\theoremstyle{plain}
\newtheorem{theorem}{Theorem}[section]
\newtheorem*{theorem*}{Theorem}
\newtheorem{corollary}[theorem]{Corollary}
\newtheorem{lemma}[theorem]{Lemma}
\newtheorem{observation}[theorem]{Observation}
\newtheorem{proposition}[theorem]{Proposition}
\newtheorem{definition}[theorem]{Definition}
\newtheorem{claim}[theorem]{Claim}
\newtheorem*{claim*}{Claim}
\newtheorem{remark}[theorem]{Remark}
\newenvironment{customlem}[1]
  {\innercustomlem}
  {\endinnercustomlem}
\newcommand{\E}{\mathbb{E}}
\newcommand{\Var}{\mathrm{Var}}
\newcommand{\R}{\mathbb{R}}
\newcommand{\ind}[1]{\mathds{1}_{#1}}
\newcommand{\Prob}{\mathbb{P}}
\newcommand*{\cP}{\mathcal{P}}
\newcommand{\cA}{\mathcal{A}}
\def\lam{\lambda}
\def\sig{\sigma}
\def\eps{\varepsilon}
\newcommand{\tv}{\mathrm{TV}}
\def\tmix{t_{\mathrm{mix}}}
\def\gap{\mathrm{gap}}
\def\bg{{\bf G}}
\def\hbg{\hat{\bf G}}
\def\T{\mathbb T}
\def\cE{\mathcal E}
\def\hsig{\hat{\sigma}}
\def\weak{\xrightarrow[]{\mathrm{loc}}}
\def\tP{\tilde{P}}
\def\GK{\mathrm{GK}}
\def\Zising{Z^{\mathsf{Ising}}}
\def\Zanti{Z^{\mathsf{anti}}}
\def\Zfix{Z^{\mathsf{fix}}}
\def\muising{\mu^{\mathsf{Ising}}}
\def\mufix{\mu^{\mathsf{fix}}}
\def\muanti{\mu^{\mathsf{anti}}}
\date{November 20, 2025}
\title[Fixed-magnetization Ising on random graphs]{Fixed-magnetization Ising on random graphs \\ up to reconstruction}
\author{Reza Gheissari}
\address{Department of Mathematics \\ Northwestern University}
\email{gheissari@northwestern.edu}
\author{Will Perkins}
\address{School of Computer Science \\ Georgia Institute of Technology}
\email{math@willperkins.org}
\author{Corrine Yap}
\address{School of Mathematics \\ Georgia Institute of Technology}
\email{math@corrineyap.com}
\begin{document}

\begin{abstract}
    We study the fixed-magnetization ferromagnetic Ising model on random $d$-regular graphs for $d\ge 3$ and inverse temperature below the  tree reconstruction threshold. Our main result is that for each magnetization $\eta$, the free energy density of the fixed-magnetization Ising model converges to the annealed free energy density, itself   the Bethe free energy of an Ising measure on the infinite $d$-regular tree. Moreover, the fixed-magnetization Ising model exhibits local weak convergence to this tree measure. A key challenge to proving these results is that for magnetizations between the model's spinodal points, the limiting tree measure corresponds to an unstable fixed point of the belief propagation equations.

    As an application, we prove that the positive-temperature Zdeborov{\'a}--Boettcher conjecture on max-cut and min-bisection holds up to the reconstruction threshold: on the random $d$-regular graph, the expected fraction of bichromatic edges in the anti-ferromagnetic Ising model plus the expected fraction  of bichromatic edges in the zero-magnetization ferromagnetic Ising model  equals $1+o(1)$.

    A second application is completely determining the large deviation rate function for the magnetization in the Ising model on the random regular graph up to  reconstruction.

    Finally, we use the precise understanding of this rate function to show that the Glauber dynamics for the full Ising model on the random graph mixes in sub-exponential time from uniformly random initialization, well into the non-uniqueness regime where the worst-case initialization mixing time is exponentially slow. 
\end{abstract}

\maketitle

\section{Introduction}
\label{secIntro}

The ferromagnetic Ising model at inverse temperature $\beta$ (without external field) on a graph $G = (V,E)$ is the probability distribution on $\{\pm 1\}^V$ defined by 
\begin{align*}
    \mu_{G,\beta}^{\mathsf{Ising}}(\sigma) = \frac{e^{ \beta H_G(\sigma)}}{ Z^{\mathsf{Ising}}_{G,\beta} }\,, \qquad \text{where} \qquad Z^{\mathsf{Ising}}_{G,\beta} = \sum_{\sigma \in \{ \pm 1\}^V} e^{\beta H_G(\sigma)}\,
\end{align*}
is the partition function (the normalizing constant) and 
 $H(\sigma) = H_G(\sigma)$ is the number of monochromatic edges of $G$ under the $\pm 1$ coloring $\sigma$; i.e., $H_G(\sigma) = \sum_{(u,v) \in E(G)} \ind{\sigma_u=\sigma_v}$.  When $\beta \ge 0$, monochromatic edges are preferred and the model is called ``ferromagnetic."  We will also refer to Ising models with an external field $h$, where $\Zising_{G,\beta,h} = \sum_{\sigma \in \{ \pm 1\}^V} e^{h\sum_v \sigma_v + \beta H_G(\sigma)}$.

We focus in this paper on the ferromagnetic Ising models on $\mathbf{G}_d(n)$, the uniformly random $d$-regular graph, for $d\ge 3$ constant as $n\to \infty$. The typical behavior of the Ising model on $ \mathbf{G}_d(n)$ is at this point very well understood.  In particular, answers to two main questions are now known:  what is the typical (approximate) value of the partition function $\Zising_{\mathbf{G}_d(n),\beta}$ and how is a typical sample from $\muising_{\mathbf{G}_d(n),\beta}$ distributed, from the local perspective of a uniformly chosen vertex?  Using standard concentration arguments, the first question amounts to computing the \textit{free energy density}, $\lim_{n \to \infty} \frac{1}{n}\E \log \Zising_{\mathbf{G}_d(n),\beta}$, as a function of $d, \beta$.  The second question can be phrased in terms of local weak convergence of probability measures.  The answers to both questions have essential connections to translation-invariant, infinite-volume Gibbs measures (or Belief Propagation fixed points) defined on the infinite $d$-regular tree $\mathbb T_d$.  

Gerschenfeld and Montanari~\cite{GM07} showed that the free energy density of the Ising model on $G\sim \mathbf{G}_d(n)$ converges as $n\to\infty$ to that of the corresponding infinite $d$-regular tree $\mathbb T_d$. This in particular implies that it undergoes a second-order phase transition at $\beta_c = \beta_c(d) = 2\tanh^{-1}(1/(d-1))$ (asymptotic to $2/d$ as $d\to\infty$). Further work~\cite{montanari2012weak} has shown that the local weak limit of the configuration on $G$ is the (unique) Ising Gibbs measure on $\mathbb T_d$ when $\beta\le \beta_c$ and a $\frac{1}{2}$-$\frac{1}{2}$ mixture of the ``plus" and ``minus" infinite-volume Gibbs measures on $\mathbb T_d$ when $\beta>\beta_c$, and when the model is conditioned to have a positive magnetization, it converges specifically to the ``plus" measure on the infinite tree.

In this paper, we study the fixed-magnetization Ising model, which is the Ising model conditioned on the event that the magnetization $\sum_v \sigma_v$ equals some number $k\in \{-n,...,n\}$. We parameterize this by a real number $\eta \in [-1,1]$, the magnetization density.  In particular, given $\eta \in[-1,1]$ and a graph $G=(V,E)$, let $\Omega_\eta = \{  \sigma \in \{ \pm 1 \}^V :  \lfloor \frac{\sum_v \sigma_v}{n} \rfloor = \eta \}$; we call these ``configurations of  magnetization $\eta$.''  The fixed-magnetization Ising model is then the probability distribution on $\Omega_\eta$ defined by 
\begin{align*}
     \mufix_{G,\beta,\eta}(\sigma) = \frac{ e^{\beta H_G(\sigma)}}{\Zfix_{G,\beta}(\eta)}\,, \qquad \text{where} \qquad 
    \Zfix_{G,\beta}(\eta) = \sum_{\sigma \in \Omega_\eta}  e^{\beta H_G(\sigma)}\,
\end{align*}
is the fixed-magnetization Ising partition function.  Depending on the graph and the choice of parameters, the fixed-magnetization Ising model  can either behave very similarly to the Ising model or exhibit dramatically different behavior, including glassy behavior at large $\beta$~\cite{mezard1987mean,DMS} and computational hardness of approximate counting and sampling~\cite{CDKP21}.  The model (at zero magnetization) also has a combinatorial and algorithmic interpretation as a ``soft" version of the minimum bisection problem.

 We answer both of the main questions stated above for \textit{all} magnetizations when the inverse temperature is below the  \textit{tree reconstruction threshold}, 
 \begin{equation}
     \label{eq:betardef}
     \beta_r= \beta_r(d) = \log \left(\frac{\sqrt{d-1}+1}{\sqrt{d-1}-1} \right) =(2+o_d(1))d^{-1/2} \,. 
 \end{equation}
 This threshold is also known as the Kesten--Stigum threshold~\cite{kesten1966additional}; we define the tree reconstruction problem precisely in Section~\ref{secTrees}.
 We determine the free energy density for every $\eta \in[-1,1]$, which immediately gives the large deviation rate function for the magnetization in the Ising model on the random graph.  We also prove local weak convergence of the fixed-magnetization Ising model to a particular translation-invariant Ising Gibbs measure on $\mathbb T_d$ with appropriately chosen external field. The key conceptual difficulty in proving such a result is that unlike previous local weak convergence results for spin systems on $G\sim \bg_d(n)$, the tree measure to which the fixed-magnetization model converges can be an \emph{unstable} BP fixed point.  See Section~\ref{secOutline} for an overview of the techniques. As a byproduct of our proof, we in particular show that the fixed-magnetization Ising model on $G\sim \bg_d(n)$ is replica symmetric for all $\beta<\beta_r$. These results are presented in Section~\ref{subsec:freeenergyIntro}.

 Beyond its intrinsic interest, a sharp understanding of the Gibbs measure at atypical magnetizations also helps with understanding the behavior of Markov chains like Glauber dynamics at low temperatures. Specifically, a long-standing question in the study of low-temperature Markov chains is that of \emph{phase ordering}. Phase ordering asks: when $\beta>\beta_c$ and mixing is slow due to the coexistence of plus and minus phases, is low-temperature Glauber dynamics started from a balanced initialization (say uniform-at-random assignments of $\{\pm 1\}$ to $V$) able to avoid the complexity of near-zero-magnetization configurations (where the model behaves like a spin glass) and quickly pick one of the two dominant phases to equilibrate to? We make progress in this direction by combining our computation of free energies of the fixed-magnetization Ising model on $\mathbf{G}_d(n)$ with recent results of~\cite{bauerschmidt2023kawasaki} on Kawasaki dynamics for the same model. Namely, we show that Glauber dynamics for the Ising model initialized from any spin-flip symmetric distribution mixes in sub-exponential time at all $\beta< \frac{1}{4 \sqrt{d-1}}$ (well into the low-temperature regime where the worst-case initialization mixing time is exponential).  To our knowledge, this is the first setting with non-trivial geometry where Glauber dynamics are shown to mix much faster from a uniformly random start than they do from a worst-case start.
This result is presented in detail in~\Cref{secIntroMC}.

\subsection{Free energy and local weak limit of fixed magnetization Ising}\label{subsec:freeenergyIntro}

We begin by considering the typical behavior of the free energy $\frac{1}{n} \log \Zfix_{G,\beta}(\eta)$ for $G\sim \mathbf{G}_d(n)$ where $d \ge 3$, $\beta \ge 0$, and $\eta\in [-1,1]$ are fixed as $n \to \infty$. By standard concentration arguments, it suffices to understand  the \textit{free energy density}  
\begin{equation}
    \lim_{n \to \infty} \frac{1}{n} \mathbb E \log \Zfix_{\mathbf G_d(n),\beta} ( \eta)
\end{equation}
as a function of $d, \beta, \eta$.  Even the existence of this limit is highly non-trivial (the interpolation arguments of~\cite{bayati2010combinatorial} do not apply) and to the best of our knowledge is only known in cases where the value itself can be computed.

Our first main result establishes the free energy density of the fixed-magnetization Ising model for all magnetizations $\eta$ when the inverse temperature $\beta$ is below the tree reconstruction threshold $\beta_r$ defined in~\eqref{eq:betardef}.  The limit coincides with the \textit{annealed free energy density}, 
\begin{equation}
\label{eqn:firstmomentDef}
    \lim_{n \to \infty} \frac{1}{n}\log  \mathbb E  \Zfix_{\mathbf G_d(n),\beta}(\eta) = f_{d,\beta}(\eta)\,,
\end{equation}
where $f_{d,\beta}(\eta)$ has a closed-form but rather complicated expression that we write below in~\eqref{eq:f-closed-form} in Section~\ref{secTrees}. 
For the case $\eta =0$, we have the simple expression
\begin{equation}
\label{eqfdb0}
    f_{d,\beta}(0)= \log 2 + \frac{d}{2} \log \left( \frac{1+ e^{ \beta}}{2} \right ) \,.
\end{equation}
In what follows, we say an event $A_n$ holds ``with high probability'' if $\Prob(A_n) \to 1$ as $n\to \infty$.
\begin{theorem}
    \label{thmFreeEnergy}
    Fix $d \ge 3$ and suppose $0 \le \beta < \beta_r$.  Then with high probability  over $G \sim \bg_d(n)$ it holds for every $\eta \in [-1,1]$ that 
         $$\frac{1}{n}\mathbb  \log \Zfix_{G,\beta}(\eta) = f_{d,\beta}(\eta) +o(1) \,.$$
\end{theorem}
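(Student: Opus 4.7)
The plan is to sandwich $\log \Zfix_{G,\beta}(\eta)$ between matching bounds that both agree with $\log \E \Zfix_{\bg_d(n),\beta}(\eta) = n f_{d,\beta}(\eta) + o(n)$, then use concentration of $\log \Zfix$ to promote moment-type statements to whp statements, and finally upgrade pointwise-in-$\eta$ control to a simultaneous-in-$\eta$ statement via Lipschitz continuity.

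First I would establish the annealed asymptotic itself by a direct configuration-model computation: for a fixed $\sigma \in \Omega_\eta$, the quantity $\E[e^{\beta H_G(\sigma)}]$ reduces to a near-multinomial ratio counting colored half-edge pairings, and summation over $\sigma \in \Omega_\eta$ combined with a standard Laplace/saddle-point analysis yields the closed form $f_{d,\beta}(\eta)$. Markov's inequality then gives the upper bound $\frac{1}{n}\log \Zfix_{G,\beta}(\eta) \le f_{d,\beta}(\eta) + o(1)$ with high probability, essentially for free.

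The matching lower bound is the heart of the argument, and I would attempt it via the second moment method. Writing $\E[(\Zfix_{\bg_d(n),\beta}(\eta))^2]$ as a double sum over pairs $(\sigma,\tau) \in \Omega_\eta^2$, classify pairs by their joint empirical type $\rho = (\rho_{++},\rho_{+-},\rho_{-+},\rho_{--})$ with marginals prescribed by $\eta$, so that the second moment becomes a Laplace sum $\sum_\rho e^{n\Phi(\rho)+o(n)}$. The goal is to show that $\Phi$ is uniquely maximized at the product type $\rho^\star$ (matching marginals) with sufficiently negative Hessian, yielding $\E[(\Zfix)^2] = e^{o(n)}(\E \Zfix)^2$. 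Then Paley--Zygmund gives $\Zfix_{G,\beta}(\eta) \ge \tfrac{1}{2}\E \Zfix_{G,\beta}(\eta)$ with probability bounded below by a positive constant; combining with a Lipschitz-martingale concentration estimate showing $|\log \Zfix - \E \log \Zfix| = o(n)$ whp (since changing one edge of $G$ shifts $H_G$ by $O(1)$) upgrades this to the desired whp lower bound.

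The main obstacle is the second-moment optimization over $\rho$, and it is precisely here that the hypothesis $\beta < \beta_r$ enters essentially: the Kesten--Stigum criterion is equivalent to the linearized BP recursion on $\mathbb{T}_d$ about the relevant fixed point being a contraction along the overlap direction, and via a tree-to-graph correspondence this controls the sign of the Hessian of $\Phi$ at $\rho^\star$. The subtle difficulty emphasized in the introduction is that for $\eta$ in the spinodal window the relevant single-replica fixed point is itself an \emph{unstable} BP fixed point for the marginal law, so the saddle $\rho^\star$ is not automatically favored under ``symmetric'' replica perturbations and the naive second moment should be insufficient. I would therefore expect the proof to require either a conditional second moment (conditioning on short-cycle counts to remove atypical global couplings and restore the desired saddle behavior) or a planted-model comparison: draw $\sigma$ proportionally to its annealed weight and $G$ from the configuration model conditioned on $\sigma$, prove contiguity to $\bg_d(n)$ below $\beta_r$ using stability of the tree reconstruction problem, and read off the lower bound from the planted first moment. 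Finally, simultaneous control over all $\eta \in [-1,1]$ follows by a union bound over an $O(1/n)$-net in $\eta$, using that a single spin flip changes $\frac{1}{n}\log \Zfix_{G,\beta}(\eta)$ by $O(d/n)$ and that $f_{d,\beta}$ is continuous in $\eta$.
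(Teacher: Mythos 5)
Your proposal captures the paper's high-level skeleton correctly: an annealed/first-moment computation via a configuration-model combinatorial count, Markov's inequality for the upper bound, Lipschitz concentration (switching martingale) to promote moment bounds to whp statements, a refined second-moment method for the lower bound, and a planted model entering the picture to salvage the second moment. The observation that the naive second moment should fail because the single-replica BP fixed point is unstable for $\eta$ inside the spinodal window is also on the mark. However, the two alternatives you offer for the refined second moment --- ``conditioning on short-cycle counts'' or ``a planted-model comparison plus contiguity'' --- are each off-target in ways that matter. Short-cycle conditioning does nothing to the overlap landscape $\Phi(\rho)$ that causes the second moment to blow up; the obstruction is the contribution of correlated replica pairs, not short cycles. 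And the paper does not prove contiguity of the planted model to $\bg_d(n)$: instead it defines a truncated partition function $Y = Z\cdot\ind{\mathcal E}$ where $\mathcal E$ is an \emph{overlap-concentration} event (two Gibbs samples have empirical overlap matrix close to the product $\rho_{\nu_\eta}^{\otimes 2}$), and the role of the planted model is precisely to show $\hat\Prob(\mathcal E) = 1 - o(1)$, which makes the truncation nearly free in the first moment while killing the correlated-pair contribution in the second moment. What your proposal leaves completely open is the central technical difficulty: \emph{how} does one establish $\hat\Prob(\mathcal E) = 1 - o(1)$ when the tree measure $\nu_\eta$ is an unstable BP fixed point? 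Standard planted-model arguments deduce overlap concentration from non-reconstruction on $\mathbb T_d$, but those arguments need to let the local magnetization fluctuate, which the instability makes fatal. The paper resolves this by working with a hard magnetization constraint locally, and then proving a strengthened non-reconstruction statement --- that conditioning on both boundary spins and a fixed interior magnetization still asymptotically decouples from the root --- via a combination of a spectral-gap bound on trees with boundary conditions (from Martinelli--Sinclair--Weitz), a sprinkling argument, and a local central limit theorem for the interior magnetization (Section~\ref{secDecayBoundary}). This is the genuine new content of the theorem, and your proposal flags that ``something more'' is needed there but supplies neither the correct truncation event nor any mechanism for handling the fixed-magnetization constraint on the tree.
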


\begin{figure}
    \centering
    \begin{tikzpicture}[scale=0.9]
        \node[anchor=south, inner sep=0] (img) at (0,0)
            {\includegraphics[width=10.8cm]{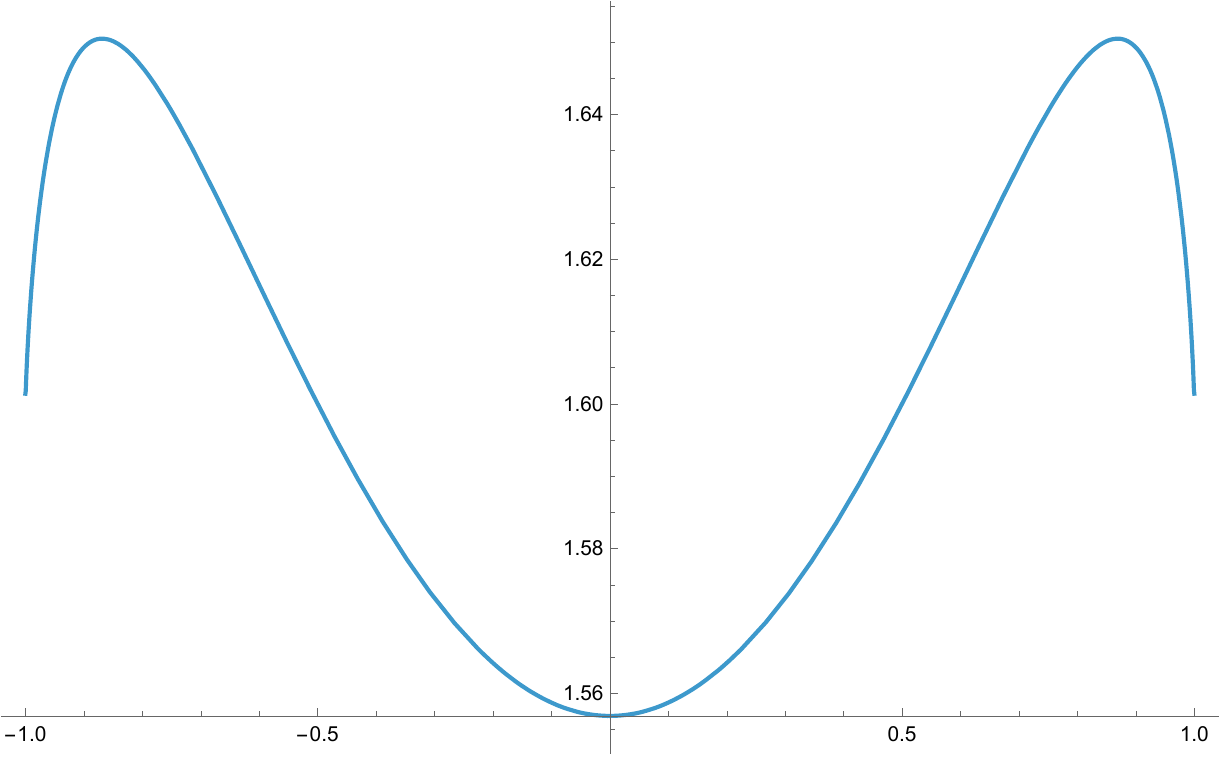}};

        \def\xMaxCm{5.0}  
        \def\yMaxCm{7.0}  
        
        \def\xInflCm{3.1} 
        \def\yInflCm{3.8} 

        \def\yMinCm{0.25}
        

        \draw [dashed, thick, gray] (\xMaxCm, \yMaxCm) -- (\xMaxCm, \yMinCm);
        \draw [dashed, thick, gray] (-\xMaxCm, \yMaxCm) -- (-\xMaxCm, \yMinCm);

        \draw [dashed, thick, gray] (\xInflCm, \yInflCm) -- (\xInflCm, \yMinCm);
        \draw [dashed, thick, gray] (-\xInflCm, \yInflCm) -- (-\xInflCm, \yMinCm);

        \node [anchor=north, yshift=-2pt] at (\xMaxCm, \yMinCm) {$\eta_*$};
        \node [anchor=north, yshift=-2pt] at (\xInflCm, \yMinCm) {$\eta_s$};
        \node [anchor=north, yshift=-2pt] at (-\xMaxCm, \yMinCm) {$-\eta_*$};
        \node [anchor=north, yshift=-2pt] at (-\xInflCm, \yMinCm) {$-\eta_s$};

    \end{tikzpicture}
    \caption{For $d=10$, $\beta=.32 \in (\beta_c,\beta_r)$, the plot of $f_{d,\beta}(\eta)$ as a function of~$\eta$. The typical magnetization is $\eta_*$ and the spinodal point $\eta_s$ is the smallest magnetization achievable by a \emph{stable} BP fixed point for some external field.}
    \label{fig:FdbZero}
\end{figure}

Theorem~\ref{thmFreeEnergy} shows that for $\beta< \beta_r$, the fixed-magnetization Ising model is \textit{replica symmetric} at all magnetizations, in the sense that one can exchange the expectation and logarithm in the formulas above, giving an equality between the free energy density and the annealed free energy density which can be interpreted as the ``Bethe free energy" of a certain Gibbs measure on $\mathbb T_d$ (see e.g.~\cite{mezard2009information} for background).  We expect that this result fails for $\eta=0$ and any $\beta>\beta_r$ (and for other values of $\eta$ at larger values of $\beta$).  The results of~\cite{DMS} strongly suggest this (at least for large $d$).  To be precise, we conjecture that for each $\beta>\beta_r$, there exists $\eps>0$ so that $\limsup \frac{1}{n} \E \log \Zfix_{\mathbf G_d(n),\beta} ( 0)  \le f_{d,\beta}(0) -\eps$.  In~\cite{coja2022ising}, such a result is proved for the anti-ferromagnetic Ising model, but this relies on the results of~\cite{coja2018information} which do not apply to the ferromagnetic Ising model.

The function $f_{d,\beta}(\eta)$ is plotted in Figure~\ref{fig:FdbZero}, for $d=10, \beta=.32$, which is between $\beta_c$ and $\beta_r$.  The features of the plot are generic to the non-uniqueness regime $\beta>\beta_c$: two (symmetric) global maxima at $\pm \eta^\ast$ and a local minimum at $0$. Between the global maxima and the local minimum there are two inflection (or spinodal) points $\pm \eta_s$; as we will describe in Section~\ref{secOutline}, it is for magnetizations between these two points that our new methods are needed.

Combined with the results of~\cite{coja2022ising}, Theorem~\ref{thmFreeEnergy} establishes the positive temperature version of the conjecture of Zdeborov{\'a} and Boettcher~\cite{zdeborova2010conjecture} on max-cut and min-bisection in random regular graphs up to reconstruction.  Their famous conjecture is that in $\mathbf G_d(n)$ the expected size of the max-cut of $G$ plus the expected size of the min-bisection is $dn/2 +o(n)$. What makes the conjecture especially intriguing is that it is not straightforwardly combinatorial: the complement of a max-cut is not close to a min-bisection. Rather the conjecture comes from the form of the non-rigorous cavity method prediction for the free energy densities in the two models.  An approximate version of the conjecture has been established  in the $d \to \infty$ limit by Dembo, Montanari, and Sen~\cite{DMS}, but it remains open for any fixed $d$.  The positive temperature version of the conjecture~\cite[Section 6]{zdeborova2010conjecture} states that for any $\beta$, the expected number of monochromatic edges in the anti-ferromagnetic Ising model plus the expected number of monochromatic edges in the zero-magnetization ferromagnetic Ising model (at the same inverse temperature $\beta$) is $dn/2 +o(n)$.  Equivalently, the sum of expected numbers of bichromatic edges is  $dn/2 +o(n)$, which in the $\beta \to \infty$ limit immediately gives their max-cut/min-bisection conjecture.  Combining the free energy density formula from Theorem~\ref{thmFreeEnergy} with the formula for the anti-ferromagnetic case from~\cite{coja2022ising} (and taking derivatives in $\beta$), we prove this  conjecture for $\beta<\beta_r$. 

Define the anti-ferromagnetic Ising model at inverse temperature $\beta$ as the measure $\muanti_{G,\beta}$ with partition function $\Zanti_{G,\beta}= \sum_{\sigma} e^{-\beta H_G(\sigma)}$, recalling that $H_G(\sigma)$ counts the number of monochromatic edges of $G$ under $\sigma$.
\begin{corollary}
    \label{corZBconj}
    For $d \ge 3$, $0\le \beta < \beta_r$, the following identity holds with high probability  over $G \sim \bg_d(n)$:
    \begin{equation}
        \E_{\muanti_{G,\beta}}  H_G +  \E_{ \mufix_{G,\beta,0}} H_G= \frac{dn}{2}(1 +o(1)) \,.
    \end{equation}
\end{corollary}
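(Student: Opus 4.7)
The plan is to write both expected monochromatic-edge counts as $\beta$-derivatives of the corresponding log partition functions, identify the pointwise limits of these log partition functions from Theorem~\ref{thmFreeEnergy} and the analogous result of~\cite{coja2022ising}, then upgrade pointwise convergence to convergence of $\beta$-derivatives via convexity, and finally check by a direct calculation that the two limiting derivatives sum to $d/2$.

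First I would record the elementary identities
\begin{align*}
\frac{\partial}{\partial \beta} \log \Zfix_{G,\beta}(0) = \E_{\mufix_{G,\beta,0}} H_G,
\qquad
\frac{\partial}{\partial \beta} \log \Zanti_{G,\beta} = -\,\E_{\muanti_{G,\beta}} H_G,
\end{align*}
which reduce the corollary to the claim that, with high probability over $G\sim\bg_d(n)$, one has $\tfrac{1}{n}\partial_\beta \log \Zfix_{G,\beta}(0) - \tfrac{1}{n}\partial_\beta \log \Zanti_{G,\beta} = \tfrac{d}{2}+o(1)$ for every $\beta < \beta_r$. Next I would apply Theorem~\ref{thmFreeEnergy} together with the anti-ferromagnetic free energy density formula from~\cite{coja2022ising} at a countable dense set $D \subset [0,\beta_r)$ simultaneously, so that on a single event of probability $1-o(1)$ the random convex functions $\beta \mapsto \tfrac{1}{n}\log \Zfix_{G,\beta}(0)$ and $\beta \mapsto -\tfrac{1}{n}\log \Zanti_{G,\beta}$ converge pointwise on $D$ to smooth deterministic limits. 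The classical fact that pointwise convergence of convex functions to a differentiable limit implies convergence of derivatives at every point of differentiability then converts this into the desired w.h.p.\ convergence of each $\tfrac{1}{n}\E H_G$ to the $\beta$-derivative of the corresponding limiting free energy density.

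Finally I would compute the two derivatives in closed form and add them. From~\eqref{eqfdb0} we get $\partial_\beta f_{d,\beta}(0) = \tfrac{d}{2}\cdot e^\beta/(1+e^\beta)$, while the replica-symmetric anti-ferromagnetic free energy density $f^{\mathsf{anti}}_{d,\beta} = \log 2 + \tfrac{d}{2}\log\bigl((1+e^{-\beta})/2\bigr)$ established in~\cite{coja2022ising} yields $-\partial_\beta f^{\mathsf{anti}}_{d,\beta} = \tfrac{d}{2}\cdot 1/(1+e^\beta)$. The sum telescopes to $d/2$, and multiplication by $n$ gives the corollary. The main obstacle in this plan is not the algebra but the exchange of limit and $\beta$-derivative: Theorem~\ref{thmFreeEnergy} only gives w.h.p.\ convergence at each fixed $\beta$, so one needs to apply it at a dense set and invoke convexity to obtain pointwise-on-a-random-event convergence of the derivatives; this step is routine once the ingredient free energy densities are in hand.
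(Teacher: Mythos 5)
Your proposal is correct and matches the paper's proof in all essentials: both express the expected monochromatic-edge counts as $\beta$-derivatives of the log partition functions, both use convexity (a Griffiths'-lemma argument) to pass from the w.h.p.\ convergence of the free energy densities given by Theorem~\ref{thmFreeEnergy} and~\cite{coja2022ising} to convergence of the derivatives, and both close with the same algebraic telescoping of $\partial_\beta f_{d,\beta}(0)$ and $-\partial_\beta f^{\mathsf{anti}}_{d,\beta}$. The only cosmetic difference is that you invoke convergence on a countable dense set $D$ of temperatures and then cite the convex-analysis fact about convergence of derivatives; since the corollary is for a single fixed $\beta$, the paper instead fixes $\eps$, chooses $\delta$ via the definition of the derivative, and applies Theorem~\ref{thmFreeEnergy} only at the three points $\beta,\beta\pm\delta$ with a finite union bound. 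Your ``simultaneously on a countable $D$'' phrasing would, if taken literally, require some uniformity in the rate of convergence of Theorem~\ref{thmFreeEnergy} across $\beta$ (or a diagonalization), which the theorem as stated does not explicitly provide; but this is easily sidestepped by the paper's local finite-difference-quotient argument, which is what your plan reduces to anyway once you note that only finitely many $\beta$-values are needed to pin down a derivative via convexity.
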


A second corollary of Theorem~\ref{thmFreeEnergy} is the complete determination of the large deviation rate function for atypical magnetizations in the  Ising model on the random regular graph.  
\begin{corollary}
    \label{CorLDrate}
    Fix $d \ge 3$, $0 \le \beta <\beta_r$, and $\eta \in [-1,1]$. Then, in probability, 
    \begin{align*}
    \lim_{n \to \infty} \frac{1}{n} \log \mu_{\mathbf G_d(n),\beta}^{\mathsf{Ising}} \left (  \sigma \in \Omega_\eta \right) = f_{d,\beta}(\eta) - \max_{\eta' \in [-1,1]} f_{d,\beta}(\eta')\,.
    \end{align*}
\end{corollary}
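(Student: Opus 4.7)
The plan is to deduce the corollary from Theorem~\ref{thmFreeEnergy} almost by bookkeeping, exploiting the fact that Theorem~\ref{thmFreeEnergy} is stated uniformly in $\eta$ on a single high-probability event.

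First, I would write
\begin{equation*}
    \mu^{\mathsf{Ising}}_{G,\beta}\bigl(\sigma \in \Omega_\eta\bigr) \;=\; \frac{\Zfix_{G,\beta}(\eta)}{\Zising_{G,\beta}} \,,
\end{equation*}
so that it suffices to show $\frac{1}{n}\log \Zising_{G,\beta} = \max_{\eta' \in [-1,1]} f_{d,\beta}(\eta') + o(1)$ in probability, since Theorem~\ref{thmFreeEnergy} already handles the numerator uniformly in $\eta$. Observe that $\Zising_{G,\beta} = \sum_{\eta'} \Zfix_{G,\beta}(\eta')$, where the sum runs over the discrete set of attainable magnetizations, a set of size at most $n+1$. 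This yields the sandwich
\begin{equation*}
    \max_{\eta'} \Zfix_{G,\beta}(\eta') \;\le\; \Zising_{G,\beta} \;\le\; (n+1)\max_{\eta'} \Zfix_{G,\beta}(\eta')\,.
\end{equation*}

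Next, on the high-probability event supplied by Theorem~\ref{thmFreeEnergy} on which $\frac{1}{n}\log \Zfix_{G,\beta}(\eta') = f_{d,\beta}(\eta') + o(1)$ \emph{simultaneously for every} $\eta' \in [-1,1]$, the $o(1)$ error is uniform in $\eta'$. Taking logs in the sandwich and dividing by $n$ gives
\begin{equation*}
    \max_{\eta'}\bigl(f_{d,\beta}(\eta')+o(1)\bigr) \;\le\; \frac{1}{n}\log \Zising_{G,\beta} \;\le\; \frac{\log(n+1)}{n} + \max_{\eta'}\bigl(f_{d,\beta}(\eta')+o(1)\bigr)\,,
\end{equation*}
and since $\log(n+1)/n = o(1)$ and the uniform error lets us pull $o(1)$ out of the maximum, we conclude $\frac{1}{n}\log \Zising_{G,\beta} = \max_{\eta'} f_{d,\beta}(\eta') + o(1)$ with high probability. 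Subtracting from the numerator estimate then gives the claimed large deviation rate.

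There is essentially no obstacle here beyond confirming that the quantifier ordering in Theorem~\ref{thmFreeEnergy} is indeed ``high probability, then for every $\eta$'' (as stated in the excerpt), so that the same random graph $G$ can be used for the numerator at a given $\eta$ and for the maximum in the denominator. The only mild point to verify is measurability/compactness of $\eta \mapsto f_{d,\beta}(\eta)$ to make sense of $\max_{\eta' \in [-1,1]}$ versus the maximum over the finite set of attainable magnetizations, but since $f_{d,\beta}$ will be continuous in $\eta$ (from its closed form in~\eqref{eq:f-closed-form}) and the attainable magnetizations are a $\frac{2}{n}$-net of $[-1,1]$, this discrepancy contributes only $o(1)$.
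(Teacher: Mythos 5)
Your proposal is correct and matches the paper's argument in substance: both reduce to Theorem~\ref{thmFreeEnergy} together with the elementary fact that $\Zising_{G,\beta} = \sum_{\eta'} \Zfix_{G,\beta}(\eta')$ is sandwiched between the max and $(n+1)$ times the max over $\eta'$, with the polynomial factor being irrelevant at the free-energy-density scale. The paper's write-up is slightly more roundabout in that it re-invokes the switching concentration (Lemma~\ref{lem:switching}) to pass to expectations rather than working directly on the high-probability event supplied by Theorem~\ref{thmFreeEnergy} as you do, but the mathematical content is the same.
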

The formula on the right-hand side is the free energy density of the fixed-magnetization Ising model at the desired magnetization minus the free energy density of the  Ising model itself.  Such a result was known previously (in~\cite{kuchukova2024fast}, following the techniques of~\cite{Coja-Oghlan22}) when $\eta$ was sufficiently far from $0$ (as a function of $\beta$), and would follow by those arguments also for all $\eta$ for $\beta<\beta_c$ but significant new innovations are needed for the regime of $\beta \in (\beta_c,\beta_r)$ and $\eta$ between the two spinodal points $\pm \eta_s$.

Our next result determines the local weak limit of the fixed-magnetization Ising model on random graphs below the reconstruction threshold. That is, we characterize the distribution that the fixed-magnetization model induces on the local neighborhood  of a typical vertex.

To state the result, we recall the notion of local weak convergence of the Ising distribution on $\bg_d(n)$ (used in a similar contexts in e.g.~\cite{aldous2004objective,dembo2010ising,montanari2012weak}). 

For a graph $G$ and a vertex $v \in V(G)$, let $B_{G,r}(v)$ denote the depth-$r$ neighborhood of $v$.
\begin{definition}
Let $G_n$ be a random sequence of $d$-regular graphs and let $\mu_n$ be a sequence of probability measures on $\{-1,1\}^{V(G_n)}$.  Let $\mu_\infty$ be a probability measure on $\{-1,1\}^{V(\mathbb T_d)}$.  We say $\mu_n$ converges locally weakly to $\mu_\infty$, denoted $\mu_n  \weak \mu_{\infty}$, if for all $r \ge 1$ and $\eps>0$, 
\begin{align*}
&\Prob(B_{G_n,r}(v) \cong B_{\mathbb T_d,r}(v_0)) = 1-o(1)\,,
\intertext{with $\cong$ meaning graph isomorphic, and with that isomorphism,}
  &  \Prob(d_{\tv}(\mu_{n}(\sigma(B_{G_n,r}(v) \in \cdot)), \mu_{\infty}(\sigma(B_{\mathbb T_d,r}(v_0) \in \cdot))) > \eps) =  o(1) \,,
\end{align*}
    where $v_0$ denotes the root of $\mathbb T_d$. Here, the probabilities are over the choice of $G_n$ and  a uniformly random $v \in V(G_n)$. 
\end{definition}
That is, $G_n$ is locally treelike (typical depth-$r$ neighborhoods are trees) and for all fixed $r$, the distribution induced by $\mu_n$ on the depth-$r$ neighborhood of a random $v$ converges to the distribution induced by $\mu_{\infty}$ on the depth-$r$ neighborhood of the root of $\mathbb T_d$.

We also must define the limiting measures, which are Ising measures on the tree $\mathbb T_d$. Even though the fixed-magnetization model $\mu_{\bg_d(n),\beta,\eta}^{\mathsf{fix}}$ has no external field, we must allow for a field $h$ for the family of limiting tree measures that will arise. For a given choice of $\beta$ and $h$, an Ising model can be defined on an infinite graph such as $\mathbb T_d$ via finite-volume marginals and the Dobrushin--Lanford--Ruelle consistency equations (see~\cite{georgii2011gibbs}).  An Ising measure $\nu$ on $\mathbb T_d$ is \textit{translation invariant} if the single-vertex magnetization $\E_{\nu} [\sigma_v]$ is constant over $v \in V(\mathbb T_d)$.  In Section~\ref{secTrees}, we recall that for a given $d, \beta, h$ there is at least one and at most three translation-invariant Ising measures on $\mathbb T_d$: at $h=0$ the onset of non-uniqueness occurs at $\beta_c$. Let $\mathcal M_{d,\beta,h}$  be the set of all translation-invariant Ising Gibbs measures on $\mathbb T_d$ with inverse temperature $\beta$ and external field $h$, where $h$ ranges over all values in $\mathbb R$.

\begin{lemma}
\label{lem:existenceTreeMeasure}
Fix $d \ge3$, $\beta \ge 0$.  For every $\eta \in (-1,1)$ there is a unique $h_\eta$ and a unique  measure $\nu_{\eta} \in \mathcal M_{d,\beta,h_\eta}$ so that $\E_{\nu_{\eta}} [\sigma_v] = \eta$ for each vertex $v $ of $\mathbb T_d$. 
\end{lemma}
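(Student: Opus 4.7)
The plan is to identify each translation-invariant pure-phase Ising Gibbs measure on $\T_d$ at parameters $(\beta,h)$ with a fixed point of the standard one-dimensional Belief Propagation recursion, and then invert this parameterization in terms of the magnetization. Writing $\phi_\beta(x):=\artanh\bigl(\tanh(\beta/2)\tanh(x)\bigr)$ for the cavity-message update, it is classical (see, e.g.,~\cite{georgii2011gibbs}) that $\mathcal{M}_{d,\beta,h}$ is in bijection with the solutions $x\in\R$ of
$$x=h+(d-1)\phi_\beta(x),$$
with the fixed point $x$ encoding the half-log-ratio of the subtree partition functions conditioned on $\sigma_{\mathrm{root}}=\pm 1$. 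Combining the $d$ incoming cavity messages at a vertex gives the single-site magnetization associated with such a fixed point as
$$\eta(x)\;=\;\tanh\bigl(x+\phi_\beta(x)\bigr),$$
and solving the fixed-point equation for $h$ in terms of $x$ gives $h(x)=x-(d-1)\phi_\beta(x)$.

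The analytic content of the lemma is then that $x\mapsto \eta(x)$ is a strictly increasing bijection from $\R$ onto $(-1,1)$. The function $\phi_\beta$ is smooth with $\phi_\beta(0)=0$ and $\phi_\beta'\ge 0$ on all of $\R$ (strictly positive when $\beta>0$), so $x+\phi_\beta(x)$ has strictly positive derivative. Boundedness of $\phi_\beta$ (its limits are $\pm\beta/2$) forces $x+\phi_\beta(x)\to\pm\infty$ as $x\to\pm\infty$, so $\eta(\cdot)$ is indeed a strictly increasing bijection onto $(-1,1)$.

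Given this, the proof concludes quickly. For each $\eta\in(-1,1)$, let $x^\star\in\R$ be the unique solution of $\eta(x^\star)=\eta$, set $h_\eta:=h(x^\star)$, and let $\nu_\eta$ be the translation-invariant Gibbs measure associated with $x^\star$. For uniqueness of the pair $(h_\eta,\nu_\eta)$, suppose $\nu\in\mathcal{M}_{d,\beta,h'}$ satisfies $\E_\nu[\sigma_v]=\eta$. Then $\nu$ corresponds to some fixed point $\tilde x$ of $\tilde x=h'+(d-1)\phi_\beta(\tilde x)$ with $\eta(\tilde x)=\eta$. Injectivity of $\eta(\cdot)$ forces $\tilde x=x^\star$, hence $h'=h(\tilde x)=h(x^\star)=h_\eta$ and $\nu=\nu_\eta$.

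The one nontrivial ingredient is the identification of $\mathcal{M}_{d,\beta,h}$ with the set of BP fixed points on $\T_d$, which we import as a standard fact about ferromagnetic Ising Gibbs measures on regular trees. With that parameterization in hand, the rest of the proof reduces to a one-variable monotonicity check, so I do not expect any substantive obstacle; the only mild care needed is to ensure the cavity-to-vertex bookkeeping in the formulas for $\eta(x)$ and $h(x)$ is consistent with our conventions for $\mathcal{M}_{d,\beta,h}$.
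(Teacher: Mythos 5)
Your proof is correct and is essentially the paper's argument in an equivalent parameterization: the paper works with the probability ratio $R$ and its fixed-point equation \eqref{eqn:tree-recursion} (via Lemma~\ref{lem:georgii-tree}), whereas you work with the cavity field $x=\tfrac12\log R$; substituting $R=e^{2x}$ turns your $\tanh(x+\phi_\beta(x))$ into the paper's magnetization formula \eqref{eqn:tree-marginal}, and the monotonicity-and-bijectivity argument is the same. No gap.
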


We establish that the fixed-magnetization Ising model on the random graph converges locally to this appropriately chosen measure.
\begin{theorem}
    \label{thmLocalWeak}
    Fix $d \ge 3$ and suppose $0 \le \beta < \beta_r$.  Then for every $\eta \in (-1,1)$,
    \begin{equation*}
        \mu_{\mathbf G_d(n),\beta,\eta}^{\mathsf{fix}} \weak \nu_{\eta} \,.
    \end{equation*}
\end{theorem}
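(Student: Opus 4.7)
The plan is to deduce local weak convergence from Theorem~\ref{thmFreeEnergy} via a \emph{perturb-and-differentiate} strategy, using the derivative of the free energy with respect to a small local source term to read off local marginals. The new wrinkle, compared to standard applications of this idea, is that the extension of Theorem~\ref{thmFreeEnergy} used as input must hold uniformly in a neighborhood of the unperturbed model, in the regime where $\nu_\eta$ is an unstable BP fixed point.

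First I would reduce local weak convergence to convergence of vertex-averaged local probabilities. Fix $r \ge 1$, a rooted depth-$r$ $d$-regular tree $T$, and a configuration $\tau \in \{\pm 1\}^{V(T)}$. Using concentration (vertex-exposure martingales on the random graph combined with bounded-difference estimates for Ising marginals), together with the fact that $B_{G,r}(v) \cong T$ with probability $1-o(1)$ for a uniform vertex $v$, it suffices to show that
\begin{equation*}
\frac{1}{n} \sum_{v \in V(G)} \mu^{\mathsf{fix}}_{G,\beta,\eta}\!\left(\sigma|_{B_r(v)} = (T,\tau)\right) \;\longrightarrow\; \nu_\eta\!\left(\sigma|_{B_r(v_0)} = (T,\tau)\right)
\end{equation*}
in probability as $G \sim \mathbf{G}_d(n)$, for each fixed $(T,\tau)$.

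To access the left-hand side, introduce a real parameter $\lambda$ and the perturbed partition function
\begin{equation*}
Z^{\mathsf{fix}}_{G,\beta,\lambda}(\eta) \;=\; \sum_{\sigma \in \Omega_\eta} \exp\!\left(\beta H_G(\sigma) + \lambda \sum_{v \in V(G)} \ind{\sigma|_{B_r(v)} = (T,\tau)}\right),
\end{equation*}
so that the vertex-average above equals $\frac{d}{d\lambda}|_{\lambda = 0} \frac{1}{n} \log Z^{\mathsf{fix}}_{G,\beta,\lambda}(\eta)$. I would then extend the proof of Theorem~\ref{thmFreeEnergy} to this perturbed model: the added term is a bounded, additive-in-$v$, strictly local functional, so it is absorbed into effective single-vertex weights in both the combinatorial/second-moment arguments on $\mathbf{G}_d(n)$ and the corresponding Bethe free energy on $\mathbb{T}_d$. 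This yields a perturbed annealed free energy $f_{d,\beta,\lambda}(\eta)$, equal to the Bethe free energy of the translation-invariant tree measure with inverse temperature $\beta$ and external field $h_\eta$ reweighted by $\exp(\lambda \ind{\sigma|_{B_r(v_0)} = (T,\tau)})$. A direct computation then gives
\begin{equation*}
\frac{d}{d\lambda}\Big|_{\lambda=0} f_{d,\beta,\lambda}(\eta) \;=\; \nu_\eta\!\left(\sigma|_{B_r(v_0)} = (T,\tau)\right).
\end{equation*}

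To match derivatives with limits, I would invoke the convexity of $\lambda \mapsto \frac{1}{n} \log Z^{\mathsf{fix}}_{G,\beta,\lambda}(\eta)$: pointwise convergence of convex functions implies convergence of derivatives at points of differentiability of the limit, so the in-probability derivative of the left-hand side converges to $\frac{d}{d\lambda}|_{\lambda=0} f_{d,\beta,\lambda}(\eta)$, completing the argument. The main obstacle is the uniform-in-$\lambda$ extension of Theorem~\ref{thmFreeEnergy}: in the interior regime $\eta \in (-\eta_s, \eta_s)$ the measure $\nu_\eta$ is an unstable BP fixed point, and the arguments underlying Theorem~\ref{thmFreeEnergy} must remain robust to small bounded local perturbations without sliding onto a different tree fixed point. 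Provided the proof of Theorem~\ref{thmFreeEnergy} (via its handling of unstable fixed points) shows that $\nu_\eta$ remains the unique relevant fixed point in an open neighborhood of the annealed optimizer, this extension should be routine and yield the theorem.
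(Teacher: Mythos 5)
Your perturb-and-differentiate strategy is a genuinely different route from the paper's (the paper transfers the planted model's local statistics to the configuration model via an exponential-equivalence argument, Lemma~\ref{lem:exponential-equivalence}, combined with a new concentration inequality for Lipschitz functionals of switchings in the weighted configuration model, Lemma~\ref{planted-concentration}, and the local total-variation comparison of Lemma~\ref{lem:local-dtv-planted-to-tree}). However, the proposal contains a real gap that you yourself flag and then dismiss as ``routine.''

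The core problem is your claim that the source term $\lambda\sum_v \ind{\sigma|_{B_r(v)}=(T,\tau)}$ ``is absorbed into effective single-vertex weights.'' For $r\ge 1$ this is false: the observable depends jointly on all spins in a depth-$r$ ball, so the perturbation is a multi-body interaction of range $2r$, not a vertex field. The entire first-moment and planted-model machinery behind Theorem~\ref{thmFreeEnergy} rests on the fact that the relevant sufficient statistics are just the vertex and edge type counts (magnetization and monochromatic-edge fraction $\rho$), so the annealed free energy reduces to a one-variable maximization of $g_{d,\beta,\eta}(\rho)$, and the planted model reduces to an assortative $\beta$-weighted configuration model. With a depth-$r$ perturbation, you would instead have to optimize over the empirical distribution of depth-$r$ neighborhood types, and the corresponding planted model would be reweighted by ball type rather than edge type. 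None of Lemma~\ref{lem:partition-first-moment}, the planted edge-statistics concentration (Lemma~\ref{lem:planted-edge-stats}), the Ising-specific non-reconstruction inputs (Lemmas~\ref{lem:nonreconbd},~\ref{lem:conditional-variance-bound},~\ref{lem:sprinkling}, which rely on the Martinelli--Sinclair--Weitz spectral gap for Ising on trees and an LCLT for the magnetization), nor the switching concentration Lemma~\ref{planted-concentration} carries over to such a model without being essentially rebuilt. Precisely the aspect that makes Theorem~\ref{thmFreeEnergy} hard --- that $\nu_\eta$ is an \emph{unstable} BP fixed point between the spinodals --- would resurface for the perturbed model, and there one no longer even has the closed-form Bethe free energy and tree-measure correspondence of Section~\ref{secTrees} to lean on.

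Two secondary points. First, even granting the extension, you would still need to verify that $\lambda\mapsto f_{d,\beta,\lambda}(\eta)$ is differentiable at $\lambda=0$ before invoking the convexity argument; in the unstable regime this is not automatic, since the perturbed annealed optimizer is now a maximization over a high-dimensional simplex of ball-type frequencies and continuity of the maximizer at $\lambda=0$ is part of what must be proved. Second, convergence of $\frac{1}{n}\sum_v \mu(\sigma|_{B_r(v)}=(T,\tau))$ to $\nu_\eta(\tau)$ alone does not rule out cancellation between vertices whose marginals lie above versus below the tree value, so the reduction to vertex-averaged marginals needs an additional second-moment/decorrelation step (the paper supplies this implicitly through the overlap concentration underlying the planted-model comparison). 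In short, the disclaimer at the end of your proposal is where the theorem actually lives; the extension is not routine, and the paper's planted-model route is specifically designed to avoid having to reprove Theorem~\ref{thmFreeEnergy} for a family of perturbed, non-Ising models.
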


Note that for the endpoints $\eta \in \{-1,1\}$, the fixed-magnetization Ising configuration is simply the all-plus (or all-minus) configuration, and converges to the all-plus (or all-minus) configuration on $\T_d$  by local weak convergence of $\bg_d(n)$ to $\T_d$. Therefore, in the proofs of Theorem~\ref{thmFreeEnergy}--\ref{thmLocalWeak}, we focus on the non-trivial cases $\eta\in (-1,1)$.

We mention in passing here (saving a precise definition for Section~\ref{secTrees}) that the tree reconstruction threshold $\beta_r$ can be defined in terms of $\nu_0$: it is the supremum over $\beta$ so that the distribution of spins  at depth $L$ induced by $\nu_0$ contains vanishing (in $L$) information about the spin at the root.

\subsection{Ising Glauber dynamics from a uniformly random initialization}
\label{secIntroMC}
We next turn to the consequence for low-temperature Markov chain behavior. The
Glauber dynamics for the Ising model $(X_t)_{t\ge 0}$ on $G$ is the following discrete-time Markov chain. Initialized from some $X_0\in \{\pm 1\}^V$, generate $(X_t)_{t\ge 0}$ iteratively as follows: given $X_{t-1}$, generate $X_t$ by 
\begin{enumerate}
    \item Picking a vertex $v$ uniformly at random from $V$; 
    \item Setting $X_{t}(w) = X_{t-1}(w)$ for $w\ne v$ and resampling 
    \begin{align*}
        X_t(v) \sim \mu_{G,\beta}(\sigma_v \in \cdot\mid (\sigma_w)_{w\ne v}= (X_{t-1}(w))_{w\ne v})\,.
    \end{align*}
\end{enumerate}
This Markov chain is reversible with respect to the Ising distribution $\mu_{G,\beta}^{\mathsf{Ising}}$. It is of significant interest both as the canonical Markov chain Monte Carlo sampler from the Ising model, and a physical model for the out-of-equilibrium evolution of thermodynamic systems. The quantity of most interest is the mixing time, which is the time it takes to be within $\varepsilon$ total variation distance of the stationary distribution, and in particular its asymptotics as a function of $n$.

The mixing time of Ising Glauber dynamics on $\mathbf{G}_d(n)$ reflects the equilibrium phase transition on $\mathbb T_d$. When $\beta<\beta_c$, the Glauber dynamics mix from worst-case initialization in optimal $O(n\log n)$ time~\cite{mossel2013exact} and in polynomial time at the critical temperature $\beta = \beta_c$~\cite{bauerschmidt2023stochastic}. However, when $\beta>\beta_c$, there is an exponential bottleneck between the plus phase where the magnetization is positive and the minus phase where the magnetization is negative~\cite{GM07,dembo2010ising,CvdHK21}. This bottleneck induces an exponentially slow mixing time (from worst-case initialization).

An important question is to understand in what sense, and for what regime of $\beta >\beta_c$, this magnetization bottleneck is the \emph{only} (at least physically relevant) obstruction to mixing. This question can be formalized by asking if spin-flip symmetric initializations (or at least nice ones like the uniform-at-random initialization) circumvent the exponential bottlenecks and equilibrate in polynomial time. Such problems have a long history going under the name of phase ordering on the lattice (see the monograph~\cite{bray1994theory}). While this can be analyzed exactly on the complete graph~\cite{LLP,DLP-censored-Glauber}, as soon as the graph has geometry, analysis of the diffusion away from zero magnetizations towards the phases becomes significantly more complicated. 

Recently, Bauerschmidt,  Bodineau,  and Dagallier~\cite{bauerschmidt2023kawasaki} made progress in a related direction for $G\sim \mathbf{G}_d(n)$ by showing that the restricted dynamics that moves in a fixed-magnetization slice $\Omega_\eta$ (done via a conservative variant of Glauber dynamics known as \textit{Kawasaki dynamics}) is fast mixing for all $\eta \in [-1,1]$ and $\beta<\frac{1}{4\sqrt{d-1}}$. Note that for large $d$ ($d\ge 66$ suffices) this is in the low-temperature $\beta>\beta_c\asymp \frac{2}{d-1}$ regime of the Ising model on $G\sim \bg_d(n)$. 

As a consequence of Theorem~\ref{thmFreeEnergy}, we are able to show that the sequence of free energies associated to the slices of magnetizations $\Zfix_{\bg_d(n),\beta}(\eta)$ for $\eta \in [0,1]$ is (approximately) unimodal. This is combined with the fast mixing of restricted dynamics to a slice via Markov chain decomposition techniques to show that the mixing time is significantly faster from spin-flip symmetric initializations than it is from worst-case.

\begin{theorem}
    \label{thmIsingMixing}
  Fix $d \ge 3$ and $\beta < \frac{1}{4 \sqrt{d-1}}$.  There exists $C> 0$ so that the following is true.  Let $\lambda_0$ be the uniform distribution on $\{ \pm 1 \}^n$ and let $\lambda_t$ be the distribution of $X_t$, after $t$ steps of the Ising Glauber dynamics starting from $X_0 \sim \lambda_0$.  With probability $1-o(1)$, $G\sim \mathbf G_d(n)$ is such that for all $\varepsilon>0$, 
  \begin{equation*}
      d_{\tv}(\lambda_t,\mu_{G, \beta}^{\mathsf{Ising}})  \le \eps 
  \end{equation*}
  for all $t \ge T_n(\eps)$ where $T_n(\eps)$ is a sub-exponential sequence $T_n(\eps) = e^{o(n)}\log(1/\eps)$. 
\end{theorem}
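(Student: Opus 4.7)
The plan is to combine Theorem~\ref{thmFreeEnergy}'s control of the free energy profile with the polynomial-time Kawasaki mixing result of Bauerschmidt--Bodineau--Dagallier (henceforth BBD) via a Markov chain decomposition along magnetization slices. The key is that the exponential bottleneck at zero magnetization, which makes worst-case Glauber mixing exponentially slow for $\beta > \beta_c$, can be bypassed by exploiting spin-flip symmetry of both $\lambda_0$ and $\muising$.

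\emph{Step 1 (Reduction by symmetry).} First I would decompose $\lambda_0 = \tfrac{1}{2}\lambda_0^+ + \tfrac{1}{2}\lambda_0^- + O(n^{-1/2})\lambda_0^0$, where $\lambda_0^{\pm}$ is the uniform measure on $\{\sigma : \mathrm{sgn}(\sum_v \sigma_v) = \pm\}$ and $\lambda_0^0$ on $\{\sum_v \sigma_v = 0\}$. By spin-flip equivariance of both the Glauber kernel and of the stationary measure, it suffices to bound $d_{\tv}(\lambda_t^+, \muising_{G,\beta}(\cdot \mid M > 0))$. Theorem~\ref{thmFreeEnergy} applied at $\eta = 0$ versus $\eta_*$ yields $\muising_{G,\beta}(M = 0) = e^{-\Omega(n)}$ whenever $\beta > \beta_c$, so a Cheeger-type bound implies that the chain started from $\lambda_0^+$ avoids $\{M \le 0\}$ for any sub-exponential horizon with probability $1 - e^{-\Omega(n)}$. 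Thus it is equivalent to analyze Glauber restricted to $\{M > 0\}$ from $\lambda_0^+$, with stationary distribution $\muising_{G,\beta}(\cdot \mid M>0)$ (and the case $\beta \le \beta_c$ is handled by standard fast-mixing results).

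\emph{Step 2 (Decomposition along slices).} Next I would partition $\{M > 0\} = \bigsqcup_{\eta > 0} \Omega_\eta$ and apply a Markov chain decomposition (e.g. Madras--Randall or Jerrum--Sinclair--Tetali--Vigoda). Since Glauber restricted to any single slice is trivial (every single-spin flip leaves the slice), I would replace Glauber with the \emph{enhanced} chain that additionally permits Kawasaki swaps (opposite-spin exchanges); a Dirichlet-form comparison, expressing each Kawasaki swap as two consecutive Glauber flips, shows the enhancement changes the spectral gap by at most $\mathrm{poly}(n)$ factors. The within-slice (Kawasaki) part then has spectral gap $\mathrm{poly}(1/n)$ uniformly in $\eta$ by BBD whenever $\beta < 1/(4\sqrt{d-1})$. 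The projected chain is a birth--death chain on $\{2/n, 4/n, \ldots, 1\}$ with stationary weights $\pi(\eta) \propto \Zfix_{G,\beta}(\eta)$; by Theorem~\ref{thmFreeEnergy}, $\log \pi(\eta) = n f_{d,\beta}(\eta) + o(n)$, and the closed form for $f_{d,\beta}$ shows it is strictly unimodal on $[0,1]$ with mode $\eta_*$. Hardy-type estimates for birth--death chains with (approximately) log-unimodal stationary weights then give spectral gap $e^{-o(n)} \cdot \mathrm{poly}(1/n)$. Combining via the decomposition theorem yields a spectral gap of the same order for the enhanced chain on $\{M > 0\}$, hence sub-exponential mixing time from $\lambda_0^+$; Step~1 then promotes this to the full statement.

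\emph{Main obstacles.} The principal difficulty I anticipate is the triviality of Glauber restricted to a fixed-magnetization slice, which forces the enhanced-chain route and a careful verification of the compatibility hypotheses of the decomposition theorem---in particular that the two-flip simulation of a Kawasaki swap can be done without leaving the restriction $\{M > 0\}$, which may require treating the boundary slices $\Omega_{2/n}$ separately. A secondary point is upgrading Theorem~\ref{thmFreeEnergy}'s $o(n)$ additive error to uniform control of the slice-to-slice ratios $\pi(\eta)/\pi(\eta \pm 2/n)$ needed for the birth--death gap estimate; this likely requires either a local-limit-theorem refinement of the first-moment computation underlying the theorem, or a small-subgraph-conditioning argument. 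Finally, the truncation of the birth--death chain at $\eta = 0$ should not spoil the unimodal-gap estimate, since the reflected chain on $[0,1]$ can be compared to one on $[\delta, 1]$ for small $\delta$ with only polynomial loss, the excluded mass $\pi([0,\delta])$ being of order $e^{-\Omega(n)}$ by Step~1's bottleneck estimate.
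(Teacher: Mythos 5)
Your high-level strategy matches the paper's: reduce by spin-flip symmetry to dynamics on one side of the magnetization axis, apply a Madras--Randall type decomposition along magnetization slices, invoke Bauerschmidt--Bodineau--Dagallier for the within-slice (Kawasaki) gap, and analyze the birth--death projection chain via the unimodality of the free energy profile. However, there are two genuine gaps. First, your Step~1 is incorrect as stated: $\lambda_0^+$ is the \emph{uniform} measure on $\{M>0\}$, which places mass $\Theta(1/\sqrt{n})$ (not $e^{-\Omega(n)}$) on configurations with magnetization near zero, so the chain does cross into $\{M\le 0\}$ with $\mathrm{poly}(1/n)$ probability within a few steps; a Cheeger bound controls escape starting from $\muising(\cdot\mid M>0)$, not from $\lambda_0^+$. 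The paper sidesteps this by a spectral argument (Corollary~\ref{cor:fast-from-spin-flip-symmetric-initialization}): since the Glauber generator preserves the decomposition $L^2(\pi) = L^2_{sym} \oplus L^2_{antisym}$ and any spin-flip symmetric $\nu$ has $P^t\nu/\pi \in L^2_{sym}$, the contraction rate is governed by $\gap_{sym}(P)$, which is then lower-bounded by comparison of Dirichlet forms and variances with the $\Omega^+$-restricted chain. This is a clean $L^2$ argument that requires no hitting-time control and applies to any spin-flip symmetric initialization, not just uniform.

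Second, you correctly flag the central obstacle --- that Theorem~\ref{thmFreeEnergy}'s $o(n)$ additive control of $\log \Zfix(\eta)$ does not control the consecutive ratios $z_{k+1}/z_k$ needed for the birth--death gap --- but the resolutions you propose (a local CLT refinement of the first moment, or small-subgraph conditioning) are not what the paper does and it is not clear either would succeed (the $o(n)$ error genuinely depends on the random graph). Instead the paper (Lemma~\ref{lem:ratio-of-partition-functions-expectation}) expresses $z_{k+1}/z_k$ as a Gibbs expectation of a \emph{local} function, $\frac{1}{k+1}\E_{\pi_k}[\sum_v \ind{\sigma_v=-1}e^{\beta\sum_{w\sim v}\sigma_w}]$, concentrates it via the switching-Lipschitz bound (Lemma~\ref{lem:switching}), and identifies its limit via the local weak convergence result (Theorem~\ref{thmLocalWeak}). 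This gives the ratio up to a non-quantitative $o(1)$ error, which is precisely why the final bound is $e^{o(n)}$ rather than polynomial. Your proposed escape via comparison to $[\delta,1]$ also cannot work as stated: the worst-case start for the projection chain is near $\eta=0$, where the drift vanishes; the paper treats this repulsive fixed point explicitly in Stage~1 of Lemma~\ref{lem:conditions-guaranteeing-fast-birth-death-mixing}.
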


Note that the distribution $\lambda_t$ includes the randomness of the starting configuration $X_0$. We in fact prove the stronger statement that the mixing time from any initial distribution that is invariant under global spin flip has sub-exponential mixing: see Corollary~\ref{cor:fast-from-spin-flip-symmetric-initialization}. In particular, we show that with high probability over $G\sim \bg_d(n)$, the generator of the Ising Glauber dynamics only has one exponentially small, i.e., $\exp(-\Omega(n))$, eigenvalue. 
This formalizes that at least in the portion of the low-temperature regime up to the reconstruction threshold, the global choice of majority spin is the only $\exp(\Omega(n))$ bottleneck to fast mixing. 

The result of~\cite{bauerschmidt2023kawasaki} on the Kawasaki dynamics is expected to hold all the way up to the reconstruction threshold $\beta_r$. If this were the case, then our Theorem~\ref{thmIsingMixing} would also hold up to $\beta_r$ because its equilibrium inputs from Theorems~\ref{thmFreeEnergy} and~\ref{thmLocalWeak} both do. We mention as added evidence for this the results of~\cite{KoehlerLeePMLR}, which (among other things) approximated $\mu_{\bg,\beta}^{\mathsf{Ising}}$ by a mixture of two high-temperature distributions for $\beta<\beta_r$ and used this to devise a fast simulated tempering scheme on an extended state space for the model.

Notably one may not expect Theorem~\ref{thmIsingMixing} to also hold for all $\beta >\beta_r$, as at sufficiently low temperatures, the glassiness of near-zero magnetization configurations (see e.g.~\cite{DMS} for a connection between the lowest fixed-magnetization energy configurations and the Sherrington--Kirkpatrick model) may become dynamically relevant. The recent paper~\cite{gheissari2025rapid} demonstrated that if the initial magnetization is strictly bounded away from zero, say $\eta \gtrsim \frac{1}{\sqrt{d}}$, then in the low-temperature regime of $\beta \gtrsim \frac{1}{\sqrt{d}}$, mixing within the plus phase is fast, but it does not address escaping the set of near-zero magnetizations. The work~\cite{GS22} previously showed fast mixing to the plus phase measure specifically when initialized from the all-plus state.

Further, one would expect a polynomial (as opposed to sub-exponential) bound to hold in Theorem~\ref{thmIsingMixing}. The reason our approach does not deliver such a polynomial bound is because the fluctuations due to the randomness of the graph  $G\sim \bg_d(n)$ of the ratios $Z_\eta^{\mathsf{fix}}/Z_{\eta + \frac{1}{n}}^{\mathsf{fix}}$ from an explicit drift function $F(\eta)$, which we show has only one positive fixed point, are only shown to be $o(1)$, non-quantitatively. Improvements in this error estimation would translate to improvements in the mixing time bound of Theorem~\ref{thmIsingMixing}.

\subsection{Proof outline}
\label{secOutline}

Here we outline the proof ideas for the three main results: the free energy density of the fixed-magnetization Ising model,  local weak convergence of the measure, and the analysis of Ising Glauber dynamics from a uniformly random start. 

\subsubsection{Free energy density: background}

Our aim is to show that for every $\eta \in [-1,1]$, with high probability $\frac{1}{n}  \log \Zfix_{\mathbf G_d(n),\beta} (\eta) $ is close to $f_{d,\beta}(\eta) = \lim_{n \to \infty} \frac{1}{n}  \log \E \Zfix_{\mathbf G_d(n),\beta} (\eta)$, the annealed free energy density. The annealed free energy density measures the contribution (on an exponential scale) to the first moment of the Ising model partition function $\Zising_{\mathbf G_d(n),\beta}$ from configurations of magnetization $\eta$. This function is plotted in Figure~\ref{fig:FdbZero} for $d=10$ and $\beta=.32$, between the uniqueness threshold $\beta_c$ and the reconstruction threshold $\beta_r$.

Let us begin by reviewing for which values of $\eta$ such a result follows from the previous literature. 
The results on the full Ising model on random graphs from~\cite{GM07} (see also~\cite{dembo2010ising}) showed that its  free energy density is given by the maximum over $\eta$ of this function:
\begin{align*}
  \lim \frac{1}{n} \E \log \Zising_{\bg_d(n),\beta}  = \lim \frac{1}{n} \log \E \Zising_{\bg_d(n),\beta} = f_{d,\beta}(\eta_*)\qquad\text{where} \qquad \eta_* = \arg\max_{\!\!\!\!\!\!\!\!\!\!\!\eta'}f_{d,\beta}(\eta')\,.
\end{align*}
By using Markov's inequality for other values of $\eta$, this establishes the fixed-magnetization free energy density at the maximizer $\eta_*$.  

By adding an external field $h$, the annealed free energy density changes by an addition of $\eta h$, resulting in a family of functions, plotted in Figure~\ref{fig:Fdbh} for the same $d,\beta$ and some non-zero values of $h$.  For each value of $h$, the methods of~\cite{GM07,dembo2010ising} apply, showing the Ising model free energy density is given by the maximum of the annealed function over all choices of $\eta$, and again these results can be transferred back to the fixed-magnetization model  at $\eta$ equal to the maximizer $\eta_*(h)$.  As the absolute value of the maximizing $\eta$ increases from its value at $h=0$, as $h$ varies, this covers all values of $\eta$ with $|\eta| \ge |\eta_\ast(0)|$.  

\begin{figure}
    \centering
    \includegraphics[width=0.6\linewidth]{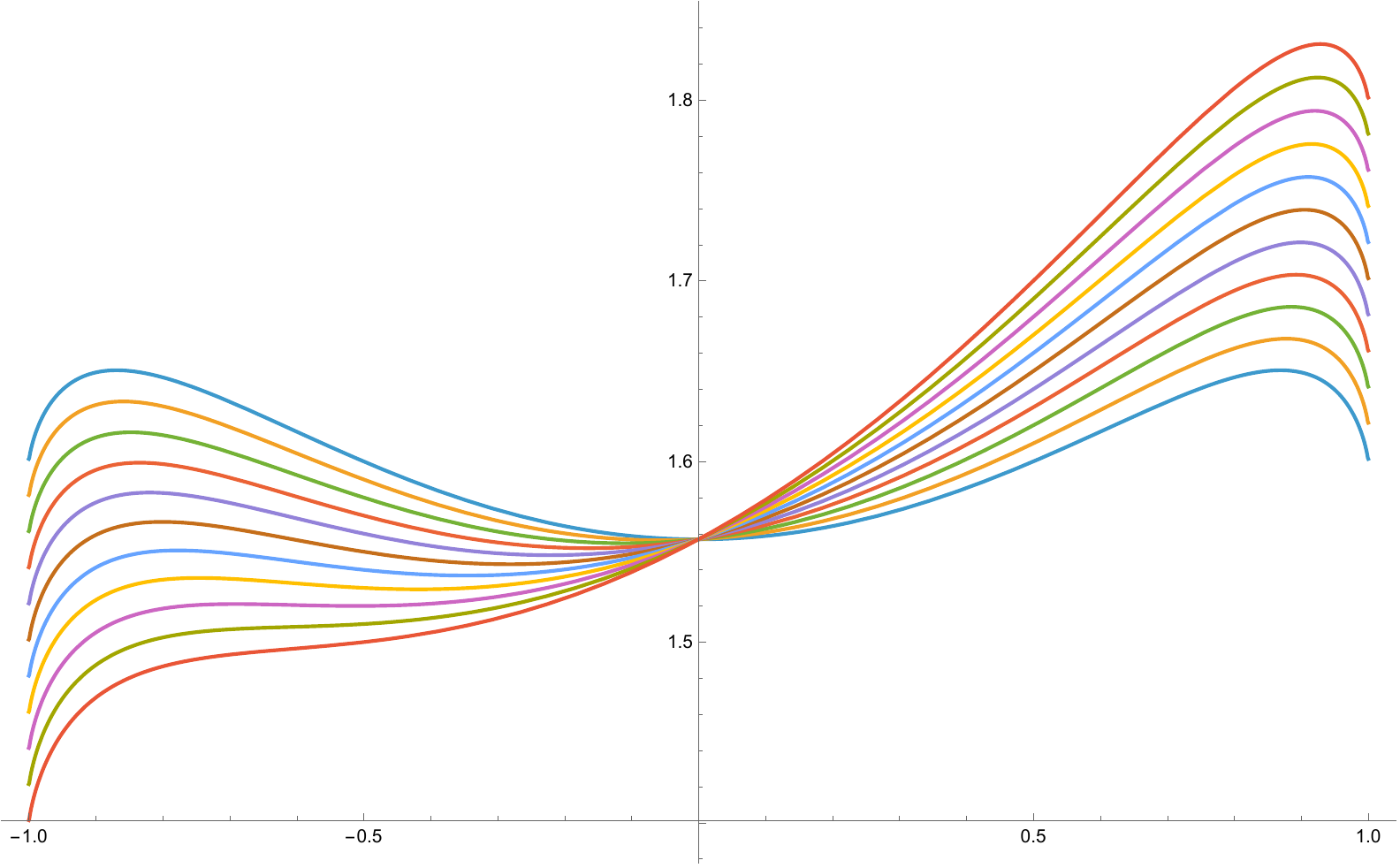}
    \caption{For $d=10$, $\beta=.32$, $f_{d,\beta}(\eta) + \eta h$ is plotted for different values of $\eta$ and $h$. This is the contribution to the annealed free energy of the Ising model with external field from different values of $\eta$. $\eta$ ranges from $-1$ to $1$ along the x-axis, while the different curves are for values of $h$ ranging from $0$ (blue) to  $.2$ (red).  Each value of $\eta$ appears as a stationary point for a unique value of $h$.}
    \label{fig:Fdbh}
\end{figure}

One can see the connection to translation-invariant measures on the tree in this discussion.  The stationary points of the annealed free energy density $f_{d,\beta}(\eta)$ correspond to fixed points of the Belief Propagation (BP) equations, which in turn correspond to translation-invariant tree measures (see Section~\ref{secTrees}).  The maximizer $\eta_*$ corresponds to the BP fixed point that maximizes the ``Bethe free energy,'' which in this case is equal to the annealed free energy density.  Following ~\cite{GM07,dembo2010ising,dembo2013factor,dembo2014replica} (and following the technical innovation in~\cite{galanis2015inapproximability}), Galanis, \u{S}tefankovi\u{c}, Vigoda, and Yang~\cite{GSVY16} established that all ferromagnetic models on random graphs follow the ``Bethe prediction'': the free energy density is given by the maximum of the Bethe free energy over all BP fixed points.

For $|\eta|< |\eta_*(0)|$, the fixed-magnetization free energy cannot be realized by a maximizer of the Bethe free energy with any external field.  However,~\cite{Coja-Oghlan22} showed that the ``metastability'' phenomenon in ferromagnetic models on random graphs can be investigated through a specialized second-moment method and the connection to \textit{local} maxima of the Bethe free energy. For those values of $\eta$ that are local maxima for some value of $h\in \mathbb R$, this shows (see~\cite{kuchukova2024fast} following the methods of~\cite{Coja-Oghlan22})  that the fixed-magnetization free energy density is given by the annealed free energy density. 
That set of $\eta$ is $|\eta|> \eta_s$ where $\pm \eta_s$ are the so-called spinodal points where the second derivative of $f_{d,\beta}(\eta)$ changes sign (see Figure~\ref{fig:FdbZero}). 

\subsubsection{Free energy density for unstable fixed points}
The remaining set of values of $\eta$, namely $\eta \in (-\eta_s,\eta_s)$ are neither global nor local maximizers of the Bethe free energy for any external field $h$, and so  never occur as typical or metastable states for the Ising model. 
The main contribution of this paper is dealing with these values of the magnetization. In particular, each of these values of $\eta$ is a critical point of the Bethe free energy for some value of $h$ (and thus corresponds to a BP fixed point and a translation-invariant Ising measure on the tree), but it is a local minimum instead of a local maximum; or, in other words, it corresponds to an unstable fixed point. Previous arguments that needed to allow for small fluctuations in the global number of plus spins will not work in this case, since given any freedom, the instability of the measure pushes the magnetization away from the desired $\eta$. 

At a high level, our proof still follows the framework of~\cite{Coja-Oghlan22}, which itself has antecedents in~\cite{achlioptas2008algorithmic,coja2018information,coja2018charting,coja2022ising}.  The main idea is to define a ``planted model'': first choose a configuration  $\sigma \in \Omega_{\eta}$ uniformly at random, then choose a $d$-regular graph $G$ with probability proportional to  $e^{\beta H_G(\sigma)}$.  This distribution of graph/configuration pairs is identical to that of first sampling a graph with probability proportional to the fixed-magnetization partition function and then sampling a configuration from the fixed-magnetization Ising model.  This planted model is in fact a regular, assortative stochastic block model, the one proposed in~\cite[Section 6]{mossel2015reconstruction} (and in fact one can deduce from our methods that below the reconstruction threshold the KL-divergence between this stochastic block model and the random regular graph is sublinear in $n$---see~\cite[Theorem 1.4]{coja2022ising} for a similar statement in the disassortative case).

We first show that in the planted model, we have local weak convergence to the tree measure $\nu_{\eta}$.  We then use the non-reconstruction property of this tree measure to show that with high probability over the planted model, two samples from the fixed-magnetization Ising model are uncorrelated, in the sense that their empirical measure converges to $\nu_\eta \otimes \nu_{\eta}$.  It is here that we must deal with the instability of this measure, as we will describe below. Once we have shown this, we return to the usual random graph and modify the partition function by multiplying it by an indicator random variable: $Y_{\bg_d(n),\beta}^{\mathsf{fix}}(\eta) = \Zfix_{\bg_d(n),\beta}(\eta) \ind{\mathcal E}$, where the event $\mathcal E$ captures the notion of two samples from the Gibbs measure being uncorrelated.  The fact that $\mathcal E$ holds with high probability in the planted model means we have not lost much in the first moment with the indicator: $\E Y_{\bg_d(n),\beta}^{\mathsf{fix}}(\eta) = (1+o(1)) \E \Zfix_{\bg_d(n),\beta}(\eta)$.  But we have now gained a lot in the second moment: immediately from the definition of $\mathcal E$ we get that the main contribution (on an exponential scale) to the second moment $\E [(Y_{\bg_d(n),\beta}^{\mathsf{fix}}(\eta))^2]$ comes from uncorrelated configurations, and their contribution matches the square of the first moment.  This suffices to carry out the second-moment method to establish the free energy density.

We deal with the instability of $\nu_{\eta}$ by working directly with the fixed-magnetization model, placing a hard constraint that prevents the magnetization from moving at all.  The price to pay for this hard constraint is that we lose direct application of the non-reconstruction property of the limiting tree measure.  Where this appears in the argument is in the key step of proving that the planted model exhibits overlap concentration (decorrelation of two samples).  Whereas in the previous approaches non-reconstruction on the tree is enough to deduce this, we need a stronger form of non-reconstruction that also involves conditioning on the magnetization of an Ising model on a tree with fixed, typical, boundary conditions. This argument (contained in Section~\ref{secDecayBoundary}) is delicate and requires several new ideas, along with inputs from a spectral gap bound of~\cite{martinelli2004glauber} on trees with boundary conditions, and local central limit theorems.

\subsubsection{Local weak convergence}

The above argument relating the planted model to the fixed-magnetization Ising model allows us to transfer  events that fail with exponentially small probability from the former to the latter.  In particular, if for some fixed $\eps>0$ the event $\mathcal E$ holds with probability at least $1- e^{- \eps n}$ in the planted model, then $\mathcal E$ holds with probability $1-o(1)$ in the non-planted model.  The argument above also tells us that the local distribution of a vertex in the planted model converges to that of the correct Gibbs measure on the infinite tree  (the measure $\nu_{\eta}$ guaranteed by Lemma~\ref{lem:existenceTreeMeasure}).  What remains is to show that the empirical distribution of local neighborhoods (over the population of all vertices of the graph) has exponentially small probability of deviating significantly from this tree measure in the planted model.  For Erd\H{o}s--R\'{e}nyi type planted models where edges are independent, this kind of a result is a straightforward application of concentration of Lipschitz functions for martingales.  For random regular graphs (either generated uniformly or from the configuration model), a combinatorial argument allows one to apply martingale concentration~\cite{wormald1999models}. For our planted model (which is an exponentially weighted configuration model, or equivalently a regular assortative stochastic block model), neither of these techniques apply directly. We prove a new result establishing concentration of Lipschitz functions of switchings in such weighted models that may be of independent interest.  See Lemma~\ref{planted-concentration} and Section~\ref{secConcentration} for a self-contained proof.   

This concentration lemma can be applied directly to other settings  in which a regular planted model is used to facilitate a second moment argument to establish a free energy density (e.g.~\cite{coja2022ising,Coja-Oghlan22}).  This adds a general method of proving local weak convergence to the techniques used in, e.g.,~\cite{bhatnagar2016decay,helmuth2023finite,shriver2023typical,basak2025potts,du2025characterizing,sly2018phase}.

\subsubsection{Analysis of Ising Glauber dynamics}
By spin-flip symmetry of the Glauber dynamics, for Theorem~\ref{thmIsingMixing} it will essentially suffice to show the desired mixing time upper bound for the Glauber dynamics restricted to $\Omega^+ = \{\sigma: \sum_v \sigma_v \ge 0\}$. 

There are two main ingredients to establishing this: the fast mixing result for Kawasaki dynamics from~\cite{bauerschmidt2023kawasaki} and our free energy density and local weak convergence results from Section~\ref{subsec:freeenergyIntro}. Our approach to obtaining mixing time bounds for Glauber dynamics in $\Omega^+$ is the projection-restriction method of Madras and Randall~\cite{madras2002markov}. To apply this method, given the fast mixing of the Kawasaki dynamics at every magnetization, we need to upper-bound the mixing time of a one-dimensional \emph{projection chain} $P_H$ moving on counts $k\in \{n/2,...,n\}$ of plus spins (equivalent to magnetizations $\eta \in [0,1]$ via $k = \frac{1+\eta}{2}n$) with weights at each $k$ given by the corresponding fixed-magnetization partition function: 
\begin{align*}
    P_{H}(k,k+1) = \frac{1}{2}\frac{Z_{k+1}}{Z_{k}+ Z_{k+1}} \qquad \text{where} \qquad Z_k = \Zfix_{G,\frac{2k}{n} -1}\ .
\end{align*}
Note that even though both the numerator and denominator are exponential in $n$, the transition probability is a sigmoid function applied to the ratio $Z_{k}/Z_{k+1}$ which is an order-one quantity. In particular, points where the ratio is $1$ are points where the projection chain has zero drift.

If we formally treat $\log (Z_k/Z_{k+1})$ as $f'_{d,\beta}(\eta)
$, the shape of $(f_{d,\beta}(\eta))_{\eta \in [0,1]}$ from Figure~\ref{fig:FdbZero} suggests the dynamics has an unstable fixed point at $\eta=0$ from which it diffuses away, and a stable fixed point at $\eta_*$ at which point mixing has occurred. By birth-death chain analysis, this could be shown to take $O(n\log n)$ steps, giving fast mixing of the overall Glauber dynamics on $\Omega^+$.  

However, the treatment above is not valid as it disregards the multiplicative $e^{o(n)}$ corrections to $e^{f(\eta)}$ in our identification of $Z_k$ from Theorem~\ref{thmFreeEnergy}. Indeed, such correction terms are real and come from the specific realization of $G$.  On the other hand, $\E[Z_{k}/Z_{k+1}]$ is a possibly different function which captures the correlations of $Z_k$ and $Z_{k+1}$ through the underlying graph, but would still be expected to have the same properties as $e^{f'(\eta)}$ to ensure fast mixing of the projection chain. 

Our approach is to rewrite the ratio of partition functions as the Gibbs expectation
\begin{align*}
    \frac{Z_{k+1}}{Z_{k}} = \frac{1}{k+1} \mathbb E_{\mufix_{G,\beta,\frac{2k}{n}-1}}\Big[\sum_v \ind{\sigma_v =-1} e^{ \beta \sum_{w\sim v} \sigma_w}\Big ]\,.
\end{align*}
The local weak convergence of $\mufix_{G,\beta,\eta}$ can then be used to say that with probability $1-o(1)$, $G\sim \bg_d(n)$ is such that this empirical average is close to a tree expectation of the local neighborhood of the root. Under the tree measure, the latter can be computed exactly as~\eqref{eq:F(eta)} and shown to have the same unimodal behavior on $\eta \in [0,1]$ that one expects. 

However, the fact that we go through local weak convergence, or alternatively exponential equivalence to the planted model, for this to be a calculable function means we incur errors that are $o(1)$ but not quantitative. These $o(1)$ errors could in principle conspire in the regions where $P_H$ has little drift (near the saddle at $0$ magnetization and the optimum at $\eta_*$ magnetization) to induce $\exp(o(n))$ slowdown in the mixing time. We note that any quantitative bound of $r_n=o(1)$ on the maximum (over $k$) deviation of the above expectation from its local weak tree limit would result in an $\exp(O(r_n^2 n))\cdot \text{poly}(n)$ upper bound on the mixing time from spin-flip symmetric initializations.

\subsection*{Organization}

In Section~\ref{secTrees} we recall background on translation-invariant Ising measures on $\mathbb T_d$ and connect these measures to the annealed free energy density $f_{d,\beta}(\eta)$ defined in~\eqref{eqn:firstmomentDef}. 
In Section~\ref{secFreeEnergy} we define the planted model and use it to carry out the proofs of Theorems~\ref{thmFreeEnergy} and Theorem~\ref{thmLocalWeak}.  In Section~\ref{secDynamics}, we use the main results on the energy landscape and local weak limit to prove Theorem~\ref{thmIsingMixing}. Finally in Section~\ref{secConcentration} we prove the result on concentration of Lipschitz functions in weighted configuration models that we use in proving Theorem~\ref{thmLocalWeak}.

\section{Ising model on trees and first-moment calculations}
\label{secTrees}

In this section we review background on Ising models on infinite trees, Belief Propagation fixed points, the reconstruction problem, and the connections of these tree measures to the first-moment $\E \Zfix_{\mathbf G_d(n),\beta}(\eta)$.  Much of this material appears in the textbook~\cite{georgii2011gibbs} and the general treatment of ferromagnetic spin models on random regular graphs in~\cite{GSVY16}.  The main goal of this section is to explain a three-way correspondence between translation-invariant Ising measures on $\mathbb T_d$, broadcast processes on $\mathbb T_d$, and critical points of the optimization problem determining the annealed free energy density.

We begin with some notational disclaimers. All logarithms are base $e$. All our results are for $d\ge 3$ and $\beta<\beta_r(d)$ fixed, while $n$ is sufficiently large. Unless otherwise specified by a subscript, all little-o and big-O notations are with respect to $n$. Because $d,\beta$ are fixed, we will often drop them from subscripts; we also may drop the graph $G\sim \bg_d(n)$ when understood from context.  Since the Ising measures  that we will predominantly work with are the fixed-magnetization ones, $\mu_{G,\beta,\eta}$ and $Z_{G,\beta}(\eta)$ will henceforth denote $\mufix_{G,\beta,\eta}$ and $\Zfix_{G,\beta}(\eta)$ respectively; the full Ising measure and partition function will retain their $\mathsf{Ising}$ superscripts.  The expectations and probabilities $\E,\mathbb{P}$ will be with respect to $G\sim \bg_d(n)$ except when indicated otherwise by a subscript, e.g., $\E_{\mu}$.

\subsection{Ising model on trees and BP fixed points}
\label{subsecIsingTrees}

We consider Ising model Gibbs measures on the infinite $d$-regular tree, denoted $\T_d$. It has long been known that the Ising model on $\T_d$ undergoes a phase transition between uniqueness and non-uniqueness of Gibbs measure.  That is, there exist $\beta_c$ and $h_c(d,\beta)$ such that there is a unique Gibbs measure on $\T_d$ if and only if (1) $\beta \leq \beta_c$ and $h$ is arbitrary, or (2) $\beta > \beta_c$ and $|h| > h_c(d,\beta)$. When $\beta, h$ satisfy either conditions (1) or (2), we say the model is in the {\em uniqueness regime} and otherwise is in the {\em non-uniqueness regime}.  

One can compute $\beta_c$ and $h_c(d,\beta)$ by restricting attention to translation-invariant Ising measures on $\T_d$.  Translation-invariant Gibbs measures are in one-to-one correspondence with solutions of the following fixed-point equation (a translation-invariant version of the more general BP equations):
\begin{equation}\label{eqn:tree-recursion}
R = \frac{e^{2h}  \left(\frac{R}{1+R}e^\beta + \frac{1}{1+R} \right)^{d-1}}{ \left (\frac{R}{1+R} + \frac{1}{1+R}e^\beta \right)^{d-1}}\ .
\end{equation}
Here the variable $R$ represents the ratio of the plus and minus spin probabilities of a single vertex on the infinite $(d-1)$-ary tree.  
In the Gibbs uniqueness regime there is only one fixed point, while in the non-uniqueness regime there are three (see, for example, \cite[Lemma 12.27]{georgii2011gibbs}).

For a given $d, \beta, h$, each fixed point of~\eqref{eqn:tree-recursion} can be used to define an infinite-volume Gibbs measure for the Ising specification defined by $\beta,h$ on $\T_d$. This is done via a {\em broadcast process}. Consider generating a spin configuration on a tree by first specifying the spin $\tau_{v_0}$ of the root $v_0$ according to some initial distribution, and then propagating spins down the levels of the tree with probabilities specified by a $2 \times 2$ transition matrix which we call the {\em broadcast matrix}.

The following lemma makes this connection explicit.  
\begin{lemma}[\hspace{1sp}{\cite[Proposition 12.24]{georgii2011gibbs}}]\label{lem:georgii-tree}
    For all $d \geq 1, \beta \geq 0, h \in \R $, there is a 1-to-1 correspondence $R \mapsto \mu_R$ between fixed points $R$ of \eqref{eqn:tree-recursion} and translation-invariant Ising measures $\mu_R$ (with specification given by $\beta,h$) on $\T_d$ which arise from the broadcast process with broadcast matrix
    \begin{equation}
    \label{eqBmatrix}
    M_R := \begin{pmatrix}
        \frac{e^{\beta}R}{e^{\beta}R+1} & \frac{R}{e^{\beta} + R} \\
        \frac{1}{e^{\beta}R + 1} & \frac{e^{\beta}}{e^{\beta} + R}
    \end{pmatrix}\end{equation}
and initial distribution for $\tau_{v_0}$  specified by
\begin{equation}\label{eqn:tree-marginal}
\E_{\mu_R}[\tau_{v_0}] = \frac{R^2 - 1}{R^2 + 2e^{-\beta}R + 1} \,.
\end{equation}
\end{lemma}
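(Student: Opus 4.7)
The plan is to build the correspondence via finite truncations of $\mathbb T_d$ and then pass to the limit. For a finite rooted subtree $T$ with root $v$ under Ising specification $(\beta,h)$, let $Z^{+}(T)$ and $Z^{-}(T)$ denote the partition functions with $\sigma_v$ pinned to $\pm 1$, and set $R(T) := Z^{+}(T)/Z^{-}(T)$. If $v$ has $d-1$ children each rooting isomorphic subtrees $T'$, then summing out the children's spins against the edge factors $e^{\beta \ind{\sigma_v = \sigma_w}}$ and the vertex factor $e^{h\sigma_v}$ yields
\[ R(T) = e^{2h}\left(\frac{e^{\beta} R(T') + 1}{R(T') + e^{\beta}}\right)^{d-1}. \]
Translation invariance collapses all subtree ratios into a single value $R$ as the truncation depth tends to infinity, and this $R$ is forced to be a fixed point of~\eqref{eqn:tree-recursion}.

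For the forward direction $R \mapsto \mu_R$, I would define $\mu_R$ by first assigning the root marginal prescribed by~\eqref{eqn:tree-marginal} and then propagating spins outward along the broadcast matrix $M_R$. The entries of $M_R$ are precisely the normalized rule ``child spin $s_w$ is weighted by $e^{\beta \ind{s_v = s_w}} R^{\ind{s_w = +1}}$ given parent spin $s_v$,'' which is what one obtains when the subtree rooted at the child contributes ratio $R$. The root marginal comes from the observation that $v_0$ has $d$ neighbors rather than $d-1$, so its plus-to-minus ratio equals $R \cdot (e^{\beta} R + 1)/(R + e^{\beta})$ after one additional subtree update combined with the fixed-point equation; converting this ratio to a magnetization recovers the closed form in~\eqref{eqn:tree-marginal}. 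Translation invariance holds by construction because every vertex of $\mathbb T_d$ sees the same $(d-1)$-ary structure in each of its $d$ outgoing directions.

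To close the bijection I would run the argument in reverse: given a translation-invariant Gibbs measure $\mu$, define $R$ as the limit of $R(T_L)$ over a nested exhaustion by finite truncations; DLR forces this limit to exist and to satisfy~\eqref{eqn:tree-recursion}, and the conditional distribution of $\mu$ on an edge given the outside must equal $M_R$. The main obstacle is verifying that the broadcast-generated $\mu_R$ actually satisfies the DLR consistency equations, since a broadcast is naturally a one-directional Markov chain while DLR is symmetric between parent and child. The key reduction is a single-edge calculation: for an edge $(u,v) \in \mathbb T_d$, condition on the spins of the two ``half-trees'' attached at $u$ and $v$; each half-tree contributes the same subtree ratio $R$, and combining these with $e^{\beta \ind{\sigma_u = \sigma_v}}$ must reproduce the two-vertex Ising marginal on $(u,v)$. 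The fixed-point equation~\eqref{eqn:tree-recursion} is exactly the algebraic identity that makes this match, after which a standard induction on the size of the conditioning set $\Lambda \subset \mathbb T_d$ upgrades the single-edge check into full DLR consistency and completes the correspondence.
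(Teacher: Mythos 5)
The paper does not prove this lemma; it is quoted verbatim as Proposition~12.24 of Georgii's textbook. Your reconstruction follows the standard boundary-law argument Georgii uses, and the algebra checks out: your subtree recursion $R(T) = e^{2h}\bigl(\tfrac{e^\beta R'+1}{R'+e^\beta}\bigr)^{d-1}$ is exactly~\eqref{eqn:tree-recursion} after clearing the $(1+R)$ normalizations; the column-stochastic matrix you identify as the normalized weights $e^{\beta\ind{s_v=s_w}}R^{\ind{s_w=+}}$ is $M_R$; and the root odds $R\cdot\tfrac{e^\beta R+1}{R+e^\beta}$ (the $d$-th update combined with the fixed-point relation) converts via $\eta = (p/q-1)/(p/q+1)$ to precisely~\eqref{eqn:tree-marginal}. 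One caveat on your ``reverse direction'': the lemma only claims a bijection with translation-invariant measures \emph{that arise from the broadcast process}, i.e.\ tree-indexed Markov chains, not all translation-invariant Gibbs measures. If you start from an arbitrary translation-invariant $\mu$ and try to extract $R$ from finite truncations you will run into trouble for non-Markov measures — at low temperature the symmetric mixture $\tfrac12(\mu^+ + \mu^-)$ is translation invariant but is not a broadcast Markov chain and corresponds to no single fixed point $R$. The argument should instead just establish injectivity of $R\mapsto\mu_R$ (distinct $R$ give distinct root magnetizations by monotonicity of~\eqref{eqn:tree-marginal}) and surjectivity onto the broadcast-generated measures, which is what your DLR single-edge reduction plus induction over finite $\Lambda$ actually delivers.
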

In words, the broadcast process defined by~\eqref{eqBmatrix} and~\eqref{eqn:tree-marginal} defines a translation-invariant Ising measure with parameters $\beta,r$ on $\T_d$ if and only if $R$ is a fixed point of~\eqref{eqn:tree-recursion}.

We can now prove Lemma~\ref{lem:existenceTreeMeasure}, which states that for each $d \ge 3$, $\beta \ge 0$, and $\eta \in (-1,1)$, there is a unique translation invariant Ising Gibbs measure on $\T_d$ with magnetization $\eta$.
\begin{proof}[Proof of Lemma~\ref{lem:existenceTreeMeasure}]
    The magnetization of the translation-invariant Ising measure $\mu_R$ is given by \eqref{eqn:tree-marginal}. This function is continuous with respect to $R$, strictly increasing, and approaches $-1$ as $R \to 0$ and $1$ as $R \to \infty$. Thus, given $\eta$ and $\beta$, we may find $R^*$ such that $\mathbb E_{\mu_{R^*}}[\tau_{v}] = \eta$.  Then given $R^*$, we claim there exists a unique choice of $h$ such that $R^*$ is a fixed point of \eqref{eqn:tree-recursion}. Indeed, we can solve \eqref{eqn:tree-recursion} for $h$. Note that while each choice of $R$ gives distinct $h$, the same $h$ corresponds to three distinct values of $R$ when $h$ is in the non-uniqueness regime.
\end{proof}
For fixed $\eta$, we henceforth define $\nu_\eta$ to be the Ising  measure guaranteed by Lemma~\ref{lem:existenceTreeMeasure}.  Each such measure comes with two statistics: the vertex marginal, i.e. the magnetization $\eta$, and the edge marginal, which we will parameterize by the single statistic $\rho_\eta$ representing the probability a given edge is monochromatic under $\nu_\eta$. Note that for a translation-invariant measure these statistics are constant over vertices and edges.  

We can take the defining equations~\eqref{eqBmatrix} and~\eqref{eqn:tree-marginal} and eliminate $R$, giving $\rho_\eta$ as a function of $\eta, \beta$ (it turns out not to depend on $d$).  

\begin{lemma}
    For each $\eta \in (-1,1)$, under $\nu_\eta$ the probability an edge is monochromatic is 
   \begin{align}\label{eq:rhoeta-explicit} 
    \rho_\eta = \frac{e^{2\beta} - \sqrt{e^{2\beta}(1 - \eta^2) + \eta^2}}{e^{2\beta} - 1}
\end{align}
\end{lemma}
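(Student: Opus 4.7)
The plan is to reduce the statement to an elementary calculation by using the broadcast representation of $\nu_\eta$ to derive a multiplicative identity for the two-point edge distribution. Concretely, fix an edge $(u,v)$ of $\mathbb{T}_d$ and write $p_{s_u s_v} := \nu_\eta(\sigma_u = s_u,\, \sigma_v = s_v)$. The first step is to establish the cross-ratio identity
\[
p_{++}\, p_{--} = e^{2\beta}\, p_{+-}^2.
\]
To see this, cut the edge $(u,v)$: this decomposes $\mathbb{T}_d$ into two disjoint $(d-1)$-ary subtrees rooted at $u$ and $v$. By Lemma~\ref{lem:georgii-tree} and the Markovian broadcast description of $\nu_\eta$, integrating out each subtree produces the same effective ``cavity bias'' at each endpoint, assigning relative weight $R$ to the $+$ state and $1$ to the $-$ state, where $R$ is the BP fixed point of \eqref{eqn:tree-recursion} corresponding to $\nu_\eta$. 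The joint law on the edge is therefore proportional to $R^{\ind{s_u=+}}\, e^{\beta\ind{s_u = s_v}}\, R^{\ind{s_v=+}}$, from which one reads off $p_{++} \propto R^2 e^\beta$, $p_{--} \propto e^\beta$, $p_{+-} = p_{-+} \propto R$, and the cross-ratio identity follows. The cancellation of $R$ in the cross-ratio is also what ultimately makes $\rho_\eta$ independent of $d$.

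The second step is to combine this identity with the single-vertex marginal to obtain a quadratic equation for $\rho_\eta$. The constraints $\nu_\eta(\sigma_u = +) = (1+\eta)/2$, the edge-flip symmetry $p_{+-} = p_{-+}$, and the definition $\rho_\eta = p_{++} + p_{--}$ force
\[
p_{++} = \tfrac{1}{2}(\rho_\eta + \eta), \qquad p_{--} = \tfrac{1}{2}(\rho_\eta - \eta), \qquad p_{+-} = \tfrac{1}{2}(1 - \rho_\eta).
\]
Substituting into the cross-ratio identity yields $\rho_\eta^2 - \eta^2 = e^{2\beta}(1 - \rho_\eta)^2$, which rearranges to the quadratic
\[
(e^{2\beta}-1)\,\rho_\eta^2 - 2 e^{2\beta}\, \rho_\eta + (e^{2\beta} + \eta^2) = 0,
\]
whose discriminant simplifies to $e^{2\beta}(1-\eta^2) + \eta^2$.

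The final step is to select the correct root. The two solutions of the quadratic are $\rho_\pm = (e^{2\beta} \pm \sqrt{e^{2\beta}(1-\eta^2)+\eta^2})/(e^{2\beta}-1)$. Since the discriminant is strictly positive for $\beta > 0$, we have $\rho_+ > e^{2\beta}/(e^{2\beta}-1) > 1$, so $\rho_+$ is not a valid probability and must be discarded; this leaves $\rho_- $, which is exactly the expression in~\eqref{eq:rhoeta-explicit}. I do not anticipate any real obstacle in this argument: once the cavity representation of the edge marginal from Lemma~\ref{lem:georgii-tree} is in hand, the remainder is a single-step identity manipulation and a quadratic formula. The $\beta = 0$ endpoint is handled by noting that the formula extends continuously as $\beta \downarrow 0$ to the trivial answer $\rho_0 = (1+\eta^2)/2$, consistent with independent vertex spins.
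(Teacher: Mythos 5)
Your argument is correct, and it takes a genuinely different route from the paper. The paper solves the single-site constraint~\eqref{eqn:tree-marginal} for $R$ explicitly in terms of $\eta$, then substitutes that $R_\eta$ into the formula $\rho_\eta = \frac{1+\eta}{2}\frac{e^\beta R}{e^\beta R+1} + \frac{1-\eta}{2}\frac{e^\beta}{e^\beta+R}$ (an average of the diagonal entries of $M_R$) and simplifies by hand. You instead use the cut-edge (cavity) representation to observe that the joint edge law is proportional to $R^{\ind{s_u=+}}e^{\beta\ind{s_u=s_v}}R^{\ind{s_v=+}}$, extract the $R$-free cross-ratio identity $p_{++}p_{--} = e^{2\beta}p_{+-}^2$, and then combine it with the marginal constraints $p_{++}=\tfrac12(\rho+\eta)$, $p_{--}=\tfrac12(\rho-\eta)$, $p_{+-}=\tfrac12(1-\rho)$ to get a quadratic in $\rho_\eta$ directly. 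Both calculations are short; yours has the advantage of making the $d$-independence of $\rho_\eta$ structurally obvious (the $d$-dependent quantity $R$ cancels exactly in the cross-ratio), and it avoids the explicit inversion of~\eqref{eqn:tree-marginal}. The root selection step ($\rho_+ > 1$) and the $\beta\downarrow 0$ sanity check are both correct.
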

\begin{proof}
    Expressing $R$ as a function of $\eta$ via~\eqref{eqn:tree-marginal} (and noting $R$ must be positive), we have
    \begin{equation*}
        R_\eta = \frac{-\sqrt{-e^{2 \beta} \eta^2+e^{2 \beta}+\eta^2}-\eta}{e^{\beta} \eta-e^{\beta}} \,.
    \end{equation*}
    The monochromatic edge probability is then the inner product of $((1+\eta)/2,(1-\eta)/2)$ with the diagonal entries of $M_R$.  This gives
    \begin{align*}
        \rho_{\eta}&= \frac{1+\eta}{2}  \frac{e^{\beta}R_\eta}{e^{\beta}R_\eta+1}  + \frac{1-\eta}{2} \frac{e^{\beta}}{e^{\beta} + R_\eta} \\
        &=\frac{e^{2\beta} - \sqrt{e^{2\beta}(1 - \eta^2) + \eta^2}}{e^{2\beta} - 1}
    \end{align*}
    after some algebraic simplification.
\end{proof}

\subsection{Reconstruction threshold}
The reconstruction problem for a broadcast process asks whether the distribution of spins at  level  $\ell$ of the tree contains any information about the distribution at the root in the $\ell \to \infty$ limit. More formally, let $T_\ell(d) = T_\ell$ be the truncation of the $d$-regular tree at depth $\ell$. Suppose $M$ is a broadcast matrix and let $P_{\ell}^i$ denote the distribution on $T_\ell$ induced by $M$, conditioned on the spin at the root $\sigma_{0} = i$.

We say the reconstruction problem for $\T_d$ and $M$ is {\em solvable} if $\lim_{n \to \infty} d_{\tv}(P_n^i, P_n^j) > 0$ for some $i, j$. Otherwise, the reconstruction problem is {\em unsolvable} and we say $M$ (or equivalently the measure on $\T_d$ arising from $M$) has {\em non-reconstruction}.

The proofs of our main results require restricting $\beta$ to the non-reconstruction regime for $\nu_{\eta}$, meaning non-reconstruction for the broadcast matrix $M$ corresponding to $\nu_{\eta}$ as described in the previous subsection.

In the zero-field case, reconstruction is characterized by the second-largest eigenvalue $\lam_2(M)$ of $M$. First \cite{kesten1966additional} and more explicitly \cite{higuchi1977remarks}
showed that $(d-1)|\lam_2(M)|^2 > 1$ is a necessary condition for reconstruction; later, \cite{bleher1995purity} proved this is also a sufficient condition, giving rise to the {\em reconstruction threshold} $\beta_r$ from~\eqref{eq:betardef}.

In the case of a non-zero external field, however, the precise value of the reconstruction threshold is only known in some cases. Indeed, it is known that there are choices of external field $h$ such that the corresponding Kesten-Steegum threshold (given by the eigenvalue condition) is provably not the reconstruction threshold for the Gibbs measure parametrized by $h$ \cite{mossel2001reconstruction}. 
Nevertheless, \cite{martinelli2004glauber, martinelli2007fast} show that for $\beta < \beta_r$ and all $h$, any Gibbs measure with respect to $\beta, h$ and arbitrary boundary conditions satisfies a ``variance-mixing'' condition which implies exponential decay of correlations and non-reconstruction.

\begin{lemma}[\hspace{1sp}\cite{martinelli2004glauber}]\label{lem:nonreconbd}
     For all $\beta < \beta_r$ and $\eta \in (-1,1)$, the reconstruction problem for the measure $\nu_{\eta}$ is unsolvable.
\end{lemma}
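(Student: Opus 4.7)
By Lemma~\ref{lem:existenceTreeMeasure}, the measure $\nu_\eta$ is a translation-invariant Ising Gibbs measure on $\T_d$ at inverse temperature $\beta$ with some external field $h_\eta \in \R$. My plan is therefore to show that for every $\beta < \beta_r$ and every $h \in \R$, any translation-invariant Ising Gibbs measure on $\T_d$ with parameters $(\beta, h)$ exhibits non-reconstruction, and to do so by importing the strong spatial mixing (``variance mixing'') estimate of Martinelli, Sinclair, and Weitz~\cite{martinelli2004glauber}.

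The key input I would use is the following consequence of~\cite{martinelli2004glauber}: for every $d \ge 3$ and $\beta < \beta_r(d)$ there exist $C < \infty$ and $\rho \in (0,1)$ such that for every $h \in \R$, every $\ell \ge 1$, and every pair of boundary conditions $\tau, \tau' \in \{\pm 1\}^{\partial T_\ell}$,
\[
    \bigl|\E_{\mu_{T_\ell,\beta,h}^{\tau}}[\sigma_{v_0}] - \E_{\mu_{T_\ell,\beta,h}^{\tau'}}[\sigma_{v_0}]\bigr| \;\le\; C\rho^\ell,
\]
where $\mu_{T_\ell,\beta,h}^\tau$ denotes the finite-volume Ising measure on the depth-$\ell$ truncation $T_\ell$ with boundary condition $\tau$. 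Specializing to $h = h_\eta$, this implies that conditioning $\nu_\eta$ on any configuration at depth $\ell$ changes the marginal of the root spin by at most $C\rho^\ell$.

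To convert this into non-reconstruction, I would use the standard equivalence of spatial mixing and reconstruction. Let $P_\ell^\pm$ denote the law of $\partial T_\ell$ under $\nu_\eta$ conditioned on $\sigma_{v_0} = \pm 1$. A direct Bayes'-rule computation shows that $d_{\tv}(P_\ell^+, P_\ell^-)$ equals a constant multiple (depending on the root marginal $\Prob_{\nu_\eta}[\sigma_{v_0}=+]$, which is bounded away from $0$ and $1$ for any fixed $\eta \in (-1,1)$) of $\E_{\nu_\eta}\bigl|\Prob_{\nu_\eta}[\sigma_{v_0}=+\mid \partial T_\ell] - \Prob_{\nu_\eta}[\sigma_{v_0}=+]\bigr|$. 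The displayed spatial mixing estimate gives a pointwise $C\rho^\ell$ bound on the integrand, so $d_{\tv}(P_\ell^+, P_\ell^-) \to 0$ exponentially, which is non-reconstruction.

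The main point of care is that for $\eta$ near zero and $\beta$ near $\beta_r$, the field $h_\eta$ may place the model in the Gibbs non-uniqueness regime, so the tree recursion~\eqref{eqn:tree-recursion} does not contract around the relevant fixed point and naive arguments fail. The strength of the MSW estimate is precisely that it is uniform in both $h$ and the boundary condition, which is what is needed here. A minor bookkeeping step is to verify that $\nu_\eta$ can be recovered as the weak limit of finite-volume measures with boundary conditions sampled from the appropriate level-$\ell$ marginal of $\nu_\eta$, so that the finite-volume bound transfers to $\nu_\eta$ itself; this is routine from DLR theory.
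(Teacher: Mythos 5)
The central input you invoke is false, and it fails precisely in the regime where the lemma is most needed. You claim that for all $\beta < \beta_r$ and all $h$ there is a uniform bound
\[
\bigl|\E_{\mu_{T_\ell,\beta,h}^{\tau}}[\sigma_{v_0}] - \E_{\mu_{T_\ell,\beta,h}^{\tau'}}[\sigma_{v_0}]\bigr| \le C\rho^\ell
\]
over \emph{all} pairs of boundary conditions $\tau,\tau'$. For $\beta\in(\beta_c,\beta_r)$ and $h=h_\eta$ in the non-uniqueness regime (which is exactly the case $|\eta|<\eta^*$, and in particular the unstable-fixed-point case $|\eta|<\eta_s$ that is the whole point of the paper), there are three distinct translation-invariant Gibbs measures; taking $\tau$ all-plus and $\tau'$ all-minus, the left-hand side converges to $\E_{\mu^+}[\sigma_{v_0}]-\E_{\mu^-}[\sigma_{v_0}]>0$, which does not go to zero. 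So no such $C,\rho$ can exist, and your subsequent Bayes'-rule step, which requires this pointwise-in-$\tau$ bound, collapses.

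What Martinelli--Sinclair--Weitz actually prove for $\beta<\beta_r$ is a spectral-gap lower bound for Glauber dynamics that is uniform over boundary conditions and external field, from which one gets a \emph{variance-mixing} condition: the fluctuation of the root marginal $\mu^\tau(\sigma_{v_0}=+)$ when $\tau$ is sampled from the Gibbs measure (not the worst-case $\tau$) is small. This averaged statement is exactly of the form one needs for non-reconstruction, since non-reconstruction is itself an averaged statement (cf.\ part (b) of Corollary~\ref{cor:various-forms-of-nonreconstruction}). Worst-case boundary influence decay implies uniqueness, so it cannot be what one uses here; the gap between worst-case and averaged is not a ``minor bookkeeping step'' but the entire substance of the citation. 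To repair your argument you would need to replace the displayed worst-case bound with the averaged variance-mixing estimate and then run the same Bayes'-rule computation under the expectation over $\tau\sim\nu_\eta$, which is essentially what the paper is implicitly citing.
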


The following collects several equivalent concrete statements one deduces immediately from the non-reconstruction up to $\beta_r$. 

\begin{corollary}\label{cor:various-forms-of-nonreconstruction}
    For any $d\ge 3$, $\eta \in (-1,1)$ and $\beta<\beta_r$ one has the following equivalent properties. Let $\tau_r$ be the configuration on the vertices of $\mathbb T_d$ at depth $r$, so that $\tau_0$ is the spin at the root.    
    \begin{enumerate}[(a)]
    \item $\lim_{r\to\infty} d_{\tv}(\nu_\eta(\tau_r \mid \tau_0=+), \nu_\eta(\tau_r \mid \tau_0 = -)) =0$\,,
    \item  $\lim_{r\to\infty} \mathbb E_{\tau_r \sim \nu_\eta}\big[|\nu_\eta (\tau_0 \mid \tau_r)- \nu_\eta(\tau_0)|\big] =0$\,,
    \item There exists a set of configurations $A_r$ on the vertices of depth $r$ such that $\nu_\eta(\tau_r\in A_r) = 1-o_r(1)$ and for all $\tau_r \in A_r$ one has $|\nu_\eta (\tau_0 \mid \tau_r)- \nu_\eta(\tau_0)|=o_r(1)$. 
    \item There exists a set $A_r$ such that $\nu_\eta(\tau_r\in A_r) = 1-o_r(1)$ and for all $\tau_r \in A_r$, one has $$\nu_\eta(\tau_r \mid \tau_0 )= (1+o_r(1)) \nu_\eta(\tau_r)\,.$$
    \end{enumerate}
\end{corollary}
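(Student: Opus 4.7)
The plan is to first obtain (a) directly from the non-reconstruction guaranteed by Lemma~\ref{lem:nonreconbd} (this is essentially the definition of non-reconstruction), and then show the chain of equivalences (a) $\Leftrightarrow$ (b) $\Leftrightarrow$ (c) $\Leftrightarrow$ (d) by standard information-theoretic manipulations. Throughout, a key quantitative point to use is that since $\eta\in (-1,1)$, the root marginal $p:=\nu_\eta(\tau_0=+)=(1+\eta)/2$ satisfies $p\in(0,1)$, i.e., it is bounded away from $0$ and $1$.

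For (a) $\Leftrightarrow$ (b), the plan is a direct Bayes calculation. Writing $\nu_\eta(\tau_0=+,\tau_r)=p\,\nu_\eta(\tau_r\mid \tau_0=+)$ and $\nu_\eta(\tau_r)=p\,\nu_\eta(\tau_r\mid\tau_0=+)+(1-p)\nu_\eta(\tau_r\mid\tau_0=-)$, one obtains
\[
\sum_{\tau_r}\nu_\eta(\tau_r)\bigl|\nu_\eta(\tau_0=+\mid \tau_r)-p\bigr| = 2p(1-p)\,d_{\tv}\!\bigl(\nu_\eta(\tau_r\mid\tau_0=+),\,\nu_\eta(\tau_r\mid\tau_0=-)\bigr),
\]
and an analogous identity for $\tau_0=-$. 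Since $p(1-p)$ is a fixed positive constant, (a) and (b) are equivalent.

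For (b) $\Leftrightarrow$ (c), the forward direction follows from Markov's inequality applied to the nonnegative random variable $|\nu_\eta(\tau_0\mid \tau_r)-\nu_\eta(\tau_0)|$: take $A_r$ to be the set of $\tau_r$ on which this quantity is at most $\sqrt{\mathbb{E}_{\tau_r\sim\nu_\eta}[\,|\nu_\eta(\tau_0\mid\tau_r)-\nu_\eta(\tau_0)|\,]}$, which is itself $o_r(1)$. The reverse direction is immediate since $|\nu_\eta(\tau_0\mid\tau_r)-\nu_\eta(\tau_0)|\le 1$, so the expectation splits into an $o_r(1)$ contribution from $A_r$ and an $o_r(1)$ contribution from its complement.

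For (c) $\Leftrightarrow$ (d), the plan is to apply Bayes' rule once more: for any $\tau_r$,
\[
\frac{\nu_\eta(\tau_r\mid\tau_0)}{\nu_\eta(\tau_r)} \;=\; \frac{\nu_\eta(\tau_0\mid\tau_r)}{\nu_\eta(\tau_0)}.
\]
Since $\nu_\eta(\tau_0)\in\{p,1-p\}$ is bounded away from $0$, the condition $|\nu_\eta(\tau_0\mid\tau_r)-\nu_\eta(\tau_0)|=o_r(1)$ is equivalent to $\nu_\eta(\tau_0\mid\tau_r)/\nu_\eta(\tau_0)=1+o_r(1)$, which by the displayed identity is equivalent to $\nu_\eta(\tau_r\mid\tau_0)=(1+o_r(1))\nu_\eta(\tau_r)$ uniformly in $\tau_r$ (and for both values of $\tau_0$), so the same set $A_r$ works for (c) and (d).

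No step presents a serious obstacle; the only place requiring care is keeping track of the constant $p(1-p)>0$ and the positivity of $\nu_\eta(\tau_0)$ when translating between additive and multiplicative error, which is exactly why the corollary is stated only for $\eta\in(-1,1)$.
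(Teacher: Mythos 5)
Your proposal is correct, and for the key step it takes a genuinely different route from the paper. The paper cites the equivalence of~(a) and~(b) as standard, derives~(c) from~(b) by Markov (same as you), and then proves~(d) directly from~(a) by contradiction: it assumes a set $A_r^c$ of measure~$\eps$ on which $\nu_\eta(\tau_r\mid\tau_0=+)>(1+\delta)\nu_\eta(\tau_r)$, sums to get $\nu_\eta(A_r^c\mid\tau_0=+)-\nu_\eta(A_r^c)>\delta\eps$, and observes this contradicts the total-variation convergence in~(a). You instead pass from~(c) to~(d) by the Bayes identity $\nu_\eta(\tau_r\mid\tau_0)/\nu_\eta(\tau_r)=\nu_\eta(\tau_0\mid\tau_r)/\nu_\eta(\tau_0)$, using that $\nu_\eta(\tau_0)\in\{p,1-p\}$ is bounded away from zero to convert additive to multiplicative error; this is cleaner and gives a full two-sided equivalence with the \emph{same} set $A_r$, whereas the paper's contradiction argument as written only bounds the deviation on one side and is one-directional. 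You also spell out the factor $2p(1-p)$ in the (a)$\Leftrightarrow$(b) identity, which the paper just calls standard; that explicitness is where the hypothesis $\eta\in(-1,1)$ (hence $p(1-p)>0$) is actually needed. Both approaches are valid; yours trades the paper's brevity for a tighter, self-contained logical chain.
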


\begin{proof}
    The first two items are the standard equivalent definitions of non-reconstruction. The third item follows from Markov's inequality applied to the second item. 
    
    For the fourth item, suppose by way of contradiction that there were a set $A_r^c$ of probability $\eps$ having $\nu_\eta(\tau_r \mid \tau_0=+) >(1+\delta) \nu_\eta(\tau_r)$ for all $\tau_r \in A_r^c$. Then $\nu_\eta(A_r^c \mid \tau_0 =+) >(1+\delta) \nu_\eta(A_r^c)$ and $$\nu_\eta(A_r^c \mid \tau_0 =+) - \nu_\eta(A_r^c) > \delta \eps\,,$$ contradicting the first item. 
\end{proof}

\subsection{Annealed free energy density}
\label{secfirstmomentcalcs}

In this section we give the formula for the annealed free energy density of the fixed-magnetization Ising model.  The formula is a maximization of a one variable function $\rho$, and the maximizer coincides exactly with the edge statistics $\rho_{\eta}$ of the tree measure $\nu_{\eta}$ defined above.  The connection between Gibbs measures on trees, BP fixed points, and  optimization problems determining annealed free energy densities appears in~\cite{galanis2015inapproximability,GSVY16}; what is nice in our setting is that everything can be made explicit.  

Instead of working directly with a uniformly random $d$-regular graph, we work with the {\em configuration model}.  In this model, each of the $n$ vertices is represented by $d$ clones.  Let $\mathcal P_{d,n}$ denote the set of all $(dn-1)!!$  possible perfect matchings of these clones.   The configuration model is a uniformly chosen $G \in \mathcal P_{d,n}$, which defines a (possibly looped) multigraph (letting $G$ denote both the matching and the resulting graph).   For constant $d$, it is well known that events that hold  with probability $1-o(1)$ for the configuration model also hold with probability $1-o(1)$ for the uniform random $d$-regular graph~\cite{wormald1999models}. It therefore suffices to prove all our main theorems for this configuration model of random $d$-regular graphs. 
In a slight abuse of notation, due to the contiguity of the models, we will use $\bg = \bg_d(n)$ as notation for the configuration model from here on, and $\E,\Prob$ are with respect to this random graph distribution.  We also write $Z_G(\eta)$ for $Z_{\bg_d(n),\beta}(\eta)$ in this section.

We  give an explicit formula for the annealed free energy density.  The formula, given by a function $f_{d,\beta}(\eta)$, will be the maximum of another function $g_{d,\beta,\eta}(\rho)$ over $\rho \in [0,1]$.  We first define this function $g_{d,\beta,\eta}$,  show that the maximizer is exactly $\rho_{\eta}$ defined above in~\eqref{eq:rhoeta-explicit}, and use this to define $f_{d,\beta}(\eta)$ explicitly.  We then go through the combinatorial calculations to show that  $f_{d,\beta}(\eta)$ is indeed the annealed free energy density.  The main idea behind these calculations is that the number of monochromatic edges determines the contribution of a configuration to the partition function.  It is also straightforward to write a formula for the number of outcomes of the configuration model with a given number of monochromatic edges.  Since we are only interested in the exponential growth rate of the first moment, it is enough to maximize the contribution to the first moment over all possible numbers of monochromatic edges, and this is done by maximizing $g_{d,\beta,\eta}(\rho)$.

\begin{lemma}\label{lem:edge-statistics-max}
Given $d \ge 3$, $\beta \ge 0$, and $\eta \in (-1,1)$, define the following function of $\rho \in [0,1]$:
\begin{align}
    g_{d,\beta,\eta}(\rho) &= \frac{\beta \rho d}{2} + \log 2 + \frac{d-1}{2}\left((1+\eta) \log (1+\eta) + (1-\eta) \log (1-\eta) \right)   \label{eq:g-free-energy-function} \\
    & \quad  - \frac{d}{2}\Big((1-\rho)\log(1-\rho) +\frac12(\rho+\eta)\log(\rho+\eta) + \frac12(\rho-\eta)\log(\rho-\eta) + \log 2\Big)\,. \nonumber
\end{align}
The function $g_{d,\beta,\eta}$ has a unique maximizer at $\rho = \rho_{\eta}$, and the maximum is given by
\begin{align}
    f_{d,\beta}(\eta) &= \frac{\beta \rho_\eta d}{2} + \log 2 + \frac{d-1}{2}\left((1+\eta) \log (1+\eta) + (1-\eta) \log (1-\eta) \right)  \label{eq:f-closed-form} \\
    & \quad - \frac{d}{2}\Big((1-\rho_\eta)\log(1-\rho_\eta)   +\frac12(\rho_\eta+\eta)\log(\rho_\eta+\eta) + \frac12(\rho_\eta-\eta)\log(\rho_\eta-\eta) + \log 2\Big)\,. \nonumber
 \end{align}
\end{lemma}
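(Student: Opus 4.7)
The plan is a one-variable smooth optimization on the interval $\rho \in [|\eta|, 1]$: identify $\rho_\eta$ as the unique interior critical point of $g_{d,\beta,\eta}$ by solving the first-order condition, and upgrade this to the global maximizer via strict concavity. The formula~\eqref{eq:f-closed-form} for $f_{d,\beta}(\eta)$ is then just the substitution $\rho = \rho_\eta$ into~\eqref{eq:g-free-energy-function}.

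First I would differentiate~\eqref{eq:g-free-energy-function} in $\rho$; every $\rho$-free term vanishes and a short computation yields
\begin{align*}
g_{d,\beta,\eta}'(\rho) = \frac{d}{4}\bigl(2\beta + 2\log(1-\rho) - \log(\rho+\eta) - \log(\rho-\eta)\bigr).
\end{align*}
Setting this to zero and exponentiating reduces the critical-point condition to $e^{2\beta}(1-\rho)^2 = \rho^2 - \eta^2$, equivalently the quadratic $(e^{2\beta}-1)\rho^2 - 2e^{2\beta}\rho + e^{2\beta} + \eta^2 = 0$ (a linear equation in the degenerate case $\beta = 0$, solved by $\rho = (1+\eta^2)/2$). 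Its discriminant equals $4\bigl(e^{2\beta}(1-\eta^2)+\eta^2\bigr) \ge 0$, and the quadratic formula produces the two roots $\rho_\pm = \bigl(e^{2\beta} \pm \sqrt{e^{2\beta}(1-\eta^2)+\eta^2}\bigr)/(e^{2\beta}-1)$.

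A one-line check shows $\rho_+ > 1$ always (equivalent to $\sqrt{e^{2\beta}(1-\eta^2)+\eta^2} > -1$), while $\rho_- \in (|\eta|, 1)$ for every $\eta \in (-1,1)$ and $\beta > 0$: indeed $\rho_- < 1$ is equivalent to $e^{2\beta}(1-\eta^2) + \eta^2 > 1$, which holds since $1-\eta^2 > 0$, and $\rho_- > |\eta|$ follows by isolating the radical and squaring. The expression for $\rho_-$ matches~\eqref{eq:rhoeta-explicit}, so the unique interior critical point is $\rho_\eta$ (with the $\beta = 0$ case handled by continuity). A second differentiation gives
\begin{align*}
g_{d,\beta,\eta}''(\rho) = -\frac{d}{2(1-\rho)} - \frac{d}{4(\rho+\eta)} - \frac{d}{4(\rho-\eta)},
\end{align*}
strictly negative on $(|\eta|,1)$, so $g_{d,\beta,\eta}$ is strictly concave there; combined with continuity at the endpoints (under the convention $0\log 0 = 0$), this makes $\rho_\eta$ the unique global maximizer on $[|\eta|,1]$. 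The identity $f_{d,\beta}(\eta) = g_{d,\beta,\eta}(\rho_\eta) = $~\eqref{eq:f-closed-form} is then immediate from the definition.

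There is no serious obstacle; the only care needed is the sign selection in solving the quadratic and the domain/boundary checks at $\rho = |\eta|$, $\rho = 1$, and $\beta = 0$. A conceptually pleasing byproduct is that the first-order condition recovers exactly the edge marginal formula~\eqref{eq:rhoeta-explicit} of the tree Gibbs measure $\nu_\eta$, so the optimizer $\rho_\eta$ has the interpretation of the monochromatic edge probability under $\nu_\eta$---the link between annealed first-moment analysis and the Bethe picture that drives the paper's second-moment arguments.
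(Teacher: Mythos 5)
Your proposal is correct and takes essentially the same route as the paper: differentiate, check strict concavity via the second derivative, and solve the first-order condition to recover $\rho_\eta$. You make explicit a few steps the paper leaves implicit — the algebra that the first-order condition reduces to a quadratic whose smaller root is~\eqref{eq:rhoeta-explicit}, the sign selection ruling out $\rho_+$, and the observation that the natural domain is $[|\eta|,1]$ rather than $[0,1]$ — but the argument is the same.
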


\begin{proof} 
Differentiating $g_{d,\beta,\eta}(\rho)$, we have
$$g_{d,\beta,\eta}'(\rho) = \frac{\beta d}{2} - \frac{d}{2}\left( -\log(1-\rho) + \frac{1}{2}\log(\rho+\eta) + \frac{1}{2}\log(\rho-\eta) \right)\,,$$
and
$$g_{d,\beta,\eta}''(\rho) = -\frac{d}{2(1-\rho)} - \frac{d}{4(\rho+\eta)} - \frac{d}{4(\rho-\eta)}\ .$$
This shows $g''(\rho) < 0$ so $g(\rho)$ must have a unique maximum, and solving $g'_{d,\beta,\eta}(\rho) =0$ we see that the maximizer occurs at $\rho_{\eta}$. Plugging $\rho_{\eta}$ into $g_{d,\beta,\eta}(\rho)$ gives the formula for $f_{d,\beta}(\eta)$.
\end{proof}

We now show that  $f_{d,\beta}(\eta)$ is indeed the annealed free energy density.
\begin{lemma}\label{lem:partition-first-moment}
For every $d\ge 3$, $\beta\ge0$ and $\eta \in (-1,1)$,
    $$\E[Z_{G}(\eta)] = \exp\big(n f_{d,\beta}(\eta) + O(\log n)\big)\,.$$
\end{lemma}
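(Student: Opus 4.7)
The plan is to perform a direct first-moment calculation in the configuration model. By linearity, $\E[Z_G(\eta)] = \sum_{\sigma \in \Omega_\eta} \E[e^{\beta H_G(\sigma)}]$; since the configuration model is invariant under vertex relabeling, this expectation depends only on the magnetization of $\sigma$, so it suffices to fix one $\sigma_0 \in \Omega_\eta$ with $k = (1+\eta)n/2$ plus spins, compute $\E[e^{\beta H_G(\sigma_0)}]$, and multiply by $|\Omega_\eta| = \binom{n}{k}$. Stirling gives $\log\binom{n}{k} = n\bigl[\log 2 - \tfrac{1+\eta}{2}\log(1+\eta) - \tfrac{1-\eta}{2}\log(1-\eta)\bigr] + O(\log n)$, matching the entropic terms in $g_{d,\beta,\eta}$.

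Next, I would enumerate matchings by the joint counts $(m_{++}, m_{--}, m_{+-})$ of plus-plus, minus-minus, and bichromatic edges. The half-edge constraints $2m_{++} + m_{+-} = kd$ and $2m_{--} + m_{+-} = (n-k)d$ force $m_{++} - m_{--} = \eta dn/2$, so with $m = m_{++}+m_{--}$ and $\rho = 2m/(dn)$, the triple is uniquely determined by $\rho \in [|\eta|,1]$. Standard configuration-model counting gives the number of matchings realizing the triple as
\[
\binom{kd}{2m_{++}}(2m_{++}-1)!!\binom{(n-k)d}{2m_{--}}(2m_{--}-1)!!\, m_{+-}!\,,
\]
each carrying Boltzmann weight $e^{\beta m}$. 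Dividing by $(dn-1)!!$ expresses $\E[e^{\beta H_G(\sigma_0)}]$ as a sum of at most $dn/2+1$ nonnegative terms indexed by $\rho$.

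Applying Stirling in the form $\log N! = N\log N - N + O(\log N)$ together with $\log(2N-1)!! = \log(2N)! - N\log 2 - \log N!$ and $\log(dn-1)!! = (dn/2)\log(dn) - dn/2 + O(\log n)$, I would verify that after cancellations the logarithm of the $\rho$th summand (multiplied by $\binom{n}{k}$) equals $n\,g_{d,\beta,\eta}(\rho) + O(\log n)$, uniformly in $\rho$ on compact subsets of $(|\eta|,1)$. Since there are only $O(n)$ nonnegative summands, the total is sandwiched between its maximum and $O(n)$ times its maximum, so $\log \E[Z_G(\eta)] = n\max_\rho g_{d,\beta,\eta}(\rho) + O(\log n)$, which equals $nf_{d,\beta}(\eta) + O(\log n)$ by Lemma~\ref{lem:edge-statistics-max}.

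The main obstacle is bookkeeping rather than ideas: one must carefully track which $N\log N$ and $N$ terms cancel in the Stirling expansion to arrive at the compact expression for $g_{d,\beta,\eta}$. A minor additional point is the boundary $\rho \in \{|\eta|,1\}$, where some factorials degenerate; these contribute only finitely many terms, absorbed into the $O(\log n)$ error. No issue arises from loops or multi-edges of the configuration model, since they are counted in $H_G(\sigma)$ just like any other monochromatic edges.
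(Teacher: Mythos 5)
Your proposal is correct and takes essentially the same route as the paper's proof: decompose by the number of monochromatic (equivalently, bichromatic) edges, count matchings realizing each edge-type count in the configuration model, apply Stirling, and observe that the sum of $O(n)$ terms is within a polynomial factor of its maximum, which equals $\exp(n\max_\rho g_{d,\beta,\eta}(\rho)) = \exp(nf_{d,\beta}(\eta))$ by Lemma~\ref{lem:edge-statistics-max}. The only cosmetic difference is that you factor the computation as $\binom{n}{k}\cdot\E[e^{\beta H_G(\sigma_0)}]$ for a single representative $\sigma_0$ and parameterize the matching count by the triple $(m_{++},m_{--},m_{+-})$, whereas the paper writes it as a product of $\binom{n}{\frac{1+\eta}{2}n}$, the probability that the configuration model realizes a given bichromatic-edge count, and $e^{\beta\rho dn/2}$, parameterizing instead by the bichromatic count $k$; your counting formula $\binom{kd}{2m_{++}}(2m_{++}-1)!!\binom{(n-k)d}{2m_{--}}(2m_{--}-1)!!\,m_{+-}!$ is algebraically identical to the paper's $b(k)$ after the substitution $2m_{++}=kd-m_{+-}$, $2m_{--}=(n-k)d-m_{+-}$.
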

\begin{proof}
In what follows, we omit floors and ceilings, (e.g., in $\frac{1+\eta}{2}n$); the effect of these omissions will always be $O(1/n)$ additive errors which are negligible. 
Suppose we fix a spin configuration $\sigma \in \Omega$ on $V$ where $n_+ = |\sigma^{-1}(+1)|$ and $n_- = |\sigma^{-1}(-1)|$. The number of ways to specify a perfect matching on $V$ with exactly $k$ bichromatic edges is given by 
\begin{equation}\label{eqn:bi-counting}
b(k) = {n_+ \choose k}{n_- \choose k}k!(n_+ - k - 1)!!(n_- - k - 1)!!\ .
\end{equation}
Indeed, there are ${n_+ \choose k}{n_- \choose k}$ ways to specify the endpoints of the bichromatic edges and $k!$ ways to pair up those endpoints. For the monochromatic edges, note that there are $(2m-1)!! = \frac{(2m)!}{2^m m!}$ perfect matchings between $2m$ vertices. In this way, we also see that the number of monochromatic $(+1, +1)$ edges and $(-1, -1)$ edges are determined by $k, n_+$, and $n_-$.

We may simplify \eqref{eqn:bi-counting} as
    $$b(k) = \frac{2^kn_+!n_-!}{k!2^{n_+/2}2^{n_-/2}(\frac{n_+-k}{2})!(\frac{n_--k}{2})!}\,.$$
Applying Stirling's formula in the form $\log n! = n \log n - n + \frac{1}{2} \log(2\pi n) + O(1/n)$, we get
    \begin{align}\label{eq:b(k)-approx}
    \log b(k)  &=  n_+ \log n_+ + n_- \log n_- - k \log k - \frac{n_+-k}{2}\log \frac{n_+-k}{2} - \frac{n_--k}{2}\log \frac{n_--k}{2}  \nonumber \\
    &\ \ \ \ + \frac12\left(\log \frac{2n_+n_-}{\pi k(n_+-k)(n_--k)}\right)  + \left(k - \frac{n}{2}\right)\log2 - \frac{n}{2} + O\left(\frac{1}{k}\right)\,.
    \end{align}

Now we write $\E[Z_{G}(\eta)]$ in terms of the contributions from graphs and configurations with a given number of bichromatic edges $B(G)$ in the following way. Fixing a magnetization $\eta \in (-1,1)$ and spin configuration $\sigma \in \Omega_\eta$, we have $n_+ = \frac{1+\eta}{2}dn$ and $n_- = \frac{1-\eta}{2}dn$. Parametrizing the number of monochromatic edges as $\rho\frac{d n}{2}$ and bichromatic edges as $(1-\rho)\frac{dn}{2}$, the probability that $G$ achieves these edge statistics is given by
\begin{align*}\Prob\left(B(G) = (1-\rho) \frac{dn}{2}\right) &= \frac{1}{(d n-1)!!} {\frac{1+\eta}{2} d n \choose (1-\rho)\frac{dn}{2}}{\frac{1-\eta}{2} d n \choose (1-\rho)\frac{dn}{2}}\left((1-\rho)\frac{dn}{2}\right)! \\
&\ \ \ \,\, \,\,\,\, \cdot  \left(\left((1+\eta) - (1-\rho)\right)\frac{d n}{4}-1\right)!!\left((1-\eta) - (1-\rho))\frac{d n}{4}-1\right)!!\ .
\end{align*}

Using the approximation for $b(k)$ from~\eqref{eq:b(k)-approx}, let
\begin{align*} \Psi(\eta, \rho) &= (1+\eta)\log (1+\eta) + (1-\eta)\log(1-\eta) - (1-\rho)\log(1-\rho)  \\
&\qquad  -\frac12(\rho + \eta)\log(\rho+\eta) - \frac12(\rho-\eta)\log(\rho-\eta) - \log 2\ .
\end{align*}
Then
\begin{align*}
    \log \Prob\left(B(G) = (1-\rho) \frac{dn}{2}\right) &= \frac{dn}{2}\Psi(\eta, \rho) - \frac12 \log dn  + O(1)\ .
\end{align*}
We can now write 
\begin{align}\label{eq:g-eta-rho-function}
g_{d, \beta,\eta}( \rho) = \mathsf{H}\left(\frac{1+\eta}{2}\right)\log 2 + \frac{d}{2}\Psi(\eta, \rho) + \frac{\beta \rho d}{2}\,,
\end{align}
using $\mathsf{H}$ for the binary entropy function, $\mathsf{H}(x) = -x\log_2 x - (1-x)\log_2(1-x)$.  After substituting in  the function $\Psi(\eta, \rho) $, this matches the formula for $g_{d, \beta,\eta}( \rho)$ given in Lemma~\ref{lem:edge-statistics-max}.

As a consequence, we get the following contribution to $\E Z_G(\eta) = \E \sum_{\rho} Z_{G}(\eta,\rho)$ from partition functions restricted to monochromatic edge fraction $\rho$, 
\begin{align}\label{eq:ZG-eta-rho}
    \E[Z_{G}(\eta,\rho)] & = {n \choose \frac{1+\eta}{2} n}  \Prob\Big(B(G) =\frac{(1-\rho) dn}{2}\Big) e^{\beta\rho dn/2} \nonumber  \\ 
    & = \exp\Big( n \cdot g_{d,\beta}(\eta,\rho) + O(\log n)\Big)
\end{align}
Since $ \E[Z_{G}(\eta)]$ is given by summing $ \E[Z_{G}(\eta,\rho)]$ over the $O(n)$ number of possible values of $\rho$, up to a polynomial factor the sum is given by the maximum. That is, $\E[Z_{G}(\eta)] = \exp\big(n f_{d,\beta}(\eta) + O(\log n)$.
\end{proof}

Given the formula~\eqref{eq:f-closed-form}, one can plot $f_{d,\beta}(\eta)$ as a function of $\eta$.  When $\beta < \beta_c$ (in the uniqueness regime), $f_{d,\beta}(\eta)$  has only one critical point, a global maximum at $\eta=0$.  When $\beta>\beta_c$, the function has three critical points (as seen in Figure~\ref{fig:FdbZero}): twin global maxima at $\pm \eta_*$ and a local minimum at $\eta=0$. These expected properties of $f$ can also be proven analytically with some calculus due to the explicit form of~\eqref{eq:f-closed-form}.

\section{Free energy of the fixed-magnetization Ising model}
\label{secFreeEnergy}

In this section we prove \Cref{thmFreeEnergy} by employing a  truncated second-moment argument facilitated by a comparison with a planted model. This section contains the bulk of the new technical work of the paper needed to handle fixed magnetization and convergence to an unstable BP fixed point.

\subsection{Truncated second moment}

At a high level, we use a technique from \cite{Coja-Oghlan22, coja2022ising}.  To apply the second-moment method beyond uniqueness, we find an event $\cE$ that gives control of the second moment, while intersecting with it does not change the first moment by much.  

We define $\cE$ in terms of the overlap statistics of two configurations. Given $\sig, \sig'$, we define a $4 \times 4$ matrix $R_{\sig,\sig'}$ where we label the rows with the edge types of $\sigma$ ($++, +-, -+, --$) and similarly the columns with edge types for $\sigma'$. Then the entry $R_{ij}$ is equal to the fraction of edges in $G$ which are type $i$ in $\sigma$ and type $j$ in $\sigma'$. 

Note that the entries of $R$ immediately determine the vertex overlap statistics. Indeed, the number of vertices that are $+$ in both $\sigma$ and $\sigma'$ can be computed as $\frac{n}{2}(2R_{++,++} + R_{+-,+-}+R_{++,+-}+R_{+-,++})$, and similarly for the number of vertices that are $-$ in both. 

Let $\rho_{\nu_{\eta}}^{\otimes 2}$ denote the $4 \times 4$ edge overlap matrix associated with two independent configurations sampled from the  tree measure $ \nu_{\eta}$. Let $\sig, \sig'$ be independent samples from $\mu = \mu_{G, \beta,\eta}$ for $G\sim \bg$. Then define the event
$$\cE := \{G : \E_{\mu_{G,\beta,\eta}^{\otimes 2}}[\|R_{\sig, \sig'} - \rho_{\nu_{\eta}}^{\otimes 2}\|_2] = o(1)\}\ .$$

\begin{proposition}\label{prop:truncated-moments}
    Fix $d\ge 3,\beta<\beta_r$ and $\eta \in (-1,1)$. Let $Z_G = Z_{G,\beta}(\eta)$ and let $Y_{G} = Z_{G} \ind{\cE}$. Then we have
    \begin{align*}\E[Y_{G}] & = (1+o(1)) \E[Z_{G}]\,, \quad  \text{ and } \\
    \E[Y_{G}^2] & \leq e^{o(n)}\E[Z_{G}]^2\ .\end{align*}
\end{proposition}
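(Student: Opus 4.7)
The plan is to exploit the planted distribution $\Prob_{\text{pl}}$ on pairs $(G,\sigma)$ with $\sigma\in\Omega_\eta$ that assigns weight proportional to $\Prob(G)e^{\beta H_G(\sigma)}$; its normalizing constant is $\E[Z_G(\eta)]$ and its graph-marginal weights each $G$ by $\Prob(G)Z_G(\eta)/\E[Z_G]$. Because $\cE$ depends only on $G$, this yields the identity
\begin{equation*}
\E\!\left[Z_G\,\ind{\cE}\right] = \E[Z_G]\cdot\Prob_{\text{pl}}(\cE),
\end{equation*}
so the first-moment claim reduces immediately to showing $\Prob_{\text{pl}}(\cE)=1-o(1)$; equivalently, one must prove that when $G$ is sampled from its planted distribution, two independent draws $\sigma,\sigma'$ from $\mu_{G,\beta,\eta}$ have expected edge-overlap matrix $\E_{\mu^{\otimes 2}}[R_{\sigma,\sigma'}]$ concentrated at $\rho_{\nu_\eta}^{\otimes 2}$.

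I would argue this in two stages. First, the planted construction is asymptotically equivalent to a broadcast process on $\T_d$ with the correct root magnetization, so single-sample local weak convergence of $\sigma\sim\mu_{G,\beta,\eta}$ to $\nu_\eta$ holds in planted expectation; combined with the conditional independence of $\sigma,\sigma'$ given $G$, this gives the correct pair marginals on any fixed ball. Second, upgrading this marginal convergence to concentration of the empirical edge-overlap matrix requires correlation decay at the tree level, and this is where non-reconstruction (Lemma~\ref{lem:nonreconbd}, Corollary~\ref{cor:various-forms-of-nonreconstruction}) enters: for $\beta<\beta_r$, the spin at a vertex is asymptotically independent of the configuration on any distant set under $\nu_\eta$. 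The main obstacle is that when $\eta\in(-\eta_s,\eta_s)$, the tree measure $\nu_\eta$ is an \emph{unstable} BP fixed point: the usual argument for two-sample decorrelation implicitly permits a small relaxation of the local magnetization, but the hard constraint $\sigma\in\Omega_\eta$ forbids this, and under the instability any such relaxation is amplified rather than damped. A strengthened form of non-reconstruction on trees with typical boundary magnetizations pinned is therefore required; I would approach it via the variance-mixing estimate of~\cite{martinelli2004glauber} for subcritical tree measures with boundary conditions, combined with a local central limit theorem controlling the fluctuations of the level-$r$ magnetization.

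Given $\cE$, the second moment follows from a standard overlap-restricted first-moment count. Markov's inequality on $\cE$ yields $\mu_{G,\beta,\eta}^{\otimes 2}(\|R_{\sigma,\sigma'}-\rho_{\nu_\eta}^{\otimes 2}\|_2>\delta)=o(1)$ for every fixed $\delta>0$, whence
\begin{equation*}
Z_G^2\,\ind{\cE} \le (1+o(1))\sum_{\substack{\sigma,\sigma'\in\Omega_\eta \\ \|R_{\sigma,\sigma'}-\rho_{\nu_\eta}^{\otimes 2}\|_2\le\delta}} e^{\beta(H_G(\sigma)+H_G(\sigma'))}.
\end{equation*}
Taking expectations and decomposing the right-hand side by the exact edge-overlap matrix $R$, a Stirling/configuration-model computation analogous to Lemma~\ref{lem:partition-first-moment} contributes $\exp(n\Psi_2(R)+O(\log n))$ for each admissible $R$, where $\Psi_2$ is an explicit entropy-plus-energy function. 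The key identity to verify is the factorization $\Psi_2(\rho_{\nu_\eta}^{\otimes 2})=2f_{d,\beta}(\eta)$: at product overlap the combinatorial count of admissible (pair, graph) triples splits into the square of the single-configuration count (up to polynomial factors) and the two Hamiltonians contribute additively. Summing over the $n^{O(1)}$-many overlap matrices within $\delta$ of $\rho_{\nu_\eta}^{\otimes 2}$ and letting $\delta\to 0$ slowly with $n$ then yields $\E[Z_G^2\,\ind{\cE}]\le e^{o(n)}\E[Z_G]^2$, completing the bound.
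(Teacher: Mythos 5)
Your proposal is correct and follows essentially the same route as the paper: the first moment reduces to $\hat\Prob(\cE)=1-o(1)$ via the planted-model identity (the paper's Proposition~\ref{prop:typical-overlap}), and the second moment is bounded by restricting the sum to near-product overlaps, reparametrizing by overlap statistics, and verifying via Stirling that the entropy-plus-energy functional at $\rho_{\nu_\eta}^{\otimes 2}$ equals $2f_{d,\beta}(\eta)$. Your outline of how $\hat\Prob(\cE)=1-o(1)$ is established—planted local weak convergence, non-reconstruction with a fixed-magnetization constraint, the variance-mixing bound of Martinelli et al., and a local CLT—matches the paper's Sections~3.3--3.4.
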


The bulk of our work in \Cref{secFreeEnergy} is in proving this proposition. We first show how the main theorem follows from this statement. To do this, we require a concentration result in the configuration model, for random variables  which have bounded differences under small perturbations of the random graph. To this end, define a {\em switching} of a graph $G \sim \bg_d(n)$ as a graph $G'$ obtained by replacing two edges $\{u,v\}, \{x,y\}$ in $G$ by the edges $\{u,y\},\{v,x\}$. Notice that $G'$ remains $d$-regular. 

\begin{lemma}[\hspace{1sp}{\cite{wormald1999models}}]
\label{lem:switching}
If $X_n$ is a random variable defined on $\bg_d(n)$ such that $|X_n(G) - X_n(G')| \leq c$ whenever $G$ and $G'$ differ by a single switch, then for all $t > 0$,
    $$\Prob(|X_n - \E[X_n]| \geq t) \leq 2\exp\Big(-\frac{t^2}{dnc^2}\Big)\ .$$
\end{lemma}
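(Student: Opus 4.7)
The plan is to combine an edge-exposure Doob martingale on the configuration model with a one-switch coupling, and conclude via Azuma--Hoeffding. Working in the configuration model, view $G \sim \bg_d(n)$ as a uniformly random perfect matching on the $dn$ half-edges; fix an arbitrary ordering of the half-edges and expose the matching in $m := dn/2$ rounds, where at round $i$ one identifies the lowest-indexed unmatched half-edge $h_i$ and reveals its partner. This produces a filtration $\mathcal{F}_0 \subset \cdots \subset \mathcal{F}_m$ together with the Doob martingale $M_i := \E[X_n \mid \mathcal{F}_i]$, satisfying $M_0 = \E X_n$ and $M_m = X_n$.

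The crux of the argument is the per-step bound $|M_i - M_{i-1}| \le c$. Condition on $\mathcal{F}_{i-1}$, let $h := h_i$, and fix two candidate partners $h', h''$. I would build a measure-preserving coupling between (i) a uniformly random completion of the remaining matching in which $h$ is paired with $h'$ and (ii) a uniformly random completion in which $h$ is paired with $h''$ as follows: given a completion of type (i), locate the partner $h'''$ of $h''$, and in the coupled completion of type (ii) replace the edges $\{h, h'\}$ and $\{h'', h'''\}$ by $\{h, h''\}$ and $\{h', h'''\}$, leaving every other edge unchanged. The two resulting matchings then differ by exactly one switch, so the hypothesis yields $|X_n(G^{(\mathrm{i})}) - X_n(G^{(\mathrm{ii})})| \le c$, and averaging over the coupling gives
\[
\bigl| \E[X_n \mid \mathcal{F}_{i-1},\, \{h,h'\} \in G] - \E[X_n \mid \mathcal{F}_{i-1},\, \{h,h''\} \in G] \bigr| \le c.
\]
Since $M_i$ is a conditional average of such quantities and $M_{i-1}$ is their full average, this gives $|M_i - M_{i-1}| \le c$.

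Finally, Azuma--Hoeffding applied to $(M_i)_{i=0}^{m}$ with $m = dn/2$ and uniform per-step bound $c$ produces
\[
\Prob(|X_n - \E X_n| \ge t) \le 2 \exp\!\left( -\frac{t^2}{2 m c^2} \right) = 2 \exp\!\left( -\frac{t^2}{dn c^2} \right),
\]
exactly as stated. The main technical point deserving care is the verification of the one-switch coupling in degenerate configurations: if $h''' = h'$ then the two completions actually coincide (strengthening the bound), and the swap may create or destroy loops or multi-edges, but since the lemma's hypothesis is assumed for any switch operation, these possibilities are absorbed without extra loss. Beyond this bookkeeping, the argument is a direct instance of the switching-based concentration framework of~\cite{wormald1999models}.
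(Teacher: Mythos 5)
The paper does not actually prove this lemma—it is cited directly from Wormald's survey—and your argument is exactly the standard proof of that cited result: expose the matching one half-edge at a time, bound the Doob martingale increments by $c$ via the switching coupling, and apply Azuma--Hoeffding over $m=dn/2$ steps to get $2\exp(-t^2/(2mc^2)) = 2\exp(-t^2/(dnc^2))$. The argument is correct as written; your worry about the degenerate case $h'''=h'$ is in fact moot (in the completion of type (i), $h'$ is already matched to $h$, so $h'''\neq h'$ whenever $h''\neq h'$), but raising it does no harm.
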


\begin{proof}[Proof of \Cref{thmFreeEnergy}]
    Fix $d \geq 3$, $\beta < \beta_r$, $\eta \in (-1,1)$, and let $Z = Z_{G}(\eta), Y = Z_G(\eta)\ind{\cE}$. First note that the random variable $\log Z$ is concentrated around its expectation. 
    Indeed, if $G, G' \sim \bg_d(n)$ differ by a single switch, then their partition functions differ by at most a factor of $e^{2\beta}$, so we may apply \Cref{lem:switching}.  Then, 
\begin{equation}\label{eqn:logZconcentration}
    \Prob(|\log Z - \E[\log Z]| \geq t) \leq 2\exp\left(-\frac{t^2}{4dn\beta^2}\right)\ .
    \end{equation}
The theorem will follow by applying~\eqref{eqn:logZconcentration} with $t = \varepsilon n$ for arbitrary fixed $\varepsilon$ and taking a union bound over the $n$ possible values of the fixed magnetization $\eta$, if we can establish 
\begin{equation}\label{eqn:expectation-log-Z}
\E[\log(Z)] = \log(\E[Z]) + o(n)\,.
\end{equation}
The upper bound follows from Jensen's Inequality. For the lower bound, it is enough (via~\eqref{eqn:logZconcentration} again)  to show that $\log Z =  \log(\E[Z]) - o(n)$ with subexponentially small probability. For $\theta \in (0,1)$, $\Prob(\log Z - \log \E[Z] \geq \log \theta) = \Prob(Z \geq \theta \E[Z])$. Trivially $Y \leq Z$, so by \Cref{prop:truncated-moments}, 
    $$\Prob(Z \geq \theta\E[Z]) \geq \Prob(Y \geq \theta(1+o(1))\E[Y])\ .$$
Applying the Paley-Zygmund inequality with $\theta = \frac12$, and using Proposition~\ref{prop:truncated-moments} again gives
\begin{equation}\label{eqn:PZbound}
\Prob(Z \geq \frac12\E[Z]) \geq \frac{1}{4}\frac{((1+o(1))\E[Y])^2}{\E[Y^2]}\geq \Omega(\exp[-o(n)])\ ,
\end{equation}
which establishes~\eqref{eqn:expectation-log-Z} and hence the theorem.
\end{proof}

To analyze the moments of the truncated partition function $Y_{G}$, we move to a planted model and prove that the event $\cE$ holds in this model with high probability. We then couple the planted model with a broadcasting process on $\T_d$ and use the non-reconstruction property, together with various novel arguments to handle the fixed magnetization aspect, to conclude.

\subsection{The planted model}
\label{secPlantedDef}

We now describe the \textit{planted model}, a distribution on pairs of graphs and configurations.  The planted model has been used (as we will use it) to compare to and analyze spin models on random graphs~\cite{achlioptas2008algorithmic,coja2018information,coja2022ising} as well as in its own right (as the stochastic block model) as a model for statistical inference~\cite{mossel2015reconstruction}.

Typically one defines the planted model by choosing a configuration at random and then choosing a graph consistent with this planted configuration.  In our setting, the planted model is the distribution $\hat \Prob$ over pairs $(\hat G,\hsig)$ given by:
\begin{enumerate}
    \item Sample $\hsig$ uniformly from $\Omega_\eta$.
    \item Given $\hsig$, sample $\hat G \in \mathcal P_{d,n}$ according to the distribution
    \begin{equation}\label{defn:plantedconfig-graph}
\hat\Prob(\hat G \mid  \hsig) = \frac{\Prob(G) e^{\beta H_G(\sig)}}{\E[e^{\beta H_{G}(\sig)}]} \,,
\end{equation}
\end{enumerate}
where $\hat \Prob, \hat \E $ denote probabilities and expectations with respect to the planted model while  $\Prob, \E$ denote probabilities and expectations with respect to the configuration model.
In words, the planted distribution reweights the configuration model by the weight the graph assigns $\hsig$ in the fixed-magnetization Ising model.

Alternatively, we can generate the same distribution $\hat \Prob$ by choosing the graph first and then the configuration, as follows:
\begin{enumerate}
    \item Sample $\hat G$ according to the distribution
    \begin{equation}\label{defn:plantedgraph}
\hat\Prob(\hat G =G) = \frac{Z_{G}(\eta)\Prob(G)}{\E[Z_{G}]}\,.
\end{equation}
\item Sample $\hsig$ from the fixed-magnetization Ising model on $\hat G$, i.e., from $\mu_{\hat G,\beta,\eta}$. 
\end{enumerate}
In words, in this description of the planted model we choose a graph with probability proportional to its fixed-magnetization Ising partition function and then sample a configuration from the Gibbs measure on the chosen graph.

The well-known Nishimori identity (see, e.g.,~\cite{coja2018information})  tell us that these two planting schemes give  the same distribution on graph/configuration pairs.

We now show that the edge statistics under the planted distribution at magnetization $\eta$ are concentrated around the ones that maximize $g_{\eta}$. Given $\eta \in (-1,1)$, let $\rho_{\eta}$ denote the  maximizer of $g_{\eta}(\rho)$~\eqref{eq:g-eta-rho-function} and let $\hat\rho = \hat\rho_{\hat G,\hsig}$ denote the random variable that is the fraction of monochromatic edges of $\hat G$ under $\hsig$ when $(\hat G, \hsig)\sim \hat \Prob$.

\begin{lemma}\label{lem:planted-edge-stats}
    Given $\eta \in (-1,1)$, there exist constants $c, t_0 > 0$ such that for all $t \in [0, t_0)$,
    $$\hat\Prob(|\hat\rho - \rho_\eta| > t) \leq \exp(-ct^2n + O(\log n))\ .$$
\end{lemma}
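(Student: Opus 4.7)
The plan is to compute the planted-model distribution of $\hat\rho$ explicitly as a ratio involving the function $g_{d,\beta,\eta}$ from \Cref{lem:edge-statistics-max}, and then apply its strict concavity at $\rho_\eta$.

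First I would exploit the description~\eqref{defn:plantedconfig-graph} of the planted law together with the vertex-permutation symmetry of the configuration model, which implies that for any fixed $\sigma_0 \in \Omega_\eta$ the conditional distribution $\hat\Prob(\hat G \in \cdot \mid \hat\sigma = \sigma_0)$ depends on $\sigma_0$ only through its magnetization. Marginalizing out $\hat\sigma$ and restricting to graphs with $\rho dn/2$ monochromatic edges, this yields the identity
$$\hat\Prob(\hat\rho = \rho) = \frac{e^{\beta \rho dn/2}\,\Prob(H_G(\sigma_0) = \rho dn/2)}{\E_G[e^{\beta H_G(\sigma_0)}]}\ .$$

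Next I would identify both numerator and denominator using the first-moment calculations already carried out in the proof of \Cref{lem:partition-first-moment}. Multiplying the numerator by $|\Omega_\eta|$ recovers $\E[Z_G(\eta,\rho)]$, which by~\eqref{eq:ZG-eta-rho} equals $\exp(n g_{d,\beta,\eta}(\rho) + O(\log n))$. Summing this identity over the $O(n)$ admissible values of $\rho$ gives $|\Omega_\eta| \cdot \E_G[e^{\beta H_G(\sigma_0)}]$, and dominance by the maximal term up to a polynomial factor then yields $\E_G[e^{\beta H_G(\sigma_0)}] = \exp(n g_{d,\beta,\eta}(\rho_\eta) + O(\log n))/|\Omega_\eta|$. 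Taking the ratio produces
$$\hat\Prob(\hat\rho = \rho) \leq \exp\bigl(n(g_{d,\beta,\eta}(\rho) - g_{d,\beta,\eta}(\rho_\eta)) + O(\log n)\bigr)\ .$$

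Finally, \Cref{lem:edge-statistics-max} and its proof gave that $g''_{d,\beta,\eta}(\rho) < 0$ on the interior of its domain; in particular $|g''_{d,\beta,\eta}(\rho_\eta)|$ is bounded below by some positive constant, so Taylor's theorem furnishes a neighborhood $|\rho - \rho_\eta| < t_0$ and a constant $c > 0$ on which $g_{d,\beta,\eta}(\rho) - g_{d,\beta,\eta}(\rho_\eta) \leq -c(\rho - \rho_\eta)^2$. A union bound over the at most $n$ values of $\rho$ with $t < |\rho - \rho_\eta| < t_0$ then yields the desired tail bound. The only step demanding any care is verifying that the $O(\log n)$ error in~\eqref{eq:ZG-eta-rho} is uniform in $\rho$ over this range; this is routine, since the Stirling approximations underlying~\eqref{eq:b(k)-approx} are uniform as long as $\rho$ stays bounded away from the degenerate endpoints $|\eta|$ and $1$, and the constants $c, t_0$ can be shrunk to absorb this.
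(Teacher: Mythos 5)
Your approach is the same as the paper's: both compute $\hat\Prob(\hat\rho=\rho)$ as a ratio of (annealed, $\rho$-restricted) partition function contributions, identify this with $\exp(n(g_{d,\beta,\eta}(\rho)-g_{d,\beta,\eta}(\rho_\eta)) + O(\log n))$, and finish using the strict concavity of $g$. Your fixed-$\sigma_0$ conditioning is a cosmetic repackaging of the paper's direct marginalization using~\eqref{defn:plantedgraph}, and the $|\Omega_\eta|$ factors cancel exactly as you say.

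There is, however, a genuine gap in your final union bound. You invoke Taylor's theorem only in a \emph{neighborhood} of $\rho_\eta$, and then sum over $\rho$ with $t<|\rho-\rho_\eta|<t_0$. But the event $\{|\hat\rho-\rho_\eta|>t\}$ also contains the values $|\rho-\rho_\eta|\ge t_0$, whose probability your argument does not address. The fix is to make the quadratic bound global rather than local: from the explicit formula
$g''_{d,\beta,\eta}(\rho) = -\tfrac{d}{2(1-\rho)}-\tfrac{d}{4(\rho+\eta)}-\tfrac{d}{4(\rho-\eta)} \le -\tfrac{d}{2}$
on the whole admissible interval, one gets $g_{d,\beta,\eta}(\rho)-g_{d,\beta,\eta}(\rho_\eta) \le -\tfrac{d}{4}(\rho-\rho_\eta)^2$ for \emph{all} $\rho$, and then the union bound over all $\rho$ with $|\rho-\rho_\eta|>t$ closes the argument. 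This is exactly what the paper does (it states the bound for all $\rho'$ at once), and it also sidesteps your concern about uniformity of the $O(\log n)$ error near the degenerate endpoints: the values of $\rho$ near $|\eta|$ or $1$ carry super-exponentially small weight because $g\to-\infty$ there, so the Stirling error in that regime is irrelevant and does not force shrinking $t_0$.
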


\begin{proof}    
    Using \eqref{defn:plantedgraph}, the joint distribution over graph-configuration pairs $(G, \sigma)$ is given by 
    $$\hat\Prob((G, \sigma)) = \frac{Z_G(\eta)\Prob(G)}{\E[Z_G(\eta)]} \cdot \frac{e^{\beta H_G(\sig)}}{Z_G(\eta)} = \frac{\Prob(G)e^{\beta H_G(\sig)}}{\E[Z_G(\eta)]}\ .$$
    For a given edge statistic $\rho$, we then have
    $$\hat\Prob(\hat{\rho} = \rho) = \sum_{(G, \sig) : \rho_{\sig,G} = \rho} \hat\Prob((G, \sig)) = \frac{1}{\E[Z_G(\eta)]} \sum_{(G, \sig) : \rho_{\sig,G} = \rho} \Prob(G)e^{\beta H_G(\sig)}\ .$$
    Observe that $\sum_{(G, \sig) : \rho_{\sig,G} = \rho} \Prob(G)e^{\beta H_G(\sig)}$ is exactly the unplanted expectation (over $G$) of the partition function $Z_G(\eta)$ restricted to configurations with monochromatic edge fraction $\rho$---denote this by $Z_G(\eta, \rho)$. Then,
    $$\hat\Prob(|\hat\rho - \rho_{\eta}| > t) = \sum_{\rho} \ind{|\rho -\rho_{\eta}| > t} \frac{\E[Z_G(\eta, \rho)]}{\E[Z_G(\eta)]}\ .$$
    Each expectation in the numerator was shown to be $\exp( n g(\eta,\rho) + O(\log n))$ in~\eqref{eq:ZG-eta-rho} while the denominator was shown there to be $\exp( n g(\eta,\rho_\eta) + O(\log n))$. 
    By definition of $\rho_{\eta}$ and the second derivative calculation of Lemma~\ref{lem:edge-statistics-max}, there exists $c > 0$ such that for all $\rho'$, we have $g_{\eta}(\rho') \leq g_{\eta}(\rho_{\eta}) - c|\rho' - \rho_\eta|^2$. Finally, there are at most $n$ choices of $\rho$ to sum over. Thus,
    \begin{align*}\hat\Prob(|\hat\rho - \rho_{\eta}| > t) &\leq \exp(n(\max_{\rho': |\rho' - \rho_\eta|>t} g(\eta, \rho') - g(\eta, \rho_{\eta})) + O(\log n))\\
    & \leq \exp(-ct^2n + O(\log n))\,. \qedhere
    \end{align*}
\end{proof}

\subsection{Local approximation of the planted model by the tree measure}
Equipped with an understanding of the typical edge statistics of the planted model, we  now compare the  distribution the planted model induces on bounded-depth neighborhoods with that induced by the tree measure $\nu_{\eta}$. 

For $v \in [n]$, let $B_r(v)$ denote the vertices of distance at most $r$ from $v$ (i.e. the closed $r$-neighborhood of $v$). Let $\partial_r(v) = \partial B_r(v)$ denote the vertices of distance exactly $r$ from $v$. Given $x$ the root of $\T_d$, let $\tau_{B_r(x)}$ denote the distribution on spin configurations on $B_r(x)$ induced by $\nu = \nu_{\eta}$, and let $\tau_r = \tau_{\partial_r(x)}$. Similarly let $\hsig_{B_r(v)}$ denote the distribution on spin configurations on $B_r(v)$ induced by $(\hsig, \hat{G}(\hsig))$ and $\hsig_r = \hsig_{\partial_r(v)}$.

\begin{lemma}\label{lem:local-dtv-planted-to-tree}
Fix $\beta < \beta_r$ and  $\eta \in (-1,1)$.  
For any $r = o(\log n)$, we have
    $$d_{\tv}(\hsig_{B_r(v)}, \tau_{B_r(x)}) = n^{-1/2 + o(1)}\,,$$
    where we use the shorthand of total-variation distance between random variables to mean the distance between their laws. 
\end{lemma}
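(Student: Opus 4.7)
The plan is to compute the joint law of the graph shape of $B_r(v)$ together with its spin labeling under $\hat\Prob$ and match it to the law of the depth-$r$ ball of $\T_d$ under $\nu_\eta$, incurring only $n^{-1/2+o(1)}$ error in total variation. I would proceed in three steps.

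First, I would reduce to the tree case. Since $r = o(\log n)$ gives $|B_r(v)| \le d^r = n^{o(1)}$, the probability that $B_r(v)$ fails to be the full $d$-regular depth-$r$ tree is $O((d^r)^2/n) = n^{-1+o(1)}$ in the unweighted configuration model. The planted reweighting by $e^{\beta H_G(\hsig)}/\E[Z_G(\eta)]$ alters probabilities of such local events by at most a $1+o(1)$ multiplicative factor, so the bound carries over to $\hat\Prob$.

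Second, conditional on $B_r(v)$ equaling a specified tree $T$, I would compute $\hat\Prob(B_r(v)=T,\ \hsig|_{V(T)}=\omega)$ by summing the planted density $\propto \ind{\sigma \in \Omega_\eta}\Prob(G)e^{\beta H_G(\sigma)}$ over all $(G,\sigma)$ extensions. This factors as $e^{\beta H_T(\omega)}$ times a sum over spin extensions $\sigma \supset \omega$ in $\Omega_\eta$ of a weighted configuration-model partition function $W(\sigma|_{V\setminus V(T)})$ on the remaining clones (together with the dangling half-edges from the leaves of $T$). Since $\log W$ is, by Lemma~\ref{lem:partition-first-moment}, an $|V|$-extensive smooth function of the clone composition, its $O(1)$-shifts across choices of the extension of $\omega$ and the pairings at the boundary of $T$ produce explicit per-vertex and per-edge multiplicative factors. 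These realize a broadcast process on $T$ with root marginal $((1+\eta)/2, (1-\eta)/2)$ and typical edge monochromaticity $\rho_\eta$, the maximizer of $g_{d,\beta,\eta}$ from Lemma~\ref{lem:edge-statistics-max}.

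Third, I would check that this broadcast process is exactly $\nu_\eta$ on $B_r(x)$. Both are translation-invariant Ising measures with vertex magnetization $\eta$ and edge monochromaticity $\rho_\eta$; I will verify the algebraic identity $(\eta+\rho_\eta)/(1+\eta) = (M_{R_\eta})_{++}$, which identifies the broadcast transition in the planted model with the corresponding entry of the tree measure's broadcast matrix. By Lemma~\ref{lem:existenceTreeMeasure}, these one-vertex and one-edge marginals uniquely specify the measure. The $n^{-1/2+o(1)}$ TV rate then aggregates: the $n^{-1+o(1)}$ tree-structure failure; an $O(|B_r(v)|/\sqrt{n}) = n^{-1/2+o(1)}$ local central limit correction to marginals of $\hsig$ uniform on $\Omega_\eta$ restricted to an $n^{o(1)}$-size set; and an $n^{-1/2+o(1)}$ contribution from Lemma~\ref{lem:planted-edge-stats} invoked at scale $t = n^{-1/2+\delta}$.

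The main obstacle is the second step: one must control the $\sigma$-dependence of $W(\sigma|_{V\setminus V(T)})$ precisely enough that the broadcast matrix $M_{R_\eta}$ of $\nu_\eta$ emerges, rather than the incorrect matrix one would naively guess by treating each edge of $T$ as independently reweighted with bias $e^{\beta\ind{\sigma_u=\sigma_v}}$. The correct transitions are dictated by the self-consistent saddle $\rho_\eta$ of $g_{d,\beta,\eta}$, not by pointwise edge weights, reflecting that the weighted configuration model's local structure is constrained by the global edge composition. This is what forces the planted local law to match the Gibbs measure $\nu_\eta$ rather than a non-Gibbsian broadcast.
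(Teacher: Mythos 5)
Your proposal identifies the right conceptual target — matching the planted local law to a broadcast process with vertex marginal $\eta$ and edge monochromaticity $\rho_\eta$, then identifying this process with $\nu_\eta$ via the identity $(\eta+\rho_\eta)/(1+\eta) = (M_{R_\eta})_{++}$, which is indeed correct — and you are right to flag Step 2 as the crux. But Step 2 is where the proof has to happen, and what you have written there is a description of what you would like to be true rather than an argument. Invoking Lemma~\ref{lem:partition-first-moment} does not do what you want: that lemma concerns the annealed free energy of the fixed-magnetization partition function on the full $n$-vertex clone set, whereas your $W(\sigma|_{V\setminus V(T)})$ is a weighted pairing partition function on the \emph{complement} of the ball with dangling half-edges at $\partial T$ that must be matched into the exterior. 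Extracting per-vertex and per-edge multiplicative factors from $O(1)$-shifts of $\log W$ would require a dedicated extensivity and smoothness estimate for this boundary object, plus control of the resulting Laplace-type expansion with enough precision to see the exact broadcast matrix emerge — exactly the work you leave undone. Step~1's justification is also flawed as stated: the planted reweighting by $e^{\beta H_G(\hsig)}$ is not uniformly $(1+o(1))$ across graph events, and can shift probabilities by exponential factors; the conclusion that $B_r(v)$ is a tree with probability $1-n^{-1+o(1)}$ is still correct, but for a different reason.

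The paper avoids both issues with a single move you did not find: condition on the planted spin configuration $\hsig$ \emph{and} on the monochromatic-edge fraction $\hat\rho$. Conditionally on both, the planted graph is a uniformly random pairing with the prescribed edge composition, and one can expose $B_r(v)$ edge by edge; the transition probabilities become explicit combinatorial ratios — e.g., the probability a uniformly chosen partner of a minus half-edge carries a plus spin is $\frac{1-\hat\rho}{1-\eta}$, with $O(1/n)$ without-replacement corrections. When $\hat\rho = \rho_\eta$ these are exactly the entries of $M_{R_\eta}$, and the only error beyond $n^{-1+o(1)}$ is $\hat\E[|\hat\rho - \rho_\eta|] = O(n^{-1/2})$ from Lemma~\ref{lem:planted-edge-stats}. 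This conditioning sidesteps any analysis of the exterior partition function entirely; it also handles the tree-structure probability cleanly (you are exposing half-edges from a pool of size $\Theta(n)$). As a minor side remark, the local-CLT correction you quote for the marginals of uniform $\Omega_\eta$ restricted to an $n^{o(1)}$-size set should be $O(|B_r(v)|^2/n)$, not $O(|B_r(v)|/\sqrt n)$, though this does not change the final exponent since the edge-statistics fluctuation dominates.
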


\begin{proof}
    We proceed by induction on $r$ and construct a coupling between the two distributions.  The base case of $r = 0$ is clear: the marginal probabilities of plus spins for $\hsig(v)$ and $\nu(x)$ are each $\frac{1+\eta}{2}$ so we may couple exactly.

    Now suppose $r \geq 1$ and that $\hsig_{B_{r-1}(v)}$ and $\tau_{B_{r-1}(x)}$ coincide. Our goal is to couple $\hsig_r$ and $\tau_r$. Note first that the coupling fails if $B_r(v)$ contains a cycle, which occurs with probability $n^{-1+o(1)}$ (this is standard to see from exposing $B_r(v)$ in the configuration model).

    Now, if we condition on $\hsig$ and the edge statistics $\rho$, then $\hbg(\hsig)$ is  distributed uniformly on graphs with the given parameters. We may think of specifying $\hat G(\hsig)$ by (1) taking an ambient collection of half-edges with spins assigned and edges each consisting of a pair of half-edges
    where the number of each edge type is specified by $\hsig$, and (2) building the graph one edge at a time by sampling edges without replacement uniformly at random. 
    More precisely, given $\hsig_{B_{r-1}(v)}$ and $u \in \partial_{r-1}(v)$, the remaining $d-1$ neighbors---say $w_1, \dots, w_{d-1}$---of $u$ can be chosen uniformly at random from the collection of available edges where one half-edge has spin $\sigma(u)$. 
    If $\sigma(u) = -1$, for example, then $\hat\Prob(\hsig(w_1) = +1)$ is exactly the probability that when starting from a uniformly random $-1$ half-edge, its paired half-edge is a plus, which is $\frac{1-\rho}{1-\eta}$. Similarly, $\hat\Prob(\hsig(w_2) = +1\,|\,\hsig(w_1) = +1) =  \frac{(1-\rho)\frac{dn}{2}-1}{(1-\eta)\frac{dn}{2}-1}$. 
  
    In general, each edge probability has a $O\left(\frac{1}{n}\right)$ difference from that of the tree, and since $|\partial_{r}(v)| = d^r \leq n^{o(1)}$, the difference in probabilities overall is $O(n^{o(1)-1})$. 

    Lastly, we take into account the difference in the edge statistics themselves: given $\sigma(u) = -1$, the difference in probability of its first child being $+1$ is $\frac{|\hat\rho - \rho_{\eta}|}{1-\eta}$. 

    Taking a union bound over $\partial_r(v)$, the probability that the coupling fails in the extension to $\partial_r(v)$ is at most $n^{o(1)-1} + d^r\hat\E[\frac{|\hat\rho - \rho_{\eta}|}{1-\eta}]$. By \Cref{lem:planted-edge-stats}, $\hat\E[|\hat\rho - \rho_{\eta}|] = O(n^{-1/2})$, so the failure probability is at most $n^{o(1)-1/2}$, which gives the claim. 
\end{proof}

Our main goal is now to import non-reconstruction results for $\nu$ at $\beta<\beta_r$ to the local neighborhoods of the fixed-magnetization Ising model. 
The above planting showed that the laws induced on the boundary of a small ball $B_r(v)$ for $r=o(\log n)$ are close, as formalized in the following corollary. 

\begin{corollary}\label{cor:ball-bdy-close-to-tree-measure}
	Fix $x \in \T_d$, a configuration $\xi$ on the boundary $\partial_r(x)$, and magnetization $m_r$ sufficiently close to $\mathbb E_{\nu}[\tau_{B_r(x)}]$ such that  
	\begin{align}
		\nu\Big( \sum_{w\in B_r(x)} \tau(w)  = m_r , \tau_{r} = \xi \Big) \ge e^{ - 2 d^r}\,. \label{assumption-tree-magnetization}
	\end{align}  
	If $v$ is a vertex chosen uniformly at random from $[n]$, for any $r= o(\log \log n)$, 
	\begin{align*}
		d_\tv\Big(\hat \Prob\big(\hsig_{B_r(v)} \in \cdot  \mid m_r, \hsig_{r} = \xi\big), \nu\big(\tau_{B_r(x)}\in \cdot & \mid  m_r , \tau_{r} = \xi \big) \Big)  \le  n^{-1/2+o(1)}\,,
	\end{align*}
    where conditioning on $m_r$ means conditioning on the magnetization in $B_r(v)$ or $B_r(x)$ respectively, being equal to $m_r$.
\end{corollary}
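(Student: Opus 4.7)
My plan is to deduce this corollary from Lemma~\ref{lem:local-dtv-planted-to-tree} via the elementary fact that total-variation distance is preserved under conditioning on an event $A$, at the cost of dividing by a lower bound on $\min(P(A),Q(A))$. Specifically, I will use
\[
d_\tv\bigl(P(\cdot\mid A),\, Q(\cdot\mid A)\bigr) \;\le\; \frac{2\, d_\tv(P,Q)}{\min\bigl(P(A),Q(A)\bigr)}\,,
\]
valid whenever the right-hand side is defined, which follows from $|P(B\cap A)-Q(B\cap A)|\le d_\tv(P,Q)$ and $|P(A)-Q(A)|\le d_\tv(P,Q)$ for every event $B$.

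I apply this with $P$ the law of $\hsig_{B_r(v)}$ under the planted model, $Q$ the law of $\tau_{B_r(x)}$ under $\nu$, and $A$ the event that the configuration on $B_r$ has total magnetization $m_r$ and agrees with $\xi$ on the depth-$r$ boundary. Lemma~\ref{lem:local-dtv-planted-to-tree} supplies $d_\tv(P,Q) \le n^{-1/2+o(1)}$, and the hypothesis~\eqref{assumption-tree-magnetization} supplies $Q(A) \ge e^{-2d^r}$. The restriction $r = o(\log\log n)$ combined with $d$ fixed forces $d^r = o(\log n)$, hence $e^{-2d^r} \ge n^{-o(1)}$, which dominates the TV distance by a factor of $n^{1/2 - o(1)}$. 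Using $|P(A)-Q(A)| \le d_\tv(P,Q)$, this in turn gives $P(A) \ge (1-o(1))e^{-2d^r}$, so both $P(A)$ and $Q(A)$ are at least $\tfrac{1}{2}e^{-2d^r}$ for $n$ large. Substituting into the conditioning inequality yields
\[
d_\tv\bigl(P(\cdot\mid A), Q(\cdot\mid A)\bigr) \;\le\; \frac{2\cdot n^{-1/2+o(1)}}{\tfrac{1}{2}e^{-2d^r}} \;=\; n^{-1/2+o(1)}\cdot e^{2d^r} \;=\; n^{-1/2+o(1)}\,,
\]
where the last equality again uses $e^{2d^r} = n^{o(1)}$ since $d^r = o(\log n)$.

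There is no real obstacle in this argument; the entire content is quantitative bookkeeping to check that the conditioning event, even at probability as small as $e^{-2d^r}$, still has probability much larger than the unconditional TV error $n^{-1/2+o(1)}$ provided by Lemma~\ref{lem:local-dtv-planted-to-tree}. The regime $r = o(\log\log n)$ is precisely the threshold at which $e^{2d^r}$ can still be absorbed into the $n^{o(1)}$ slack, which is why it appears in the hypothesis.
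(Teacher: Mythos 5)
Your proof is correct and takes essentially the same route as the paper: the paper's proof invokes the same general conditioning fact $d_{\tv}(\mathrm{Law}(X\mid A),\mathrm{Law}(Y\mid A))\le 2\,\Prob(Y\in A)^{-1}d_{\tv}(X,Y)$ together with Lemma~\ref{lem:local-dtv-planted-to-tree} and the observation that $e^{2d^r}=n^{o(1)}$ when $r=o(\log\log n)$. The only cosmetic difference is that your version of the inequality has $\min(P(A),Q(A))$ in the denominator rather than $Q(A)$ alone, which costs you an extra (correct) line showing $P(A)\ge(1-o(1))Q(A)$; the paper's formulation sidesteps this step.
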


\begin{proof}
	Note the general fact that for random variables $X, Y$ and event $A$,
	\begin{align*}
		d_{\tv}\big(\text{Law}( X \mid A), \text{Law}(Y \mid A)\big) \le  2\mathbb P(Y\in A)^{-1} d_{\tv} (X,Y) \,.
	\end{align*}
	The assumption gives a lower bound on the probability of the event being conditioned on under $\nu$; this along with $r=o(\log \log n)$ and \Cref{lem:local-dtv-planted-to-tree} immediately gives the bound. 
	\end{proof}

	We also make a simple observation that typical values of $m_r$ and $\xi$ under $(\hsig, \hat G(\hsig))$ will satisfy \eqref{assumption-tree-magnetization}, and so by restricting to the cases where the assumption is satisfied, we may replace the $\hsig$ conditional measure with the corresponding conditional measure on the tree and not lose much. 
	\begin{observation}
	Let $v$ be a uniformly chosen vertex, $r = o( \log n)$, and $m_r, \xi$ be those given by $\hsig_{B_r(v)}$. Then
		\begin{align*}
			\hat\Prob((\xi,m_r) \text{ satisfy \eqref{assumption-tree-magnetization}}) \ge 1-e^{ -  d^r} - n^{1/2-\delta}\,.
		\end{align*}
	\end{observation}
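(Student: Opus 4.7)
The plan is to transfer the event from the planted measure $\hat\Prob$ to the tree measure $\nu$ using the local total-variation estimate from \Cref{lem:local-dtv-planted-to-tree}, and then bound the $\nu$-probability of the ``bad'' pairs directly by a counting argument. Specifically, let $B_r$ denote the set of pairs $(\xi, m)$ on $B_r(x)$ for which $\nu(\tau_r = \xi, \sum_{w \in B_r(x)} \tau(w) = m) < e^{-2d^r}$. Our goal is to upper bound $\hat\Prob((\xi, m_r) \in B_r)$ where $(\xi, m_r)$ is drawn from $\hsig_{B_r(v)}$.

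The first step is a union bound on the tree side. Since $|\partial_r(x)| \le d(d-1)^{r-1} \le d^r$ and $|B_r(x)| \le d^{r+1}$, there are at most $2^{d^r}$ choices of boundary configuration $\xi$ and at most $d^{r+1} + 1$ admissible values of the magnetization $m$. Each element of $B_r$ carries $\nu$-mass strictly less than $e^{-2d^r}$, so
\begin{align*}
    \nu\Big(\big(\tau_r,\, \textstyle\sum_{w \in B_r(x)} \tau(w)\big) \in B_r\Big) \,<\, 2^{d^r} \cdot (d^{r+1}+1) \cdot e^{-2d^r} \,\le\, e^{-d^r},
\end{align*}
where the last inequality holds once $r$ exceeds a constant depending only on $d$ (using $2 - \log 2 > 1$ to absorb the $(r+1)\log d$ term). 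For smaller $r$ the claimed bound $1 - e^{-d^r} - \cdots$ is essentially trivial.

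The second step invokes \Cref{lem:local-dtv-planted-to-tree}: for $r = o(\log n)$, we have $d_\tv(\hsig_{B_r(v)}, \tau_{B_r(x)}) \le n^{-1/2 + o(1)}$. Since the event ``$(\xi, m_r) \in B_r$'' is a function of $\hsig_{B_r(v)}$, this TV bound immediately gives
\begin{align*}
    \hat\Prob\big((\xi, m_r) \in B_r\big) \,\le\, \nu\big(B_r\big) + n^{-1/2 + o(1)} \,\le\, e^{-d^r} + n^{-1/2 + o(1)},
\end{align*}
and passing to the complementary event yields the observation. There is no real obstacle here; the statement is essentially a clean union bound combined with the already-established local coupling. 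The only mild care needed is the compatibility of the ranges of $r$ (both the Observation and \Cref{lem:local-dtv-planted-to-tree} allow $r = o(\log n)$) and the verification that the entropy of the boundary $2^{d^r}$ is indeed dominated by the per-pair decay $e^{-2d^r}$ for $r$ large.
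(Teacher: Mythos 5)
Your proof is correct and follows essentially the same route as the paper: a union bound over the at most $2^{d^r}$ boundary configurations and $O(d^r)$ magnetizations shows the ``bad'' set has $\nu$-mass at most $2^{d^r}\,d^{r+1}\,e^{-2d^r}\le e^{-d^r}$, and then the TV estimate transfers this to $\hat\Prob$. (The paper cites Corollary~\ref{cor:ball-bdy-close-to-tree-measure} for the transfer; you more directly use Lemma~\ref{lem:local-dtv-planted-to-tree}, which is equally valid since the event is a function of $\hsig_{B_r(v)}$ — and you also correctly render the error term as $n^{-1/2+o(1)}$, which fixes an apparent sign typo in the stated observation.)
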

	
	\begin{proof}
		There are at most $2^{d^r}$ many possible configurations and at most $d^r$ many magnetizations available, so the total mass of $(\xi,m_r)$ pairs violating \eqref{assumption-tree-magnetization} under $\nu$ is at most $2^{d^r} d^r e^{ -2 d^r}$  The above follows from the total-variation bound of \Cref{cor:ball-bdy-close-to-tree-measure}.
	\end{proof}

    	As a result, when considering the law of $\hsig_{B_r(v)}$ \emph{given the configuration outside of $B_r(v)$} we can typically replace it with the distribution given by the tree measure $\nu$ over $B_r(x)$ with a frozen boundary condition $\tau_r$ on $\partial_r(x)$ and a magnetization constraint $m_r$ (by which we mean $m(\tau_{B_r(x)}) = \sum_{w\in B_r(x)} \tau_w = m_r$). Moreover, the total-variation bound on the spin configuration on $B_r(v)$ means that the set of typical pairs $(m_r, \tau_r)$ that have high probability under $\nu$ is also a high probability set under the distribution $\hsig_{B_r(v)}$ of the fixed-magnetization planted model on a ball around a uniformly random vertex.

\subsection{Decay of boundary influence even with fixed magnetization on the tree} 
\label{secDecayBoundary}
	
	In order to apply non-reconstruction results on the tree, we need to be able to drop the magnetization constraint, at least for the typical values of $(m_r,\tau_r)$ that will appear. In this subsection, let $v$ be the label of the root of $\mathbb T_d$.  

	\begin{lemma}\label{lem:drop-magnetization-constraint-tree}
		Fix $\beta < \beta_r$ and $\eta \in (-1,1)$. There exists a set $A_r$ of $({\mathfrak{m}},\zeta)= ({\mathfrak{m}}_r, \zeta_r)$ pairs which has probability $1-o_r(1)$ under $\nu$  such that 
		\begin{align*}
			\max_{({\mathfrak{m}}, \zeta)\in A_r} d_\tv\big(\nu ( \tau_v  \mid {\mathfrak{m}}, \zeta)),\nu ( \tau_v  \mid \zeta ) \big) = o_r(1)\,,
		\end{align*}
        where the shorthand $( \tau_v \mid {\mathfrak{m}})$ means conditioned on $m(\tau_{B_r(v)}) = \mathfrak m$ and $( \tau_v  \mid \zeta)$ means conditioned on $\tau_{r} \equiv \zeta$.
	\end{lemma}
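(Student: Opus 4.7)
The plan is to reduce the lemma to a local central limit theorem for the magnetization of the boundary-conditioned tree measure. Fix a boundary $\zeta$ on $\partial_r(v)$, write $\nu_\zeta := \nu(\cdot \mid \tau_r = \zeta)$ for the Ising measure on $B_r(v)$ with this boundary, and set $M := \sum_{w \in B_r(v)} \tau_w$. Bayes' rule gives
\begin{align*}
\nu_\zeta(\tau_v = s \mid M = \mathfrak{m}) \;=\; \nu_\zeta(\tau_v = s) \cdot \frac{\nu_\zeta(M = \mathfrak{m} \mid \tau_v = s)}{\nu_\zeta(M = \mathfrak{m})}.
\end{align*}
Since $\nu_\zeta(M = \mathfrak{m})$ is a convex combination (over $s \in \{\pm 1\}$) of the numerators, it suffices to show
\begin{align*}
\frac{\nu_\zeta(M = \mathfrak{m} \mid \tau_v = +)}{\nu_\zeta(M = \mathfrak{m} \mid \tau_v = -)} = 1 + o_r(1)
\end{align*}
uniformly for $(\mathfrak{m},\zeta)$ in a well-chosen typical set $A_r$. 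I would take $A_r$ to be the pairs where (i) $\zeta$ lies in a $\nu$-probability $1 - o_r(1)$ set of boundaries on which the correlation-decay estimates below hold (possible by Lemma~\ref{lem:nonreconbd} and Corollary~\ref{cor:various-forms-of-nonreconstruction}), and (ii) $\mathfrak{m}$ lies within $\sqrt{|B_r(v)|}\log|B_r(v)|$ of $\mathbb E_{\nu_\zeta}[M]$, which has $\nu_\zeta$-probability $1-o_r(1)$ by Chebyshev once the variance bound below is in hand.

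Conditional on $\tau_v = s$, the configurations on the $d$ subtrees hanging from the neighbors of $v$ are mutually independent under $\nu_\zeta$ (the only coupling across subtrees was through $\tau_v$). Thus $M = s + \sum_{u \sim v} V_u^{(s)}$ with the $V_u^{(s)}$ independent sums of $\Theta(d^{r-1})$ spins. Two quantitative inputs drive the proof. First, the Martinelli--Sinclair--Weitz spectral-gap bound of~\cite{martinelli2004glauber}, valid throughout $\beta < \beta_r$, yields an exponential covariance decay $|\Cov_{\nu_\zeta}(\tau_u,\tau_w)| \le C e^{-c\,\dist(u,w)}$ uniformly over boundaries $\zeta$. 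Summing gives $\Var_{\nu_\zeta}(M) = \Theta(|B_r(v)|) = \Theta(d^r)$; the lower bound uses that typical $\zeta$ keep the single-vertex marginals uniformly away from $\pm 1$ by non-reconstruction. Second, the same covariance decay yields $|\mu_+ - \mu_-| = O(1)$ where $\mu_s := \mathbb E_{\nu_\zeta}[M \mid \tau_v = s]$: flipping $\tau_v$ shifts $\mathbb E_{\nu_\zeta}[\tau_w]$ by at most $C e^{-c\,\dist(v,w)}$, and summing this geometric series over $w \in B_r(v)$ gives an $O(1)$ bound. An analogous argument gives $\sigma_+^2 = \sigma_-^2(1+o_r(1))$ for $\sigma_s^2 := \Var_{\nu_\zeta}(M \mid \tau_v = s)$.

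With these inputs in place, I would apply a classical Petrov-type local CLT to the independent lattice sum $\sum_u V_u^{(s)}$ to obtain
\begin{align*}
\nu_\zeta(M = \mathfrak{m} \mid \tau_v = s) \;=\; \frac{1 + o_r(1)}{\sqrt{2\pi \sigma_s^2}}\exp\!\Big({-\tfrac{(\mathfrak{m}-\mu_s)^2}{2\sigma_s^2}}\Big)
\end{align*}
uniformly for $\mathfrak{m}$ in the bulk. Taking the ratio for $s = \pm$ and expanding, the squared terms contribute
\begin{align*}
\tfrac{(\mathfrak{m}-1-\mu_+)^2 - (\mathfrak{m}+1-\mu_-)^2}{2\sigma^2} \;=\; \tfrac{(\mu_- - \mu_+ - 2)(2\mathfrak{m} - \mu_+ - \mu_-)}{2\sigma^2}.
\end{align*}
On $A_r$ we have $|\mu_- - \mu_+ - 2| = O(1)$ and $|2\mathfrak{m} - \mu_+ - \mu_-| = O(\sqrt{d^r}\log d^r)$, so the whole expression is $O((\log d^r)/\sqrt{d^r}) = o_r(1)$. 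Exponentiating and combining with the pre-factor (whose ratio is $1+o_r(1)$) yields the claim.

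The main obstacle is executing the local CLT step with enough uniformity in typical $\zeta$. One must verify that each $V_u^{(s)}$ has a non-degenerate lattice period so that its characteristic function is bounded away from $1$ outside small neighborhoods of $2\pi \Z$, and quantify the Petrov error. This should follow by applying the spectral-gap lower bound at the single-subtree scale, which both forces $\Var(V_u^{(s)}) = \Theta(d^{r-1})$ and prevents concentration on any proper sublattice; the resulting uniform CLT error then propagates through the ratio computation above.
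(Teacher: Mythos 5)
Your Bayes reduction to comparing $\nu_\zeta(M=\mathfrak{m}\mid\tau_v=+)$ with $\nu_\zeta(M=\mathfrak{m}\mid\tau_v=-)$ is the same starting point as the paper, and the observation that the $d$ subtree contributions are conditionally independent given $\tau_v$ is correct. But the two quantitative inputs you then invoke both break down in the regime $\beta\in(\beta_c,\beta_r)$ that is the whole point of the lemma. First, a Petrov-type local CLT needs the number of independent summands to go to infinity; here $\sum_{u\sim v}V_u^{(s)}$ has exactly $d$ summands, each of which is a magnetization of an Ising measure on a subtree (dependent spins), and no off-the-shelf LCLT applies. Declaring that aperiodicity and a variance lower bound "should follow from the spectral-gap lower bound at the single-subtree scale" is where the argument actually is, not a routine verification: a spectral gap gives variance control, not pointwise pmf control. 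Second, the claim $|\mu_+-\mu_-|=O(1)$ is obtained by summing $e^{-c\,\dist(v,w)}$ over $w\in B_r(v)$, but that geometric series converges only when the covariance decay rate beats the branching, i.e.\ $e^{-c}<1/(d-1)$, which for the free tree measure corresponds to $\tanh\beta<1/(d-1)$, i.e.\ $\beta<\beta_c$. Between $\beta_c$ and $\beta_r$ the Kesten--Stigum bound only gives $\tanh\beta<1/\sqrt{d-1}$, and $\sum_{k}(d-1)^k\tanh^k\beta$ grows like $((d-1)\tanh\beta)^r$, so without extra input (e.g.\ that conditioning on a typical $\zeta$ strictly improves the rate) one cannot conclude the mean shift is bounded; it could a priori be of the same order as $\sqrt{\Var(M)}$, which makes the Gaussian-ratio estimate fail even if a local CLT were available.

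The paper's proof sidesteps both problems by inserting an intermediate level $\ell$ (fixed, then sent to infinity after $r$). Conditioning on $\tau_\ell$ exploits the Markov property to \emph{exactly} decouple the root spin from the outer magnetization, so no decay-rate estimate for the root's long-range influence is needed; the root can only shift the magnetization contribution coming from $B_{\ell-1}$, which is at most $O(d^\ell)=O_r(1)$. One then only needs the pmf of the magnetization to be flat over $O_r(1)$ shifts, which is precisely Lemma~\ref{lem:sprinkling}. Crucially, the local CLT there is obtained not by decomposing into $d$ subtree sums but by \emph{sprinkling}: conditioning on $\tau(U^c)$ where $U$ is the penultimate level, which makes the $\Theta(d^{r})$ spins in $U$ conditionally independent so that the classical independent-summand LCLT (Lemma~\ref{lem:local-CLT-rademachers}) applies directly, with Lemma~\ref{lem:conditional-variance-bound} (the spectral-gap variance bound) and the definition of a typical magnetization set $\mathcal{M}_\eps$ replacing your variance-lower-bound/aperiodicity worries. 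The additional Lemma~\ref{lem:size-ell-non-reconstruction}, which upgrades root non-reconstruction to non-reconstruction of the whole depth-$\ell$ ball, is what lets the paper decouple $\nu(\tau_\ell\mid\zeta)$ from $\zeta$ and make the intermediate-level bookkeeping close up. If you want to salvage your outline, you would need to supply exactly these two missing devices: the Markov cutoff at a fixed intermediate level, and the single-level sprinkling that makes the local CLT provable.
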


In proving this, we require two additional lemmas. The first is the following  amplified version of Corollary~\ref{cor:various-forms-of-nonreconstruction} which says not only is the root spin not reconstructible, but also there is vanishing difference in the law on any $O(1)$-depth ball around the root. 

\begin{lemma}\label{lem:size-ell-non-reconstruction}
    For $\beta<\beta_r$, for every fixed $\ell < r$, one has that 
    \begin{align*}
        \mathbb E_{\tau_{r}\sim\nu } \big[ d_{\tv}\big(\nu(\tau_\ell \in \cdot \mid \tau_r) - \nu(\tau_\ell \in \cdot) \big)\big ]= o_r(1)\,.
    \end{align*}
\end{lemma}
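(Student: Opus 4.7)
The plan is to reduce the claim, via a chain-rule bound for total-variation distance together with the tree Markov property, to the single-vertex non-reconstruction statement of Corollary~\ref{cor:various-forms-of-nonreconstruction}(b), applied separately for each of the $O_\ell(1)$ vertices in $B_\ell(v_0)$.  Since $\tau_\ell$ is a deterministic function of the full ball configuration $\tau_{B_\ell(v_0)}$, the data-processing inequality gives
\[
d_\tv\bigl(\nu(\tau_\ell \in \cdot \mid \tau_r),\, \nu(\tau_\ell \in \cdot)\bigr) \le d_\tv\bigl(\nu(\tau_{B_\ell(v_0)} \in \cdot \mid \tau_r),\, \nu(\tau_{B_\ell(v_0)} \in \cdot)\bigr),
\]
so it suffices to bound the right-hand side in expectation over $\tau_r \sim \nu$.

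Enumerate the vertices of $B_\ell(v_0)$ in BFS order $v_0, v_1, \dots, v_N$ with $N = N(d,\ell) = O_\ell(1)$, and for $i \ge 1$ let $p(v_i)$ denote the parent of $v_i$ in $\T_d$ and $T_{v_i}$ the subtree of $\T_d$ rooted at $v_i$.  Applying the standard chain-rule upper bound for total variation to $P = \nu(\,\cdot\, \mid \tau_r)$ and $Q = \nu$, and then using the tree Markov property of $\nu$---conditional on the ancestors already revealed, $\tau_{v_i}$ depends only on $\tau_{p(v_i)}$, and further conditional on $\tau_r$ it depends only on $\tau_{p(v_i)}$ and $\tau_r \cap T_{v_i}$---yields
\[
\E_{\tau_r}\bigl[d_\tv\bigl(\nu(\tau_{B_\ell(v_0)} \in \cdot \mid \tau_r),\, \nu(\tau_{B_\ell(v_0)} \in \cdot)\bigr)\bigr] \le \sum_{i=0}^N \E\bigl[d_\tv\bigl(\nu(\tau_{v_i} \in \cdot \mid \tau_{p(v_i)}, \tau_r \cap T_{v_i}),\, \nu(\tau_{v_i} \in \cdot \mid \tau_{p(v_i)})\bigr)\bigr],
\]
with the convention that the conditioning on $\tau_{p(v_0)}$ is vacuous, so the $i=0$ term is precisely the quantity controlled by Corollary~\ref{cor:various-forms-of-nonreconstruction}(b).

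For $i \ge 1$ and any fixed value $s$ of $\tau_{p(v_i)}$, the conditional joint law of $(\tau_{v_i}, \tau_r \cap T_{v_i})$ is the broadcast Ising process on $T_{v_i}$ started at $v_i$ with initial distribution equal to the $s$-th row of the broadcast matrix $M_R$ from Lemma~\ref{lem:georgii-tree}, run to depth $r - \mathrm{depth}(v_i) \ge r - \ell$.  Applying Lemma~\ref{lem:nonreconbd} intrinsically to this broadcast gives $d_\tv(\nu(\tau_r \cap T_{v_i} \in \cdot \mid \tau_{v_i} = +),\, \nu(\tau_r \cap T_{v_i} \in \cdot \mid \tau_{v_i} = -)) = o_r(1)$, and a direct Bayes-rule computation converts this into an $o_r(1)$ bound on the corresponding chain-rule summand regardless of the prior on $\tau_{v_i}$.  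Summing the $N+1 = O_\ell(1)$ terms completes the argument.  The main place to exercise care is the transfer of non-reconstruction from the ``natural'' root marginal of $\nu$ to the shifted marginal induced by conditioning on $\tau_{p(v_i)} = s$; this is not a genuine obstacle because non-reconstruction is an intrinsic property of the broadcast transition matrix and depth, but it must be invoked via Bayes rather than by direct appeal to the root-centric form of Corollary~\ref{cor:various-forms-of-nonreconstruction}(b).
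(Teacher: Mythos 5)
Your proof is correct and follows a genuinely different route from the paper's. The paper proves Lemma~\ref{lem:size-ell-non-reconstruction} by induction on $\ell$: the base case is Corollary~\ref{cor:various-forms-of-nonreconstruction}, and the inductive step passes from level $\ell-1$ to level $\ell$ by writing the conditional law on level $\ell$ as a product over children of functions $F_{\pm\pm}$ of the root-marginal of each subtree, then using that each $F_{\pm\pm}$ is Lipschitz and bounded away from $\{0,1\}$ to propagate the $o_r(1)$ error upward, tracking a constant $C_\ell$ that grows with $\ell$. You instead tensorize the total-variation distance directly: you apply the chain-rule bound along a BFS ordering of $B_\ell(v_0)$, use the tree Markov property to reduce each summand to a per-vertex term involving only $(\tau_{p(v_i)}, \tau_r \cap T_{v_i})$, and observe that each such term is controlled by non-reconstruction of the broadcast on the subtree $T_{v_i}$ run to depth $r-\mathrm{depth}(v_i)\ge r-\ell$. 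The Bayes-rule step you flag is exactly right and is clean to make explicit: if $d_\tv(P^+_{r'},P^-_{r'})=\delta_{r'}$ and the prior puts mass $p_s$ on $+$, then $\E\big[|\nu(\tau_{v_i}=+\mid s,\tau_r\cap T_{v_i})-p_s|\big]=2p_s(1-p_s)\delta_{r'}\le 2\delta_{r'}=o_r(1)$, so the statement is indeed prior-independent and only uses the intrinsic non-reconstruction property of the broadcast matrix $M_R$ (which, by the variance-mixing result of~\cite{martinelli2004glauber} cited in Lemma~\ref{lem:nonreconbd}, holds uniformly over boundary conditions). Your route is arguably cleaner—it avoids explicit manipulation of the BP recursion functions and the bookkeeping of $\ell$-dependent constants—at the cost of invoking the TV chain rule, whereas the paper's argument stays fully self-contained at the level of elementary recursive manipulations of the Ising tree measure. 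Both give the same conclusion since $N+1 = |B_\ell(v_0)| = O_\ell(1)$ terms, each $o_r(1)$, sum to $o_r(1)$ for fixed $\ell$.
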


The second is showing that for typical magnetizations given $\tau_{r}$ (which we recall is the configuration on $\partial_r(v)$), the conditioning on the spin at the root vertex does not shift the law of the magnetization significantly, as it can be corrected by local central limit theorem fluctuations in the magnetization between depth $\ell$ and $r$. 

\begin{lemma}\label{lem:sprinkling}
    In a tree of depth $r$ rooted at $v$, for every depth-$r$ configuration $\zeta= \zeta_r$, there exists a set $\mathcal M(\zeta)$ of magnetizations such that
    $\nu (\mathcal M \mid \zeta) = 1-o_r(1)$, and for all ${\mathfrak{m}} = {\mathfrak{m}}_r \in \mathcal M$ and $\tau_{\ell}$, 
    \begin{align*}
        \nu( {\mathfrak{m} }\mid  \tau_r = \zeta,\tau_{\ell} ) = (1+o_r(1)) \nu({\mathfrak m} \mid \zeta,\tau_\ell,\tau_v = +1)\,.
    \end{align*}
\end{lemma}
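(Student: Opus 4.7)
The plan is a local central limit theorem (LCLT) for the magnetization of the ``annular'' region $B_r(v)\setminus B_\ell(v)$, combined with the tree Markov property to isolate the effect of $\tau_v$. By the Markov property of $\nu$, conditional on $\tau_\ell$ the configuration on $B_\ell(v)$ is independent of the configuration on $B_r(v)\setminus B_\ell(v)$ (and hence of $\zeta$). Decompose
\[
m_r \;=\; X + Y + m(\tau_\ell), \qquad X := \!\!\!\sum_{w\in B_\ell(v)\setminus \partial_\ell(v)}\!\!\!\tau_w, \qquad Y := \!\!\!\sum_{w\in B_r(v)\setminus B_\ell(v)}\!\!\!\tau_w.
\]
Given $(\zeta,\tau_\ell)$, $X$ and $Y$ are independent, and further conditioning on $\tau_v=+1$ alters only the law of $X$, which is supported in $[-|B_\ell(v)|,|B_\ell(v)|]$, a set of size $O(d^\ell)$ (constant in $r$).

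Writing $p(\cdot)=\nu(X=\cdot\mid\tau_\ell)$, $p^+(\cdot)=\nu(X=\cdot\mid\tau_\ell,\tau_v=+1)$ and $q(\cdot)=\nu(Y=\cdot\mid \tau_\ell,\zeta)$, both targeted probabilities are convolutions $\sum_k p^{(+)}(k)\,q(\mathfrak m - k - m(\tau_\ell))$. Since $p$ and $p^+$ have total mass one and are supported on the same $O(d^\ell)$ range, it therefore suffices to show that $q$ is \emph{slowly varying}: for all $j_1,j_2$ in a high-probability ``bulk'' set with $|j_1-j_2|=O(d^\ell)$,
\[
\frac{q(j_1)}{q(j_2)} = 1 + o_r(1).
\]
This slow variation is exactly what a local CLT for $Y$ delivers.

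For the LCLT, note that $B_r(v)\setminus B_\ell(v)$ decomposes, given $\tau_\ell$, into $d^\ell$ disjoint subtrees $T^1,\ldots,T^{d^\ell}$, each rooted at a vertex of $\partial_\ell(v)$ and of depth $r-\ell$, with boundary condition on $\partial T^i$ specified by $\zeta$. By Markov, the magnetizations $Y_i=\sum_{w\in T^i}\tau_w$ are conditionally independent given $(\tau_\ell,\zeta)$, so $Y=\sum_i Y_i$ is a sum of independent bounded summands. By the Martinelli--Sinclair--Weitz spectral gap/variance mixing bound underlying Lemma~\ref{lem:nonreconbd}, valid uniformly in boundary conditions for $\beta<\beta_r$, each $Y_i$ has variance $\Omega(d^{r-\ell})$ uniformly in the conditioning, so $\Var(Y\mid \tau_\ell,\zeta)=\Omega(d^r)$. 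A standard LCLT for sums of bounded independent lattice variables then yields
\[
q(j) = \frac{1+o_r(1)}{\sqrt{2\pi\,\Var(Y\mid \tau_\ell,\zeta)}} \qquad \text{for } |j-\E[Y\mid \tau_\ell,\zeta]|\le C\sqrt{d^r},
\]
so $q(j_1)/q(j_2)=1+O(d^\ell/\sqrt{d^r})=1+o_r(1)$ for $|j_1-j_2|=O(d^\ell)$ in this bulk. Defining $\mathcal M(\zeta)$ to consist of those $\mathfrak m$ for which $\mathfrak m-m(\tau_\ell)-k$ lies in the LCLT bulk of $Y$ for every $\tau_\ell$ compatible with $\zeta$ and every $k$ in the common $O(d^\ell)$-range supporting $p$ and $p^+$, Chebyshev applied to $\nu(\cdot\mid\zeta)$ gives $\nu(\mathcal M\mid \zeta)=1-o_r(1)$, and the convolution identity above yields the claimed ratio $1+o_r(1)$.

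The main obstacle is establishing the LCLT with error \emph{uniform} in the boundary data $(\tau_\ell,\zeta)$. Two points need care: (i) the uniform-in-boundary variance lower bound $\Var(Y_i)\gtrsim d^{r-\ell}$ for each subtree, which is where the non-reconstruction regime $\beta<\beta_r$ enters via the uniform spectral gap of~\cite{martinelli2004glauber}; and (ii) the lattice/parity structure of the $Y_i$ (each takes values of a fixed parity), which forces the LCLT to be stated on a sublattice of $\mathbb Z$ and forces us to compare only $\mathfrak m$ values of matching parity with the relevant shifts from $k,k'$ in the supports of $p$ and $p^+$. Both issues are handled by standard LCLT technology (e.g., Fourier/characteristic function bounds), but uniformity in $\zeta$ is the subtle point.
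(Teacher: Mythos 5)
Your reduction to the slow variation of $q(\cdot)=\nu(Y=\cdot\mid\tau_\ell,\zeta)$ via the convolution decomposition $m_r = X+Y+m(\tau_\ell)$ is correct and clean, and it is morally the same reduction the paper uses. However, your proof that $q$ is slowly varying has a genuine gap. You write $Y=\sum_{i=1}^{d^\ell}Y_i$ with the $Y_i$ conditionally independent, and then invoke ``a standard LCLT for sums of bounded independent lattice variables.'' But the number of summands $d^\ell$ is \emph{fixed} as $r\to\infty$, and each $Y_i$ has range $\Theta(d^{r-\ell})$, so the $Y_i$ are neither bounded nor numerous. A variance lower bound on each $Y_i$ does not yield an LCLT for a fixed number of independent summands: e.g.\ each $Y_i$ could a priori be a two-point distribution at $\pm\sigma_i$, giving large variance but atomic $q$. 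What is actually needed is an LCLT for each $Y_i$ individually, and establishing that is essentially the same task you started with, restricted to a depth-$(r-\ell)$ subtree; the subtree decomposition therefore does not make progress. (A secondary issue: the Martinelli--Sinclair--Weitz spectral gap gives a variance \emph{upper} bound via Poincar\'e, as in Lemma~\ref{lem:conditional-variance-bound}; a lower bound requires a different argument such as summing conditional variances over an independent set.)

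The paper sidesteps this by conditioning not on $(\tau_\ell,\zeta)$ but on $\tau(U^c)$ where $U$ is a \emph{single level} (an independent set) of $\Theta(d^r)$ vertices. Given $\tau(U^c)$, the spins on $U$ are genuinely independent Bernoullis with parameters bounded away from $0,1$, so the Bernoulli LCLT of Lemma~\ref{lem:local-CLT-rademachers} applies directly to their sum, with the number of summands tending to infinity. This is where the actual local-limit content comes from; the paper then averages over the conditioning and uses the Chebyshev-defined set $\mathcal M_\epsilon$ (with the variance upper bound of Lemma~\ref{lem:conditional-variance-bound}) to control the contribution of bad $\tau(U^c)$. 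To repair your argument, you could apply this single-level conditioning inside each $T^i$, but then the annular decomposition is unnecessary and you would just be reproducing the paper's proof on the whole tree.
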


We defer the proofs of these two statements and first show how they imply \Cref{lem:drop-magnetization-constraint-tree}.

\begin{proof}[Proof of \Cref{lem:drop-magnetization-constraint-tree}]
Let $p=p_\eta$ be the probability that $\tau_v= +1$ under $\nu$ (so, $2p-1 =\eta$).  Our goal is to find a high probability set $({\mathfrak{m}},\zeta)$ so that
\[ \nu(\tau_v = +1 \mid {\mathfrak{m}},\zeta) = p + o_r(1)\,.\]
Note that this is sufficient for proving the claim; by the non-reconstruction statements of \Cref{cor:various-forms-of-nonreconstruction}, there is a high-probability set of $\zeta$ such that $\nu(\tau_v = + 1 \mid \zeta) = p+o_r(1)$.

We can rewrite the conditional probability as 
\begin{align*}
    \nu(\tau_v = + 1 \mid {\mathfrak{m}},\zeta)  &= \frac{\nu(\tau_v = + 1 ,{\mathfrak{m}},\zeta)}{\nu({\mathfrak{m}},\zeta)} = p \frac{ \nu(\zeta\mid \tau_v = +1)\nu({\mathfrak{ m}} \mid \tau_v = +1 ,\zeta)}{\nu(\zeta) \nu({\mathfrak{m}}\mid \zeta)}\,.
\end{align*}
By \Cref{cor:various-forms-of-nonreconstruction}, 
there is also a set $A'$ of $\zeta$ with probability $1-o_r(1)$ under $\nu$ such that  $\nu(\zeta \mid  \tau_v = +) = (1+o_r(1)) \nu(\zeta)$. 
Therefore, it is enough to show $\nu({\mathfrak{m}} \mid  \tau_v =+1,\zeta) = (1+o_r(1)) \nu({\mathfrak{m}} \mid \zeta)$, or equivalently,
\begin{align}\label{eq:NTS-drop-mag-constraint-1}
\nu({\mathfrak{m}} , \tau_v=+1 \mid \zeta) = (1+o_r(1)) \nu(\tau_v =+ 1 )\nu({\mathfrak{m}} \mid \zeta) \,,
\end{align}
since by non-reconstruction (Corollary~\ref{cor:various-forms-of-nonreconstruction}) for a high probability set of $\zeta$'s, we have $\nu(\tau_v =+ 1 \mid \zeta) = (1+o_r(1)) \nu(\tau_v =+ 1 )$. 

 Towards~\eqref{eq:NTS-drop-mag-constraint-1}, observe that for fixed $\ell < r$, we may write 
\begin{align*}
    \nu({\mathfrak{m}}, \tau_v=+1 \mid  \zeta) &=\sum_{\tau_\ell} \nu(\tau_\ell \mid \zeta) \nu({\mathfrak{m}},  \tau_v=+1 \mid \zeta ,\tau_\ell) \\
    &= (1+o_r(1)) \sum_{\tau_\ell} \nu(\tau_\ell)\nu({\mathfrak{m}}, \tau_v=+1 \mid  \zeta,\tau_\ell)\,,
    \end{align*}
    where the second line follows from \Cref{lem:size-ell-non-reconstruction}, which says that for a high-probability set of $\zeta$, the law on $B_\ell(v)$ for $\ell$ fixed relative to $r$ is within $(1+o_r(1))$ of $\nu(\tau_{B_\ell(v)}\in \cdot)$. Then,
    \begin{align*}
\nu({\mathfrak{m}}, \tau_v=+1 \mid  \zeta) &= (1+o_r(1))\sum_{\tau_\ell} \nu(\tau_\ell) \nu(\tau_v=+1 \mid \zeta, \tau_\ell) \nu ({\mathfrak{m}} \mid \zeta,\tau_\ell, \tau_v=+ 1 ) \\ 
&= (1+o_r(1))(1+o_\ell(1)) \sum_{\tau_\ell} \nu(\tau_\ell) \nu(\tau_v=+1 ) \nu( {\mathfrak{m}} \mid \zeta,\tau_\ell, \tau_v=+1) \,,
\end{align*}
where here $o_\ell(1)$ means first taking $r\to\infty$ with $\ell$ fixed, then subsequently sending $\ell \to\infty$. Note that this step used that for a high-probability set of configurations at level $\ell$, we can apply non-reconstruction in the form of Corollary~\ref{cor:various-forms-of-nonreconstruction} at that level. 

The last and key step is the application of \Cref{lem:sprinkling}, which shows that the probability of magnetization ${\mathfrak m}$ (which is in a set with high probability under $\nu(\cdot \mid \zeta)$) is not affected much by the configuration on $B_\ell(v)$ because $\ell$ is fixed as $r\to\infty$. This relies on a ``sprinkling" argument to show that around $\mathfrak{m}$ which are typical under $\nu(\cdot \mid \zeta)$, the probability mass function of the magnetization satisfies a local central limit theorem and therefore can compensate for different realizations of $\tau_\ell$ (which cause order-1 shifts in the magnetization compared to $r$). 
Namely, Lemma~\ref{lem:sprinkling} says that there exists $\mathcal M(\zeta)$ a high probability set of magnetizations such that for all $\mathfrak{m} \in \mathcal M$, 
\begin{align}\label{eq:sprinkling-step-in-proof}
    \nu( {\mathfrak m} \mid \zeta,\tau_\ell, \tau_v=+1)  = (1+o_r(1)) \nu( {\mathfrak m} \mid \zeta,\tau_\ell)\,. 
\end{align}
We thus arrive at a high probability set of $\zeta$ and their corresponding $\mathcal M(\zeta)$'s such that 
\begin{align*}
    \nu({\mathfrak m} , \tau_v=+1 \mid \zeta)&  = (1+o_\ell(1))\nu(\tau_v =+ 1) \sum_{\tau_\ell} \nu( \tau_\ell) \nu({\mathfrak m} \mid \zeta,\tau_\ell)(1+o_r(1)) \\ 
    & = (1+o_r(1))\nu(\tau_v = + 1) \nu({\mathfrak m} \mid \zeta)\,,
\end{align*}
where the last line follows from taking $\ell$ to be growing sufficiently slowly with $r$. 
\end{proof}

It remains to prove the two deferred lemmas. We start with the stronger form of non-reconstruction for constant-depth balls around the root.

\begin{proof}[Proof of \Cref{lem:size-ell-non-reconstruction}]
    We proceed by induction on $\ell$ and aim to show that 
    \begin{equation} \label{eqn:size-ell-nonreconstruction}
    \mathbb E_{\tau_{B_r(v)}\sim\nu } \big[ d_{\tv}\big(\nu(\tau_\ell \in \cdot \mid \tau_r) - \nu(\tau_\ell \in \cdot) \big)\big ]= C_{\ell} \cdot o_r(1)
    \end{equation}
    for a sequence $C_\ell$ (growing exponentially in $\ell$).
    
    For the base case, note that since $\beta < \beta_r$, by Corollary~\ref{cor:various-forms-of-nonreconstruction}, 
    there exists a set $A_r$ with high probability under $\nu$ such that for all $\tau \in A_r$, one has 
    $$|\nu(\tau_v = +1 \mid \tau_r) - \nu(\tau_v = +1)|=o_r(1)\,.$$ 
    
    Now suppose that \eqref{eqn:size-ell-nonreconstruction} holds for $\ell-1$. Let $u_{1,1},...,u_{1,d},...,u_{N,1},...,u_{N,d}$ be the vertices at level $\ell$ in the tree, so $N = d^{\ell-1}$. We can express the distribution of configurations on level $\ell$ in terms of those at level $\ell-1$ as follows. 
    \begin{align*}
        \nu(\tau_\ell = \varsigma \mid \tau_r)  &  = \nu(\tau(u_{1,1},...,u_{N,d}) =\varsigma \mid \tau_r) \\
        & = \mathbb E_{\tau_{\ell-1}\sim \nu} \Big[ \prod_{i}\nu(\tau(u_{i1},...,u_{i,d})= \varsigma_i \mid \tau_r ,\tau_{\ell-1}(i))\Big] \,,
    \end{align*}
    where $i$ is the parent (at level $\ell-1$) of vertices $u_{i1},...,u_{id}$, $\zeta_i$ is the vector configuration on the vertices $u_{i1},...,u_{id}$, and $\tau_{\ell-1}(i)$ is the spin at $i$.

    By the inductive assumption and the fact that the argument of the expectation is bounded by $1$, for most $\tau_r$ (say those in the high probability set $\tau_r \in A_{\ell-1}$) the outer expectation can be replaced by $\tau_{\ell-1}$ drawn independently from the infinite-volume distribution, up to error $C_{\ell-1} \cdot o_r(1)$. Next, we analyze the terms in the product.
    \begin{align}\label{eq:zeta-to-product}
        \nu(\tau(u_{11},...,u_{1d}) = \varsigma_1 \mid \tau_r, \tau_{\ell-1}(1)) = \prod_{j} \nu(\tau(u_{1j})= \varsigma_{1j} \mid \tau_{\ell-1}(1),\tau_r(T_{u_{1j}}))\,.
    \end{align}
    Here, $\varsigma_{1j}\in \{\pm 1\}$ and $T_{u_{1j}}$ is the subtree rooted at $u_{1j}$ with $\tau_r(T_{u_{1j}})$ being the restriction of $\tau_r$ to that subtree. We can remove the effect of the parent for each of these terms as follows. 
    For an Ising model on a tree with root $v$ and boundary at depth $\ell$, and adding a parent vertex above the root called $0$, algebra with ratios of partition functions shows that   
    \begin{align*}
        \nu_{T_v \cup \{0\}}(\sigma_v = + \mid \sigma_0=+, \tau_r)  = \frac{e^{2\beta} \nu_{T_v}(\sigma_v = + \mid \tau_r)}{1-(1-e^{2\beta})\nu_{T_v}(\sigma_v = + \mid \tau_r)} =: F_{++}(\nu(\sigma_v = + \mid \tau_r))\,.
    \end{align*}
   For every fixed $\beta$, the function $F_{++}$ is Lipschitz and bounded away from $0$ and $1$; likewise the same holds for $F_{+-}, F_{-+}$ and $F_{--}$. Thus, if $x$ is close to $x_*$ then $F_{++}(x)$ is close to $F_{++}(x_*)$. 
   
   Returning to~\eqref{eq:zeta-to-product}, we can apply this bound with $r$ replaced by $r-\ell$ and we get 
   \begin{align*}
       \nu(\tau(u_{11},...,u_{1d}) = \varsigma_1 \mid \tau_r, \tau_{\ell-1}(1)) = \prod_j F_{\tau_{\ell-1}(1),\varsigma_{1j}} (\nu_{T_{u_{1j}}}(\tau_v =\varsigma_{1j} \mid \tau_r(T_{u_{1j}}))\,.
   \end{align*}
   We now note that by non-reconstruction (and fact that $r-\ell$ goes to infinity with $r$), one has that as long as $\tau_r(T_{u_{1j}})\in A_{1}$, then the inputs $\nu_{T_{u_{1j}}}(\tau_v =\varsigma_{1j} \mid \tau_r(T_{u_{1j}}))$ are within $o_r(1)$ of $\nu(\tau_v = \varsigma_{1j})$. As we are taking a finite product (over $d$ many terms, and then another $d^{\ell-1}$ many terms), if $\tau_{\ell-1}\in A_{\ell-1}$ and each of its subtrees of the depth $\ell$-vertices are in $A_1$, then 
   \begin{align*}
       \prod_i\nu(\tau_\ell(u_{i1},...u_{id}) = \varsigma_i\mid \tau_r, \tau_{\ell-1}(i))  = \prod_i \prod_j F_{\tau_{\ell-1}(i),\zeta_{ij}} (\nu(\tau_v = \zeta_{1j})) +o_r(1)\,.
   \end{align*}
   The probability of not having the above is therefore bounded by $d^{\ell-1}$ times the $o_r(1)$ error probability inherited from the level up. 
   The right-hand side does not depend on $\tau_r$, and by the inductive assumption, the expectation over $\tau_{\ell-1}$ is taken with respect to $\nu$, up to a $o_r(1)$ error. This completes the claim.
\end{proof}

We now turn our attention to \Cref{lem:sprinkling}. To prove this, we require an intermediate lemma which bounds the variance of the magnetization conditional on $\zeta_r$. This will follow from the spectral gap bound of~\cite{martinelli2004glauber} up to $\beta_r$.

\begin{lemma}\label{lem:conditional-variance-bound}
    If $\beta<\beta_r$, there exists a constant $C(d,\beta)>0$ such that on a $d$-regular tree $T_r$ with boundary conditions $\zeta$ at depth $r$ and external field $h$, it has 
    \begin{align*}
        \mbox{\emph{Var}}_{\nu(\cdot \mid \tau_{r} = \zeta)}\Big( \sum_{v\in T_r} \tau_v \Big)  \le C |T_r|\,.
    \end{align*}
\end{lemma}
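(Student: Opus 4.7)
The plan is to deduce the variance bound from a Poincar\'e (spectral gap) inequality for the Ising Gibbs measure $\nu(\cdot \mid \tau_r = \zeta)$ on the finite tree $T_r$, applied to the linear observable $f(\tau) = \sum_{v \in T_r} \tau_v$.

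First, I would invoke the main result of~\cite{martinelli2004glauber}: for $\beta < \beta_r(d)$, uniformly over $r \ge 1$, external fields $h \in \R$, and boundary conditions $\zeta$ at depth $r$, the Glauber dynamics reversible with respect to $\nu(\cdot \mid \tau_r = \zeta)$ on $T_r$ has spectral gap bounded below by a constant $\gamma = \gamma(d,\beta) > 0$. Equivalently, one has the Poincar\'e inequality
$$\Var_{\nu(\cdot \mid \tau_r = \zeta)}(g) \le \frac{1}{\gamma} \sum_{v \in T_r} \E_{\nu(\cdot \mid \tau_r = \zeta)}\!\left[\Var_{v}\!\left(g \mid \tau_{T_r \setminus \{v\}}\right)\right]$$
for every $g : \{\pm 1\}^{T_r} \to \R$, where $\Var_v$ denotes the variance under the conditional Gibbs measure at $v$ given the remaining spins. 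This is exactly what is established below the reconstruction threshold in that work, and the key point is that their argument yields $\gamma$ independent of the boundary condition, the external field, and the depth $r$.

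Second, I would apply this inequality to $g = f := \sum_{v \in T_r} \tau_v$. For each $v$, given $\{\tau_w\}_{w \ne v}$, the function $f$ differs from $\tau_v$ by a constant, so
$$\Var_v\!\left(f \mid \tau_{T_r \setminus \{v\}}\right) = \Var_v\!\left(\tau_v \mid \tau_{T_r \setminus \{v\}}\right) \le 1,$$
since $\tau_v \in \{\pm 1\}$. Summing over $v$ and taking the expectation under $\nu(\cdot \mid \tau_r = \zeta)$ then yields
$$\Var_{\nu(\cdot \mid \tau_r = \zeta)}\!\left(\sum_{v \in T_r} \tau_v\right) \le \frac{|T_r|}{\gamma},$$
which is the claim with $C = 1/\gamma$. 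Since the proof reduces to applying a Poincar\'e inequality to a linear observable whose single-site fluctuations are trivially bounded, no substantive obstacle is anticipated; the one step requiring care is verifying that the cited spectral gap bound of~\cite{martinelli2004glauber} is indeed stated in the uniform-in-$(\zeta,h,r)$ form used here, which it is throughout the regime $\beta < \beta_r$.
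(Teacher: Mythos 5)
Your proposal is correct and follows essentially the same route as the paper: invoke the Martinelli--Sinclair--Weitz uniform spectral gap bound for $\beta < \beta_r$ and then apply the Poincar\'e inequality to the magnetization observable, whose Dirichlet form (or single-site conditional variances) are trivially bounded by $|T_r|$. The one detail the paper handles that you elide is that \cite[Theorem~1.1]{martinelli2004glauber} is stated for the rooted $(d-1)$-ary tree, not the $d$-regular tree $T_r$; the paper inserts a standard spectral gap comparison (via \cite[Section~13.3]{levin2017markov}) to transfer the gap from the $(d-1)$-ary tree of depth $r-1$ to $T_r$, whereas you assert the uniform-in-$(\zeta,h,r)$ statement directly for $T_r$. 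This is a cosmetic omission rather than a gap, but it should be made explicit.
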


\begin{proof}
    In~\cite[Theorem 1.1]{martinelli2004glauber}, it was shown that on the $(d-1)$-ary tree, for $\beta<\beta_r$ and any external field $h\in \mathbb R$, there is a constant $c_{\mathsf{gap}}>0$ such that the (continuous-time) Glauber dynamics spectral gap is at least $c_{\mathsf{gap}}$ uniformly over boundary conditions and depth of the tree. By standard spectral gap comparison~\cite[Section 13.3]{levin2017markov} the spectral gap on a $d$-regular tree of depth $r$ is comparable, up to a constant $C(\beta)$, to the spectral gap of the $(d-1)$-ary tree of depth $r-1$, and thus there is an $\Omega(1)$ inverse spectral gap $c'_{\mathsf{gap}}$ lower bound on the $d$-regular tree's spectral gap (uniformly over the boundary conditions and depth).
    
    If we then apply the variational form of the spectral gap to the magnetization test function, and note that the continuous-time Dirichlet form for the magnetization test function is bounded by the number of vertices, which is simply $|T_r|$, we obtain the claim. 
\end{proof}

We also use the following local central limit theorem for sums of independent (but not necessarily identically distributed) binomial random variables; see, e.g.,~\cite[Theorem 1.1]{DaMc-LCLT}.  
This will translate to a local central limit theorem on integers of the same parity by the simple linear transformation from $k = $ number of $+1$ spins and $\mathfrak m = $ magnetization.

        \begin{lemma}\label{lem:local-CLT-rademachers}
            Let $(X_i)_{i=1}^{N}$ be independent random variables such that  $X_i \sim \text{Bin}(p_i)$ with $\bar p \le p_i \le 1-\bar p$, and let $S_N = \sum_{i=1}^N X_i$. Noting that $\Var(S_N) = \sum_i 4p_i (1-p_i)$, uniformly over $r= O(\sqrt{N})$ we have 
            \begin{align*}
                \mathbb P(|S_N - \mathbb E[S_N]|  = r)  = (1+o_N(1)) \frac{1}{\sqrt{2\pi \sum_i p_i(1-p_i)}} \exp\Big( - \frac{r^2}{2\sum_i p_i (1-p_i)}\Big)\,.
            \end{align*}
        \end{lemma}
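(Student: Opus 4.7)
The plan is to use the standard Fourier-analytic proof of local central limit theorems for integer-valued random variables. Since each $X_i$ takes values in $\{0,1\}$, the sum $S_N$ is integer-valued and the inversion formula
\begin{align*}
\Prob(S_N = k) = \frac{1}{2\pi}\int_{-\pi}^{\pi} \varphi_{S_N}(t)\, e^{-itk}\, dt
\end{align*}
applies, where $\varphi_{S_N}(t) = \prod_{i=1}^{N}\bigl((1-p_i) + p_i e^{it}\bigr)$. Setting $\sigma_N^2 := \sum_i p_i(1-p_i)$, the assumption $\bar p \le p_i \le 1-\bar p$ yields $\sigma_N^2 \ge \bar p(1-\bar p) N = \Theta(N)$. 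Writing $k = \E[S_N] + r$ and changing variables $t = s/\sigma_N$, the goal reduces to showing
\begin{align*}
\frac{1}{2\pi\sigma_N}\int_{-\pi\sigma_N}^{\pi\sigma_N} \prod_i\bigl((1-p_i) + p_i e^{is/\sigma_N}\bigr)\, e^{-is(\E[S_N]+r)/\sigma_N}\, ds = (1+o_N(1))\,\frac{e^{-r^2/(2\sigma_N^2)}}{\sqrt{2\pi\sigma_N^2}}
\end{align*}
uniformly for $r = O(\sqrt{N})$.

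Following the classical route, I would split the integration range into a central piece $|s|\le \delta\sigma_N$ for a small fixed $\delta>0$ and a tail piece $\delta\sigma_N \le |s|\le \pi\sigma_N$. In the central region, Taylor-expanding the cumulant generating function as $\log\bigl((1-p_i)+p_i e^{it}\bigr) = i p_i t - \tfrac12 p_i(1-p_i)t^2 + O(t^3)$ and summing over $i$, the linear-in-$t$ contributions cancel against the $e^{-it\E[S_N]}$ factor, while the quadratic contributions produce the Gaussian kernel $e^{-s^2/2}$. The cubic remainder, summed and multiplied out, is $O(N/\sigma_N^3) = O(N^{-1/2})$ uniformly for $|s|\le\delta\sigma_N$. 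Standard dominated-convergence estimates then give a Gaussian integral with multiplicative error $1+o_N(1)$, uniformly in the target range $|r| = O(\sqrt{N}) = O(\sigma_N)$, which is exactly what keeps the oscillatory factor $e^{-isr/\sigma_N}$ from degrading the approximation.

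The main obstacle is the tail region, where I need $|\varphi_{S_N}(t)|$ to be exponentially small in $N$ throughout $\delta \le |t| \le \pi$. Using the identity $|(1-p_i) + p_i e^{it}|^2 = 1 - 2p_i(1-p_i)(1-\cos t)$ together with the uniform lower bound $p_i(1-p_i) \ge \bar p(1-\bar p)$, one obtains
\begin{align*}
|\varphi_{S_N}(t)| \le \bigl(1 - 2\bar p(1-\bar p)(1-\cos \delta)\bigr)^{N/2} \le e^{-c(\delta) N}
\end{align*}
for $|t|\in[\delta,\pi]$, where $c(\delta)>0$. The tail contribution is therefore bounded by $O(e^{-c(\delta)N})$, which is negligible compared to the target scale $\Theta(1/\sqrt{N})$. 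The hypothesis $\bar p \le p_i \le 1-\bar p$ is used in both pieces: in the central region to ensure $\sigma_N = \Theta(\sqrt N)$ (so that the rescaled central window $|s|\le\delta\sigma_N$ is genuinely growing), and in the tail region to force nontrivial oscillation of each characteristic-function factor away from $t=0$. Combining the two estimates delivers the claim; the one place to be careful is that all error terms in the central-region Taylor expansion are controlled uniformly across $|r| \le C\sqrt{N}$, but this follows because the integrand's modulus $e^{-s^2/2}$ is integrable and the oscillatory factor $e^{-isr/\sigma_N}$ only enters through its absolute value in the error bounds.
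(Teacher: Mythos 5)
The paper does not prove this lemma at all: it is stated with a citation to a reference ([DaMc-LCLT]) and used as a black box. Your Fourier-analytic argument is the standard route to such local limit theorems, so in spirit it matches what the cited source would do. However, as written your treatment of the central region contains a genuine error.

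You claim the cubic remainder, after summing over $i$ and substituting $t=s/\sigma_N$, is ``$O(N/\sigma_N^3)=O(N^{-1/2})$ uniformly for $|s|\le\delta\sigma_N$.'' This drops the factor of $|s|^3$: the remainder is $O(N|s|^3/\sigma_N^3)=O(|s|^3/\sqrt N)$, which is $O(\delta^3 N)$ at the edge of your central window $|s|=\delta\sigma_N$---not small at all. So the integrand on $|s|\le\delta\sigma_N$ is \emph{not} uniformly $e^{-s^2/2}(1+o(1))$. The fix is the standard further split: on $|s|\le A_N$ for some $A_N\to\infty$ slowly (e.g. $A_N=N^{1/8}$), the remainder $O(A_N^3/\sqrt N)$ really is $o(1)$ and the Gaussian approximation holds with multiplicative error $1+o(1)$; on $A_N<|s|\le\delta\sigma_N$, one instead uses only the real-part bound $|\varphi_{S_N}(s/\sigma_N)|\le e^{-c s^2}$ (valid for $\delta$ small, since the quadratic term in the real part of the log dominates the quartic one there), so that this range contributes $O(e^{-cA_N^2})$, negligible against the $\Theta(1)$ main term. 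Your invocation of ``dominated convergence'' points in the right direction but does not by itself deliver the uniformity over $r=O(\sqrt N)$; an explicit error bound, split as above, does.

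A secondary, more cosmetic issue: you model $X_i$ as $\{0,1\}$-valued, but the lemma (note the name and the stated $\Var(S_N)=\sum_i 4p_i(1-p_i)$, and its use with the magnetization on the tree) is for $\{\pm 1\}$-valued signs. This changes the lattice spacing from $1$ to $2$, hence the period of $\varphi_{S_N}$ and the constants in the inversion formula, and it is exactly where the factor $4$ in the variance comes from. The argument survives after the obvious linear change of variables, but the constants should be tracked if you want them to match the displayed normalization. Your tail estimate $|(1-p_i)+p_ie^{it}|^2=1-2p_i(1-p_i)(1-\cos t)\le e^{-cN}$ on $\delta\le|t|\le\pi$ is correct, and the hypothesis $\bar p\le p_i\le 1-\bar p$ is used exactly as you say.
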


We can now prove Lemma~\ref{lem:sprinkling}, which was used in equation~\eqref{eq:sprinkling-step-in-proof}.

\begin{proof}[Proof of \Cref{lem:sprinkling}]
    Recall the lemma statement: we want to show that for every $\zeta= \zeta_r$, there exists a set $\mathcal M(\zeta)$ of magnetizations such that $\nu (\mathcal M \mid \zeta) = 1-o_r(1)$, and for all ${\mathfrak{m}} = {\mathfrak{m}}_r \in \mathcal M$ and $\tau_{\ell}$, 
    \begin{align*}
        \nu( {\mathfrak{m} }\mid  \tau_r = \zeta,\tau_{\ell} ) = (1+o_r(1)) \nu({\mathfrak m} \mid \zeta,\tau_\ell,\tau_v = +1)\,.
    \end{align*}
    
    We begin by taking the left-hand side and additionally conditioning on all of $\tau_{\le \ell}= \tau_{B_\ell(v)}$. It suffices to show that for any two $\tau_{\le \ell},\tau_{\le \ell}'$ with the same values on $\tau_\ell = \tau'_\ell$, one has 
    \begin{align*}
        \nu({\mathfrak m} \mid \tau_{\le \ell},\tau_r = \zeta) = (1+o_r(1)) \nu({\mathfrak m} \mid \tau'_{\le \ell},\tau_r =\zeta)\,.
    \end{align*}
    Equivalently, by the Markov property,  we will show that for all $\zeta$, there exists  $\mathcal M$ such that 
    \begin{align*}
        \nu(\mathcal M \mid  \tau_r = \zeta) = 1-o_r(1)\,,
    \end{align*}
    such that for any $\tau_{\le \ell}$, one has uniformly over $2k\in \{-2d^\ell,...,2d^\ell\}$ (note the magnetization changes by two for every spin flip in $T_\ell$) that 
    \begin{align*}
        \nu({\mathfrak m} \mid \tau_{\le \ell}, \tau_r =\zeta)  = (1+o_r(1)) \nu({\mathfrak m} + 2k \mid \tau_{\le \ell}, \tau_r = \zeta)\,
    \end{align*}
    for all $\mathfrak m \in \mathcal M$. We define explicitly the set of magnetizations, 
    \begin{align*}
        \mathcal M_\eps =\mathcal M_\eps (\zeta)  = \big\{ {\mathfrak{m}}: \nu(m(\tau) = {\mathfrak{m}} \mid \tau_r = \zeta) > \eps d^{-r/2}\big\}\,.
    \end{align*}
    We will argue that for $\eps$ suitably chosen, $\mathcal M_\eps$ has high probability, and also that different values of ${\mathfrak m}$ which are at constant (relative to $r$) distance from one 
another have a $(1+o_r(1))$ factor difference of probability. 

We first note that by Chebyshev's inequality and Lemma~\ref{lem:conditional-variance-bound}, there exists $C  = C(\beta, d)$ (where $C$ absorbs the constant ratio between $|T_h|$ and $d^r$) such that for all $K$, 
\begin{align*}
    \nu\big( |m(\tau) - \mathbb E_\nu[m(\tau) \mid \tau_r =  \zeta] | > C K d^{r/2} \mid \tau_r = \zeta\big) \le \frac{1}{K^2}\,.
\end{align*}
Thus, for any $\delta>0$, there is a $K$ large enough and $\eps$ small enough such that 
\begin{align*}
    \nu(m(\tau) \notin \mathcal M_\eps \mid \tau_r = \zeta) & \le \sum_{{\mathfrak m}: |{\mathfrak m} -  \mathbb E_\nu [m (\tau)
    | \tau_r = \zeta]|> C K d^{r/2}}\nu( {\mathfrak m}\mid \tau_r = \zeta) \\
    &  \qquad + \sum_{{\mathfrak m}: |{\mathfrak m} -  \mathbb E_\nu [m (\tau)
    | \tau_r = \zeta]|< C K d^{r/2}, {\mathfrak m}\notin \mathcal M_\eps} \nu(  \mathfrak{m} \mid \tau_r = \zeta)  \\ 
    & \le \frac{1}{K^2}  + C \eps K < \delta\,.
\end{align*}
Now we use sprinkling (resampling on an independent set) to argue that for any ${\mathfrak{m}}\in \mathcal M_\eps$, the probability mass function is close to that on ${\mathfrak{m}}\pm 2k$ for $2k$ fixed relative to $r$. Let $U = \partial T_{r-1}$ be the set of vertices one level up from the root, and condition on the configuration $\tau(U^c)$. Then, 
\begin{align}\label{eq:nu(m)-conditioning}
    \nu({\mathfrak{m}} \mid \tau_r =\zeta,\tau_{\le \ell})  = \mathbb E_{\tau (U^c) \sim \nu(\cdot \mid \zeta, \tau_{\le \ell})} [ \nu(  {\mathfrak m}\mid \tau(U^c), \tau_r = \zeta, \tau_{\le \ell} )]\ .
\end{align}
    We claim first that the above expectation is dominated by the configurations $\tau(U^c)$ for which 
    \begin{align*}
        \nu({\mathfrak m} \mid \tau(U^c) , \tau_r = \zeta, \tau_{\le \ell}) \ge \eps^2 d^{-r/2}\,.
    \end{align*}
    Let $\mathcal A$ be the set of such configurations $\tau(U^c)$, and observe that by definition, for $\mathfrak m\in \mathcal M_{\eps}$,   
    \begin{align*}
        \mathbb E_\nu[\ind{\mathcal A^c} \nu({\mathfrak m} \mid \tau(U^c), \tau_r = \zeta,\tau_{\le \ell}) \mid \tau_r = \zeta, \tau_{\le \ell} ] \le \eps^2 d^{-r/2} \le \eps \nu({\mathfrak m}\mid \tau_r= \zeta,\tau_{\le \ell})\,.
    \end{align*}
    For the remainder of the contribution to~\eqref{eq:nu(m)-conditioning}, we get 
    \begin{align*}
        \nu(\mathcal A) \mathbb E_\nu [ \nu({\mathfrak m} \mid \tau(U^c) , \zeta_r,\zeta_{\le \ell} ) \mid \mathcal A, \tau_r = \zeta, \tau_{\le \ell}]\,.
    \end{align*}
    We now apply a local central limit theorem, Lemma~\ref{lem:local-CLT-rademachers} with $N = |U| = d^{ r}$ and $\bar p = e^{ - 2\beta d}$,  to see that  
    \begin{align*}
        \nu(\mathfrak m \mid \tau(U^c) , \tau_r= \zeta, \tau_{\le \ell}) = (1+o_N(1)) \frac{1}{\sqrt{2\pi \sigma_{\tau}^2}} \exp( - ({\mathfrak m}- \E{[\mathfrak{m}} \mid \tau(U^c)])^2/2\sigma_\tau^2)\,,
    \end{align*}
    where $\sigma_\tau^2$ is the conditional variance of $\mathfrak{m}$ given a realization $\tau(U^c)$ in $\mathcal A$. 
    
    If $\tau(U^c) \in \mathcal A$, then ${\mathfrak m}$ is in the moderate deviation regime in the above bound (meaning the Gaussian probability mass function is order $1/\sigma_\tau$). The same then holds for $\mathfrak{m}+ O(1)$, and so by the above, one has for all $2k\in \{- 2d^\ell, ...,2d^\ell\}$ that 
    \begin{align*}
        \nu({\mathfrak m} + 2k \mid \tau(U^c), \tau_r =\zeta, \tau_{\le \ell})  = (1+o_r(1)) \nu({\mathfrak m} \mid \tau(U^c), \tau_r =\zeta, \tau_{\le \ell})\,.
    \end{align*}
    If $\tau(U^c) \notin \mathcal A$, then the same local central limit theorem tells us that since ${\mathfrak m}$ has probability at most $\eps^2 d^{-r/2}$, we have 
    \begin{align*}
        \sup_{2k\in \{-2d^\ell,...,2d^\ell\}} \nu({\mathfrak m}+2k \mid \tau(U^c) , \zeta_r,\zeta_{\le \ell}) \le (1+o_r(1)) \eps^2d^{-r/2}\,.
    \end{align*}
    Combining these bounds, for any ${\mathfrak m} \in \mathcal M_\eps$ we obtain that ~\eqref{eq:nu(m)-conditioning} equals
    \begin{align*}
        \nu(\mathcal A^c) (1+o_r(1)) \eps^2d^{-r/2} &  + (1+o_r(1))  \mathbb E_\nu[ \ind{\mathcal A}  \nu({\mathfrak m} \mid \tau(U^c) , \tau_r = \zeta,\tau_{\le \ell}) ] \\ 
        & \le (1+o_r(1)) \nu({\mathfrak m}+ 2k \mid \tau_r = \zeta, \tau_{\le \ell}) + 2\eps^2 d^{-r/2} \\ 
        & \le (1+ 2\delta + o_r(1)) \nu({\mathfrak m} + 2k \mid \tau_r = \zeta, \tau_{\le \ell})\,,
    \end{align*}
    for $\eps$ sufficiently small depending on $\delta$. Sending $\delta$ to $0$ after $r\to \infty$ while keeping $\ell$ fixed yields the claimed bound.
\end{proof}

\subsection{Overlap statistics in the planted model}
We now analyze the statistics of pairs of configurations in the planted model. Recall that for a single configuration $\sigma$, we may represent the edge statistics as a $2 \times 2$ matrix parametrized in terms of magnetization $\eta$ and monochromatic edge density $\rho$ and that for a pair of configurations $\sigma$ and $\sigma'$, we may represent the edge overlap statistics as a $4 \times 4$ matrix $R_{\sig,\sig'}$ where 
the entry $R_{ij}$ is equal to the fraction of edges which are type $i$ in $\sigma$ and type $j$ in $\sigma'$.

The following lemma shows that a planted graph does not carry significant correlations between two independent samples from the fixed-magnetization Ising model on it. 
\begin{lemma}\label{prop:typical-overlap}
    Let $\sig, \sig'$ denote independent samples from $\mu_{G, \eta}$ for $G\sim \hbg$. Then 
    $$\hat\E[\|R_{\sig,\sig'} - \rho_{\nu_{\eta}}^{\otimes 2}\|_2] = o(1)\ .$$
\end{lemma}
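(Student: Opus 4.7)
The plan is to reduce the statement to convergence of the first and second moments of each entry of the $4\times 4$ matrix $R_{\sigma,\sigma'}$, and then control each via the local weak convergence of the planted model combined with the fixed-magnetization non-reconstruction results proved in Section~\ref{secDecayBoundary}. By Jensen's inequality it suffices to show $\hat\E \E_{\mu^{\otimes 2}}\|R_{\sigma,\sigma'} - \rho_{\nu_\eta}^{\otimes 2}\|_2^2 = o(1)$, and since the matrix has only $16$ entries it further suffices to prove, for each pair of edge types $(i,j)$, both $\hat\E \E_{\mu^{\otimes 2}}[R_{ij}] \to (\rho_{\nu_\eta})_i (\rho_{\nu_\eta})_j$ and $\hat\E \E_{\mu^{\otimes 2}}[R_{ij}^2] \to ((\rho_{\nu_\eta})_i (\rho_{\nu_\eta})_j)^2$. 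Writing $R_{ij} = \frac{1}{|E|}\sum_e \chi_i(\sigma,e)\chi_j(\sigma',e)$ and using conditional independence of $\sigma,\sigma'$ given $\hat G$, the first moment is an average over edges of $\hat\E[\mu_{\hat G,\eta}(\chi_i(e))\,\mu_{\hat G,\eta}(\chi_j(e))]$, which by edge-transitivity of $\hat\Prob$ reduces to the single quantity $\hat\E[\mu(\chi_i(e_*))\mu(\chi_j(e_*))]$ at a uniformly random edge $e_*$.

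For the first moment, I would expose a ball $B_r(e_*)$ of radius $r$ with $1 \ll r = o(\log \log n)$ together with the boundary configurations $\sigma|_{\partial B_r(e_*)}$ and $\sigma'|_{\partial B_r(e_*)}$ and the local magnetizations. On the event that $B_r(e_*)$ is tree-like (probability $1-o(1)$) and the revealed boundary data $(\mathfrak m_r,\zeta_r)$ satisfies~\eqref{assumption-tree-magnetization}, Corollary~\ref{cor:ball-bdy-close-to-tree-measure} replaces the law of $\sigma|_{B_r(e_*)}$ (respectively $\sigma'|_{B_r(e_*)}$) by $\nu_\eta$ conditioned on the same boundary and magnetization, at total-variation cost $n^{-1/2+o(1)}$. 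Applying Lemma~\ref{lem:drop-magnetization-constraint-tree} for the typical set of $(\mathfrak m_r,\zeta_r)$ then erases the magnetization conditioning, and Lemma~\ref{lem:size-ell-non-reconstruction} (or Corollary~\ref{cor:various-forms-of-nonreconstruction}) erases the boundary conditioning on any constant-depth sub-ball containing $e_*$, both at total-variation cost $o_r(1)$. Thus each of the two conditional edge marginals $\mu(\chi_i(e_*))$, $\mu(\chi_j(e_*))$ concentrates at $(\rho_{\nu_\eta})_i$ and $(\rho_{\nu_\eta})_j$ respectively; since they are independent given $\hat G$, the product converges to $(\rho_{\nu_\eta})_i(\rho_{\nu_\eta})_j$ as desired.

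For the second moment, expand
\[
\hat\E \E_{\mu^{\otimes 2}}[R_{ij}^2] = \frac{1}{|E|^2}\sum_{e_1,e_2} \hat\E\bigl[\mu(\chi_i(e_1)\chi_i(e_2))\,\mu(\chi_j(e_1)\chi_j(e_2))\bigr],
\]
and split the sum according to whether the graph distance between $e_1,e_2$ is at most $2r$ or larger. Close pairs contribute at most $O(|E| d^{2r}/|E|^2) = O(d^{2r}/n) = o(1)$, which is negligible for $r=o(\log\log n)$. For well-separated pairs the balls $B_r(e_1), B_r(e_2)$ are disjoint trees with probability $1-o(1)$, and the same conditioning argument applied jointly to the two balls (revealing boundaries and local magnetizations of both $\sigma$ and $\sigma'$ on $\partial B_r(e_1)\cup \partial B_r(e_2)$) yields approximate independence of the four single-edge marginals. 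Consequently each two-point factor $\mu(\chi_i(e_1)\chi_i(e_2))$ is $(\rho_{\nu_\eta})_i^2+o(1)$, so the second moment tends to $((\rho_{\nu_\eta})_i(\rho_{\nu_\eta})_j)^2$, completing the proof.

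The main obstacle is the step that removes the magnetization conditioning: naively, exposing $\sigma|_{B_r(e_*)^c}$ imposes a nontrivial magnetization constraint on $B_r(e_*)$, and because $\nu_\eta$ can be an \emph{unstable} BP fixed point (for $\eta\in(-\eta_s,\eta_s)$) one cannot absorb this constraint by freely letting the magnetization inside $B_r(e_*)$ fluctuate. This is precisely the situation handled by the chain of lemmas in Section~\ref{secDecayBoundary}: Lemma~\ref{lem:drop-magnetization-constraint-tree} uses the local central limit theorem of Lemma~\ref{lem:local-CLT-rademachers} together with the spectral-gap-based conditional variance bound of Lemma~\ref{lem:conditional-variance-bound} to show that the magnetization constraint is asymptotically invisible on any constant-depth ball around the root for typical boundary data. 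Once this ingredient is in place the rest of the argument is a routine combination of local weak convergence and non-reconstruction.
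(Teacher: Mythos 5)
Your proof is correct and follows essentially the same route as the paper: local resampling on balls of radius $r=o(\log\log n)$, replacement of the conditional law inside by the tree conditional via Corollary~\ref{cor:ball-bdy-close-to-tree-measure}, erasure of the magnetization constraint for typical boundary data via Lemma~\ref{lem:drop-magnetization-constraint-tree}, non-reconstruction, and a first/second-moment argument. The only cosmetic difference is that the paper first invokes the Nishimori identity to replace one of the two independent samples by the planted configuration $\hsig$ and tracks the auxiliary statistics $N_s, Q_{s,t}$ against the fixed reference $\hsig$, whereas you work directly with two fresh samples $\sigma,\sigma'$ and the entries $R_{ij}$; these are equivalent by the Nishimori identity and rely on exactly the same chain of lemmas.
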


\begin{proof}
    By the Nishimori identity, we may instead consider a uniformly random assignment $\hsig \in \Omega_{\eta}$ against $\sig \sim \mu_{\hbg_{\eta}(\hsig)}$ and show that
    $|\hsig \cdot \sig - \eta^2| =o(1)$ with high probability.
    
    To prove this, let $N_{s} = |\{v : \hsig_v = s, \sig_v = +\}|$ for $s \in \{+,-\}$, and let $Q_{s,t} = |\{uv \in E(\hat{G}) : \hsig_u = s, \hsig_v = t, \sig_u = \sig_v = + \}|$.
    By the previous discussion, the quantities $N_s$ and $Q_{s,t}$ determine the collection of vertex-pair and edge-pair statistics, respectively. Let $\rho_{\eta}^+$ denote the fraction of edges which are $++$ under $\nu_{\eta}$. 

    With the aim of using the second moment method, we consider the mean and variance of $N_{s}$ and $Q_{s,t}$. By the Law of Total Expectation, we may condition on $\hsig$. For the first moment, we have
    \begin{align*}\hat\E[N_{s}\mid \hsig] & = \hat\E\Big[\sum_{v} \ind{\hsig_v = s, \sig_v = +}\mid \hsig\Big] = \hat\E\Big[\sum_{v:\hsig_v=s}\ind{\sig_v = +}\mid \hsig\Big] \\
    &= |\{v: \hsig_v =s\}| \cdot \hat\Prob(\sig_v =+\ |\ \hsig)\,,
    \end{align*}
    and similarly 
    \begin{align*} \hat\E[Q_{s,t} \mid \hsig] &=  \sum_{u,v} \ind{\hsig_u = s, \hsig_v = t}\hat\Prob(uv \in E(\hat{G}) \mid \hsig) \cdot \hat\Prob(\sig_u = +, \sig_v = + \mid uv \in E(\hat{G}))\\
    &= \hat\E[|\{uv \in E(\hat{G}) : \hsig_u = s, \hsig_v = t\}|] \hat\Prob(\sig_u = +, \sig_v = + \mid uv \in E(\hat{G}))
    \end{align*}
    To show that the first moments are close to those given by the tree measure, let $r$ be a slowly diverging function of $n$ that is  $o(\log \log n)$, say $r=\log \log \log n$. Let $v$ be a uniformly random vertex with $\hsig_v= s$. Draw $\sig \sim \mu_{\hbg_{\eta}(\hsig), \eta}$ and generate a configuration $\sig'$ by resampling the colors of vertices in $B_r(v)$ conditionally on the configuration in $B_r(v)^c$. Note that we still have $\sig' \sim \mu_{\hbg_{\eta}(\hsig), \eta}$. 
    
    By Lemma~\ref{lem:local-dtv-planted-to-tree}, with  probability $1-o(1)$, the boundary values on $\partial_r(v)$ and the residual magnetization in the interior of $B_r(v)$ are in the good set $A_r$ of \Cref{lem:drop-magnetization-constraint-tree}. Intersecting that set of boundary values and residual magnetizations with the set of Corollary~\ref{cor:ball-bdy-close-to-tree-measure} still leaves a high probability configuration which by Corollary~\ref{cor:ball-bdy-close-to-tree-measure} is such that the subsequent resampling inside the ball $B_r(v)$ may equivalently be done under the tree measure $\nu$ conditioned on the boundary conditions and residual magnetization. 
    By Lemma~\ref{lem:drop-magnetization-constraint-tree}, one has non-reconstruction in this conditional resampling stage inside, and the distribution of $\sig'_v$ is exactly $\nu + o(1)$. Thus, using that $\hsig$ has exactly $\frac{1+\eta}{2}n$ vertices with $+$ spins, we get 
    \begin{align*}
        \hat\E[N_{s} \mid \hsig] &= \left(\frac{1+\eta}{2}\right)^2n+o(n)\,,\\
        \hat\E[Q_{s,t} \mid \hsig] & = (\rho_{\eta}^+)^2\frac{dn}{2} + o(n)\ .
    \end{align*}

    We proceed similarly for the second moment. Let $v, w$ be uniformly random vertices such that $\hsig_v = s = \hsig_w$. Draw $\sig \sim \mu_{\hbg_{\eta}(\hsig), \eta}$ and generate $\sig'$ by resampling both $B_r(v)$ and $B_r(w)$ conditioning on the boundaries, noting that with high probability the balls are disjoint. By non-reconstruction following from the same arguments as above, we have that $\Prob(\sig'(v) = s, \sig'(w) = t\ |\ \hsig) = \nu(s)\nu(t) + o(1)$. 

    This tells us that $\E[N_s^2] = \E[\E[N_s^2|\hsig]] = \E[N_s]^2 + o(n^2)$ and $\E[Q_{s,t}^2] = \E[Q_{s,t}]^2 + o(n^2)$. The claim then follows from Chebyshev's inequality.
\end{proof}

\subsection{Second-moment computation}

We now put everything from the previous subsections together to prove \Cref{prop:truncated-moments}, from which Theorem~\ref{thmFreeEnergy} was already shown to follow. 

\begin{proof}[Proof of \Cref{prop:truncated-moments}]

Recall we define $\cE := \{G : \E_{\mu_{G,\eta}^{\otimes 2}}[ \| R_{\sig, \sig'} - \rho_{\nu_{\eta}}^{\otimes 2} \|_2 ] = o(1)\}$, where we recall that $R_{\sigma,\sigma'}$ implicitly depends on $G$. By the planted model definition~\eqref{defn:plantedgraph} and \Cref{prop:typical-overlap}, we have
$$\frac{\E[Y_{G}]}{\E[Z_{G}]} = \hat\Prob(\hat G \in \cE) = 1-o(1)\ .$$

For the second moment, let $A_G({\sig,\sig'}) = \{\| R_{\sig, \sig'} - \rho_{\nu_{\eta}}^{\otimes 2} \|_2 = o(1)\}$. 
By definition
\begin{align*}
    \E[Y_{G}^2] & = \E[ \sum_{\sigma,\sigma'} \ind{\cE} e^{\beta (H(\sigma)  + H(\sigma'))}] \leq (1-o(1)) \E[ \sum_{\sig, \sig'} \ind{A_G(\sigma,\sigma')} e^{\beta (H(\sig) + H(\sig'))}]\,.
\end{align*}
As in the first-moment computation leading up to~\eqref{eq:g-eta-rho-function}, we may reparametrize the summation with respect to the vertex and edge overlap statistics of the two configurations. 
Given vertex overlaps $M = (M_{++}, M_{+-}, M_{-+}, M_{--})$ where $M_{ij}$ is the fraction of vertices with spin $i$ in $\sig$ and spin $j$ in $\sig'$, the number of configuration pairs $\sig, \sig'$ achieving a given $M$ is counted by a multinomial that we may approximate using Stirling's formula as $\exp(-n\sum_{i,j \in \{\pm\}} M_{ij} \log M_{ij} + O(\log n))$. If we further specify the edge overlap statistics in the matrix $R$, the number of ways to place edges achieving $R$ is similarly approximated by $\exp(-\frac{nd}{2} \sum_{i,j} R_{ij} \log R_{ij} + O(\log n))$. Thus, we have 
\begin{align*}\E[Y_{G}^2] \leq \sum_{M, R} \ind{M,R \in A_G(\sig, \sig')}\exp\Big(n  ( & -\sum_{i,j \in \{\pm\}} M_{ij} \log M_{ij}-\frac{d}{2}\sum_{i,j} R_{ij} \log R_{ij}) \\
& + \frac{\beta nd}{2}(\sum_{i,j} R_{++,ij} + R_{ij,++}) + o(n)\Big)\ .\end{align*}
Because of the indicator function and continuity of the entropy function, we may replace $M$ and $R$ with their corresponding values in $\rho_{\nu_{\eta}}^{\otimes 2}$ and only incur $o(1)$ error. This gives us exactly $-\sum_{i,j \in \{\pm\}} M_{ij} \log M_{ij} = 2\mathsf{H}\left(\frac{1+\eta}{2}\right)\log 2$ for the vertex overlap and $-\sum_{i,j} R_{ij} \log R_{ij} = 2\Psi(\eta, \rho)$ and $\sum_{i,j} R_{++,ij} + R_{ij,++} = 2\rho$ for the edge overlap. Thus, recalling~\eqref{eq:g-eta-rho-function}, 
\[\E[Y_{G}^2] \leq \exp(2n f_{d, \beta}(\eta) + o(n))= e^{o(n)} \E[Z_{G}]^2\,.  \qedhere\]
\end{proof}

\subsection{Proofs of consequences}\label{subsec:consequences}
We now deduce  corollaries of the free energy density result of Theorem~\ref{thmFreeEnergy}.

\begin{proof}[\textbf{\emph{Proof of Corollary~\ref{corZBconj}}}]
First note that setting $\eta=0$ and maximizing~\eqref{eq:g-eta-rho-function} over $\rho$ gives $\rho_{0}  = e^{\beta}/(1+e^{\beta})$.  Plugging this value into~\eqref{eq:g-free-energy-function} yields the formula~\eqref{eqfdb0} for the free energy density: $f_{d,\beta}(0) = \log 2 + \frac{d}{2} \log \left( \frac{1+ e^{\beta}}{2} \right )$.  Likewise, for $\beta<\beta_r$, ~\cite{coja2022ising} proved that the free energy density of the anti-ferromagnetic Ising model is $\log 2 + \frac{d}{2} \log \left( \frac{1+ e^{-\beta}}{2} \right ) $.  

We will now show that with high probability over $G \sim \bg_d(n)$,
\begin{align*}  \frac{1}{n} \E_{ \mufix_{G,\beta,0}} [H_G(\sigma)] &= \frac{\partial}{\partial \beta} \left [  \log 2 + \frac{d}{2} \log \left( \frac{1+ e^{\beta}}{2} \right )\right ] +o(1) =  \frac{d}{2} \cdot \frac{e^{\beta}}{1+ e^{\beta}} +o(1) 
     \intertext{and}
     \frac{1}{n} \E_{\muanti_{G,\beta}}  [H_G(\sigma)] &= - \frac{\partial}{\partial \beta} \left [  \log 2 + \frac{d}{2} \log \left( \frac{1+ e^{-\beta}}{2} \right )\right ] +o(1) =  \frac{d}{2}  \cdot \frac{1}{1+ e^{\beta}}  +o(1) \,.
\end{align*}
    Adding these expressions together gives $\frac{d}{2} + o(1)$, yielding Corollary~\ref{corZBconj}.  We prove the upper bound for the first equation since the proofs of other bounds are nearly identical.  The proof essentially just follows that of Griffith's Lemma (e.g.,~\cite{ruelle1969statistical}), but we take a little care to combine it with the high probability statement.  

    Fix $\eps>0$.  We  show that with probability $1-o(1)$, 
    \begin{equation}
        \label{eqFderv1}
    \frac{1}{n} \E_{ \mufix_{G,\beta,0}} [H_G(\sigma)] < \frac{d}{2} \cdot \frac{e^{\beta}}{1+ e^{\beta}}  + \eps \,.
\end{equation} 
Choose $\delta >0$ so that 
\begin{align}
\label{eqderivdef}
  \left |  \frac{f_{d,\beta+\delta}(0)-f_{d,\beta}(0) }{\delta }  - \frac{d}{2} \cdot \frac{e^{\beta}}{1+ e^{\beta}}  \right|  < \eps /2 \,,
\end{align}
 and $\beta +\delta < \beta_r$, which is possible by the definition of a derivative and the assumption that $\beta<\beta_r$. 
A standard computation shows that for any $G$, 
\begin{align*}
    \frac{1}{n} \E_{ \mufix_{G,\beta,0}} [H_G(\sigma)] &= \frac{1}{n} \frac{\partial}{\partial \beta} \log Z_{G,\beta}(0) \,,
\end{align*}
and moreover, $\log Z_{G,\beta}(0)$ is a  convex function of $\beta$ (which one can see  by taking the second derivative of $\log Z_{G,\beta}(0)$).  This means that for any $\delta>0$,
\begin{align}
\label{eqFEconvex}
     \frac{1}{n} \E_{ \mufix_{G,\beta,0}} [H_G(\sigma)] &\le \frac{\frac{1}{n} \log Z_{G,\beta+\delta}(0) - \frac{1}{n} \log Z_{G,\beta}(0)}{\delta} \,.
\end{align}
Since $\beta+\delta<\beta_r$, we can apply 
Theorem~\ref{thmFreeEnergy} (and a union bound) to say that with high probability over $G \sim \bg_d(n)$, we have
\begin{align}
\label{eqwhoZ1}
     \frac{1}{n} \log Z_{G,\beta+\delta}(0) &< f_{d,\beta+\delta}(0) + \varepsilon \delta/4 \intertext{and}
     \nonumber
   - \frac{1}{n} \log Z_{G,\beta}(0) &< - f_{d,\beta}(0) + \varepsilon \delta/4 \,.   
\end{align}
Combining the inequalities~\eqref{eqderivdef},\eqref{eqFEconvex},\eqref{eqwhoZ1} together yields~\eqref{eqFderv1}. The upper bound for $\E_{{\mu}_{G,\beta}^{\mathsf{anti}}}[H_G(\sigma)]$ is identical. 

    The proof for the lower bound (using convexity to now lower bound $\frac{1}{n} \E_{ \mufix_{G,\beta,0}} [H_G(\sigma)]$ by the left approximation $\frac{\frac{1}{n} \log Z_{G,\beta}(0) - \frac{1}{n} \log Z_{G,\beta-\delta}(0)}{\delta}$) and its analogue for $ \frac{1}{n} \E_{\muanti_{G,\beta}}  [H_G(\sigma)]$ follow symmetrically.
\end{proof}

\begin{proof}[\textbf{\emph{Proof of Corollary~\ref{CorLDrate}}}]
By Lemma~\ref{lem:switching}, since both $\frac{1}{n} \log Z$ and $\frac{1}{n} \log Z(\eta)$ are $C(\beta,d)/n$ Lipschitz, one has for some $c(\beta,d)>0$, 
\begin{align*}
    \mathbb P\left(\left|\frac{1}{n} \log Z_G^{\mathsf{Ising}} - \E\left[\frac{1}{n} \log Z_G^{\mathsf{Ising}}\right]\right| >\eps\right)  \le 2e^{ - c\eps^2 n}\,, \end{align*}
    and similarly, 
    \begin{align*}\max_{\eta}\mathbb P\left( \left|\frac{1}{n} \log \Zfix_G(\eta) - \E\left[\frac{1}{n} \log \Zfix_G(\eta)\right]\right| >\eps\right)\le 2e^{ - c\eps^2 n}\,.
\end{align*}
Thus, it suffices to show that for every $\eta$ fixed, as $n\to\infty$ 
\begin{align*}
    |\mathbb E[\frac{1}{n} \log \Zfix(\eta)] - \mathbb E[\frac{1}{n} \log \Zising]| \to f_{d,\beta}(\eta) - \max_{\eta'\in [-1,1]}f_{d,\beta}(\eta')\,.
\end{align*}
This in turn follows immediately from Theorem~\ref{thmFreeEnergy}. 
\end{proof}

\medskip
\noindent \textbf{\emph{Proof of Theorem~\ref{thmLocalWeak}.}}

To prove Theorem~\ref{thmLocalWeak}, we need two ingredients. The first is to show that events with exponentially small probability in the planted model have exponentially small probability in the configuration model. 
This is a  consequence of the success of the second-moment method and the definition of the planted model, utilized in, e.g.,~\cite{achlioptas2008algorithmic}. 
\begin{lemma}\label{lem:exponential-equivalence}
Fix $d \ge 3$, $\beta< \beta_r$, and $\eta \in [-1,1]$.  Let $(G,\sig)$ be a graph $G\sim \Prob$ drawn from the configuration model and a sample $\sigma\sim \mu_{G,\beta,\eta}$, and let $(\hat G,\hsig)$ be drawn from the planted model $\hat \Prob$.  Then for any event $\mathcal A$, if $\hat \Prob((\hat G,\hsig)\in \cA) \leq \exp(-\Omega(n))$ then we have  $\Prob((G,\sig)\in  \cA) \leq \exp(-\Omega(n))$ .
\end{lemma}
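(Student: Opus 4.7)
\medskip

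The plan is to use the Radon--Nikodym derivative between the two distributions on graph-configuration pairs and then exploit the fact that the second-moment method already shows $Z_G(\eta)$ is not much smaller than $\E[Z_G(\eta)]$ except with subexponential probability. From the two equivalent descriptions of $\hat\Prob$ in Section~\ref{secPlantedDef}, one reads off that for any pair $(G,\sigma)$ with $\sigma \in \Omega_\eta$,
\[
\frac{\Prob(G)\mu_{G,\beta,\eta}(\sigma)}{\hat\Prob((G,\sigma))} \;=\; \frac{\E[Z_G(\eta)]}{Z_G(\eta)}\,,
\]
so for any event $\cA$,
\[
\Prob((G,\sigma)\in\cA) \;=\; \hat\E\Big[\ind{(\hat G,\hsig)\in\cA}\cdot \frac{\E[Z_G(\eta)]}{Z_{\hat G}(\eta)}\Big]\,.
\]

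The strategy is to split on the ``good'' event that $Z_G(\eta)$ is close to $\E[Z_G(\eta)]$ on the exponential scale. Fix $\varepsilon>0$ and set $B_\varepsilon = \{G : Z_G(\eta) \ge e^{-\varepsilon n}\E[Z_G(\eta)]\}$. On $B_\varepsilon$ the Radon--Nikodym factor is at most $e^{\varepsilon n}$, giving
\[
\Prob(\cA \cap \{G\in B_\varepsilon\}) \;\le\; e^{\varepsilon n}\,\hat\Prob(\cA)\,.
\]
For the complement, I would bound $\Prob(B_\varepsilon^c)$ directly: by the proof of~\Cref{thmFreeEnergy} (specifically equation~\eqref{eqn:expectation-log-Z}) we have $\E[\log Z_G(\eta)] = \log \E[Z_G(\eta)] - o(n)$, and by the switching concentration in~\eqref{eqn:logZconcentration},
\[
\Prob(\log Z_G(\eta) \le \E[\log Z_G(\eta)] - \varepsilon n/2) \;\le\; 2\exp\!\big(-\Omega(\varepsilon^2 n)\big)\,,
\]
so for $n$ large, $\Prob(B_\varepsilon^c) \le 2\exp(-\Omega(\varepsilon^2 n))$.

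Putting these together, if $\hat\Prob(\cA) \le e^{-cn}$ for some $c>0$, choosing $\varepsilon = c/2$ yields
\[
\Prob(\cA) \;\le\; e^{\varepsilon n}\hat\Prob(\cA) + \Prob(B_\varepsilon^c) \;\le\; e^{-cn/2} + 2e^{-\Omega(c^2 n)} \;=\; e^{-\Omega(n)}\,,
\]
as desired. There is no real obstacle here; the lemma is essentially a bookkeeping consequence of the two inputs already established, namely the explicit form of $d\Prob/d\hat\Prob$ and the lower tail concentration of $\log Z_G(\eta)$ around $\log \E[Z_G(\eta)]$ that the second-moment argument provides. The only subtlety is that one must be careful to allow $\varepsilon$ to depend on the rate $c$ of the hypothesis, so that the exponential gain from $\hat\Prob(\cA)$ dominates the exponential loss from the Radon--Nikodym factor on $B_\varepsilon$.
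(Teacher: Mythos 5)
Your proposal is correct and is essentially the paper's proof: both identify the change of measure $d\hat\Prob/d\Prob = Z_G/\E[Z_G]$, split on the event that $Z_G$ is within an exponential factor $e^{\eps n}$ of $\E[Z_G]$, bound the bad event via the lower-tail concentration of $\log Z_G$ around $\log\E[Z_G]$ (from~\eqref{eqn:expectation-log-Z} and~\eqref{eqn:logZconcentration}), and choose $\eps$ small relative to the rate in the hypothesis. The only cosmetic difference is that you phrase the good-event bound via the inverse Radon--Nikodym factor $\E[Z]/Z \le e^{\eps n}$, whereas the paper uses the equivalent inequality $\ind{\mathcal B^c} \le e^{\eps n/2} X$ with $X = Z/\E Z$.
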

\begin{proof}
   By assumption there exists some $\eps>0$ so that $\hat \Prob (\cA) \le e^{-\eps n}$. Let $X_G = \frac{Z_G}{\E Z_G}$ and let $\mathcal B = \{G:  X_G < e^{-\eps n/2} \}$.  In~\eqref{eqn:expectation-log-Z}, we showed that $\E[\log Z] = \log (\E [Z])+o(n)$ and thus by Lemma~\ref{lem:switching}, we get $\Prob(\mathcal B) \le \exp (-c(\eps) n) $.

By the definition of the planted model, we have 
\begin{align*}
    \hat \Prob ((\hat G,\hsig) \in \cA  ) &= \sum_{(G,\sig) \in \cA} \Prob( G) \frac{Z_G}{\E Z_G} \mu_G(\sig) = \E  [ X \ind{\mathcal A}] \,.
\end{align*}
Using this and $\ind{\mathcal B^c} \le e^{\eps n/2} X$, we can write 
   \begin{align*}
       \Prob( (G,\sig) \in \cA) &\le  \E[ \ind{\cA}  \ind{ \mathcal B^c}]+ \Prob(\mathcal B) \le  e^{\eps n /2 } \hat\Prob (\cA   )  + e^{-c n}\,,
   \end{align*}
   which is at most $e^{-\eps n /2} + e^{-c n}$ as desired. 
\end{proof}

The next essential ingredient is to show that the probability that the empirical distribution of local neighborhoods in the planted model deviates significantly  from that of the tree measure $\nu_\eta$ is exponentially small.  This uses the following new concentration result (Lemma~\ref{planted-concentration}) for Lipschitz functions in the planted model (a weighted configuration model), analogous to \Cref{lem:switching} for the usual configuration model. 
The proof uses local central limit theorems and coupling arguments to correct discrepancies between planted graphs as monochromatic and bichromatic edges get different weights under the planted model. It is  deferred in a self-contained manner to \Cref{secConcentration} and may be of independent interest. 

\begin{lemma}\label{planted-concentration}
Fix any $d\ge 3$, $\beta>0$, and $\eta \in (-1,1)$. Suppose for some $L$ that the function $X$ defined on pairs of graphs and spin configurations $(G,\sigma)$ is such that for each $\sigma \in \Omega_\eta$, if $G,G'$ differ in exactly one switch, then $|X(G,\sigma) - X(G',\sigma)|\le L$.  Let $\hat \Prob_{\hbg(\hsig)}, \hat \E _{\hbg(\hsig)}$ denote the  distribution~\eqref{defn:plantedconfig-graph} of the planted model conditioned on the spin configuration $\hsig$.

Then there exists $C(d,\beta)>0$ such that  for all $\hsig \in \Omega_{\eta}$ and all $t>0$, 
    $$\hat\Prob_{\hbg(\hsig)}(|X - \hat\E_{\hbg(\hsig)} X | \geq t ) \leq 2\exp\Big(-\frac{t^2}{ C n L^2}\Big)\,.$$
\end{lemma}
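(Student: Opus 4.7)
The plan is to exploit the fact that the Boltzmann weight $e^{\beta H_G(\hat\sigma)}$ depends on $G$ only through $K = K(G)$, the number of bichromatic edges of $G$ under $\hat\sigma$. This yields a two-step description of $\hat\Prob_{\hat G(\hat\sigma)}$: the marginal of $K$ has probability mass function $p(k) \propto b(k) e^{\beta(dn/2 - k)}$, where $b(k)$ is the counting formula~\eqref{eqn:bi-counting}; and conditional on $K = k$, the matching is uniform over matchings of the $dn$ half-edges with exactly $k$ bichromatic pairs. By Stirling's formula combined with the strict concavity of $g_{d,\beta,\eta}(\rho)$ (with second derivative bounded above by a negative constant, as computed in Lemma~\ref{lem:edge-statistics-max}), $\log p(k)$ is strictly concave with curvature of order $1/n$ around its maximum $k^\ast = (1-\rho_\eta) dn/2$. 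This gives the sub-Gaussian tail $\hat\Prob_{\hat G(\hat\sigma)}(|K - k^\ast| \geq s) \leq 2\exp(-c s^2/n)$ for some $c = c(d,\beta) > 0$.

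Conditional on $K = k$, the uniform distribution factors cleanly: first select uniformly a subset of $k$ of the $n_+ d$ plus half-edges and $k$ of the $n_- d$ minus half-edges to form the bichromatic endpoints, then sample three independent uniform matchings --- a bipartite matching pairing the chosen $k$ plus with the chosen $k$ minus half-edges, and monochromatic matchings on the remaining $n_+ d - k$ plus and $n_- d - k$ minus half-edges. Any switch internal to one of these three uniform matchings preserves the partition and hence $K$, and is a switch in the original $d$-regular graph in the sense of Lemma~\ref{lem:switching}; so the hypothesis gives $|X(G,\hat\sigma) - X(G',\hat\sigma)| \leq L$ for such a switch. Applying the standard switching-martingale argument underlying Lemma~\ref{lem:switching} to each of these three independent uniform matchings (using a bipartite variant for the middle one), I obtain, uniformly in $k$,
\[
\hat\Prob_{\hat G(\hat\sigma)}\Bigl(\bigl|X - \hat\E_{\hat G(\hat\sigma)}[X \mid K]\bigr| \geq t \,\Bigm|\, K = k\Bigr) \leq 2\exp\!\bigl(-t^2/(C_1 n L^2)\bigr).
\]

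The remaining and most delicate step, and the main obstacle, is to show that $k \mapsto \hat\E_{\hat G(\hat\sigma)}[X \mid K = k]$ is Lipschitz with constant $O(L)$. The plan here is a switching-based coupling between the uniform measures on $K = k$ and $K = k+2$ matchings: pick one monochromatic $(+,+)$ edge and one monochromatic $(-,-)$ edge (uniformly among such pairs) and rewire them into two bichromatic edges. The source and target of this rewiring differ by a single switch, so the Lipschitz hypothesis gives a pointwise change of at most $L$. The ratios of edge-type counts are explicit via~\eqref{eqn:bi-counting}, so one can correct the size-biasing of this map to a genuine coupling (or equivalently, reweight along a reversible switching chain between the $K$-slices) to conclude $|\hat\E_{\hat G(\hat\sigma)}[X \mid K = k+2] - \hat\E_{\hat G(\hat\sigma)}[X \mid K = k]| \le C_2 L$. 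Combined with the sub-Gaussian tail of $K$, this shows that $\hat\E_{\hat G(\hat\sigma)}[X \mid K] - \hat\E_{\hat G(\hat\sigma)}[X]$ is itself sub-Gaussian with variance proxy $O(n L^2)$. Summing the two sub-Gaussian contributions (conditioning on $K$ in a window $|K-k^\ast| \leq s \sqrt{n/c}$ and using a union bound outside it) yields the stated bound $\hat\Prob_{\hat G(\hat\sigma)}(|X - \hat\E_{\hat G(\hat\sigma)} X| \geq t) \leq 2 \exp(-t^2/(CnL^2))$.
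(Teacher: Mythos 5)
Your approach is genuinely different from the paper's. The paper runs a single edge-exposure Doob martingale on the full $\beta$-weighted configuration model and bounds its increments via a two-layer coupling (Lemma~\ref{lem:localCLT-planted}, Lemma~\ref{bounded-coupling}, Claim~\ref{claim:lipschitz-coupling}): given two candidate partners for the half-edge being exposed, it couples the remaining bichromatic-edge counts through Strassen's criterion and a local CLT, then couples the conditional matchings given those counts. You instead condition directly on the bichromatic count $K$, prove sub-Gaussianity of $K$ from log-concavity of $p(k)$, prove conditional sub-Gaussianity given $K$ by a switching martingale on the factored structure, prove $O(L)$-Lipschitz dependence of $k \mapsto \hat\E[X \mid K=k]$ via a rewiring coupling, and combine. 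This is a cleaner modularization, and where it is worked out it is correct.

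There is, however, a genuine gap in the conditional concentration step. Conditional on $K=k$, the law of the matching is a \emph{mixture} over the choice of which $k$ plus half-edges and which $k$ minus half-edges serve as bichromatic endpoints; only after further conditioning on that pair of subsets $(S,T)$ does it factor into the three independent uniform matchings you describe. Your switching martingale therefore controls $X - \hat\E[X \mid K=k, S, T]$, not $X - \hat\E[X \mid K=k]$, and the remaining randomness in $(S,T)$ is unaccounted for. This is fixable: one can also expose the elements of $S$ and $T$ in the martingale and check that replacing one half-edge $s_1 \in S$ by $s_2 \notin S$, while keeping the matchings otherwise identified, changes the resulting graph by a single switch (swap the partners of $s_1$ and $s_2$), so those increments are also $O(L)$-bounded. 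But as written that step is missing, and since it is structurally the same kind of coupling needed elsewhere in your argument, it should be made explicit.

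One small observation in the other direction: the rewiring coupling you propose for the Lipschitz step --- pick a uniform $(+,+)$ edge and a uniform $(-,-)$ edge in a $K=k$ matching and cross them to produce a $K=k+2$ matching --- is in fact an \emph{exact} coupling, not a size-biased one. The pushforward assigns equal mass to each $K=k+2$ matching, because each such matching has exactly $\binom{k+2}{2}$ preimages and the transition probability from any preimage is the same; the identity $b(k+2)/b(k) = (n_+ d - k)(n_- d - k)/((k+2)(k+1))$ from~\eqref{eqn:bi-counting} confirms the normalizations match. So the ``size-biasing correction'' you flag as the main obstacle is not actually needed, and the Lipschitz bound $|\hat\E[X\mid K=k+2] - \hat\E[X\mid K=k]| \le L$ follows immediately on the bulk of $K$'s range, with the sub-Gaussian tail of $K$ handling the extreme values of $k$.
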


We can now combine these two ingredients to prove Theorem~\ref{thmLocalWeak}.
\begin{proof}[Proof of Theorem~\ref{thmLocalWeak}]
It is sufficient to show that if $A(\sigma(B_r(v))$ is an event of the configuration on an $r$-neighborhood of a vertex $v\in V$, then for all such $A$, one has that the empirical frequency of $A$ is close to its expectation on the tree under $\nu_{\eta}$: 
\begin{align}\label{eq:local-weak-wts}
    \lim_{n\to\infty}\E\Big[\mu_{G,\eta}\Big(\Big|\frac{1}{n} \sum_{v} \ind{\sigma(B_r(v))\in A} - \nu_\eta(A)\Big|>\eps \Big)\Big] =0\,.
\end{align}
Markov's inequality would imply that with  probability $1-o(1)$, $G\sim \bg$ is such that $$\mu_{G,\eta}\Big(\Big|\frac{1}{n} \sum_{v} \ind{\sigma(B_r(v))\in A} - \nu(A)\Big|>\eps\Big)\to 0\,.$$ 

By Lemma~\ref{lem:exponential-equivalence}, it is sufficient to show that the probability in~\eqref{eq:local-weak-wts} is exponentially small in $n$: namely if
\begin{align*}
    \hat \Prob\Big(\Big|\frac{1}{n} \sum_{v} \ind{\hsig(B_r(v))\in A} - \nu(A)\Big|>\eps \Big) \le e^{ - \Omega(n)}\,.
\end{align*}
and it will necessarily be exponentially unlikely also under $\mathbb E[ \mu_{G,\eta}[\cdot]]$

Note that the law of $X(\hat \sigma,\hat G)= \frac{1}{n} \sum_{v\in V} \ind{\hsig(B_r(v)) \in A}$ is invariant under choice of $\hat \sigma \in \Omega_\eta$. 
By fixing $\hsig \in \Omega_{\eta}$, drawing $\hat G \sim \hbg(\hsig)$, and applying Lemma~\ref{planted-concentration}  to $X$,  which is $C_r/n$-Lipschitz with respect to switches of $\hat G$, it concentrates exponentially about $\hat \Prob(\hsig(B_r(v))\in A)$. By Lemma~\ref{lem:local-dtv-planted-to-tree}, that probability is within $O(n^{-1/2+\delta})$ of its tree probability $\nu(A)$. Therefore, for any fixed $\eps>0$, for $n$ large, the error in expectations is at most $\eps/2$ and the exponential concentration then gives the desired $e^{ -\Omega(n)}$ upper tail bound. 
\end{proof}

\section{Sub-exponential mixing of dynamics from symmetric initializations}
\label{secDynamics}

Our main aim in this section is to show that Glauber dynamics restricted to $\Omega^+ = \{\sigma: \sum_v \sigma_v\ge 0\}$, meaning rejecting any update that takes it out of $\Omega^+$, mixes in sub-exponential time when $\beta \in (\beta_c,\frac{1}{4\sqrt{d-1}})$.  

\begin{theorem}\label{thm:positive-magnetization-mixing-time}
    Fix $d\ge 3$ and $\beta\in (\beta_c,\frac{1}{4\sqrt{d-1}})$ and consider Ising Glauber dynamics restricted to $\Omega^+$ on $G\sim \bg_d(n)$. With probability $1-o(1)$, $G$ is such that the mixing time is $e^{o(n)}$. 
\end{theorem}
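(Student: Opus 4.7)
The plan is to apply the Madras--Randall projection-restriction decomposition~\cite{madras2002markov} to $\Omega^+$, partitioning it into the fixed-magnetization slices $\Omega_\eta$ for $\eta \in [0,1]$. The restricted chain on each slice is comparable, by a standard Markov-chain-comparison argument, to the Kawasaki dynamics at magnetization $\eta$, for which Bauerschmidt--Bodineau--Dagallier~\cite{bauerschmidt2023kawasaki} prove a $\poly(n)$ mixing-time bound uniformly in $\eta$ whenever $\beta < \tfrac{1}{4\sqrt{d-1}}$. It therefore suffices to bound, by a sub-exponential factor and with probability $1-o(1)$ over $G\sim\bg_d(n)$, the mixing time of the one-dimensional \emph{projection chain} $P_H$ on $\{n/2,\ldots,n\}$ with transitions
\[
P_H(k,k+1) = \tfrac{1}{2}\,\frac{Z_{k+1}}{Z_k+Z_{k+1}}, \qquad Z_k := Z^{\mathsf{fix}}_{G,\beta}\bigl(\tfrac{2k}{n}-1\bigr),
\]
and stationary measure $\pi(k)\propto Z_k$.

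To analyze $P_H$, the first step is to rewrite the key ratio as a Gibbs expectation,
\[
\frac{Z_{k+1}}{Z_k} = \frac{1}{k+1}\,\E_{\mu^{\mathsf{fix}}_{G,\beta,\eta_k}}\Bigl[\sum_v \ind{\sigma_v=-1}\,e^{\beta \sum_{w\sim v}\sigma_w}\Bigr], \qquad \eta_k := \tfrac{2k}{n}-1,
\]
and observe that the integrand depends only on the configuration in the depth-$1$ neighborhood of $v$. Theorem~\ref{thmLocalWeak} then implies that with probability $1-o(1)$ over $G$, this empirical average converges, uniformly in $\eta\in[0,1]$, to the deterministic tree expectation
\[
F(\eta) := \frac{1}{p_-(\eta)}\,\E_{\nu_\eta}\Bigl[\ind{\tau_{v_0}=-1}\,e^{\beta\sum_{w\sim v_0}\tau_w}\Bigr], \qquad p_-(\eta):=\tfrac{1-\eta}{2}.
\]
Using the broadcast description of $\nu_\eta$ from Section~\ref{secTrees}, $F(\eta)$ can be computed explicitly and shown to be strictly decreasing on $[0,1]$ with $F(0)>1$ and a unique crossing $F(\eta_\ast)=1$ at the maximizer $\eta_\ast$ of $f_{d,\beta}|_{[0,1]}$. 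Thus the projection chain has an unstable fixed point at $\eta=0$ and a unique stable fixed point at $\eta_\ast$, with $\Omega(1)$ drift toward $\eta_\ast$ throughout any compact subset of $(0,1)\setminus\{\eta_\ast\}$.

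With this drift in hand, the mixing time of $P_H$ is controlled by the standard birth-death formula in terms of $\pi(k)\propto Z_k$ and $P_H(k,k+1)$. By Theorem~\ref{thmFreeEnergy}, $\frac{1}{n}\log Z_k = f_{d,\beta}(\eta_k)+o(1)$ with high probability; by the analysis of Section~\ref{secTrees}, the function $f_{d,\beta}$ on $[0,1]$ is unimodal with a local minimum at $\eta=0$ and global maximum at $\eta_\ast$. Hence $\log\pi$ is unimodal on $\{n/2,\ldots,n\}$ up to an additive $o(n)$ error, and no free-energy barrier of depth $\Omega(n)$ lies between the starting set and the mode. Plugging into the birth-death mixing formula and tracking the $o(n)$ additive uncertainty in $\log Z_k$ gives a projection-chain mixing time of $e^{o(n)}\cdot\poly(n)$; combined with the polynomial Kawasaki bound and the Madras--Randall estimate, this yields the theorem.

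The main obstacle is precisely the non-quantitative nature of the $o(1)$ errors in Theorems~\ref{thmFreeEnergy} and~\ref{thmLocalWeak}: near the saddle $\eta=0$ and the maximum $\eta_\ast$, where the drift of $F$ vanishes linearly, the random fluctuations of $\log(Z_{k+1}/Z_k)$ could in principle create spurious local wells in $\log\pi$. The point is that Theorem~\ref{thmFreeEnergy} bounds the depth of any such well by $o(n)$, which translates to only an $e^{o(n)}$ multiplicative penalty in the birth-death mixing formula---exactly the bound we claim. As noted in Section~\ref{secOutline}, any quantitative strengthening of the sampling error to $r_n=o(1)$ would upgrade the bound to $\exp(O(r_n^2 n))\cdot\poly(n)$.
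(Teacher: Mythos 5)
Your overall framework matches the paper's: Madras--Randall decomposition into magnetization slices, Kawasaki dynamics~\cite{bauerschmidt2023kawasaki} for the restricted chains, and a one-dimensional projection chain analyzed via the ratio $Z_{k+1}/Z_k$ and local weak convergence. However, there are two concrete problems in how you set up the argument.

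First, ``partitioning $\Omega^+$ into the fixed-magnetization slices $\Omega_\eta$'' and applying Madras--Randall does not work as stated: every Glauber update changes the magnetization by $\pm 1$, so the restriction of $P^+$ to a single $\Omega_\eta$ has no off-diagonal transitions at all, and the restricted spectral gap is zero. The paper handles this by comparing $P^+$ to a hybrid Glauber--Kawasaki chain via canonical paths (Proposition~\ref{prop:augmented-chain}) and then decomposing that hybrid chain along overlapping slices $A_i = \Omega_i\cup\Omega_{i+1}$ (Lemma~\ref{lem:fast-mixing-of-slice-chain}); ``comparable to Kawasaki'' only makes sense after those two steps. Second, your characterization of $F$ is wrong: in the non-uniqueness regime $F(0)=1$ with $F'(0)>0$ (not $F(0)>1$), and $F$ is not monotone---it rises above $1$ on $(0,m_\ast)$ and then falls back through $1$ at $m_\ast=\eta_\ast$ (Lemma~\ref{lem:properties-of-ratio-of-part-functions}). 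Your next sentence, that $\eta=0$ is an unstable fixed point, is the correct statement but is inconsistent with $F(0)>1$, and the distinction matters: the need to \emph{escape} the neighborhood of a true zero-drift saddle at $\eta=0$ is precisely what costs the $e^{O(r_n n)}$ factor in the paper's coupling argument.

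On the projection chain, your proposed analysis is a genuinely different route from the paper's. You plug $\log Z_k = n f_{d,\beta}(\eta_k) + o(n)$ (Theorem~\ref{thmFreeEnergy}) into a birth-death barrier/bottleneck bound, using the unimodality of $f_{d,\beta}$ on $[0,1]$ and the fact that $P_H(k,k+1)+P_H(k+1,k)=\tfrac12$ (so one of the two is always $\ge \tfrac14$) to control the conductance. This does give $e^{o(n)}$ and is arguably more elementary than the paper's explicit drift/coupling argument (Lemma~\ref{lem:conditions-guaranteeing-fast-birth-death-mixing}). What the paper's route buys is the explicit dependence on the error $r_n = o(1)$ in $z_{k+1}/z_k - F(\eta_k)$: a quantitative $r_n$ could be upgraded to $\exp(O(r_n^2 n))$, potentially polynomial. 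Your route is tied to the additive $o(n)$ error on $\log Z_k$; even with $r_n = n^{-1/2}\sqrt{\log n}$ control on the ratio, the accumulated error on $\log Z_k$ is $O(n r_n) = O(\sqrt{n\log n})$, so the barrier bound would still only give $e^{O(\sqrt{n\log n})}$, not polynomial.
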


After proving Theorem~\ref{thm:positive-magnetization-mixing-time}, we will then use the spin-flip symmetry of the Ising model, its dynamics, and the uniform-at-random initialization to deduce Theorem~\ref{thmIsingMixing}. 

We prove Theorem~\ref{thm:positive-magnetization-mixing-time} by decomposing the Glauber dynamics into two Markov chains using the projection--restriction tools introduced in \cite{madras2002markov}; one will essentially move within one slice of configurations of a given magnetization using Kawasaki dynamics, for which fast mixing up to $1/(4\sqrt{d-1})$ was recently shown in~\cite{bauerschmidt2023kawasaki} (though a bit more care is needed to do the comparison arguments), and the other will be a one-dimensional chain moving on the free energy landscape given by the partition functions of the fixed-magnetization Ising model for different magnetizations $\eta n \in \{0,...,n\}$. 

This section will be predominantly concerned with the full (not fixed-magnetization) Ising model. To avoid writing $\mu^{\mathsf{Ising}}$ everywhere, we now switch to the common Markov chain notation of $\pi = \mu^{\mathsf{Ising}}$ for stationary distributions. 

\subsection{From Kawasaki to Glauber-Kawasaki}

Let $P^+$ denote the Glauber dynamics transition matrix restricted to $\Omega^+$. Define a new hybrid chain $P_{\GK}^+$ on $\Omega^+$ where at each step, with probability $\frac12$ the chain performs a step of the Glauber dynamics (restricted to $\Omega^+$) and with probability $\frac12$, the chain performs a step of the Kawasaki dynamics.

\begin{definition}\label{def:Kawasaki-dynamics}
    The Kawasaki dynamics on $\Omega_\eta$ (corresponding to $k=\frac{1+\eta}{2} n$ pluses and $n-k$ minuses) is the discrete-time Markov chain which at each time picks a uniform at random pair of vertices $x,y\in V$ such that $\sigma_x \ne \sigma_y$ and swaps the spins on those two sites with one another to get the new configuration $\sigma^{x\leftrightarrow y}$ with probability $\pi(\sigma^{x\leftrightarrow y})/(\pi(\sigma) + \pi(\sigma^{x\leftrightarrow y}))$. 
\end{definition}

It is easy to verify that for each fixed $k$, the Kawasaki dynamics is reversible with respect to $\pi_k:=\pi(\cdot \mid \Omega_k)$ where we recall that $\Omega_k = \binom{[n]}{k}$ is the set of configurations with $k$ plus spins. Therefore, the Glauber--Kawasaki dynamics $P^+_{\GK}$ is reversible with respect to the full positive-magnetization measure $\pi^+:=\pi (\cdot \mid \Omega^+)$. 

In what follows, for an ergodic reversible Markov chain with transition matrix $P$, we use $\gap(P)$ to denote its (absolute) spectral gap. 

\begin{proposition}\label{prop:augmented-chain}
For any $\beta$ and any graph, the hybrid Glauber-Kawasaki chain satisfies
    $$\gap(P^+) \ge (3n e^{\beta d})^{-1} \gap(P_{\GK}^+) \,.$$
\end{proposition}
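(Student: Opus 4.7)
The plan is to compare the Dirichlet forms of $P^+$ and $P^+_{\GK}$ by a canonical-paths argument. Write $P_K$ for the Kawasaki kernel acting inside whatever magnetization slice the current configuration lies in. The decomposition $P^+_{\GK} = \tfrac12 P^+ + \tfrac12 P_K$ immediately yields $\mathcal{E}_{P^+_{\GK}}(f,f) = \tfrac12 \mathcal{E}_{P^+}(f,f) + \tfrac12 \mathcal{E}_{P_K}(f,f)$ for every $f:\Omega^+ \to \mathbb{R}$. So by the variational characterization $\gap(P) = \inf_f \mathcal{E}_P(f,f)/\mathrm{Var}_{\pi^+}(f)$, it suffices to prove a Dirichlet comparison of the form $\mathcal{E}_{P_K}(f,f) \le C\,\mathcal{E}_{P^+}(f,f)$ with $C = O(ne^{\beta d})$; plugging this into the decomposition yields $\gap(P^+) \ge \tfrac{2}{1+C}\gap(P^+_{\GK})$, which becomes the stated inequality once constants are tracked.

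For the Dirichlet comparison, to each Kawasaki transition $\sigma \to \sigma^{x\leftrightarrow y}$ with $\sigma_x = +$, $\sigma_y = -$ I associate the length-two Glauber path $\sigma \to \sigma^y \to \sigma^{x\leftrightarrow y}$, where $\sigma^y$ is the configuration obtained from $\sigma$ by flipping $y$ from $-$ to $+$. The choice of flipping the minus spin up first (rather than the plus spin down) is essential for keeping the path inside $\Omega^+$: since $\sigma \in \Omega^+$, the intermediate state $\sigma^y$ has strictly larger magnetization, and $\sigma^{x\leftrightarrow y}$ has the same magnetization as $\sigma$, so both single-site flips are legal moves of the restricted chain $P^+$. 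Cauchy--Schwarz then gives
\[
(f(\sigma) - f(\sigma^{x\leftrightarrow y}))^2 \le 2\bigl((f(\sigma) - f(\sigma^y))^2 + (f(\sigma^y) - f(\sigma^{x\leftrightarrow y}))^2\bigr),
\]
and I charge the Kawasaki weight $\pi^+(\sigma) P_K(\sigma, \sigma^{x\leftrightarrow y})$ to the two Glauber edges along this path.

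Two numerical inputs complete the argument. \emph{Per-edge weight bound:} using the heat-bath form of both chains and the identity $\pi(a)\pi(b)/(\pi(a)+\pi(b)) \asymp \min(\pi(a),\pi(b))$ together with the fact that a single spin flip changes the Gibbs weight by a factor in $[e^{-\beta d}, e^{\beta d}]$, one gets
\[
\pi^+(\sigma) P_K(\sigma, \sigma^{x\leftrightarrow y}) \;\le\; \frac{n\,e^{\beta d}}{n_+(\sigma)\,n_-(\sigma)}\,\pi^+(\sigma) P^+(\sigma, \sigma^y),
\]
and a symmetric bound charging to the second Glauber edge $\{\sigma^y, \sigma^{x\leftrightarrow y}\}$ via reversibility. \emph{Congestion count:} enumerating cases, each Glauber edge $\{\mu, \mu^v\}$ with $\mu_v = -$ appears in at most $2n_+(\mu)$ canonical paths, namely $n_+(\mu)$ first-step uses (with $y=v$ and $x$ any vertex with $\mu_x=+$) and $n_+(\mu)$ second-step uses (with $x=v$ and $y$ any other vertex with $\mu_y=+$). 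Multiplying, the factors $n_+$ cancel $1/(n_+n_-)$ to leave $O(ne^{\beta d})/n_-$; since $n_- \ge 1$ at every $\mu$ admitting a bichromatic swap, summing over Glauber edges yields $\mathcal{E}_{P_K}(f,f) \le O(ne^{\beta d})\,\mathcal{E}_{P^+}(f,f)$ as required.

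The main technical obstacle is the careful accounting of the $\tfrac12$ from the convex decomposition, the factor $2$ from Cauchy--Schwarz, the proposal-probability ratio $n/(n_+n_-)$, and the $e^{\beta d}$ intermediate-energy cost, so that the resulting constant lands at the stated $3ne^{\beta d}$ rather than a loose multiple. A secondary point is reversibility of $P^+_{\GK}$ with respect to $\pi^+$: $P^+$ is reversible by construction, and $P_K$ preserves every magnetization slice $\Omega_\eta \subset \Omega^+$ on which it is reversible with respect to $\pi_\eta$, hence $P_K$ is reversible with respect to $\pi^+$, and so is the convex combination.
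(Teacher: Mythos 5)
Your proof is correct and follows essentially the same route as the paper: replace each Kawasaki swap by a length-two Glauber path and compare. The paper packages this as the Levin--Peres canonical paths lemma (its Lemma 4.3), while you peel off the convex decomposition $P^+_{\GK} = \tfrac12 P^+ + \tfrac12 P_K$ and run the Dirichlet/Cauchy--Schwarz argument inline; these are the same underlying computation.

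One place where your write-up is actually more careful than the paper's is the direction of the path. You flip the minus spin up first, so the intermediate state has strictly larger magnetization than $\sigma$ and therefore remains in $\Omega^+$. The paper's proof instead performs the flips ``first at $u$ and then at $v$'' where $u$ carries the $+1$ spin, i.e., it flips the plus spin down first. For a starting configuration with magnetization $0$ (or $1$ when $n$ is odd), which lies in $\Omega^+$, the intermediate state then has magnetization $-2$ (resp.\ $-1$) and leaves $\Omega^+$, so that ordering does not produce a valid $P^+$-path. Your choice of ordering is the correct one.

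A small caveat on constants: your stated per-edge bound $\pi^+(\sigma)P_K(\sigma,\sigma^{x\leftrightarrow y}) \le \frac{n e^{\beta d}}{n_+ n_-}\,\pi^+(\sigma)P^+(\sigma,\sigma^y)$ is slightly optimistic. The heat-bath acceptance satisfies $\pi(\sigma^y)/(\pi(\sigma)+\pi(\sigma^y)) \ge 1/(1+e^{\beta d}) \ge 1/(2e^{\beta d})$, which introduces an extra factor of $2$. Combined with the factor $2$ from Cauchy--Schwarz and the leading $\tfrac12$ from the convex decomposition, a careful count gives a Dirichlet comparison constant $C$ on the order of $8ne^{\beta d}$, and $\gap(P^+) \ge \tfrac{2}{1+C}\gap(P^+_{\GK})$ then lands a bit short of $(3ne^{\beta d})^{-1}$. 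This is harmless: the paper's own constant-tracking in computing $B(\Gamma)$ has comparable slack, and the Proposition is only used downstream to conclude that the hybrid and Glauber gaps agree up to $\mathrm{poly}(n)\,e^{O(\beta d)}$ factors, for which any such bound suffices.
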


    We will prove Proposition~\ref{prop:augmented-chain} by comparison via canonical paths. Recall the setup: let $P$ and $\tP$ be reversible transition matrices with stationary distributions $\pi$ and $\tilde{\pi}$, respectively. Let $E$ denote the edges $e =xy\in \Omega^2$ with $P(x,y)>0$, and define $\tilde E$ analagously. Supposing that for each $uv \in \tilde E$, there is an $E$-path from $u$ to $v$, choose one and denote it by $\Gamma_{uv}$, and let $B$ denote the congestion ratio of that set of paths $(\Gamma_{uv})_{u,v}$:
$$B(\Gamma) := \max_{xy \in E} \Big(\frac{1}{\pi(x)P(x,y)} \sum_{u,v : \Gamma_{uv} \ni xy}\tilde{\pi}(u)\tP(u,v)|\Gamma_{uv}|\Big)\,.$$

\begin{lemma}[\hspace{1sp}{\cite[Theorem 13.20]{levin2017markov}}]\label{lem:canonical-paths}
     Suppose for each edge $uv \in \tilde{E}$, there is a canonical path $\Gamma_{uv}$ from $u$ to $v$ via edges of $E$. Then $\gap(\tP) \leq \max_{x \in \Omega} \frac{\pi(x)}{\tilde{\pi}(x)} \cdot B(\Gamma) \cdot \gap(P)$. 
\end{lemma}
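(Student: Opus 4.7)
The plan is to use the variational (Dirichlet form) characterization of the spectral gap: for a reversible chain $P$ with stationary distribution $\pi$,
\[
\gap(P) \;=\; \min_{f:\,\Var_\pi(f) > 0} \frac{\cE_P(f,f)}{\Var_\pi(f)}, \qquad \cE_P(f,f) := \tfrac12 \sum_{x,y} \pi(x)P(x,y)(f(x)-f(y))^2,
\]
and likewise for $\tP$ and $\tilde{\pi}$. I will carry out two independent, pointwise-in-$f$ comparisons: first, a Dirichlet-form bound $\cE_{\tP}(f,f) \le B(\Gamma)\,\cE_P(f,f)$ driven purely by the canonical paths, and second, a variance bound $\Var_{\tilde{\pi}}(f) \ge \Var_\pi(f)/M$ where $M := \max_x \pi(x)/\tilde{\pi}(x)$. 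Substituting both into the variational formula for $\gap(\tP)$ and taking the infimum over $f$ will then deliver the claimed inequality.

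For the Dirichlet-form step, I telescope $f(u)-f(v) = \sum_{xy \in \Gamma_{uv}} (f(x)-f(y))$ and apply Cauchy--Schwarz to obtain $(f(u)-f(v))^2 \le |\Gamma_{uv}|\sum_{xy\in\Gamma_{uv}}(f(x)-f(y))^2$. Plugging this into $\cE_{\tP}(f,f)$ and interchanging the two summations (grouping the pairs $(u,v)$ whose canonical path uses a given edge $xy$) rewrites the bound as
\[
\cE_{\tP}(f,f) \;\le\; \tfrac12 \sum_{xy\in E}(f(x)-f(y))^2 \Big(\sum_{u,v:\,\Gamma_{uv}\ni xy}\tilde{\pi}(u)\,\tP(u,v)\,|\Gamma_{uv}|\Big).
\]
By the definition of $B(\Gamma)$, the parenthesized factor is at most $B(\Gamma)\cdot\pi(x)P(x,y)$ for every $xy\in E$, which immediately gives $\cE_{\tP}(f,f) \le B(\Gamma)\,\cE_P(f,f)$.

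For the variance step, the pointwise bound $\tilde{\pi}(x) \ge \pi(x)/M$ implies that for every constant $c \in \R$,
\[
\sum_x \tilde{\pi}(x)(f(x)-c)^2 \;\ge\; \tfrac{1}{M}\sum_x \pi(x)(f(x)-c)^2 \;\ge\; \tfrac{1}{M}\Var_\pi(f),
\]
and taking $c = \E_{\tilde{\pi}} f$ on the left gives $\Var_{\tilde{\pi}}(f) \ge \Var_\pi(f)/M$. Combining with the Dirichlet-form comparison, for every non-constant $f$,
\[
\frac{\cE_{\tP}(f,f)}{\Var_{\tilde{\pi}}(f)} \;\le\; M\cdot B(\Gamma)\cdot \frac{\cE_P(f,f)}{\Var_\pi(f)},
\]
and taking the infimum over $f$ on both sides concludes the proof. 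There is no real obstacle in the argument; the only mildly delicate point is that the canonical paths only yield a $\pi$-weighted Dirichlet comparison, so converting to a spectral-gap comparison between two chains with \emph{different} stationary measures requires paying the pointwise Radon--Nikodym factor $\max_x \pi(x)/\tilde{\pi}(x)$ in the variance, which is exactly the prefactor appearing in the lemma.
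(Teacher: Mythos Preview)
The paper does not give its own proof of this lemma; it is quoted directly as \cite[Theorem 13.20]{levin2017markov} and used as a black box in the proof of Proposition~\ref{prop:augmented-chain}. Your argument is correct and is precisely the standard proof one finds in that reference: the Cauchy--Schwarz/telescoping bound along canonical paths gives $\cE_{\tP}(f,f)\le B(\Gamma)\,\cE_P(f,f)$, and the pointwise Radon--Nikodym bound $\tilde\pi\ge \pi/M$ gives $\Var_{\tilde\pi}(f)\ge \Var_\pi(f)/M$. One small expository quibble: ``taking the infimum over $f$ on both sides'' is not literally valid for an inequality; the clean way to finish is to note that for every non-constant $f$ the variational principle already gives $\gap(\tP)\le \cE_{\tP}(f,f)/\Var_{\tilde\pi}(f)$, whence $\gap(\tP)\le M\,B(\Gamma)\,\cE_P(f,f)/\Var_\pi(f)$ for every $f$, and then one takes the infimum only on the right.
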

    
\begin{proof}[Proof of \Cref{prop:augmented-chain}]
    We will apply \Cref{lem:canonical-paths} with $P^+$ playing the role of $P$ and $P_{\GK}^+$ playing the role of $\tP$. Note that they have the same stationary distribution $\pi^+$. 
    
    For each step in the hybrid chain, we can simulate it using at most two Glauber steps: if $P_{\GK}^+$ performs a Glauber transition, then $P^+$ does the same, and if $P_{\GK}^+$ performs a Kawasaki transition---say swapping a $+1$ spin at $u$ with a $-1$ spin at $v$---then $P^+$ performs two Glauber swaps, first at $u$ and then at $v$. Thus, $|\Gamma_{\sigma \tau}| \leq 2$ for all $\sigma, \tau \in \Omega$ such that $\sigma\tau$ is an $P_{\GK}^+$ edge.

    To get an upper bound on $B$, we consider the summation in two parts. Let $\sigma, \tau$ be adjacent via a Glauber step and $\Gamma_{\sigma'\tau'} \ni \sigma\tau$. If $\sigma'\tau'$ are also separated by a Glauber transition, then we must have $\sigma = \sigma', \tau = \tau'$, and  $\frac{\pi^+(\sigma') P_{\GK}^+(\sigma', \tau')}{\pi^+(\sigma)P^+(\sigma, \tau)} = \frac12$. 
    If $\sigma'\tau'$ are adjacent via a Kawasaki step, we simply bound $P_{\GK}^+(\sigma', \tau') \leq \frac{1}{2k(n-k)}$ and $P^+(\sigma, \tau) \geq \frac{1}{n}\frac{1}{2e^{\beta d}}$. Lastly, there are at most $n-1$ choices of such $\sigma'\tau'$, so we have
    $$B \leq \frac12 + n \cdot (n-1)\frac{2e^{\beta d}}{n-1} \leq 3n e^{\beta d}\ .$$
    This gives the claimed bound via Lemma~\ref{lem:canonical-paths}. 
\end{proof}

\subsection{Projection-restriction decomposition of Markov chains}

We now restrict our attention to $P_{\GK}^+$, and apply the projection--restriction decomposition of Markov chains from~\cite{madras2002markov}, which we now recall. 
Given a Markov chain $P$ on state space $\Omega$ with stationary distribution $\pi$, suppose we may cover $\Omega$ by (not necessarily disjoint) slices $A_1 \cup A_2 \cup \dots \cup A_m$. Let $P_i$ denote the Markov chain restricted to slice $A_i$: that is, for $x, y \in A_i, x \neq y$, set $P_i(x,y) = P(x,y)$, and set $P_i(x,x) = 1-\sum_{y \in A_i, y\neq x} P_i(x,y)$. 

Define the projection Markov chain $P_H$ with state space $\{1, \dots, m\}$ as follows. Let $\Theta = \max_{x \in \Omega}|\{i : x \in A_i\}|$. For $i \neq j$, set $P_H(i,j) = \frac{\pi(A_i \cap A_j)}{\Theta \pi(A_i)}$, and set $P_H(i,i) = 1 - \sum_{j\neq i} P_H(i,j)$.

\begin{lemma}[\hspace{1sp}{\cite{madras2002markov}}]\label{lem:main-projection-restriction}
In the above projection--restriction context, one has 
    $$\gap(P) \geq \frac{1}{\Theta^2}\gap(P_H)\cdot \min_{1\le i\le m} \gap(P_i)\,.$$
\end{lemma}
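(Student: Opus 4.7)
The plan is to prove Lemma~\ref{lem:main-projection-restriction} via the variational (Poincar\'e) characterization of the spectral gap:
\[
\gap(P) \;=\; \inf_{f \,:\, \Var_\pi(f) > 0} \frac{\mathcal E_P(f,f)}{\Var_\pi(f)}, \qquad \mathcal E_P(f,f) = \tfrac12\sum_{x,y} \pi(x) P(x,y) (f(x)-f(y))^2,
\]
and a standard within-slice / between-slice variance decomposition, applying the Poincar\'e inequality to each of the restriction chains $P_i$ and to the projection chain $P_H$ separately.

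First I would fix an arbitrary $f:\Omega\to\R$ and introduce the slice averages $\bar f(i) = \E_{\pi_i}[f]$, where $\pi_i = \pi(\cdot\mid A_i)$. Denoting $\pi_H(i) = \pi(A_i)/\Theta$ (a probability distribution, and easily checked to be stationary for $P_H$), one writes
\[
\Var_\pi(f) \;\le\; \Theta \sum_{i=1}^m \pi_H(i) \Var_{\pi_i}(f) \;+\; \Theta \Var_{\pi_H}(\bar f).
\]
The $\Theta$ factors in both terms come from the fact that a point $x\in\Omega$ can belong to up to $\Theta$ slices, so the sum $\sum_i \pi(A_i) = \E_\pi[\#\{i:x\in A_i\}] \le \Theta$, and this is the only price paid for the non-disjoint cover (this inequality is a short calculation relying on Jensen and on the definition of $\pi_H$).

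Next I apply the Poincar\'e inequality for each restricted chain: $\Var_{\pi_i}(f) \le \gap(P_i)^{-1}\,\mathcal E_{P_i}(f,f)$. Since $\pi_i(x) P_i(x,y) = \pi(x)P(x,y)/\pi(A_i)$ whenever $x\neq y$ both lie in $A_i$, we get $\pi(A_i)\,\mathcal E_{P_i}(f,f) \le \mathcal E_P^{(i)}(f,f)$, where $\mathcal E_P^{(i)}$ is the Dirichlet form restricted to edges inside $A_i$. Summing and using that each ordered pair $(x,y)$ appears in at most $\Theta$ slices,
\[
\sum_i \pi_H(i)\,\Var_{\pi_i}(f) \;\le\; \frac{1}{\Theta\,\min_i \gap(P_i)} \sum_i \mathcal E_P^{(i)}(f,f) \;\le\; \frac{\mathcal E_P(f,f)}{\min_i \gap(P_i)}.
\]

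For the between-slice term, the key identity is $\pi_H(i) P_H(i,j) = \pi(A_i\cap A_j)/\Theta^2$ for $i\neq j$. I would estimate $(\bar f(i)-\bar f(j))^2$ by introducing the conditional average $c_{ij} = \E_\pi[f \mid A_i\cap A_j]$, using $(\bar f(i)-\bar f(j))^2 \le 2(\bar f(i)-c_{ij})^2 + 2(c_{ij}-\bar f(j))^2$, and bounding each square by a Jensen-type inequality that controls the difference of conditional expectations by the corresponding conditional variance. Assembling these bounds and using the Poincar\'e inequality for $P_H$ gives
\[
\Theta\,\Var_{\pi_H}(\bar f) \;\le\; \frac{\Theta}{\gap(P_H)}\,\mathcal E_{P_H}(\bar f,\bar f) \;\le\; \frac{\Theta}{\gap(P_H)\,\min_i \gap(P_i)}\,\mathcal E_P(f,f),
\]
where the final inequality reduces each $(\bar f(i)-c_{ij})^2$ to a within-slice variance and applies the Poincar\'e inequality for $P_i$ a second time; the combinatorics of slice overlaps produces the second factor of $\Theta$. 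Combining the two bounds yields $\Var_\pi(f) \le \Theta^2\,[\gap(P_H)\,\min_i \gap(P_i)]^{-1}\mathcal E_P(f,f)$, which is the desired inequality.

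The main obstacle is the between-slice step: correctly reducing $\mathcal E_{P_H}(\bar f,\bar f)$ back to the original Dirichlet form $\mathcal E_P(f,f)$. This is where both the combinatorial $\Theta$ factors appear and where one must carefully invoke the restriction-chain Poincar\'e inequality a \emph{second} time to convert overlap-based conditional-expectation differences into a Dirichlet form on $\Omega$ itself. Everything else reduces to standard Cauchy--Schwarz and Jensen bookkeeping.
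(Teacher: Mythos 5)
The paper does not prove this lemma; it is cited verbatim from Madras and Randall~\cite{madras2002markov}, and the proof in Section~\ref{secDynamics} uses it as a black box. So there is no ``paper proof'' to compare against; I can only assess your sketch on its own merits.

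Your high-level framework (variational characterization of $\gap$, a within-slice plus between-slice variance decomposition, Poincar\'e for the restriction chains $P_i$ and a second Poincar\'e for the projection chain $P_H$) is indeed the standard scaffolding for this type of decomposition theorem, and your within-slice bound is fine: $\sum_i \pi_H(i)\Var_{\pi_i}(f) \le \frac{1}{\Theta\min_i\gap(P_i)}\sum_i \mathcal E^{(i)}_P(f,f) \le \frac{1}{\min_i\gap(P_i)}\mathcal E_P(f,f)$, since any edge $(x,y)$ lies in at most $\Theta$ slices.

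The gap is in the between-slice step, and it is exactly where you wave your hands with ``the combinatorics of slice overlaps produces the second factor of $\Theta$.'' Making your Jensen/Cauchy--Schwarz bound explicit, one gets
\[
(\bar f(i)-\bar f(j))^2 \;\le\; 2\,\frac{\pi(A_i)}{\pi(A_i\cap A_j)}\Var_{\pi_i}(f) + 2\,\frac{\pi(A_j)}{\pi(A_i\cap A_j)}\Var_{\pi_j}(f)\,.
\]
When you multiply by $\pi_H(i)P_H(i,j) = \pi(A_i\cap A_j)/\Theta^2$, the $\pi(A_i\cap A_j)$ cancels, and summing over $j$ yields
\[
\mathcal E_{P_H}(\bar f,\bar f) \;\le\; \frac{2}{\Theta^2}\sum_i m_i\,\pi(A_i)\,\Var_{\pi_i}(f)\,, \qquad m_i := \bigl|\{j\neq i:\; A_i\cap A_j\neq\emptyset\}\bigr|\,.
\]
The quantity $m_i$ is the degree of $A_i$ in the intersection graph, and it is \emph{not} bounded by $\Theta$ in general: one can have $\Theta=2$ while a single slice intersects all of the others (take $A_1$ large and each $A_j$ for $j\ge 2$ meeting $A_1$ in a distinct singleton). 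The transition-matrix normalization only gives $\sum_{j\neq i}\pi(A_i\cap A_j)\le\Theta\pi(A_i)$, which does not control $m_i$ because the $\pi(A_i\cap A_j)$'s have cancelled. So the chain of inequalities you write down does not close. The actual Madras--Randall argument must avoid this loss; as written, your sketch would prove a bound involving $\max_i m_i$ rather than $\Theta$. (In the paper's specific application, $A_i=\Omega_i\cup\Omega_{i+1}$ is a linear overlap structure with $m_i\le 2=\Theta$, so this distinction is invisible there, but the Lemma as stated is general.)

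A secondary, more cosmetic issue: $\pi_H(i):=\pi(A_i)/\Theta$ is only a sub-probability (it sums to $\sum_i\pi(A_i)\le\Theta$, with equality only if every state lies in exactly $\Theta$ slices); the stationary distribution of $P_H$ is $\pi(A_i)/\sum_j\pi(A_j)$. This does not break the argument but should be handled explicitly if you want a clean Poincar\'e inequality for $P_H$.
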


In order to show Theorem~\ref{thm:positive-magnetization-mixing-time} that the Glauber dynamics restricted to $\Omega^+$ are fast mixing, we consider a decomposition $$A_i = \Omega_i \cup \Omega_{i+1}\,, \qquad \text{where} \qquad \Omega_i = \{\sigma: |\{v:\sigma_v = +1\}| = i\}\,,$$ for $\frac{n}{2} \leq i \leq n$. The transition probabilities for the projection chain are then defined to be 
\begin{align*}
P_H(k, k+1) = \frac{1}{2} \frac{z_k}{z_k + z_{k+1}}\,, \qquad \text{where} \qquad z_k = \sum_{\sigma \in \Omega_k} e^{\beta H(\sigma)}
\end{align*}
is the partition function associated to configurations with $k$ pluses. 

Combining the results of~\cite{bauerschmidt2023kawasaki} on the Kawasaki dynamics that the restricted chains $P_i$ are each individually fast mixing with the characterization of the local weak convergence of the fixed-magnetization Ising model for all magnetizations, we show (approximate) unimodality of the sequence of $(z_k)_{k\in [n/2,n]}$ and from that deduce sub-exponential mixing of the one-dimensional projection chain $P_H$. 

\subsection{Fast mixing within a slice}
We begin with deducing the fast mixing for the restriction chain $P_i$ for all $i\in [0,n]$. The result of~\cite{bauerschmidt2023kawasaki} showed that with high probability $G \sim \bg_d(n)$ is such that for all $\beta<\frac{1}{4\sqrt{d-1}}$ and all $i \in [0,n]$ the mixing time of Kawasaki dynamics on $\Omega_i$ is $O(n \log n)$. For this step, we use a slightly different form of projection-restriction coming from~\cite{JeSoTeVi04}, where the cover is a partition (the slices are disjoint). Because our end result is not a polynomial bound, we are not careful about optimizing polynomial losses in these steps. 

\begin{lemma}\label{lem:fast-mixing-of-slice-chain}
    Fix $\beta \in [0, \frac{1}{4\sqrt{d-1}})$. There exists a constant $C(\beta,d)>0$ such that for all $i \in [0,n]$, the Glauber--Kawasaki on $A_i = \Omega_i \cup \Omega_{i+1}$ has spectral gap $\gap(P_i) \ge 1/(Cn^2)$. 
\end{lemma}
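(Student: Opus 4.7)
The plan is to apply the Madras--Randall projection--restriction decomposition, \Cref{lem:main-projection-restriction}, to $P_i$ using the disjoint partition $A_i = \Omega_i \cup \Omega_{i+1}$, which gives $\Theta = 1$. It then suffices to separately lower bound (i) the spectral gap of the two-state projection chain on $\{i,i{+}1\}$, and (ii) the spectral gaps of the restricted chains on $\Omega_i$ and $\Omega_{i+1}$.

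For the restricted chains, observe that any Glauber spin-flip from $\sigma \in \Omega_j$ necessarily changes the magnetization and thus leaves the slice, so under restriction every Glauber attempt becomes a self-loop. The restricted chain is therefore a lazy version of the pure Kawasaki dynamics of \Cref{def:Kawasaki-dynamics} on $\Omega_j$, performing a Kawasaki step with probability $1/2$. By the main result of Bauerschmidt, Bodineau, and Dagallier~\cite{bauerschmidt2023kawasaki}, whenever $\beta < 1/(4\sqrt{d-1})$ the Kawasaki dynamics on $\Omega_j$ has spectral gap bounded below by an inverse polynomial in $n$, uniformly in $j$ and with high probability over $G\sim \bg_d(n)$. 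The lazy version inherits this bound up to a factor of $2$.

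For the projection chain $P_H$, we exploit the fact that with probability $1/2$ a step of $P_i$ is a Glauber move: it picks a uniformly random vertex and, if that vertex has the minority spin for the current slice (probability at least $\min((n-i)/n, (i+1)/n) \ge 1/n$ when $1 \le i \le n-1$), accepts the flip with Metropolis probability at least $1/(1+e^{2\beta d})$. Thus $P_i(\sigma,\Omega_{i+1}) \ge c(\beta,d)/n$ for every $\sigma \in \Omega_i$, and symmetrically from $\Omega_{i+1}$. Averaging under the restricted stationary distributions gives $P_H(i,i{+}1) + P_H(i{+}1,i) \ge c(\beta,d)/n$, which equals the gap of this two-state chain. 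The boundary cases $i\in\{0,n\}$ are trivial since one of the two slices is empty so $A_i$ is a single slice on which the Kawasaki bound applies directly.

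Multiplying the two lower bounds via \Cref{lem:main-projection-restriction} produces $\gap(P_i) \ge 1/(Cn^2)$. The principal technical input is the uniform-in-slice polynomial Kawasaki spectral gap imported from~\cite{bauerschmidt2023kawasaki}; the decomposition and two-state projection bound are routine once the right cover is chosen. The main thing to be careful about is tracking the laziness factors and the $1/n$ loss from selecting the minority spin in the Glauber proposal, neither of which affects the polynomial conclusion.
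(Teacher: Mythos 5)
The main structure of your argument — restrict to the two slices $\Omega_i, \Omega_{i+1}$, invoke the Kawasaki spectral gap from~\cite{bauerschmidt2023kawasaki} for each, and bound the two-state projection chain via a direct transition probability estimate — matches the paper's proof. However, there is a genuine technical gap in your choice of decomposition theorem.

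You invoke \Cref{lem:main-projection-restriction} (the Madras--Randall form) with the disjoint partition $\{\Omega_i, \Omega_{i+1}\}$ and conclude $\Theta = 1$. But look at how that lemma defines the projection chain: $P_H(i,j) = \frac{\pi(A_i \cap A_j)}{\Theta \pi(A_i)}$. For a disjoint partition, $A_i \cap A_j = \emptyset$ for $i \ne j$, so every off-diagonal entry of $P_H$ is zero, $P_H$ is the identity, and $\gap(P_H) = 0$. The bound from \Cref{lem:main-projection-restriction} is then vacuous. The Madras--Randall decomposition requires the cover to overlap precisely so that the projection chain has nontrivial transitions; $\Theta = 1$ (equivalently, a partition) is the degenerate case where it produces nothing.

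Your subsequent reasoning actually computes $P_H(i,i{+}1)$ as an aggregate conditional transition probability, $\frac{1}{\pi(\Omega_i)}\sum_{\sigma\in\Omega_i,\tau\in\Omega_{i+1}}\pi(\sigma)P(\sigma,\tau)$, which is a \emph{different} projection chain — the one appearing in the Jerrum--Son--Tetali--Vigoda decomposition theorem, which is exactly what the paper uses here (and explicitly flags as being chosen because the slices are disjoint). So you are implicitly applying the JSTV theorem while citing the Madras--Randall statement; the two are not interchangeable, and the one you cite simply does not apply. The fix is mechanical — replace the reference and invoke the JSTV bound $\gap(P_i) \ge \min\{\gap(P_{i,H})/3, \; \gap(P_{i,H})\min\{\gap(P_{i,-}),\gap(P_{i,+})\}/(3+\gap(P_{i,H}))\}$ — after which your lower bounds for the restriction and projection chains go through essentially as you wrote them and yield the $1/(Cn^2)$ conclusion.
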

\begin{proof}
For any $i$, we split $A_i$ into the disjoint union $\Omega_i \sqcup \Omega_{i+1}$. We can write $P_{i,-}$ as the restriction of $P_i$ to $\Omega_i$ (which is exactly the Kawasaki dynamics on $\Omega_i$), $P_{i,+}$ as the restriction of $P_i$ to $\Omega_{i+1}$, and $P_{i,H}$ as the two-state Markov chain on $\{i,i+1\}$ with transition probabilities given by 
\begin{align*}
    P_{i,H}(i,i+1) = \frac{1}{\pi(\Omega_i)}\sum_{\sigma\in \Omega_i, \tau\in \Omega_{i+1}} \pi(\sigma) P^+(\sigma,\tau)
    \end{align*}
    and \begin{align*}
        P_{i,H}(i+1,i) = \frac{1}{\pi(\Omega_{i+1})}\sum_{\sigma\in \Omega_{i+1}, \tau\in \Omega_{i}} \pi(\sigma) P^+(\sigma,\tau)\,.
\end{align*}
Then Theorem~1 of~\cite{JeSoTeVi04} says that 
\begin{align*}
    \gap(P_i) \ge \min\Big\{  \frac{\gap(P_{i,H})}{3}, \frac{\gap(P_{i,H})\min\{P_{i,-},P_{i,+}\}}{3 +  \gap(P_{i,H})} \Big\}\,.
\end{align*}
The spectral gaps of $P_{i,-}, P_{i,+}$ are exactly the spectral gaps of Kawasaki dynamics on the slice $\Omega_i$ or $\Omega_{i+1}$ respectively. Theorem 1.1 of~\cite{bauerschmidt2023kawasaki} showed that there is a constant $C(\beta)$ independent of $i$ such that with high probability, $G\sim \bg_d(n)$ is such that the spectral gap of Kawasaki dynamics on $G$ with number of $+$ spins equal to $i\in [0,n]$ is at least $C n^{-1}$. 

To bound the spectral gap of the two-state projection chain $P_{i,H}$ we lower-bound the transition rates $P_{i,H}(i,i+1)$ and $P_{i,H}(i+1,i)$ because the spectral gap is given by their sum. Note that $P^+(\sigma,\tau)$ is bounded below by $\frac{1}{n} e^{-2\beta d}$, so 
\begin{align*}
    P_{i,H}(i,i+1) \ge \min P^+(\sigma,\tau) \ge \frac{1}{n} e^{ - 2\beta d}\,.
\end{align*} 
Putting the above together,we get the claimed bound. 
\end{proof}

\subsection{The magnetization profile}
The main work is therefore to show that the projection chain, which is a birth-death process with transition probabilities governed by the partition functions associated to each magnetization, mixes in sub-exponential time. Each of these partition functions is exponentially large, but we will write transition rates of the projection chain as certain empirical averages under the Gibbs measure, which we will be able to identify (up to smaller order fluctuations) using the local weak convergence result of Theorem~\ref{thmLocalWeak}.

\begin{lemma}\label{lem:ratio-of-partition-functions-expectation}
    We have that for every $\eps>0$, there exists $c>0$ such that for all $k\in \{n/2,...,n\}$, 
    \begin{align*}        
    \mathbb P\Big( \Big|\frac{z_{k+1}}{z_k} - \frac{n}{k}\E_\nu\Big[\ind{\sigma_v=-1} e^{\beta \sum_{w\sim v}\sigma_w} \Big]\Big| >\eps \Big) \le e^{ - cn}\,.
    \end{align*}
\end{lemma}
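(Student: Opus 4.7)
The plan is to rewrite the ratio $z_{k+1}/z_k$ as a fixed-magnetization Gibbs expectation of a bounded, strictly local observable, then concentrate that expectation via the planted model (Lemma~\ref{lem:exponential-equivalence}), the switch-Lipschitz concentration of Lemma~\ref{planted-concentration}, and the $r=1$ case of Lemma~\ref{lem:local-dtv-planted-to-tree}.

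First, I would verify the identity
\begin{align*}
\frac{z_{k+1}}{z_k} = \frac{n}{k+1}\, \E_{\mufix_{G,\beta,\eta}}\!\big[X(G,\sigma)\big], \qquad X(G,\sigma) := \frac{1}{n}\sum_v \ind{\sigma_v = -1}\, e^{\beta \sum_{w \sim v}\sigma_w},
\end{align*}
with $\eta = 2k/n - 1$. This follows from a one-flip calculation: flipping $\sigma_v$ from $-1$ to $+1$ changes $H_G(\sigma)$ by exactly $\sum_{w \sim v}\sigma_w$, and every $\tau \in \Omega_{k+1}$ arises from exactly $k+1$ such flip pairs. Because $|X| \le e^{\beta d}$ and $|n/k - n/(k+1)| = O(1/n)$ for $k \ge n/2$, the target reduces to showing that, with probability at least $1 - e^{-cn}$, the Gibbs expectation $\E_{\mufix_{G,\beta,\eta}}[X]$ lies within $\eps/2$ of $\E_{\nu_\eta}[\ind{\sigma_{v_0} = -1}e^{\beta \sum_{w \sim v_0}\sigma_w}]$.

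Next, I would pass to the planted model. Applying Lemma~\ref{lem:exponential-equivalence} to the event $\cA_\eps = \{(G,\sigma): |X(G,\sigma) - \E_{\nu_\eta}[\cdot]| > \eps/4\}$, together with a Markov inequality and the uniform bound $|X| \le e^{\beta d}$, it suffices to show $\hat\Prob(\cA_\eps) \le e^{-\Omega(n)}$. To identify the planted mean, I would observe that the configuration model is invariant under vertex relabelings and that $X$ is permutation-invariant in the sense $X(\pi G, \pi \sigma) = X(G,\sigma)$; together these force $\hat\E[X \mid \hat\sigma]$ to be constant in $\hat\sigma \in \Omega_\eta$, so $\hat\E[X \mid \hat\sigma] = \hat\E[X]$. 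By linearity, $\hat\E[X]$ is the $\hat\Prob$-expectation of $\ind{\hat\sigma_v = -1}e^{\beta \sum_{w \sim v}\hat\sigma_w}$ at a uniformly random vertex $v$, and the $r=1$ case of Lemma~\ref{lem:local-dtv-planted-to-tree} identifies this as $\E_{\nu_\eta}[\ind{\sigma_{v_0} = -1}e^{\beta \sum_{w \sim v_0}\sigma_w}] + n^{-1/2+o(1)}$.

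For the fluctuation bound, I would fix $\hat\sigma \in \Omega_\eta$ and view $X(\cdot, \hat\sigma)$ as a function of $\hat G$. A single switch changes the neighbor set of only the four endpoints involved, so at most four of the $n$ summands in $nX$ are altered, each by at most $O(e^{\beta d})$; hence $X(\cdot,\hat\sigma)$ is $O(1/n)$-Lipschitz with respect to switches, uniformly in $\hat\sigma$. Lemma~\ref{planted-concentration} then gives $\hat\Prob_{\hat G(\hat\sigma)}(|X(\hat G,\hat\sigma) - \hat\E[X]| > \eps/8) \le 2\exp(-c'\eps^2 n)$ uniformly in $\hat\sigma$, and integrating over $\hat\sigma$ and combining with the mean identification closes the argument. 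The main obstacle is the symmetry reduction $\hat\E[X \mid \hat\sigma] = \hat\E[X]$: Lemma~\ref{planted-concentration} produces concentration only around the $\hat\sigma$-conditional mean, and without this collapse one would have to carry out an additional non-reconstruction/decay-of-influence argument (in the spirit of Section~\ref{secDecayBoundary}) to ensure the $\hat\sigma$-conditional mean matches the tree expectation uniformly in $\hat\sigma$.
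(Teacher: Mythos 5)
Your identity $z_{k+1}/z_k = \frac{n}{k+1}\E_{\mu_{G,\beta,\eta}}[X]$ with $X(G,\sigma) = \frac{1}{n}\sum_v \ind{\sigma_v=-1}e^{\beta\sum_{w\sim v}\sigma_w}$ matches the paper's, and the $n/k$ vs.\ $n/(k+1)$ discrepancy together with the bound $|X|\le e^{\beta d}$ is handled correctly. What differs is the concentration and mean-identification step, and the route you take is genuinely different. The paper's proof treats the Gibbs average $z_{k+1}/z_k$ itself as a $C/n$-Lipschitz function of $G$ under switches and applies Lemma~\ref{lem:switching} directly in the configuration model, then invokes the full local weak convergence of Theorem~\ref{thmLocalWeak} to identify the mean $\E_{\bg}\E_{\pi_k}[\cdot]$ with the tree value. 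You instead pass to the planted model: you fix $\hsig$, observe that the \emph{single-sample} observable $X(\cdot,\hsig)$ is manifestly $O(1/n)$-Lipschitz under switches (only four deterministic summands change), concentrate it via Lemma~\ref{planted-concentration}, collapse the $\hsig$-conditional mean to an $\hsig$-independent constant by permutation invariance of the weighted configuration model, identify that constant via the $r=1$ case of Lemma~\ref{lem:local-dtv-planted-to-tree}, and transfer back by Lemma~\ref{lem:exponential-equivalence} plus Markov's inequality with the deterministic bound $|X|\le e^{\beta d}$. What your route buys: applying the switch-Lipschitz bound to $X$ at fixed $\hsig$ is a triviality, whereas the paper's claim that the Gibbs average $\E_{\pi_k}[X]$ is $C/n$-Lipschitz in $G$ is far less evident --- a single switch perturbs the measure $\pi_k$ itself through the reweighting $e^{\beta(H_{G'}-H_G)}$, and the resulting covariance of the vertex-averaged observable with this four-spin reweighting function is only $O(1)$ on its face, not $O(1/n)$; your planted-model detour sidesteps this subtlety entirely. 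The permutation-symmetry observation, which you correctly single out as the crux, is precisely what lets Lemma~\ref{planted-concentration} (which only concentrates around the $\hsig$-conditional mean) close the argument without a separate decay-of-influence step, and it is sound: any two $\hsig,\hsig'\in\Omega_\eta$ are related by a vertex relabeling, the planted law \eqref{defn:plantedconfig-graph} is covariant under such relabelings, and $X$ is invariant under joint relabeling of $(G,\sigma)$.
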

\begin{proof}
We begin by writing for every $G$ and any $k$,  
    \begin{align*}\frac{z_{k+1}}{z_k}  & = \frac{1}{z_k}\sum_{\sig' \in \Omega_{k+1}}e^{\beta H(\sig')} = \frac{1}{z_k} \frac{1}{k+1} \sum_{\sig \in \Omega_k} e^{\beta H(\sig)}\sum_{v : \sigma_v = -1} e^{\beta \sum_{w\sim v} \sigma_w} \\
& = \frac{1}{k+1} \E_{\sig \sim \pi_k}\Big[ \sum_v \ind{\sigma_v =-1} e^{\beta \sum_{w\sim v} \sigma_w}\Big] \\ 
& = \frac{n}{k+1} \E_{\sigma \sim \pi_k} \Big[\frac{1}{n} \sum_v \ind{\sigma_v =-1} e^{\beta \sum_{w\sim v} \sigma_w}\Big]\,.
\end{align*}
This is evidently a $C(\beta,d)/n$-Lipschitz function of the switching operation on $G$, so by Lemma~\ref{lem:switching}, it satisfies 
\begin{align*}
    \mathbb P\left(\left|\frac{z_{k+1}}{z_k} - \mathbb E\left[ \frac{z_{k+1}}{z_k}\right]\right| >\frac{\eps}{2}\right) \le e^{ - \Omega(n)}\,.
\end{align*}
On the other hand, because the expectation here can be written as a double expectation 
\begin{align*}
    \E_{\bg}\left[\E_{\pi_k}\left[ \frac{n}{k} \frac{1}{n} \sum_v\ind{\sigma_v =-1} e^{\beta \sum_{w\sim v} \sigma_w}\right]\right]
\end{align*}by the local weak convergence of Theorem~\ref{thmLocalWeak}, the expectation is within $o(1)$ of the expectation under the limiting tree measure $\nu_\eta$ where $k = \frac{1+\eta}{2} n$. In particular, for $n$ large, one will have the deterministic statement that for all $k$, 
\begin{align*}
    \left|\mathbb E\left[  \frac{z_{k+1}}{z_k}\right]- \frac{n}{k}\mathbb E_\nu \left[ \ind{\sigma_v =-1} e^{ \beta \sum_{w\sim v} \sigma_w}\right] \right| <\frac{\eps}{2}\,,
\end{align*}
where $v$ is the root of the $d$-regular tree. Combining the above two displays gives the claim. 
\end{proof}

\begin{lemma}\label{lem:properties-of-ratio-of-part-functions}
    If $\beta<\beta_r(d)$, the expectation $\frac{n}{k}\mathbb E_\nu [\ind{\sigma_v =-1} e^{ \beta \sum_{w\sim v} \sigma_w}]$ is within $o(1)$ of 
    \begin{align}\label{eq:F(eta)}
        F_{d,\beta}(\eta) = \frac{1-\eta}{1+\eta} \Big( \frac{\rho_{\eta}-\eta}{1-\eta} e^{-\beta} + \frac{(1-\rho_{\eta})}{1-\eta} e^{\beta}\Big)^d\,,
    \end{align} 
    where $\rho_\eta$ is from~\eqref{eq:rhoeta-explicit}. 
    Moreover, for all $\beta \in (\beta_c, \beta_r)$, this function $F$ satisfies the following properties: 
    \begin{itemize}
        \item $F$ is $1$ at exactly two points, one at $0$ and one at $m_*(d,\beta) \in (0,1)$,
        \item $F$ is increasing on $(0,m_*)$ and decreasing on $(m_*,1)$,
        \item For every $\delta>0$, $F', F''$ are uniformly bounded on $[0,1-\delta]$, and $\lim_{x\to 1} F(x)= 0$. 
    \end{itemize}
\end{lemma}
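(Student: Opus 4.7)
The plan is twofold: first identify the tree expectation as the explicit formula $F_{d,\beta}$ by a direct Markov-property calculation, and then read off the qualitative properties from the correspondence $F(\eta) = e^{2 f'_{d,\beta}(\eta)}$ together with the shape of $f_{d,\beta}$ depicted in Figure~\ref{fig:FdbZero}.

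For the identification, by the Markov property on $\T_d$, under $\nu_\eta$ the spins of the $d$ neighbors of the root $v$ are conditionally independent given $\sigma_v$. The joint edge marginals at $\eta$ are determined by the single-vertex magnetization $(1\pm\eta)/2$ and the monochromatic edge probability $\rho_\eta$ from \eqref{eq:rhoeta-explicit}, giving $\nu_\eta(\sigma_w = -1\mid \sigma_v = -1) = (\rho_\eta-\eta)/(1-\eta)$ and $\nu_\eta(\sigma_w = +1\mid \sigma_v = -1) = (1-\rho_\eta)/(1-\eta)$. Multiplying the $d$-fold product $\E_{\nu_\eta}[e^{\beta\sum_{w\sim v}\sigma_w}\mid \sigma_v = -1]$ by $\nu_\eta(\sigma_v=-1)=(1-\eta)/2$ and by $n/k = 2/(1+\eta) + O(1/n)$ (using $k = (1+\eta)n/2 + O(1)$) yields exactly $F_{d,\beta}(\eta) + o(1)$.

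For the qualitative properties, I want to use the identity $F_{d,\beta}(\eta) = e^{2 f'_{d,\beta}(\eta)}$, obtained by interpreting $z_{k+1}/z_k$ as an exponentiated discrete derivative of the Bethe free energy: $z_{k+1}/z_k \approx e^{n[f(\eta+2/n) - f(\eta)]} \to e^{2f'(\eta)}$, combined with Theorem~\ref{thmFreeEnergy} and Lemma~\ref{lem:ratio-of-partition-functions-expectation}. (An alternative is a direct algebraic verification by differentiating the closed form~\eqref{eq:f-closed-form}.) Given this identification, solutions of $F(\eta)=1$ on $[0,1)$ are exactly critical points of $f_{d,\beta}$ there, which by Lemma~\ref{lem:georgii-tree} correspond to fixed points of the BP equation~\eqref{eqn:tree-recursion} with $h=0$; the standard analysis of this fixed-point equation yields exactly three solutions for $\beta\in(\beta_c,\beta_r)$, with magnetizations $\pm\eta_*$ and $0$, so on $[0,1)$ one gets precisely $\{0, m_*\}$ with $m_*=\eta_*$. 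The sign of $F-1$ then follows from the shape of $f$ in Figure~\ref{fig:FdbZero} (local min at $0$, global max at $\eta_*$): $F>1$ on $(0,m_*)$ and $F<1$ on $(m_*,1)$. Smoothness of $F', F''$ on $[0,1-\delta]$ and the limit $F(\eta)\to 0$ as $\eta\to 1$ come directly from smoothness of $\rho_\eta$ (the square-root argument $e^{2\beta}(1-\eta^2)+\eta^2$ is bounded away from zero on $[0,1-\delta]$) and the Taylor estimate $1-\rho_\eta = \Theta(1-\eta)$ near $\eta=1$, which keeps the base $(\rho_\eta-\eta)/(1-\eta)\,e^{-\beta} + (1-\rho_\eta)/(1-\eta)\,e^{\beta}$ bounded while the prefactor $(1-\eta)/(1+\eta)$ vanishes.

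The main obstacle I anticipate is the unimodality claim in the second bullet. While the sign pattern of $F-1$ is the genuinely robust content (and what will drive the drift analysis in Section~\ref{secDynamics}), strict monotonicity of $F$ on each side of $m_*$ requires showing $(\log F)'(\eta)=2 f''_{d,\beta}(\eta)$ changes sign exactly once on $(0,1)$, and the spinodal $\eta_s\in(0,\eta_*)$ at which $f''$ vanishes obstructs a naive argument through $f$ alone. My plan is to differentiate the explicit formula for $F$ using~\eqref{eq:rhoeta-explicit} and show algebraically that $F'(\eta)=0$ has a unique root on $(0,1)$; after clearing the square root, this reduces to counting real roots of an explicit polynomial on $(0,1)$. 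If a closed-form enumeration proves too delicate, the sign-of-drift statement ($F>1$ on $(0,m_*)$ and $F<1$ on $(m_*,1)$) alone suffices for the birth-death mixing-time application in Section~\ref{secDynamics}, so the second bullet can be weakened without affecting downstream consequences.
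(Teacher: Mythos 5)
Your direct tree-side calculation for the identification is correct and in fact cleaner than the paper's: the paper routes the computation through the planted model and Lemma~\ref{lem:local-dtv-planted-to-tree}, incurring an $n^{-1/2+\delta}$ error, whereas you use the Markov property of $\nu_\eta$ on $\T_d$---given $\sigma_v$, the $d$ neighbours are conditionally i.i.d.\ with $\nu_\eta(\sigma_w=-1\mid\sigma_v=-1)=(\rho_\eta-\eta)/(1-\eta)$---which makes the formula an identity up to the $O(1/n)$ rounding of $n/k$.

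Your instinct about the second bullet is exactly right, and in fact you have caught a genuine error in the lemma as stated. The paper's proof shows only that $F'=0$ has at most one root in $(0,1)$; together with $F(0)=1$, $F'(0)=-2+d(1-e^{-\beta})>0$ for $\beta>\beta_c$, and $F\to 0$ as $\eta\to 1$, this gives $F$ a unique maximum at the spinodal $\eta_s$, which lies strictly in $(0,m_*)$ because $(\log F)'=2f''_{d,\beta}$ vanishes at $\eta_s<\eta_*=m_*$ while $\log F=2f'_{d,\beta}$ vanishes at $m_*$. Hence $F$ increases on $(0,\eta_s)$ and decreases on $(\eta_s,1)$, not on $(0,m_*)$ and $(m_*,1)$; a quick numerical check (for $d=3$, $\beta=1.4$: $F(0.6)\approx 1.15$ but $F(0.9)\approx 1.00$ while $m_*\in(0.9,0.95)$) confirms the stated bullet cannot hold. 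What the drift analysis in Section~\ref{secDynamics} actually uses is your weakened version: $F>1$ on $(0,m_*)$, $F<1$ on $(m_*,1)$, and $F'(0)>0>F'(m_*)$, all of which do follow from the at-most-one-critical-point fact.

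For establishing that fact, your ``plan B'' is exactly the paper's argument: write $\phi=\log F$ with $S_\eta=\sqrt{e^{2\beta}(1-\eta^2)+\eta^2}$, reduce $\phi'(\eta)=0$ to a pair of polynomial equations in $(\eta,S)$, take the resultant in $S$, and observe the resulting polynomial is linear in $\eta^2$, hence has at most one root in $(0,1)$. Your ``plan A'' (counting BP fixed points) would require rigorously establishing the correspondence between critical points of $f_{d,\beta}$ and $h=0$ BP fixed points, which the paper asserts informally but does not prove, so treat it as heuristic only. One further caution: the discrete-derivative shortcut $z_{k+1}/z_k\approx e^{n[f(\eta+2/n)-f(\eta)]}$ does not close naively, since the $o(n)$ error in $\log z_k=nf(\eta)+o(n)$ dominates the $O(1)$ increment; one must either telescope Lemma~\ref{lem:ratio-of-partition-functions-expectation} against Theorem~\ref{thmFreeEnergy}, or (better, and as you already suggest) verify $\log F=2f'_{d,\beta}$ algebraically from~\eqref{eq:f-closed-form} by direct differentiation.
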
    
\begin{proof}
We consider the expectation under $\nu_\eta$ with $\frac{1+\eta}{2} n = k$, 
\begin{align*}
    \frac{n}{k} \E_\nu \Big[\ind{\sigma_v =-1} e^{ \beta \sum_{w\sim v} \sigma_w} \Big] = \frac{2}{1+\eta} \frac{1-\eta}{2} \E_{\nu}\Big[ e^{\beta \sum_{w\sim v} \sigma_w} \mid \sigma_v =-1\Big]\,.
\end{align*}
Now notice that by the total-variation bound of Lemma~\ref{lem:local-dtv-planted-to-tree}, this expectation can be approximated, up to $n^{-1/2+\delta}$ error, by its expectation under the planted model $\hat{\mathbb E}$ where it is easier to compute explicitly. 

Following the proof of Lemma~\ref{lem:local-dtv-planted-to-tree}, conditional on $\sigma_w$ being $-1$, its neighboring matched half-edges are chosen uniformly among all available half-edges with spin $-1$ on their other endpoint, which implies in particular that their joint law is within $o(1)$ in total variation distance to $d$ independent Rademacher random variables with parameter $(1-\rho_{\eta})/(1-\eta)$ for $\rho_\eta$ from~\eqref{eq:rhoeta-explicit}. Using the moment generating function of the Rademacher random variables, we get that the expectation under consideration is within $o(1)$ of 
\begin{align*}
    F_{d,\beta}(\eta) = \frac{1-\eta}{1+\eta} \Big( \frac{\rho_{\eta}-\eta}{1-\eta} e^{-\beta} + \frac{(1-\rho_{\eta})}{1-\eta} e^{\beta}\Big)^d\,.
\end{align*}

Observe that since $\rho_{\eta=0} = \frac{e^{\beta} - 1}{e^{\beta}-e^{-\beta}}$, one has $F(0)=1$ as claimed. Next, computing the derivative of $\rho_\eta$ we find that 
\begin{align*}
    \frac{d\rho_{\eta}}{d\eta} = \frac{\eta}{\sqrt{e^{2\beta}(1 - \eta^2) + \eta^2}}\,.
\end{align*}
Since $\frac{d}{d\eta}\rho_\eta \vert_{\eta=0}=0$, one calculates $$F'(0) = -2 + d(1-e^{-\beta})$$ which is straightforwardly seen to undergo a phase transition between being negative to positive exactly at $- \log (1-\frac{2}{d}) =\beta_c$, as expected. At the same time, $\lim_{x\to 1} F(x) =0$ is evident from L'Hopital's rule. Therefore, the intermediate value theorem implies that when $\beta \in (\beta_c,\beta_r)$ there must be (at least) another point $m_*(\beta)\in (0,1)$ at which $F$ equals $1$. For the first two bullet points, it remains to show that there is no other point where $F$ equals $1$. 

We will show that $F'(\eta) =0$ has at most one solution by showing that $\phi(\eta) = \log F$ has only one critical point (sufficient because $\phi' = F'/F$ and $F$ is bounded positive on $(0,1)$. If we let 
\begin{align*}
    S_\eta = \sqrt{e^{2\beta}(1-\eta^2) + \eta^2} \qquad \text{so that} \qquad \rho_\eta = \frac{e^{2\beta} - S(\eta)}{e^{2\beta} -1}\,,
\end{align*}
then 
\begin{align*}
    \phi(\eta) = - \beta d + d \log (S_\eta - \eta) + (1-d)\log (1-\eta) - \log(1+\eta)\,.
\end{align*}
Computing $S'_\eta= -\eta(e^{2\beta} -1)/S_\eta>0$, we differentiate to get 
\begin{align*}
    \phi'(\eta) 
    = d\frac{ \frac{-\eta(e^{2\beta}-1)}{S_\eta}-1}{S_\eta - \eta}+ \frac{d-1}{1-\eta} - \frac{1}{1+\eta}
\end{align*}
For $\eta \in (0,1)$ we can multiply through by $(1-\eta)(1+\eta)(S_\eta - \eta) S_\eta$ to get equivalently 
\begin{align*}
    0 
    & = - d(e^{2\beta}-1)\eta (1-\eta)(1+\eta)- dS_\eta (1-\eta)(1+\eta) + (d-1) (S_\eta - \eta) (1+\eta) S_\eta \\ 
    &  \qquad - (S_\eta - \eta) (1-\eta)S_\eta \,.
\end{align*}
We can thus write a pair of polynomial equations satisfied by $S_\eta$ (in terms of $\eta$):  
\begin{align*}
    0 & =S^2 - e^{2\beta} + (e^{2\beta}-1) \eta^2 \,, \\ 
    0& = (-d \eta  - d +2) S_\eta^2 + (d\eta + d -2\eta) S_\eta + d\eta(1-\eta)(1+\eta)(e^{2\beta}-1)\,.
\end{align*}
To determine which values of $\eta$ can give solutions, we use the polynomial resultant of the pair of equations, which says that any $\eta$ solving the above must satisfy 
\begin{align*}
    \text{Res}_S(\eta) & = e^{2\beta} (\eta-1)(1-\eta) P(\eta) =0 \qquad \text{where}  \\ 
    P(\eta) &  =  \eta^2 ( d^2 (e^{2\beta}-1)-4d (e^{2\beta}-1)+4(e^{2\beta}-1))- d^2 (e^{2\beta} -1)+ 4 (d -1)e^{2\beta}\,. 
\end{align*}
Noting that the possible zeros are at $\eta \in \{\pm 1\}$ and at roots of $P(\eta)$, and that $P(\eta)$ is a linear expression in $\eta^2$ and thus only has at most one solution for $\eta \in (0,1)$, we get the claim. Indeed, with some further algebra, one verifies that $P(\eta)$ has a root in $\eta \in (0,1)$ if and only if $\beta>\beta_c$ as expected.   

Finally, the third bullet point is established by differentiating $F$ by hand, and noticing that its derivatives are smooth on $(0,1)$ and the only places any term can blow up is at $\eta =\pm 1$. 
\end{proof}

Putting Lemmas~\ref{lem:ratio-of-partition-functions-expectation}--\ref{lem:properties-of-ratio-of-part-functions} together with a union bound over $k$, and recalling the projection chain's transition probabilities, we arrive at the following approximation for the one-dimensional projection chain's transition rates. In words, it says the projection chain behaves like a random walk in an approximately unimodal (on $k=\{n/2,...,n\}$, bimodal on all of $\{0,...,n\}$) energy landscape.  

\begin{corollary}\label{cor:all-1d-transition-probability-properties}
    Fix $\beta \in (\beta_c,\beta_r)$. There exists a sequence $r_n = o(1)$ such that with high probability over $G\sim \bg$, the transition probabilities $P_{H}(i,i+1)$ satisfy the following for all $i\in \{n/2,...,n\}$: 
    \begin{enumerate}
        \item $|P_H(i,i+1) - \frac{1}{2} \frac{1}{1+F(\frac{2i}{n}-1)^{-1}}| \le  r_n$,
        \item $|P_H(i,i-1) - \frac{1}{2} \frac{1}{1+F(\frac{2i}{n}-1)}| \le r_n$,
        \item $P_H(i,i) = 1- P_H(i,i+1) - P_H(i,i-1)$,
    \end{enumerate}
    for $F$ given by~\eqref{eq:F(eta)} satisfying the three properties outlined in Lemma~\ref{lem:properties-of-ratio-of-part-functions}. 
\end{corollary}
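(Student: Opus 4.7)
The plan is to combine the pointwise concentration of $z_{k+1}/z_k$ from Lemma~\ref{lem:ratio-of-partition-functions-expectation} with the identification of its limit as $F(\eta)$ from Lemma~\ref{lem:properties-of-ratio-of-part-functions}, take a union bound over the $O(n)$ many values of $k$, and then transfer the resulting approximation of partition-function ratios to transition probabilities of $P_H$ via the $1$-Lipschitz map $x\mapsto x/(1+x)$.

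First, I would pick a sequence $\eps_n=o(1)$ decaying slowly, say $\eps_n=(\log n)^{-1/2}$. Inspecting the proof of Lemma~\ref{lem:ratio-of-partition-functions-expectation}, the failure probability for this choice of $\eps$ is $e^{-\Omega(n/\log n)}$, which is $o(1/n)$, so union bounding over $k\in\{n/2,\ldots,n\}$ yields that with probability $1-o(1)$ over $G\sim\bg$, for every such $k$,
\[
\Bigl|\tfrac{z_{k+1}}{z_k}-\tfrac{n}{k}\E_\nu\bigl[\ind{\sigma_v=-1}\,e^{\beta\sum_{w\sim v}\sigma_w}\bigr]\Bigr|\le\eps_n.
\]
Lemma~\ref{lem:properties-of-ratio-of-part-functions} then identifies the second term with $F(\eta_k)+o(1)$ uniformly in $k$, where $\eta_k=\tfrac{2k}{n}-1$. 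Hence there is a sequence $\tilde r_n=o(1)$ such that with high probability $|z_{k+1}/z_k-F(\eta_k)|\le\tilde r_n$ simultaneously for all $k\in\{n/2,\ldots,n\}$.

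Next, I would pass from ratios to transition probabilities. Writing
\[
P_H(k,k+1)=\tfrac{1}{2}\cdot\tfrac{z_{k+1}/z_k}{1+z_{k+1}/z_k},\qquad P_H(k,k-1)=\tfrac{1}{2}\cdot\tfrac{z_{k-1}/z_k}{1+z_{k-1}/z_k},
\]
and using that $x\mapsto x/(1+x)$ is $1$-Lipschitz on $[0,\infty)$, the first approximation transfers directly to give bullet (1):
\[
\bigl|P_H(k,k+1)-\tfrac{1}{2}\bigl(1+F(\eta_k)^{-1}\bigr)^{-1}\bigr|\le\tfrac{1}{2}\tilde r_n.
\]
For bullet (2), I would apply Lemma~\ref{lem:ratio-of-partition-functions-expectation} at $k-1$ as well (absorbed into the same union bound) to conclude $z_k/z_{k-1}\approx F(\eta_{k-1})$, invert to get $z_{k-1}/z_k\approx F(\eta_{k-1})^{-1}$, use the continuity of $F$ (third bullet of Lemma~\ref{lem:properties-of-ratio-of-part-functions}) to replace $\eta_{k-1}$ by $\eta_k$ at an additional $O(1/n)$ cost, and apply the $1$-Lipschitz map again to arrive at $P_H(k,k-1)\approx\tfrac{1}{2}\bigl(1+F(\eta_k)\bigr)^{-1}$. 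Bullet (3) is just the row-sum constraint at each state.

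The only subtlety is uniformity near the right endpoint $\eta_k\to 1$, where $F(\eta_k)\to 0$ so that $F^{-1}(\eta_k)\to\infty$. Here the quantity $\bigl(1+F(\eta_k)^{-1}\bigr)^{-1}=F(\eta_k)/(1+F(\eta_k))$ remains bounded and continuous in $F(\eta_k)$, so the $1$-Lipschitz transfer argument is unaffected; and because $F$ is continuous on the compact interval $[-1,1]$, it is uniformly continuous there, so $F(\eta_{k-1})-F(\eta_k)=o(1)$ uniformly in $k$. This lets us absorb every error term into a single $r_n=o(1)$. I expect this endpoint bookkeeping to be the only nonroutine part of the argument; everything else is a concatenation of the two preceding lemmas with a polynomial union bound.
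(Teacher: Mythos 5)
Your overall approach matches the paper's: the paper treats this corollary as an immediate consequence of Lemmas~\ref{lem:ratio-of-partition-functions-expectation} and~\ref{lem:properties-of-ratio-of-part-functions} plus a union bound over $k$, and then just reads off the transition probabilities. Two details in your write-up need adjustment, though neither is a conceptual gap.

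First, the specific rate $\eps_n=(\log n)^{-1/2}$ is not attainable from the lemmas. The error in Lemma~\ref{lem:ratio-of-partition-functions-expectation} has two sources: the Azuma/switching concentration of $z_{k+1}/z_k$ around its mean (which does support any $\eps$ polynomially larger than $n^{-1/2}$), and the deterministic approximation of $\E[z_{k+1}/z_k]$ by the tree expectation, which comes from the non-quantitative local weak convergence of Theorem~\ref{thmLocalWeak}. The paper is explicit (see the remark preceding Lemma~\ref{lem:conditions-guaranteeing-fast-birth-death-mixing}, and the discussion in Section~\ref{secOutline}) that this $o(1)$ error is not quantitative, which is precisely why the final mixing bound is $e^{o(n)}$ rather than polynomial. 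What you actually have is: for every $\eps>0$ there is an $n_0(\eps)$ such that for $n\ge n_0(\eps)$ the statement holds with failure probability $e^{-c(\eps)n}$. The fix is a diagonalization: take $r_n\to 0$ slowly enough that it dominates both the unknown $o(1)$ term and, say, $n^{-1/4}$; then $r_n^2 n/\log n\to\infty$ and the union bound over the $\Theta(n)$ values of $k$ still gives $o(1)$ total failure probability. This yields an unspecified $r_n=o(1)$, which is exactly what the corollary asserts.

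Second, your formula for $P_H(k,k-1)$ does not match the Madras--Randall definition used by the paper. With $A_i=\Omega_i\cup\Omega_{i+1}$ and $\Theta=2$, one has $P_H(k,k-1)=\frac{\pi(A_k\cap A_{k-1})}{2\pi(A_k)}=\frac{1}{2}\frac{z_k}{z_k+z_{k+1}}=\frac{1}{2}\bigl(1+z_{k+1}/z_k\bigr)^{-1}$, which depends on the same ratio $z_{k+1}/z_k$ as $P_H(k,k+1)$, not on $z_{k-1}/z_k$. Your expression $\frac{1}{2}\frac{z_{k-1}}{z_{k-1}+z_k}$ is a different quantity, and your extra steps (applying Lemma~\ref{lem:ratio-of-partition-functions-expectation} at $k-1$ and invoking continuity of $F$) are compensating for this discrepancy: the two expressions agree only up to the $o(1)$ error coming from slow variation of the ratios. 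The correct derivation is shorter: bullet (2) follows from the single bound $|z_{k+1}/z_k-F(\eta_k)|\le\tilde r_n$ by the same Lipschitz transfer, with no second application of the lemma and no continuity step. (One also then sees that $P_H(i,i)\equiv\frac{1}{2}$ exactly, since the two outgoing probabilities sum to $\frac{1}{2}$.) Your endpoint discussion of $\eta_k\to 1$ is fine.
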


\subsection{Mixing time of the 1D projection chain} 
In this subsection, we show that any one-dimensional birth-and-death chain whose transition probabilities satisfy the set of properties outlined in Corollary~\ref{cor:all-1d-transition-probability-properties} will have a mixing time that is at most $\exp( O(r_n \cdot n))$. 

\begin{remark}
Note that if we had access to optimal concentration estimates on $z_k/z_{k+1}$ as we expect (i.e., those we have for the planted model, without having to pass to the unplanted model), one would take $r_n  = n^{-1/2}\sqrt{\log n}$ and with use of slightly refined estimates in the proof below, improve the mixing time bound to $\exp( O(r_n^2 \cdot n))$ to get polynomial mixing. However, as mentioned in the introduction, getting such a precise error for the deviation of the unplanted expectation of $z_k/z_{k+1}$ from the properties outlined in Lemma~\ref{lem:properties-of-ratio-of-part-functions} seems to pose a serious challenge. 
\end{remark}

\begin{lemma}\label{lem:conditions-guaranteeing-fast-birth-death-mixing}
    Consider any 1D birth-and-death chain on $\{0,...,n\}$ with transition probabilities $p_{i,i-1},p_{i,i+1}$ and $p_{ii} = 1-p_{i,i-1} - p_{i,i+1}$. Suppose there exist $m_*\in (0,1)$, $\delta>0$ small, sequence $r_n = o(1)$ with $r_n \gtrsim n^{-1/2}\sqrt{\log n}$, and constants $(c_i)_{0\le i\le 5}>0$ such that   
    \begin{enumerate}
        \item For $i\in [0,\delta n]$, one has $p_{i,i+1} - p_{i,i-1} \ge c_0 (i/n) - r_n$,
        \item For $i\in [\delta n, m_*n - \delta n]$, one has  $p_{i,i+1} - p_{i,i-1} \ge c_1$,
        \item For $i\in [m_* - \delta n, m_*n + \delta n]$ one has $$p_{i,i+1} - p_{i,i-1} \in  [c_2(m_*-i/n)- r_n, c_3(m_* -i/n) - r_n]\,,$$
        \item For $i \in [m_*n+ \delta n, n]$, one has $p_{i,i+1} - p_{i,i-1}\le - c_4$. 
        \item For all $i\in [0,n]$, one has $p_{ii}\in [c_5, 1-c_5]$. 
    \end{enumerate}
    Then there exists $C$ such that $\tmix\le C e^{Cr_n \cdot  n}$. 
\end{lemma}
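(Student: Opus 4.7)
The plan is to apply the discrete Hardy inequality for birth-and-death chains, which states
\begin{align*}
\gap(P)^{-1} \;\le\; C_0 \max_{0 \le k < n} \frac{\pi([0,k])\, \pi([k+1, n])}{\pi_k\, p_{k,k+1}}
\end{align*}
for a universal constant $C_0$, and to bound this maximum by $\exp(O(r_n^2 n))$ through a region-by-region analysis of the stationary measure $\pi$. Condition 5 makes the chain aperiodic with bounded holding probabilities, so the mixing time is then controlled by $\tmix \lesssim \gap(P)^{-1} \log(1/\min_k \pi_k)$, and conditions 2 and 4 will give $\log(1/\min_k \pi_k) = O(n)$ directly. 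Writing $d_j := p_{j,j+1} - p_{j,j-1}$, the ratios of interest satisfy $\log(\pi_{j+1}/\pi_j) = c_j d_j (1 + O(|d_j|))$ with $c_j$ uniformly bounded above and away from zero (from condition 5).

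First I would dispose of regions 2, 3, and 4. In regions 2 and 4 the drift has magnitude $\Omega(1)$ with the correct sign to push mass toward $m_* n$, making $\log \pi$ piecewise linear with slope $\Theta(1)$ outside a window of width $\delta n$ around $m_* n$. In region 3 the two-sided bound of condition 3 places the peak of $\pi$ within $O(r_n n)$ of $m_* n$ with a locally Gaussian profile on scale $\sqrt{n}$. Thus $\pi$ is essentially unimodal, and for any $k$ outside region 1 the bottleneck ratio above is at most $\poly(n)$.

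Next, I would focus on region 1, where condition 1 gives only the one-sided bound $d_j \ge c_0 j/n - r_n$. Telescoping and summing yields
\begin{align*}
\max_{0 \le j \le k \le \delta n} \log(\pi_j/\pi_k) \;\le\; C \max_{j\le k} \sum_{i=j}^{k-1}(r_n - c_0 i/n)^+ \;\le\; \frac{C_1 r_n^2 n}{c_0},
\end{align*}
with the maximum essentially attained at $j=0$, $k = k^\ast := \lceil r_n n/c_0 \rceil$, the place where the lower-bound drift crosses zero. Hence any valley $\pi$ can develop in region 1 has depth at most $\exp(O(r_n^2 n))$; for $k$ just past region 1, the exponential gain $\exp(\Omega(c_1 (k - \delta n)))$ coming from condition 2 swamps this factor, so the bottleneck is indeed attained in region 1 with value at most $n \cdot \exp(O(r_n^2 n))$. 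Feeding this back into the Hardy bound gives $\tmix \le n^{O(1)} \exp(O(r_n^2 n))$; the hypothesis $r_n \gtrsim n^{-1/2}\sqrt{\log n}$ forces $r_n^2 n \gtrsim \log n$, absorbing the polynomial factor, and $r_n = o(1)$ gives $r_n^2 n \le r_n n$ for $n$ large, yielding the claimed $\tmix \le C\exp(C r_n n)$.

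The hard part will be the bookkeeping in region 1, where only a one-sided drift lower bound is available and one must show that this alone controls the depth of any possible $\pi$-valley. The key justification is that the lower bound represents the worst case for mixing: if the true drift were larger than $c_0 j/n - r_n$, then $\pi$ would only concentrate more on larger $j$, shrinking rather than enlarging the bottleneck; so summing the lower bound suffices.
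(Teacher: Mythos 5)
Your approach is genuinely different from the paper's. The paper proves this lemma by direct monotone coupling: chains started from $0$ and from $n$ are run through three stages (escape the near-zero valley by forcing the chain rightward for $O(r_n n)$ consecutive updates at cost $e^{-O(r_n n)}$; drift toward $m_*n$ in $O(n)$ steps; couple near $m_*n$), yielding $\tmix = e^{O(r_n n)}$. Your spectral route via a Hardy-type inequality is a legitimate alternative, and if executed correctly would give the stronger bound $e^{O(r_n^2 n)}$ because the spectral gap captures the height of the potential barrier near $0$ (which is $\Theta(r_n^2 n)$) rather than its width. The paper's remark after Corollary~\ref{cor:projection-chain-subexponential-mixing} explicitly acknowledges this stronger bound is achievable ``with slightly refined estimates,'' so the target is correct.

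However, there is a concrete error: the inequality you quote as ``the discrete Hardy inequality,''
$\gap(P)^{-1} \le C_0 \max_k \frac{\pi([0,k])\,\pi([k+1,n])}{\pi_k\, p_{k,k+1}}$,
is false. Lazy simple random walk on $\{0,\dots,n\}$ has uniform $\pi$ and the right-hand side is $\Theta(n)$, while $\gap^{-1}=\Theta(n^2)$. The correct Miclo--Chen form is
\begin{align*}
\gap^{-1} \;\asymp\; \max\Bigl( \max_{j>i^*} \pi([j,n]) \sum_{k=i^*+1}^{j}\tfrac{1}{\pi_{k-1}p_{k-1,k}},\;\max_{j<i^*}\pi([0,j]) \sum_{k=j}^{i^*-1}\tfrac{1}{\pi_k p_{k,k+1}}\Bigr)
\end{align*}
for a median $i^*$ (equivalently, ordinary Cheeger gives $\gap^{-1}\le 2/\Phi^2$). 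Because the inner sum has at most $n$ terms and all $p_{k,k+1}$ are $\Omega(1)$ by condition (5), swapping in the correct inequality only costs a polynomial factor, which the hypothesis $r_n \gtrsim n^{-1/2}\sqrt{\log n}$ absorbs into $e^{O(r_n^2 n)}$; so this is repairable. A secondary point: $\log(\pi_{j+1}/\pi_j)=\log(p_{j,j+1}/p_{j+1,j})$ mixes data from sites $j$ and $j+1$, so it is not literally $c_j d_j$ for any single-site factor $c_j$; but writing $s_j = 1-p_{jj}$ one has $\pi_k/\pi_0 = \tfrac{s_0+d_0}{s_k-d_k}\prod_{j=1}^{k-1}\tfrac{s_j+d_j}{s_j-d_j}$, in which the site-mismatch telescopes and condition (5) bounds the residual factors, so your cumulative estimate (the only thing you actually use) is sound and the region-1 valley depth is indeed $e^{O(r_n^2 n)}$.
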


 \begin{proof}
     All birth-death chains are monotone so it is sufficient to couple the chains initialized from $X_0 =0$ and from $X_0 = n$. Call the former chain $X_t^-$ and the latter $X_t^+$. 
     
     The argument will proceed in three stages. 
     \begin{itemize}
         \item Stage 1: $X_t^-$ reaches $\delta n$ in $C_1e^{ C_1 r_n n} + C_1 n \log n$ steps; 
         \item Stage 2: $X_t^-$ and $X_t^+$ reach $[m_* n-\tfrac{\delta}{2} n, m_* n + \tfrac{\delta}{2} n]$ in a further $C_2 n$ steps; 
         \item Stage 3: $X_t^-$ and $X_t^+$ couple in a further $C_3e^{ C_3 r_n n} + C_3 n \log n$ steps. 
     \end{itemize}
     
     \medskip
     \noindent \emph{Stage 1}: We first argue that with high probability, $X_t^-$ is able to escape the (approximately) repulsive fixed point of the dynamics at $0$. We split the analysis in two, first bounding the hitting time $\tau_1$ of $X_t^-$ to $C_0  r_n n$ for $C_0$ large (as in this region one isn't even guaranteed that $X_t^-$ has a positive drift). Notice that by forcing the chain to move to the right in every step that it moves, in every $C_0 r_n n $ steps, there is a fresh attempt of probability $(c_5(\frac{1}{2} - r_n))^{C_0 r_n n}$ of hitting $C_0 r_n n$, giving a high probability upper bound on $\tau_1$ of $C_1 e^{ C_1 r_n n}$ for an appropriately chosen $C_1$.

    Next define $\tau_2$ to be the hitting time for $X_t^-$ of $\delta n$. 
    By the strong Markov property, conditioning on $\mathcal F_{\tau_1}$, it suffices to consider $X_t^-$ initialized from $X_0^- = C_0 r_n n$. As long as $C_0$ is a sufficiently large constant, by Assumption (1) again, that Markov chain has a drift which is lower bounded by $c_0' i/n$ for all $i\in [\frac{C_0}{2} r_n n ,\delta n]$ where $c_0'$ is used as opposed to $c_0$ to absorb the $c_5$ constant for moving, and to compensate for the error $r_n$ in Assumption (1). In this region, we have 
    \begin{align*}
        X_t^- \ge  C_0 r_n n + \sum_{s=1}^t \frac{c_0' X_s^-}{n} + M_t\,,
    \end{align*}
    where $M_t$ is the Doob martingale, and has increments bounded by $1$. By the Azuma--Hoeffding inequality, one has 
    $$\mathbb P\Big(\sup_{t\le T} |M_t| \ge \frac{C_0}{2} r_n n \Big) \le \exp( - \Omega( r_n^2 n^2/T))\,.$$ 
    As long as $T \le C_1 n \log n$ where $C_1$ can be taken large as long as $C_0$ initially was large enough, this probability is at most $1/100$. On that event, in time $T = C_1 n \log n$, the process does not exit the interval $[\frac{C_0}{2} r_n n, \delta n]$ to the left, and by the discrete Gronwall inequality, one has 
    \begin{align*}
        X_t^- \ge \frac{C_0}{2} r_n n \cdot e^{c_0' t/n}\,\qquad \text{for all $t\le T$.}
    \end{align*}
    In particular, as long as $C_1$ was sufficiently large (depending on $c_0'$), this process hits $\delta n$, upper-bounding $\tau_2 - \tau_1$ by $C_1 \log n$, and thus taking the larger of the $C_1$'s, we get with probability $99/100 -o(1)$ that $\tau_2 \le C_1 e^{C_1 r_n n} + C_1 n \log n$. 

    \medskip
\noindent     \emph{Stage 2}: This portion of the analysis is the easiest as there is a bounded-away-from-zero drift towards a $\delta n$ neighborhood of $m_* n$. Again by the strong Markov property, suppose $X_t^-$ is initialized from $\delta n$. We explain the argument for $X_t^-$ reaching $m_* n - \delta n$, with the bound on the hitting time of $X_t^+$ reaching $m_* n + \delta n$ being symmetrical, with Assumption (4) playing the role of Assumption (2). As long as $X_t^-$  is at least $\delta n/2$, by Assumptions (1)--(2),  its drift is at least $c_1'$, and therefore, 
    \begin{align*}
        X_t^- \ge \delta n + tc_1'  + M_t\,,
    \end{align*}
    where $M_t$ is the Doob martingale. Again using Azuma's inequality, this time with $T = C_2 n$, one gets 
$
        \mathbb P(\sup_{t\le T}|M_t|\ge \tfrac{\delta }{4} n ) \le \exp ( - \Omega(n))$. 
    On the complement of that event, 
    we get that after $T = C_2 n$ steps, $X_t^-$ has hit $m_* n  - \delta n$. 
    Moreover, by the same reasoning and a union bound, once $X_t^-$ hits $m_* n - \delta n$, it stays at least $m_* n - 2\delta n$ for $\exp(\Omega(n))$ many steps except with probability $\exp(- \Omega(n))$. 
    
    \medskip
    \noindent \emph{Stage 3}: In the last section of the coupling, we can assume that $X_t^- = m_* n - \delta n$ and $X_t^+ = m_* n + \delta n$. We first establish that after $C_3 n \log n$ steps, $X_t^-$ attains $m_* n - C_0' r_n n$ and $X_t^+$ attains $m_* n  + C_0' r_n n$ except with probability $\frac{2}{100}$. This argument goes symmetrically to the second part of stage 1 where $X_t^-$ went from $C_0 r_n n$ to $\delta n$ with Assumption~(3) replacing Assumption (1) (together with a union bound). 
    
    At this point, we have established that if we let $T_0 = C_1 e^{C_1 r_n n} + C_1 n\log n + C_2 n$, one has with probability $1 - \frac{3}{100} - o(1)$,  
    $$X_{T_0}^- \ge m_* n - C_0' r_n n\,, \qquad \text{and} \qquad X_{T_0}^+ \le m_* n + C_0' r_n n\,.$$
    By monotonicity, it suffices to consider two chains $X_{T_0+t}^-$ and $X_{T_0+t}^+$ started from these two extreme points, and establish that there is at least probability $e^{- C_3 r_n n}$ of them being equal after a further $n$ steps. Indeed, one can force that in the next $C_0 ' r_n n$ many updates,$X_{T_0+t}^-$ always moves to the right, and $X_{T_0}^+$ always moves to the left, in which case by the end of that period they will have coincided, and this forcing costs $(c_5(\frac{1}{2} - r_n))^{2 C_0' r_n n}$. 
 \end{proof}

 The next corollary applies Lemma~\ref{lem:conditions-guaranteeing-fast-birth-death-mixing} to the projection chain. 

 \begin{corollary}\label{cor:projection-chain-subexponential-mixing}
     For every $d\ge 3$ and $\beta \in (\beta_c, \beta_r)$, there exists $C(\beta,d)$ such that with probability $1-o(1)$, $G\sim \bg$ is such that the mixing time and inverse spectral gap of the projection chain $P_H$ are $e^{o(n)}$. 
 \end{corollary}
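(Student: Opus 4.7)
The plan is to apply Lemma~\ref{lem:conditions-guaranteeing-fast-birth-death-mixing} to the projection chain $P_H$, viewed as a birth-death chain on $\{0,\ldots,n/2\}$ via the identification $i \mapsto j = i - n/2$, and then transfer the resulting mixing-time bound to an inverse-spectral-gap bound. Corollary~\ref{cor:all-1d-transition-probability-properties} gives that with probability $1-o(1)$ over $G\sim \bg$, the drift at the state corresponding to magnetization $\eta = 2j/n$ satisfies
\begin{equation*}
p_{j,j+1} - p_{j,j-1} = \frac{F(\eta) - 1}{2(1+F(\eta))} + O(r_n),
\end{equation*}
for some $r_n = o(1)$ (which we may enlarge so that $r_n \gtrsim n^{-1/2}\sqrt{\log n}$ if needed), and $p_{jj} = \tfrac{1}{2} + O(r_n)$, which immediately handles condition~(5) of the lemma. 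I will then verify conditions~(1)--(4) by case analysis on $\eta$, using the structural properties of $F$ guaranteed by Lemma~\ref{lem:properties-of-ratio-of-part-functions}.

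Near $\eta = 0$, the assumption $\beta > \beta_c$ yields $F(0)=1$ and $F'(0) = d(1-e^{-\beta}) - 2 > 0$ as derived in the proof of Lemma~\ref{lem:properties-of-ratio-of-part-functions}, and a first-order Taylor expansion (using boundedness of $F''$ on compact subsets of $[0,1)$) gives $F(\eta) - 1 \gtrsim \eta$ on a right neighborhood of $0$, producing condition~(1). For the bulk positive-drift region $\eta \in [\delta, m_*-\delta]$, the function $F-1$ is bounded below by a positive constant by compactness, since $F > 1$ strictly on $(0,m_*)$, which verifies condition~(2). Near $\eta = m_*$, the fact that $F-1$ changes sign at $m_*$, combined with $F\in C^2$ and the uniqueness of the critical point of $F$ in $(0,1)$ implied by the resultant analysis in the proof of Lemma~\ref{lem:properties-of-ratio-of-part-functions}, forces $F'(m_*) < 0$; a second Taylor expansion then gives two-sided bounds $|F(\eta)-1| \asymp |m_* - \eta|$ in a neighborhood of $m_*$, yielding condition~(3). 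Finally, for $\eta \in [m_*+\delta, 1]$, continuity together with $\lim_{\eta\to 1} F(\eta) = 0$ gives a uniform upper bound $F(\eta) \le 1 - c$ and hence a drift bounded above by a negative constant, establishing condition~(4).

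Once all five conditions are in place, Lemma~\ref{lem:conditions-guaranteeing-fast-birth-death-mixing} produces $\tmix(P_H) \le C e^{C r_n n} = e^{o(n)}$ on the high-probability event. To upgrade this to an inverse-spectral-gap bound, I will invoke the standard reversible-chain inequality $\trel - 1 \le \tmix(1/4)/\log 2$, giving $\trel(P_H) = e^{o(n)}$ on the same event. The main subtlety to watch is condition~(3): ensuring transversality $F'(m_*) \neq 0$ so that the drift crosses zero at linear rate, since otherwise the $O(r_n)$ noise around $m_*$ could create a bottleneck worse than sub-exponential. This transversality is exactly what the resultant calculation in Lemma~\ref{lem:properties-of-ratio-of-part-functions} rules out, by showing that the only interior stationary point of $F$ lies strictly between $0$ and $m_*$, so $F$ is strictly decreasing in a neighborhood of $m_*$.
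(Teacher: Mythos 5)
Your proposal is correct and follows essentially the same route as the paper: apply Lemma~\ref{lem:conditions-guaranteeing-fast-birth-death-mixing} by verifying its five conditions via case analysis on $\eta$ and Taylor expansions of $F$ at $0$ and $m_*$, using the structural facts from Lemma~\ref{lem:properties-of-ratio-of-part-functions}. You are slightly more explicit than the paper in two places: you spell out the transversality argument $F'(m_*)\neq 0$ (the paper defers this to the phrase ``analogous to Assumption (1),'' and Lemma~\ref{lem:properties-of-ratio-of-part-functions} is imprecisely stated about where $F$ increases/decreases, but the unique-critical-point fact from the resultant calculation plus Rolle's theorem does give $F'(m_*)<0$, exactly as you argue), and you explicitly note the standard $\trel - 1 \le \tmix(1/4)/\log 2$ step to convert the mixing-time bound of Lemma~\ref{lem:conditions-guaranteeing-fast-birth-death-mixing} into the spectral-gap bound that the corollary also asserts, a conversion the paper's proof leaves implicit.
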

 \begin{proof}
     We show that Corollary~\ref{cor:all-1d-transition-probability-properties} guarantees the properties of Lemma~\ref{lem:conditions-guaranteeing-fast-birth-death-mixing} (up to identification of $\{n/2,...,n\}$ with $\{0,...,n\}$) for some sequence $r_n = o(1)$. Note that at the points where $F(\eta)=1$, one has zero drift, and we can Taylor expand 
     \begin{align*}
         P_H(i,i+1) - P_H(i,i-1) \ge \frac{1}{2} \frac{F(\frac{2i}{n} -1)-1}{F(\frac{2i}{n}-1)+1}- r_n \,.
     \end{align*}
     By Taylor expanding $F$ about $0$ (corresponding to $i/n= 1/2$), 
     \begin{align*}
        | F(2i/n-1) - 1 + F'(0)(2i/n -1) | \le C (2i/n -1)^2\,,
     \end{align*}
     for all $2i/n-1 \le 1-\delta$ by the uniform upper bound on $F''$. Plugging this into $(x-1)/(x+1)$ and using that $F'(0)>0$ per Corollary~\ref{cor:all-1d-transition-probability-properties}, we get 
     \begin{align*}
         P_H(i,i+1) - P_H(i,i-1) \ge \frac{1}{2} F'(0) (\tfrac{2i}{n} -1) + O((\tfrac{2i}{n}-1)^2) -r_n
     \end{align*}
     which is at least $c_0 (2i/n-1) -r_n$ for $c_0 = F'(0)/2$ as long as $2i /n -1 <\delta$ for suitable~$\delta$.

     For Assumption (2), by continuity, there exists $\eps_\delta$ such that for all $i \in [\delta n, m_* n - \delta n]$, one has  $F\ge 1+\eps_\delta$. Thus 
     \begin{align*}
         \min_{i\in [\delta n,m_* n -\delta n]} P_H(i,i+1) - P_H(i,i-1) &  \ge \min_i \frac{1}{2} \frac{F(\frac{2i}{n} -1)-1}{F(\frac{2i}{n}-1)+1} - o(1)\\ & \ge \frac{1}{2} \frac{ \eps_\delta}{\eps_\delta +1}-o(1)\,,
     \end{align*}
     because $(x-1)/(x+1)$ is increasing for $x>1$. The right-hand side is strictly positive. 

     The proof that Assumption (3) holds is analagous to the proof for Assumption (1), with the Taylor expansion now being about $m_*$. The proof that Assumption (4) holds is analagous to the proof of Assumption (2). 
     
     Finally, to see that Assumption (5) holds, consider  
     \begin{align*}
         |P_H(i,i+1) + P_H(i,i-1) - \frac{1}{2}| \le  r_n\,.
     \end{align*}
     It follows that for $n$ sufficiently large, this is at least, say, $1/3$.
 \end{proof}

 We can combine the above ingredients to conclude sub-exponential mixing of the positive-magnetization Glauber dynamics. 

 \begin{proof}[\textbf{\emph{Proof of Theorem~\ref{thm:positive-magnetization-mixing-time}}}]
    By Lemma~\ref{lem:main-projection-restriction}, the inverse spectral gap of the Glauber dynamics is upper bounded by  
    \begin{align*} 
        \gap(P^+)^{-1}\le 2 \gap(P_H)^{-1} \max_{n/2\le i \le n} \gap(P_i)^{-1}\,.
    \end{align*}
    By Lemma~\ref{lem:fast-mixing-of-slice-chain}, as long as $\beta< \frac{1}{4\sqrt{d-1}}$ this is at most $2 \gap(P_H)^{-1} \cdot Cn ^2$. By Corollary~\ref{cor:projection-chain-subexponential-mixing}, for all $\beta \in (\beta_c, \beta_r)$, the birth-death chain $P_H$ has inverse spectral gap that is sub-exponential $e^{o(n)}$. Multiplying these bounds together, we get the desired claim. 
 \end{proof}

 \subsection{Mixing from spin-flip symmetric initializations}

We now use a spectral argument, leveraging the spin-flip symmetry, to go from the positive-magnetization fast mixing of Theorem~\ref{thm:positive-magnetization-mixing-time} to the fast mixing from uniform-at-random initialization of Theorem~\ref{thmIsingMixing}. That theorem is a special case of the following.  

  \begin{corollary}\label{cor:fast-from-spin-flip-symmetric-initialization}
            Suppose $\nu$ is any probability distribution on $\Omega = \{-1,+1\}^n$ that is invariant under global spin flip, i.e., $\nu(\sigma) = \nu(-\sigma)$. Then if $\beta\in (\beta_c,  \frac{1}{4 \sqrt{d-1}})$, the mixing time of Ising Glauber dynamics from a $\nu$-initialization is at most $e^{o(n)}$. 
  \end{corollary}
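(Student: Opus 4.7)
The approach is a spectral argument based on the global spin-flip symmetry. Let $\tau : \Omega \to \Omega$ be the involution $\sigma \mapsto -\sigma$; both the Glauber kernel $P$ and the stationary measure $\pi = \muising_{G,\beta}$ are $\tau$-invariant, so $P$ commutes with $\tau$ on $\ell^2(\pi)$, yielding an orthogonal decomposition $\ell^2(\pi) = V_+ \oplus V_-$ into $\tau$-symmetric and $\tau$-antisymmetric functions, each invariant under $P$. Any $\nu$ with $\nu \circ \tau = \nu$ has density $f := d\nu/d\pi$ that is $\tau$-invariant, so $f - 1 \in V_+ \cap \{1\}^\perp$; the exponentially small eigenvalue of $P$ responsible for the slow mixing from worst-case initializations lives in $V_-$ and so is orthogonal to this vector.

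The key step is the inequality $\gap(P|_{V_+}) \ge \gap(P^+)$, where $P^+$ is the Glauber chain restricted to $\Omega^+$. Each $g \in V_+$ corresponds bijectively to a function on $\Omega^+$ via $g(\tau\sigma) = g(\sigma)$, and for $n$ odd one has $\pi^+ = 2\pi|_{\Omega^+}$ and $\Var_\pi(g) = \Var_{\pi^+}(g|_{\Omega^+})$. Splitting the Dirichlet form of $P$ according to whether $\sigma, \sigma'$ lie in $\Omega^\pm$ and using $\tau$-invariance,
\begin{align*}
    \mathcal{E}_P(g, g) = \mathcal{E}_{P^+}(g|_{\Omega^+}, g|_{\Omega^+}) + 2 \!\! \sum_{\sigma \in \Omega^+,\, \sigma' \in \Omega^-} \!\! \pi(\sigma) P(\sigma, \sigma')\bigl(g(\sigma) - g(\tau\sigma')\bigr)^2 \ge \mathcal{E}_{P^+}(g|_{\Omega^+}, g|_{\Omega^+}).
\end{align*}
The Poincar\'e variational formula then yields $\gap(P|_{V_+}) \ge \gap(P^+) \ge e^{-o(n)}$, where the last bound follows from the sub-exponential mixing time in Theorem~\ref{thm:positive-magnetization-mixing-time} via the standard inequality $\trel \le \tmix(1/4)/\log 2 + 1$.

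The remainder is routine reversible-chain theory. Using $(\nu P^t)(\sigma)/\pi(\sigma) = (P^t f)(\sigma)$ by reversibility and Cauchy--Schwarz,
\begin{align*}
    \|\nu P^t - \pi\|_{\tv}^2 \le \tfrac{1}{4} \|P^t(f-1)\|_{\ell^2(\pi)}^2 \le \tfrac{1}{4}(1-\gap(P|_{V_+}))^{2t} \|f-1\|_{\ell^2(\pi)}^2,
\end{align*}
where the last contraction uses $f - 1 \in V_+ \cap \{1\}^\perp$ (passing to the lazy chain to rule out negative eigenvalues). Since $\|f-1\|_{\ell^2(\pi)}^2 \le 1/\min_\sigma \pi(\sigma) \le e^{O(n)}$, taking $t = e^{o(n)}\log(1/\varepsilon)$ suffices, giving the corollary; Theorem~\ref{thmIsingMixing} is the special case of uniform $\nu$. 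The main technical wrinkle is handling $n$ even, where $\Omega^+ \cap \Omega^- = \Omega^0 \ne \emptyset$: one either redefines $\Omega^+$ to exclude $\Omega^0$ and accounts for the $O(n^{-1/2})$ mass of $\Omega^0$ separately, or directly incorporates the boundary transitions into the Dirichlet-form comparison; either adaptation is straightforward.
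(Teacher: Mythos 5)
Your proposal is correct and follows essentially the same route as the paper: decompose $\ell^2(\pi)$ into the spin-flip symmetric and antisymmetric subspaces, compare the Dirichlet form and variance on the symmetric sector with those of the $\Omega^+$-restricted chain to get $\gap(P|_{V_+})\ge \gap(P^+)$, and then convert the $L^2$ contraction to total variation using the $e^{-O(n)}$ lower bound on $\pi(\sigma)$. You are more careful than the paper on two minor points that are worth noting: the paper writes the contraction as $\exp(-2\gap_{sym}\cdot t)$ as if the chain were continuous-time, while you correctly flag that the discrete-time Glauber kernel may have negative eigenvalues and one should pass to the lazy chain (or work with the absolute spectral gap); and you address the $n$-even case, where $\Omega^+$ and $\Omega^-$ overlap on the zero-magnetization slice, explicitly in the main argument rather than relegating it to a footnote as the paper does.
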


  \begin{proof}
      We begin by bounding the inverse spectral gap of Glauber dynamics on all of $\Omega$ but restricted to the subspace $L^2_{sym}(\pi)$ which consists only of spin-flip symmetric functions. For any configuration $\sigma$, let $\bar \sigma \in \{-\sigma,\sigma\}$ be its positive magnetization version. Let $P$ be the Glauber dynamics on all of $\Omega$, and recall that $P^+$ is the Glauber dynamics restricted to $\Omega^+$ and $\pi^+ = \pi^+(\cdot \mid \Omega^+)$. Note that $\pi^+(\bar \sigma) = \pi(\sigma)$ for all $\sigma$.\footnote{For $n$ even, a little extra care is needed to handle the $m(\sigma)=0$ configurations. The same proof works mutatis mutandis if the weight assigned to all zero-magnetization configurations under $\pi^+$ is divided by two.} 
     
      For any spin-flip symmetric test function $f$, since $f(\sigma) = f(\bar \sigma)$, 
      \begin{align*}
          \text{Var}_{\pi}(f) = \sum_{\sigma,\sigma'} \pi(\sigma) \pi(\sigma') (f(\sigma) - f(\sigma'))^2 = 4 \sum_{\bar \sigma,\bar \sigma'}\pi^+(\bar \sigma)\pi^+(\bar \sigma')(f(\bar \sigma) - f(\bar\sigma'))^2 = 4\text{Var}_{\pi^+}(f)\,.
      \end{align*}
      On the other hand, its Dirichlet form can be lower-bounded by only considering transitions that preserve the magnetization:  
      \begin{align*}
          \mathcal E(f,f) & = \sum_{\sigma} \sum_{\sigma'}\pi(\sigma) P(\sigma,\sigma') (f(\sigma) - f(\sigma'))^2  \\& \ge \sum_{\sigma}\sum_{\sigma': \text{sign}(m(\sigma')) = \text{sign}(m(\sigma))} \pi(\sigma) P(\sigma,\sigma')(f(\sigma) - f(\sigma'))^2 \\ 
          & \ge 2 \sum_{\bar \sigma} \sum_{\bar \sigma'} \pi(\bar\sigma) P(\bar \sigma,\bar \sigma')(f(\bar \sigma) - f(\bar \sigma'))^2 \,,
      \end{align*}
      where the last inequality used that if $\sigma,\sigma'$ have the same magnetization sign, then $P(\sigma,\sigma') = P(\bar \sigma,\bar \sigma')$. In fact, for $\bar \sigma \ne \bar \sigma'$ in $\Omega^+$ this transition matrix will be exactly the same as $P^+$ and therefore the right-hand side here is exactly ${\mathcal E}^+(f,f)$, the Dirichlet form for the $\Omega^+$-restricted Glauber dynamics. In particular, we obtain 
      \begin{align*}
          \gap_{sym}(P):= \inf_{f\in L^2_{sym}(\Omega): \E_\pi[f] = 0} \frac{\mathcal E(f,f)}{\text{Var}_\pi(f)} \ge \frac{1}{2} \inf_{f\in L^2(\Omega^+): \E_{\pi^+}[f] =0} \frac{{\mathcal E}^+(f,f)}{\text{Var}_{\pi^+}(f)}\,,
      \end{align*}
      and the right-hand side is the spectral gap of the Glauber dynamics restricted to $\Omega^+$. In particular, by Theorem~\ref{thm:positive-magnetization-mixing-time}, we deduce that the spectral gap of Glauber dynamics restricted to the subspace $L^2_{sym}(\pi)$ is at least $\gap_{sym}(P) \ge e^{-o(n)}$. 

      It remains to conclude from here that the total variation distance started from spin-flip symmetric initializations becomes small after time that is sub-exponential in $n$. Since the two subspaces $L^2_{sym}(\pi)$ and $L^2_{antisym}(\pi)$ are orthogonal subspaces, and the operator $P$ keeps each of them fixed (due to spin-flip symmetry of the dynamics), for spin-flip symmetric initial distribution $\nu$, the distribution of $P^t\nu$ is spin-flip symmetric and the function $\frac{P^t\nu}{\pi}$ is in $L^2_{sym}(\pi)$. In particular, using its orthogonal decomposition, we get
      \begin{align*}
          \Big\|\frac{P^t \nu}{\pi} - 1\Big\|_{L^2(\pi)} \le \exp( - 2 \gap_{sym}\cdot t) \Big\|\frac{\nu}{\pi} -1\Big\|_{L^2(\pi)}\,.
      \end{align*}
    Using the sub-exponential lower bound on $\gap_{sym}$ and the exponential lower bound on $\pi(\cdot)$, we deduce up to a change in the constant $C$ that the total variation distance is 
    \begin{align*}
        d_{\tv}\big(P^t\nu(\sigma) ,\pi(\sigma)\big) \le e^{ 2d \beta n} \exp( - t/e^{o(n)})\,,
    \end{align*}
    which can be made at most any $\eps$ if $t \ge e^{o(n)}$ up to a change in the $o(n)$ sequence. 
  \end{proof}

\section{Concentration of Lipschitz functions in the planted model}
\label{secConcentration}

For the planted model introduced in \eqref{defn:plantedconfig-graph}, we may represent it as the following weighted configuration model: fix $\beta > 0$, and fix $\sigma : [n] \to \{-1, +1\}$. Sample a random perfect matching on a vertex set consisting of $d$ copies of $[n]$ where each monochromatic edge is weighted by $e^{\beta}$ and each bichromatic edge is weighted by $1$. 

Note that specifying $\sigma$ is equivalent to specifying $n_+$ and $n_-$ as the number of endpoints (or half-edges) in $[dn]$ with $+1$ and $-1$ spins, respectively (so long as these numbers are divisible by $d$) and letting $\{1, 2, \dots, n_+\}$ be the vertices with $+1$ spins and the remainder with $-1$ spins. For the next several results, we work in this more general context. Let $\cP = \cP_{n_+, n_-}$
be the collection of pairings of vertices where $n_+$ vertices have $+1$ spins and $n_-$ have $-1$ spins. When needed, we specify the underlying spin configuration $\sigma : [n_+ + n_-] \to \{-1, +1\}$.

Our goal is to prove concentration of Lipschitz random variables in this model in the style of results known for concentration of Lipschitz functions of random regular graphs, as in \Cref{lem:switching}. We recall the statement for the reader's convenience. 

\begin{customlem}{\bf \ref{planted-concentration}}
Fix any $d\ge 3$, $\beta>0$, and $\eta \in (-1,1)$. Suppose for some $L$ that the function $X$ defined on pairs of graphs and spin configurations $(G,\sigma)$ is such that for each $\sigma \in \Omega_\eta$, if $G,G'$ differ in exactly one switch, then $|X(G,\sigma) - X(G',\sigma)|\le L$.  Let $\hat \Prob_{\hbg(\hsig)}, \hat \E _{\hbg(\hsig)}$ denote the  distribution~\eqref{defn:plantedconfig-graph} of the planted model conditioned on the spin configuration $\hsig$.

Then there exists $C(d,\beta)>0$ such that  for all $\hsig \in \Omega_{\eta}$ and all $t>0$, 
    $$\hat\Prob_{\hbg(\hsig)}(|X - \hat\E_{\hbg(\hsig)} X | \geq t ) \leq 2\exp\Big(-\frac{t^2}{ C n L^2}\Big)\,.$$
\end{customlem}

To do that, we show that an edge exposure martingale has bounded differences by showing that the weighted configuration measure conditioned on its edge statistics---say, a fixed number of bichromatic edges---can be coupled with a measure where the edge statistics are perturbed slightly. We start by characterizing the distribution of bichromatic edges. We deal separately with the edge cases where one of $n_+, n_-$ is $O(1)$ relative to their sum, and the cases where the number of bichromatic edges is far from the mode. Outside of these cases, the distribution of bichromatic edges behaves nicely in that it satisfies a local central limit theorem with a quantitative error bound. 

In what follows in this section, we abuse notation to now write $\Prob,\E$ for probability and expectation with respect to the $\beta$-\emph{weighted} configuration model on $n_+$ plus vertices and $n_-$ minus vertices, which is exactly the distribution of $\hat \Prob$ given assignment $\hat \sigma\in \Omega_\eta$.

\begin{lemma}\label{lem:localCLT-planted}
    Given $\beta > 0$, let $n_+, n_- \in \mathbb N$. Let $B$ be the number of bichromatic edges in the $\beta$-weighted configuration model. Then, there exist $\mu, \varsigma^2 \to \infty$ as $n_+, n_- \to \infty$ such that for all $k \in [\mu - \varsigma \log \varsigma, \mu + \varsigma \log \varsigma]$,
    $$\Prob(B = k) = \frac{1}{\sqrt{2\pi}\varsigma}e^{-(k-\mu)^2 / 2\varsigma^2}\left(1 + O\Big(\frac{(k-\mu)^3}{\varsigma^4} \vee \frac{1}{\varsigma^2}\Big)\right)\ .$$
\end{lemma}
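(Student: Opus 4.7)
The plan is to write $\mathbb{P}(B=k)$ explicitly using the formula \eqref{eqn:bi-counting} for the number of pairings with $k$ bichromatic edges, apply Stirling's formula to obtain a smooth approximation of $\log\mathbb{P}(B=k)$, and then Taylor-expand around the maximizer to extract the Gaussian form with the claimed error.

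First, I observe that in the $\beta$-weighted configuration model on $n_+$ plus half-edges and $n_-$ minus half-edges, each pairing with $k$ bichromatic edges (and hence $(n_++n_-)/2-k$ monochromatic ones) carries weight proportional to $b(k)e^{-\beta k}$, giving
$$\mathbb{P}(B=k) \;=\; \frac{b(k)\,e^{-\beta k}}{\sum_{k'}b(k')\,e^{-\beta k'}}\,.$$
Applying Stirling's formula as in the derivation of \eqref{eq:b(k)-approx}, I will write $\log[b(k)e^{-\beta k}] = \Phi(k) + \psi(k)$, where $\Phi$ is the smooth function obtained by keeping only the leading Stirling terms and $\psi(k) = O(1/k) + O(1/(n_+-k)) + O(1/(n_--k))$ collects the sub-logarithmic corrections.

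Next, I identify the mode. Differentiating, $\Phi'(k)=0$ reduces to the algebraic equation $4k^2 = e^{-2\beta}(n_+-k)(n_--k)$, which has a unique solution $\mu \in (0, \min(n_+,n_-))$ with $\mu = \Theta(\min(n_+,n_-))$. From the form of $\Phi$ one computes
$$\Phi''(k) \;=\; -\frac{1}{k} - \frac{1}{2(n_+-k)} - \frac{1}{2(n_--k)} + \text{const}\,,$$
so $\Phi$ is strictly concave, and setting $\varsigma^{2} := -1/\Phi''(\mu) = \Theta(\min(n_+,n_-))$ ensures both $\mu$ and $\varsigma^{2}$ diverge as $n_\pm \to \infty$. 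Differentiating once more shows $|\Phi'''(\mu)| = \Theta(1/\varsigma^4)$ and $|\Phi^{(4)}(\mu)| = \Theta(1/\varsigma^6)$.

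Then I Taylor-expand: for $|k-\mu| \le \varsigma \log \varsigma$, strict concavity and the derivative bounds just derived yield
$$\Phi(k) - \Phi(\mu) \;=\; -\frac{(k-\mu)^2}{2\varsigma^2} + \frac{\Phi'''(\mu)}{6}(k-\mu)^3 + O\!\left(\frac{(k-\mu)^4}{\varsigma^6}\right)\,.$$
Since $|k-\mu|\le\varsigma\log\varsigma \le \mu/2$ for large $n$, we also have $\psi(k) = O(1/\varsigma^2)$ uniformly over this window. Using the elementary bound $(k-\mu)^4/\varsigma^6 \le |k-\mu|^3/\varsigma^4$ whenever $|k-\mu|\le\varsigma^2$, the combined contribution from the cubic term, the quartic remainder, and $\psi(k)$ is $O((k-\mu)^3/\varsigma^4 \vee 1/\varsigma^2)$, as required.

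Finally, I normalize: by a standard Riemann-sum approximation of the Gaussian integral---tails beyond $|k'-\mu|>\varsigma\log\varsigma$ contribute super-polynomially little by strict concavity of $\Phi$---the partition function $\sum_{k'}e^{\Phi(k')-\Phi(\mu)+\psi(k')}$ equals $\sqrt{2\pi}\,\varsigma\,(1+O(1/\varsigma^2))$. Combining this with the per-$k$ expansion above produces the claimed LCLT. \emph{The main obstacle} is the quantitative bookkeeping of the Stirling correction $\psi(k)$ together with the cubic Taylor term in $\Phi$, ensuring that both absorb cleanly into the precise $O((k-\mu)^3/\varsigma^4 \vee 1/\varsigma^2)$ error; once $\mu$ and $\varsigma^2$ are identified as being of order $\min(n_+,n_-)$, the remainder is standard local-CLT calculus.
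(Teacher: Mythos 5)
Your proposal follows the paper's proof step for step: explicit pmf via $b(k)e^{-\beta k}$, Stirling to get a smooth function $\Phi$ plus a $O(1/k)$ correction, locate the mode by setting $\Phi'=0$, observe strict concavity with $\varsigma^2=-1/\Phi''(\mu)$, Taylor expand with a cubic remainder of order $(k-\mu)^3/\varsigma^4$, and normalize via Riemann-sum approximation of the Gaussian integral with super-polynomially small tails. This is essentially the paper's argument. One minor computational slip: the mode equation should read $e^{2\beta}k^2=(n_+-k)(n_--k)$ (equivalently $\rho_0=e^\beta/(1+e^\beta)$ when $n_+=n_-$), not $4k^2=e^{-2\beta}(n_+-k)(n_--k)$; the extra factor of $4$ comes from not cancelling the $\log 2$ terms, but it does not affect the structure of the argument since only existence, uniqueness, and the order of magnitude of $\mu$ are used downstream.
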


\begin{proof}
    Let $N = n_+ + n_-$.
    Recall our formula~\eqref{eqn:bi-counting} for $b(k)$, the number of pairings in $\cP = \cP_{n_+,n_-}$ with exactly $k$ bichromatic edges. Let $Z$ be the partition function, meaning the sum of weights over all $P \in \cP$. 
    Consider the probability mass function for $B$, given by
    $$p(k):= \Prob(B=k) = \frac{e^{\beta(N/2-k)}}{Z}b(k) = \frac{e^{\beta(N/2-k)}}{Z}{n_+ \choose k}{n_- \choose k}k!(n_+ - k - 1)!!(n_--k-1)!!\,.$$
    As we did with $b(k)$ after~\eqref{eqn:bi-counting}, this simplifies to
    \begin{align}\label{eq:derived-expression-p(k)}p(k) = \frac{e^{\beta(N/2-k)}}{Z} \cdot \frac{2^kn_+!n_-!}{k!2^{n_+/2}2^{n_-/2}(\frac{n_+-k}{2})!(\frac{n_--k}{2})!}\,.
    \end{align}

To determine the value of $k$ which maximizes $p(k)$, we approximate $\log p(k)$ by using Stirling's formula $\log n! = n\log n - n + \frac12\log(2\pi n) + O\left(\frac{1}{n}\right)$ and taking a smooth continuous approximation. More precisely, analagous to~\eqref{eq:b(k)-approx}, let $\log p(k) = \ell(k) + O\left(\frac{1}{k}\right)$ where $\ell(k)$ is the function on $\mathbb R$ defined as 
     \begin{align*}\ell(k)  &:= \beta\left(\frac{N}{2}-k\right) - \log Z + n_+ \log n_+ + n_- \log n_- - k \log k - \frac{n_+-k}{2}\log \frac{n_+-k}{2}  \\
    &\ \ \ \ -\frac{n_--k}{2}\log \frac{n_--k}{2} + \frac12\left(\log \frac{2n_+n_-}{\pi k(n_+-k)(n_--k)}\right)  + (k - \frac{N}{2}) \log 2 - \frac{N}{2}\ . 
    \end{align*}
    
    The first and second derivatives with respect to $k$ are thus
    \begin{align*} \ell'(k) &= -\beta - \log k + \frac12 \log \frac{n_+ - k}{2} + \frac12\log \frac{n_- - k}{2} - \frac{1}{2} \left(\frac{1}{k} - \frac{1}{n_+-k} - \frac{1}{n_--k} \right) + \log 2 + O\left(\frac{1}{k^2}\right)\ ,
    \end{align*} 
    \begin{align*}
        \ell''(k) &= - \frac{1}{k} - \frac{1}{2(n_+-k)} - \frac{1}{2(n_- - k)} + \left(\frac{1}{k^2} + \frac{1}{(n_+-k)^2} + \frac{1}{(n_--k)^2}\right) + O\left(\frac{1}{k^3}\right)\ .
    \end{align*}
    The second derivative is strictly negative for $n_+, n_-$ larger than an absolute constant $C_0$, telling us that $\ell(k)$ is strictly concave and thus has a unique global maximum. Call 
    \begin{align*}
        \mu:= {\arg\max}_k \ell(k) \qquad \text{and} \qquad \varsigma^2 :=  - \frac{1}{\ell''(\mu)}\,.
    \end{align*}
    (For intuition, it is straightforward to verify that if $n_+,n_-$ go to infinity together proportionately, then $\mu$ and $\varsigma^2$ both scale linearly with $n_+,n_-$, as everything can be rewritten in terms of proportions $\frac{n_+}{n_- + n_+}, \frac{n_-}{n_+ + n_-}$ and $\frac{k}{n_+ + n_-}$ as done in Lemma~\ref{lem:partition-first-moment}.)

To obtain the claim, we compute the Taylor series expansion of $\ell(k)$ about $k = \mu$: 
$$\ell(k) = \ell(\mu) - (k-\mu)\ell'(\mu) + (k-\mu)^2 \frac{\ell''(\mu)}{2}+R(k)$$
where $|R(k)| \leq \frac{\max_{x \in [k \pm \mu]} \ell^{'''}(k)}{6}(k-\mu)^3$.
Note that
$$\ell'''(k) = \left(\frac{1}{k^2} - \frac{1}{2(n_+-k)^2} - \frac{1}{2(n_--k)^2}\right) + \left(\frac{-2}{k^3} + \frac{2}{(n_+-k)^3} + \frac{2}{(n_--k)^3}\right) + O\left(\frac{1}{k^4}\right)\ ,$$
so $|R(k)| \leq O(\frac{(k-\mu)^3}{\varsigma^4})$.

The probability mass function of $B$ is then 
$$\Prob(B = k) = e^{\ell(\mu)}e^{-(k-\mu)^2 / 2\varsigma^2}\left(1 + O\left(\frac{(k-\mu)^3}{\varsigma^4}\right)\right)$$
using the estimate $e^x = 1+O(x)$ for $x$ small.
Lastly, we claim that we have $e^{\ell(\mu)} = \frac{1}{\sqrt{2\pi}\varsigma}\left(1 + O\left(\frac{1}{\varsigma^2}\right)\right)$. This follows from noting that $1 = \sum_k \Prob(B = k)$ which can be approximated by an integral $\int_{-\infty}^{\infty} e^{\ell(\mu)}e^{-(x-\mu)^2/2\varsigma^2}dx$ with $O(1/\varsigma)$ additive error. This Gaussian integral gives the the claimed normalizing constant.
\end{proof}

\begin{lemma}\label{bounded-coupling}
    Let $\beta > 0$, $N, n_+, n_- \in \mathbb N$ such that $n_+ + n_- = N$. 
    Let $\sigma_0:[N] \to \{+1, -1\}$ such that $n_+ = |\sigma_0^{-1}(+1)|$ and $n_- = |\sigma_0^{-1}(-1)|$, and let $\sigma_1 : [N] \to \{+1, -1\}$ such that $n_+ +1 = |\sigma_0^{-1}(+1)|$ and $n_- - 1= |\sigma_0^{-1}(-1)|$.
    Let $\pi_0, \pi_1$ be the respective $\beta$-weighted configuration models. 
    Let $B_0, B_1$ be the random variables counting the number of bichromatic edges under $\pi_0, \pi_1$, respectively.

    Then there exists $C = C(d,\beta) > 0$ and a coupling of $B_0, B_1$ such that $\Prob(|B_0 - B_1| \leq C)=1$. 
\end{lemma}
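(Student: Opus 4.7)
The plan is to deduce the coupling from the local central limit theorem (Lemma~\ref{lem:localCLT-planted}) applied to both $B_0$ and $B_1$, after showing that the relevant Gaussian parameters shift by only $O(1)$ under the single-spin-flip perturbation from $\sigma_0$ to $\sigma_1$. The coupling itself will be the monotone (inverse-CDF) coupling $B_i = F_i^{-1}(U)$ with $U \sim \operatorname{Unif}[0,1]$, so the task reduces to showing $|F_0^{-1}(U) - F_1^{-1}(U)| \le C$ uniformly in $U$.

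Applying Lemma~\ref{lem:localCLT-planted} to each distribution gives Gaussian parameters $(\mu_i, \varsigma_i^2)$ for $i = 0,1$. The mode $\mu$ is characterized (up to lower-order terms from $\ell'(\mu)=0$) by the algebraic identity $e^{\beta}\mu^2 = (n_+ - \mu)(n_- - \mu)$. Viewing $\mu = \mu(n_+, n_-)$ implicitly and applying the implicit function theorem gives $\partial_{n_+}\mu = (n_- - \mu)/\Theta(N)$ and $\partial_{n_-}\mu = (n_+ - \mu)/\Theta(N)$, since $\partial_\mu F = 2(e^\beta-1)\mu + N = \Theta(N)$. Under $(\Delta n_+, \Delta n_-) = (+1,-1)$ this yields $|\mu_1 - \mu_0| = |n_- - n_+|/\Theta(N) = O(1)$, and the analogous perturbation analysis of $\varsigma^2 = -1/\ell''(\mu)$ (noting $\ell''(\mu) = \Theta(1/N)$ changes by $O(1/N^2)$) gives $|\varsigma_1^2 - \varsigma_0^2| = O(1)$, hence $\varsigma_1/\varsigma_0 = 1 + O(1/\varsigma_0^2)$.

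In the LCLT regime $|B_0 - \mu_0| \le \varsigma_0 \log \varsigma_0$, Lemma~\ref{lem:localCLT-planted} gives the approximation $F_i^{-1}(U) = \mu_i + \varsigma_i \Phi^{-1}(U) + o(1)$, so
\[
F_1^{-1}(U) - F_0^{-1}(U) = (\mu_1 - \mu_0) + (B_0 - \mu_0)(\varsigma_1/\varsigma_0 - 1) + o(1) = O(1) + O\bigl(\tfrac{\log \varsigma_0}{\varsigma_0}\bigr) = O(1),
\]
handling the bulk of the probability. For the complementary tails $|B_0 - \mu_0| > \varsigma_0 \log \varsigma_0$, I compare the explicit recursions
\[
\frac{p_i(k+2)}{p_i(k)} = \frac{e^{-2\beta}(n_+^{(i)} - k)(n_-^{(i)} - k)}{(k+1)(k+2)},
\]
which differ between $i = 0$ and $i = 1$ by a multiplicative factor $1 + O(1/N)$ uniformly in $k$ away from the combinatorial boundary. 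Telescoping this across intervals of length $O(1)$ shows $\log p_0(k) \approx \log p_1(k \pm 1)$ with bounded error, and combined with the strict log-concavity of $\ell(k)$ from the LCLT proof this pins down the horizontal shift between $F_0$ and $F_1$ at $O(1)$ in the tails as well.

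The main obstacle will be the tail regime: Lemma~\ref{lem:localCLT-planted} only guarantees the Gaussian approximation within $|k-\mu| \le \varsigma \log \varsigma$, so the a.s.\ bound requires separately controlling the extreme tails up to the combinatorial boundary $k = \min(n_+, n_-)$, where the recursion ratio degenerates and the Gaussian approximation fails. Also, the parity condition $k \equiv n_+^{(i)} \pmod 2$ (forced so that $n_\pm^{(i)} - k$ is even) flips between $i = 0$ and $i = 1$, so the two supports are offset by one lattice step and the coupling must respect this parity shift. Addressing both issues requires a careful boundary-layer analysis using the explicit ratio above together with the log-concavity of $\ell$ to compare how the two tails decay near the endpoint.
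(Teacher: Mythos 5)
Your overall plan is the same as the paper's: a monotone coupling (your quantile coupling $B_i = F_i^{-1}(U)$ is exactly what Strassen's criterion, which the paper uses, certifies), the LCLT of Lemma~\ref{lem:localCLT-planted} in the bulk, $O(1)$ shifts of $\mu$ and $\varsigma^2$ under the single-spin-flip perturbation, and a separate tail argument. The bulk step is correct modulo the detail that the LCLT error in the CDF is $O(1/\varsigma)$, not $o(1)$, but since $\Phi^{-1}$ is Lipschitz only near the bulk, turning this into a uniform $O(1)$ shift of quantiles requires some care near the edge of the moderate-deviation window; the paper handles this via a mean-value-theorem argument on $\Phi$.

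The real gap is the tail regime, and you flag it yourself without closing it. Telescoping $p_1(k+2)/p_1(k)$ against $p_0(k+2)/p_0(k)$ and citing log-concavity is a heuristic, not a proof, for two reasons. First, your ratio comparison $1+O(1/N)$ fails precisely where you need it: as $k$ approaches $\min(n_+, n_-)$, the factor $(n_+^{(i)}-k)(n_-^{(i)}-k)$ can differ between $i=0$ and $i=1$ by a $\Theta(1)$ multiplicative amount (e.g.\ $n_--k=2$ vs.\ $n_--k=1$), and telescoping into this region accumulates an unbounded error. Second, even a pointwise comparison $p_0(k)\asymp p_1(k\pm 1)$ does not by itself give the CDF comparison $\Prob(B_0\le k-C)\le\Prob(B_1\le k)$ needed for the quantile bound; one must control the entire tail sum. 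The paper closes both gaps cleanly in a different way: it bounds the whole tail by a single point mass, $\Prob(B_0\le k-C)\le\gamma\,p_0(k-C)$, using log-concavity to sum a geometric series with ratio $r_0(k)$, and then compares that single point mass directly to $p_1(k)$ via the explicit formula~\eqref{eq:derived-expression-p(k)}, writing the point-probability ratio as
\[
\frac{\Prob(B_0=k-C)}{\Prob(B_1=k)}\le\frac{Z_1}{Z_0}\cdot\frac{n_-}{n_+}\Big(\frac{e^{2\beta}k}{2(n_+-k)(n_--k)}\Big)^{C}\,,
\]
where the exponent $C$ is chosen large enough to crush the boundary-dependent factors, and a combinatorial bijection gives $Z_0/Z_1\ge e^{-\beta}$. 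This handles the combinatorial boundary and the normalization in one stroke and is the piece your sketch is missing; it also implicitly resolves the parity offset by choosing $C$ of the right parity, whereas your sketch notes the parity issue but does not resolve it. Finally, the paper first dispatches the trivial case $\min(n_+,n_-)=O(1)$, which your proposal does not mention but which is needed before invoking the LCLT.
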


\begin{proof}
Let $C_0$ be a sufficiently large, fixed, constant. First note that if either $n_+$ or $n_-$ is bounded by $C_0$, 
the claim trivially follows by taking $C>C_0$ because the number of bichromatic edges is bounded by the smaller part's vertex count. 
Thus, we may assume that $n_+, n_-$ are sufficiently large, and in particular, in that case Lemma~\ref{lem:localCLT-planted} still applies with the hidden constants in the big-O being between some fixed $\delta_0$ and $\delta_0^{-1}$.

By Strassen's criterion for stochastic domination, it is enough to show that there exists $C>0$ such that for every $k$, we have
\begin{align}\label{eq:nts-Strassen}
\Prob(B_0 \leq k-C) \leq \Prob(B_1 \leq k) \leq \Prob(B_0 \leq k+C)\ .\end{align}
Let $\mu_0, \varsigma^2_0$ and $\mu_1, \varsigma^2_1$ be the $\mu$ and $\varsigma$ given by \Cref{lem:localCLT-planted} for $B_0$ and $B_1$, respectively. 

\medskip
\noindent\emph{Eq.~\eqref{eq:nts-Strassen} for $k$ in the tail}. 
First consider $k$ in the lower tail, $\mu_0 - k > \varsigma_0\log(\varsigma_0)$. Let $\gamma = \frac{c\varsigma_0}{\log \varsigma_0}$ for some $c > 0$ (to be specified). We will show that 
\begin{align}\label{eq:wts-tails}\Prob(B_0 \leq k - C) \leq \gamma\Prob(B_0 = k-C) \leq \Prob(B_1 = k)\,.
\end{align}
To show the first inequality in~\eqref{eq:wts-tails}, let $p_0(k) = \Prob(B_0 = k)$ and $r_0(k) = \frac{p_0(k-1)}{p_0(k)}$. By log-concavity of $p_0(k)$, the function $r_0(k)$ is increasing. Then
$$\Prob(B_0 \leq k) = \sum_{i = 0}^k p_0(i) \leq p_0(k)\sum_{i=0}^k r_0(k)^{i} \leq \frac{p_0(k)}{1-r_0(k)}$$
The ratio $\frac{1}{1-r_0(k)}$ is maximized at the boundary, where $|k - \mu_0| = \varsigma_0\log(\varsigma_0)$. 
By approximating $r_0(k)$ by $\exp(-\ell'(k))$ and evaluating $\ell'(k)$ at $k = \mu_0 - \varsigma_0 \log (\varsigma_0)$, we get that 
\begin{align*}
    r_0(k) = \exp( \ell(k-1) - \ell(k)) = e^{-(k-\mu)\ell''(\mu)} (1+O(1/\varsigma^2))
\end{align*}
and plugging in $\ell''(\mu) = -\frac{1}{\varsigma_0^2}$, we get 
$\frac{1}{1-r_0(k)} = \frac{\varsigma_0}{\log(\varsigma_0)} (1+O(1/\varsigma^2))$. This gives the first inequality of~\eqref{eq:wts-tails} with $\gamma = \frac{c\varsigma_0}{\log \varsigma_0}$ for $c$ a large constant. 

For later reference, also note that this argument allows us to show that the sum of the tail probabilities is super-polynomially small in $\varsigma$. Using \Cref{lem:localCLT-planted}, we can bound the error at $k = \mu_0 - \varsigma_0\log(\varsigma_0)$ by $R(k) = O(\frac{\log^3(\varsigma_0)}{\varsigma_0})$. Thus, applying the local CLT statement gives \begin{equation}\label{tailbound}
\Prob(B_0 \leq \mu_0 - \varsigma_0 \log(\varsigma_0)) \leq \frac{c\varsigma_0}{\log \varsigma_0} \cdot \frac{1}{\sqrt{2\pi} \varsigma_0}e^{-\log^2(\varsigma_0)/2}\left(1 + O\left(\frac{\log^3(\varsigma_0)}{\varsigma_0}\right)\right)
\end{equation}which is dominated by the $\exp(-\Omega(\log^2 \varsigma_0))$ term as $\varsigma_0 \to \infty$.

To show the second inequality in~\eqref{eq:wts-tails}, we want to compare the point probabilities of $B_0$ and $B_1$. Using the previously derived expression for $p(k)$~\eqref{eq:derived-expression-p(k)}, we have

\begin{align}\frac{\Prob(B_0 = k-C)}{\Prob(B_1 = k)} &= e^{2C\beta}\frac{Z_1}{Z_0}\frac{n_-(k)\cdots (k-C+1)}{2^C(n_++1)}\frac{(\frac{n_+-k+1}{2})!}{(\frac{n_+-k+C}{2})!} \frac{(\frac{n_--k-1}{2})!}{(\frac{n_--k+C}{2})!} \nonumber \\ 
&\leq \frac{Z_1}{Z_0}\frac{n_-}{n_+}\left(\frac{e^{2\beta}k}{2(n_+-k) (n_--k)}\right)^C \,.\label{eqn:pointprobs}
\end{align}
 We first bound the ratio $\frac{Z_1}{Z_0}$.     Recall $Z_0 = \sum_{P \in \cP_{n_+, n_-}} w_P(\sigma_0)$ where $w_P(\sigma_0)$ is the weight of $P$ on spin configuration $\sigma_0$. Notice that we may define a bijection between $\cP_{n_+, n_-}$ equipped with spins $\sigma_0$ and $\cP_{n_++1, n_--1}$ equipped with $\sigma_1$ by preserving the matching and flipping the spin of a distinguished vertex $v$ from $-1$ to $+1$.

    Let $\cP^+_{n_+, n_-}$ be the set of matchings where $v$ is matched to a vertex with a $+1$ spin and analogously define $\cP^-_{n_+, n_-}$. Then
    \begin{align*}
        Z_0 &= \sum_{P \in \cP^+_{n_+, n_-}}w_P(\sigma_0) + \sum_{P \in \cP^-_{n_+, n_-}}w_P(\sigma_0)\\
        &= \sum_{P \in \cP^+_{n_+, n_-}}w_{P \setminus v}(\sigma_0) + \sum_{P \in \cP^-_{n_+, n_-}}w_{P \setminus v}(\sigma_0)e^{\beta}\\
        &= \sum_{P \in \cP^+_{n_++1, n_--1}}w_{P \setminus v}(\sigma_1) + \sum_{P \in \cP^-_{n_++1, n_--1}}w_{P \setminus v}(\sigma_1)e^{\beta}\\
        &= \sum_{P \in \cP^+_{n_++1, n_--1}}w_{P}(\sigma_1)e^{-\beta} + \sum_{P \in \cP^-_{n_++1, n_--1}}w_{P}(\sigma_1)e^{\beta}\,.
    \end{align*}
    Then calling $N(v)$ the (random) vertex to which $v$ is matched, 
    $$\frac{Z_0}{Z_1} = e^{-\beta}\Prob(N(v) \in \sigma_1^{-1}(+1)) + e^{\beta}\Prob(N(v) \in \sigma_1^{-1}(-1)) \geq e^{-\beta}\ .$$

    If $n_-/n_+ \le C_0$, then since $k$ is in the lower tail, we use that $k/(n_+ - k)(n_- -k)\le \mu/(\varsigma^2 \log^2\varsigma) =o(1)$ and the inequality holds; if $n_-/n_+ \ge C_0$, then $n_- - k\ge n_- - n_+$ so as long as the exponent $C>2$, then $k/(n_- -k)$ can be used to kill both the $n_-/n_+$ and any other constants, leaving the inequality true again.

The argument for $k$ in the upper tail is analogous. In this case, $\Prob(B_0 \leq k-C) \leq \Prob(B_1 \leq k)$ follows from showing $\Prob(B_0 = k-C+1) \geq \gamma'\Prob(B_1= k) \geq  \Prob(B_1 > k)$ for a specified $\gamma'$ in terms of $\varsigma_0$. As before, we first find $\gamma'$ by writing $\Prob(B_1 > k)$ as a product of $p_1(k)$ with a geometric series. We then compare the point probabilities $\frac{\Prob(B_1 = k)}{\Prob(B_0 = k-C+1)}$ which is of the form of the reciprocal of the expression of~\eqref{eqn:pointprobs}. We can see that by the assumption of $k$ being in the upper tail and taking $C_0$ large enough and $C>2$, the desired inequalities follow.

\medskip
\noindent\emph{Eq.~\eqref{eq:nts-Strassen} for $k$ in the moderate deviations}. 
For $|k-\mu_0| \leq \varsigma_0 \log (\varsigma_0)$, we aim to apply \Cref{lem:localCLT-planted} to compare $B_0$ and $B_1$. First, note that perturbing $n_+, n_-$ by $\pm 1$ changes $\mu$ and $\varsigma^2$ by at most a uniform additive constant. Indeed, let $\ell_0(k) = \ell_{n_+, n_-}(k)$ and $\ell_1(k) = \ell_{n_++1, n_--1}(k)$. Let $K > 0$ be a constant. 
By the Mean Value Theorem, $\ell_0'(\mu_0+K) = K\ell_0''(\xi)$ for some $\xi \in (\mu_0, \mu_0 + K)$. By evaluating $\ell_1'(k)$, we also have $\ell_1'(\mu_0+K) - \ell_0'(\mu_0+K) \approx \frac{1}{2(n_+-(\mu_0+K))} - \frac{1}{2(n_--(\mu_0+K))}$. Then we have
$$\ell_1'(\mu_0+K) = K \ell_0''(\xi) + \frac{1}{2(n_+-(\mu_0+K))} - \frac{1}{2(n_--(\mu_0+K))}\ .$$

By analyzing $\ell_0''(k)$, we also see that $\ell''(\xi) = (1+o(1))\ell''(\mu_0)$ (as $n_+, n_- \to \infty$) for $\xi \in (\mu_0, \mu_0+K)$, and $|\ell_0''(\mu_0)| > |\ell_1'(\mu_0+K) - \ell_0'(\mu_0+K)|$. However, we also know that $\ell_0''(k) < 0$ for all $k$. This implies that for $K$ sufficiently large (e.g. $K = 2$), we have $\ell_1'(\mu_0 + K) < 0$ whereas $\ell_1'(\mu_0 - K) > 0$, so the unique root $\mu_1$ of $\ell_1$ must lie in the interval $(\mu_0 - K, \mu_0 + K)$.

We perform a similar analysis to bound $|\varsigma_1^2 - \varsigma_0^2|$ by a constant.
Note that this implies $|\varsigma_1 - \varsigma_0| \leq \frac{K'}{\varsigma_0}$ for some constant $K' > 0$.

Now to show Strassen's condition holds for $k$ in this case, we use \eqref{tailbound} for $k$ in the tail and for larger $k$ add up the probability mass function estimates given by \Cref{lem:localCLT-planted} to get an estimate on the cumulative distribution function as
\begin{align*} \Prob(B_0 \leq k-C) &= \Prob(B_0 \leq \mu_0 - \varsigma_0\log\varsigma_0) + \\
&\qquad  + \sum_{j=\mu_0 - \varsigma_0\log(\varsigma_0)}^{k-C} \frac{1}{\sqrt{2\pi}\varsigma_0}e^{-(j-\mu_0)^2 / 2\varsigma_0^2}\left(1 + O\left(\frac{(j-\mu_0)^3}{\varsigma_0^4}\right)\right)
\end{align*}
The first term was already bounded by $e^{ - \Omega(\log^2 \varsigma_0)}$. 
To bound the error from the big-$O$ term, we approximate the sum by an integral, up to an error of $O(1/\varsigma_0)$, to get 
$$\int_{-\log(\varsigma_0)}^{(k-C-\mu_0)/\varsigma_0} \frac{1}{\sqrt{2\pi}}e^{-x^2/2}\left(\frac{C_3x^3}{\varsigma_0}\right)dx$$ where $x = \frac{j-\mu_0}{\varsigma_0}$ to obtain an additive error of $O(1/\varsigma_0)$.  Thus, again switching the sum to an integral, we end up with 
$$\Prob(B_0 \leq k-C) = \Phi\left(\frac{k-C-\mu_0}{\varsigma_0}\right)+ O\left(\frac{1}{\varsigma_0}\right)\,,$$
where $\Phi$ is the Gaussian CDF. 
We obtain a similar expression for $B_1$ as 
$$\Prob(B_1 \leq k) = \Phi\left(\frac{k - \mu_1}{\varsigma_1}\right) + O\left(\frac{1}{\varsigma_1}\right)\ .$$
To conclude that $\Prob(B_0 \leq k - C) \leq \Prob(B_1 \leq k)$, let $E_0$ be the error term from the left-hand side and $E_1$ be that of the right-hand side. We want $E_0 - E_1 \leq \left(\frac{k-\mu_1}{\varsigma_1} - \frac{k-C-\mu_0}{\varsigma_0}\right)\Phi'(\xi)$ where $\frac{k-C-\mu_0}{\varsigma_0} < \xi < \frac{k-\mu_1}{\varsigma_1}$. By bounding $E_0 - E_1 \leq \frac{c_1}{\varsigma_0}$ and $\Phi'(\xi) \geq c_2 > 0$, and recalling that $\mu_1 \leq \mu_0 + K$, we may take $C$ large enough in terms of $c_1, c_2, K$ so that the inequality holds.
\end{proof}

\begin{lemma}\label{lem:Doob-martingale-is-bounded-difference-planted}
    Let $f$ be a function on spin configuration and $\beta$-weighted configuration models  $(\sigma,P)$ and suppose there exists $L$ such that for all $\sigma\in \Omega_\eta$, one has $|f(P) - f(P')|\le L$ if $P,P'\in \mathcal P$ differ by a single switch.

    Fix $\sigma \in \Omega_\eta$ and draw $\hat G \sim \hbg(\sigma)$. Let $X_0, X_1, \dots$ be the corresponding edge exposure martingale for this planted configuration model, i.e. $X_{m} = \E[ f(\sigma,\hat G)  \mid H_m \subset \hat G]$ where $H_i$ denotes the first $i$ pairings (under an arbitrary advanced ordering) of $\hat G$. Then there exists $c(L,d,\beta)$ such that this martingale has $c$-bounded differences, $|X_{m+1} - X_m| < c$. 
\end{lemma}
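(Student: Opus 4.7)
The plan is to show that the Doob martingale has $c$-bounded differences by constructing, for any two candidate partners $w_1, w_2$ of the pivot half-edge exposed at step $m+1$, a coupling of the two resulting conditional laws whose associated full matchings differ by $O_{d,\beta}(1)$ switches in expectation; the Lipschitz assumption on $f$ then yields the claim. Conditional on $H_m$, the remainder of $\hat G$ is a $\beta$-weighted perfect matching on the unmatched half-edges $V^{(m)} := [dn] \setminus V(H_m)$, and the advanced ordering fixes a deterministic pivot $v \in V^{(m)}$ to be matched at step $m+1$. For each candidate partner $w$, let $\mu_w$ denote the conditional law on the matching of $V^{(m)} \setminus \{v, w\}$ given $E_{m+1} = (v, w)$, and write $F_w(P) := f(\sigma, H_m \cup \{(v, w)\} \cup P)$. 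Since $X_m = \E[X_{m+1} \mid H_m]$, it suffices to bound $\sup_{w_1, w_2} |\E_{\mu_{w_1}}[F_{w_1}] - \E_{\mu_{w_2}}[F_{w_2}]|$.

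If $\sigma(w_1) = \sigma(w_2)$, the label-swap map $\phi$ exchanging $w_1 \leftrightarrow w_2$ and fixing every other half-edge is a weight-preserving bijection between matchings of $V^{(m)} \setminus \{v, w_1\}$ and $V^{(m)} \setminus \{v, w_2\}$, since it preserves the monochromaticity of every edge. Setting $P_2 = \phi(P_1)$ with $P_1 \sim \mu_{w_1}$ gives $P_2 \sim \mu_{w_2}$, and the two full matchings differ by exactly one switch: the edges $(v, w_1), (w_2, u)$ of the first are replaced by $(v, w_2), (w_1, u)$ in the second, where $u$ is the $P_1$-partner of $w_2$. This case contributes at most $L$ to the bound.

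When $\sigma(w_1) \ne \sigma(w_2)$, the map $\phi$ changes the monochromaticity of the relabeled edge, perturbing the weight by a factor $e^{\pm \beta}$, so $\tilde\mu_{w_2} := \phi_* \mu_{w_1} \ne \mu_{w_2}$. The triangle inequality and the one-switch bound above give
\[
|\E_{\mu_{w_1}}[F_{w_1}] - \E_{\mu_{w_2}}[F_{w_2}]| \;\le\; L + |\E_{\tilde\mu_{w_2}}[F_{w_2}] - \E_{\mu_{w_2}}[F_{w_2}]|.
\]
Now $\tilde\mu_{w_2}$ and $\mu_{w_2}$ are $\beta$-weighted matching measures on the \emph{same} vertex set $V^{(m)} \setminus \{v, w_2\}$, whose underlying spin assignments differ only at the single vertex $w_1$ (whose effective spin is flipped under $\phi$); this is precisely the setup of \Cref{bounded-coupling}, which produces a coupling of the two bichromatic counts with difference at most $C = C(d, \beta)$ almost surely. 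Since each of these measures is uniform on the fibers of fixed bichromatic count, this bichromatic-count coupling lifts to a matching coupling whose samples differ by $O(C)$ switches, contributing an additional $O(CL)$ to the bound.

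The main technical obstacle is carrying out this fiber-level coupling: given two uniform matchings on the same vertex set with prescribed bichromatic counts differing by at most $C$, one must interchange them using only $O(C)$ switches while preserving the correct uniform marginals. I would handle this via a canonical-path construction within the switch graph on $V^{(m)} \setminus \{v, w_2\}$, exploiting that each switch alters the bichromatic count by $0$ or $\pm 2$ and that the uniform measure is stationary on each fiber. Combining the same-spin and different-spin analyses then yields the desired bound $|X_{m+1} - X_m| \le c(L, d, \beta)$ with $c = O(L \cdot C(d, \beta))$.
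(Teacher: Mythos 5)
Your reduction via the label-swap pushforward $\phi_*\mu_{w_1}$ is a genuinely different and somewhat cleaner route than the paper's: rather than coupling $\mu_{w_1}$ and $\mu_{w_2}$ directly across two different vertex sets, you relabel so that both measures live on matchings of the same vertex set $V^{(m)} \setminus \{v,w_2\}$, with spin assignments that differ only at $w_1$. That places you squarely in the setting of \Cref{bounded-coupling}. The same-spin case and the triangle-inequality step are both correct.

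The gap is the fiber-level step, which you flag as the main obstacle but do not actually carry out. After \Cref{bounded-coupling} gives a coupling of the bichromatic counts with $|B'-B|\le C$, one still needs, for each conditioned pair of counts, a coupling of the two conditional (uniform-on-fiber) measures whose samples are within $O(C)$ switches almost surely. Invoking ``a canonical-path construction within the switch graph'' does not produce such a coupling: canonical paths are a congestion bound for spectral-gap comparisons, and the fact that the uniform measure is stationary under within-fiber switches says nothing about how to build a joint law across fibers. There is also a wrinkle your sketch does not address: since $\tilde\mu_{w_2}$ and $\mu_{w_2}$ compute bichromatic counts with respect to two different spin assignments (differing at $w_1$), any single matching $P$ has $B'(P)=B(P)\pm 1$, so the two families of fibers are interleaved rather than aligned. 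The paper closes exactly this gap with the explicit joint construction of \Cref{claim:lipschitz-coupling}: both matchings are built edge-by-edge from a shared tuple representation of fixed-bichromatic-count matchings, with a bounded number of deliberately mismatched edges placed first and an identical coupling on the remainder. Some explicit construction of this kind is required; the canonical-path idea as stated does not supply it.
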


\begin{proof}
Suppose $ij$ is the $(m+1)$'th pairing exposed. Then we can write 
\begin{align*}X_{m+1} &  = \E[f \mid (H_{m} \cup ij)\subset \hat G] \\ 
X_m &  = \sum_k \E[f \mid  (H_m \cup ik) \subset \hat G] \cdot \Prob(ik \in \hat G \mid  H_m \subset \hat G)
\end{align*}
so that 
\begin{align*} |X_{m+1} - X_m| &\leq \sum_k \Prob(ik \mid  H_m ) |\E[f \mid  H_m \cup ij] - \E[f \mid  H_m \cup ik]|\,.
    \end{align*}
    Considering now the difference in expectations, we have 
    \begin{align*}
        &\E[f\ |\ H_m \cup ij] - \E[f\ |\ H_m \cup ik]= \sum_{P' \in \Omega_j} \pi_j(P')f(P') - \sum_{P'' \in \Omega_k} \pi_k(P'')f(P'')
    \end{align*}
    where for all $\ell$ we define $\Omega_{\ell} = \{P' : (H_m \cup i\ell) \subset P'\}$ and $\pi_{\ell}(P') = \Prob(P' \mid \Omega_\ell)$.

Our goal is to construct a coupling $\pi$ on $\Omega_j \times \Omega_k$ so that we may rewrite this as
$$\sum_{(P', P'') \in \Omega_j \times \Omega_k} \pi(P', P'')(f(P') - f(P''))\ .$$
Observe that the laws $\pi_j$ and $\pi_k$ are simply uniformly random matchings on the vertices not matched under $H_m \cup ij$ and $H_m\cup ik$ respectively. If $j$ and $k$ have the same spin under $\sigma$, then the $n_+,n_-$ on the remainder are the same for $\pi_j,\pi_k$ and there is a trivial coupling which sets $j$'s partner under $P''$ to be $k$'s partner under $P'$ and uses the identity coupling on the remaining vertices, leading to a switch distance of two.

Now suppose $j,k$ have different spin under $\sigma$. Let $B_j, B_k$ be the number of bichromatic edges under $\pi_j, \pi_k$, respectively, on the complement of $H_m$. 
By \Cref{bounded-coupling} applied to the sets of un-matched vertices, we know there exists a coupling such that with probability 1, $|B_j - B_k| \leq C$ for some constant $C > 0$. Now we claim that for each value $C' \leq C$, we may couple the conditional measures to have a $2C'$ difference in their bichromatic edges.

\begin{claim}\label{claim:lipschitz-coupling}
    Conditioned on $B_j = b_j, B_k = b_k$ such that $|b_j-b_k| \leq C$, there exists a coupling of the conditional measures $\pi_j(\cdot \mid B_j = b_j), \pi_k(\cdot \mid B_k = b_k)$ such that with probability $1$, the resulting pairings $P',P''$ differ in at most $2C$ switches. 
\end{claim}

Let us defer the proof of the claim and first conclude the proof of Lemma~\ref{lem:Doob-martingale-is-bounded-difference-planted}.

Combining \Cref{bounded-coupling} and \Cref{claim:lipschitz-coupling}, there is a coupling $\pi$ of $P' \sim \pi_j$ and $P'' \sim \pi_k$ such that $P', P''$ differ in at most $2C$ switches. By the Lipschitz assumption and the triangle inequality, $|f(P') - f(P'')| \leq 2 L C$. Then, 
\begin{align*}
    |\E[f\ |\ H_m \cup ij] - \E[f\ |\ H_m \cup ik]| &\leq \sum_{(P', P'') \in \Omega_j \times \Omega_k} \pi(P', P'')|f(P') - f(P'')| \leq 2 LC\ . \qedhere
\end{align*}
\end{proof}

By applying Azuma's Inequality, we immediately obtain the desired \Cref{planted-concentration} as a consequence of Lemma~\ref{lem:Doob-martingale-is-bounded-difference-planted}. It remains to prove the deferred claim.

\begin{proof}[Proof of Claim~\ref{claim:lipschitz-coupling}]
Recall our distributions $\pi_j$ and $\pi_k$ are over matchings conditioned on some fixed submatching $H_m$ (which we thus ignore) and the vertex $i$ being matched to some vertex $j$ in the former and $k$ in the latter. We construct a coupling between $\pi^0 = \pi_j( \cdot \mid  B_j = b_j)$ and $\pi^1 = \pi_k( \cdot \mid  B_k = b_k)$ as follows. 

We will sample pairings $P_0 \sim \pi^0, P_1 \sim \pi^1$ one edge at a time. Observe that one way to generate a pairing $P$ with $b$ bichromatic edges is the following: write a tuple of length $\frac{nd}{2} =: \frac{n_+ + n_-}{2}$ where each entry is either $-1, 0$, or $1$ and where the total number of $0$'s is equal to $b$ and the total number of $1$'s is $m_+ := \frac{n_+ - b}{2}$ (and thus, the total number of $-1$'s is $m_- := \frac{n_--b}{2}$). To generate the $i$th edge of $P$, look at the $i$th entry of the tuple. If it is $0$, choose a uniformly random pair of $+1$ and $-1$ vertices from the remaining unmatched vertices and add this pair to $P$. Similarly, if the $i$th entry is $1$, choose a uniformly random pair of unmatched $(+1, +1)$ vertices, and if $-1$, choose a uniformly random pair of unmatched $(-1, -1)$ vertices.

We can verify that this sampling procedure results in the uniform distribution over pairings with $b$ bichromatic edges by showing that the probability it gives to a pairing is invariant under adjacent transpositions.  
Given $n_+, n_-$ available vertices with $+1, -1$ spins, respectively, choosing a bichromatic edge followed by a $(+1, +1)$ edge has probability $$\frac{b}{n_+n_-} \cdot \frac{m_+}{{n_+ - 1 \choose 2}} = \frac{2bm_+}{n_+n_-(n_+-1)(n_+-2)}$$ 
whereas choosing the $(+1,+1)$ edge followed by the bichromatic edge has probability $$\frac{m_+}{{n^+ \choose 2}} \cdot \frac{b}{(n_+ - 2)n_-} = \frac{2bm_+}{n_+(n_+-1)(n_+-2)n_-}\ .$$
The computations for transposing a bichromatic and $(-1,-1)$ edge as well as two monochromatic edges are analogous.

Without loss of generality, suppose $b_j \le  b_k$. 

First, note that $k$ is available to match in $P_0$ and $j$ is available to match in $P_1$. We may generate $P_0, P_1$ by first specifying the partners of $k, j$, respectively. 
We first sample the partner $u_0$ of $k$ in $P_0$ according to the marginal of $k$, and similarly $u_1$ the partner of $j$ in $P_1$. Note that $u_0$ is a random unpaired vertex of $P_1$ and $u_1$ is a random unpaired vertex of $P_0$. 

In the case that $0 < b_j \le b_k < \frac{n_+ + n_-}{2}$, do the following. If $u_0 \neq u_1$, then sample a random vertex $u_2$ such that $\sigma(u_2) = \sigma(u_0)$ and add the edges $u_0u_2$ to $P_1$ and $u_1u_2$ to $P_0$. If $\sigma(u_0) = \sigma(u_1)$, both edges are monochromatic whereas if $\sigma(u_0) \neq \sigma(u_1)$, this will add one monochromatic edge to $P_1$ and one bichromatic edge to $P_0$.
In the case that $b_k = \frac{n_+ + n_-}{2}$ or $b_j = 0$, repeat the above but sample $u_2$ such that $\sigma(u_2) = \sigma(u_1)$.

We are now left with the same remaining set of vertices to match in $P_0$ and $P_1$, with $B_0 \in \{b_j, b_j \pm 1, b_j \pm 2\}$ remaining bichromatic edges to place in $P_0$ and $B_1 \in \{b_k, b_k \pm 1, b_k \pm 2\}$ remaining bichromatic edges to place in $P_1$. 

If $B_1 = B_0$, the identity coupling concludes the proof. Without loss of generality assume $B_1 - B_0 = D > 0$. Choose a random set of vertices $x_1, x_2, \dots, x_D$ with $+1$ spins and $y_1, y_2, \dots, y_D$ with $-1$ spins. Add the edges $\{x_iy_i : 1 \leq i \leq D\}$ to $P_1$ and the edges $\{x_{2i-1}x_{2i} : 1 \leq i \leq D/2\} \cup \{y_{2i-1}y_{2i} : 1 \leq i \leq D/2\}$ to $P_0$. 
On the remaining vertices, we couple the pairings exactly by using the same set of edges to complete the pairings, as described in the sampling procedure above. 
\end{proof}

\subsection*{Acknowledgments}
RG supported in part by NSF CAREER grant 2440509 and NSF DMS grant 2246780.  WP supported in part by NSF grant CCF-2309708. CY supported in part by NSF grant DMS-1928930 while in residence at the Simons--Laufer Mathematical Sciences Institute during the Spring 2025 semester.

\bibliographystyle{plain}
\bibliography{references}

\end{document}